\newtheorem{lemma}{Lemme}
\newtheorem{prop}{Proposition}
\newtheorem{theorem}{Théorème}
\newtheorem*{theorems}{Théorème}
\newtheorem{defi}{Définition}
\def\moverlay{\mathpalette\mov@rlay}
\def\mov@rlay#1#2{\leavevmode\vtop{%
   \baselineskip\z@skip \lineskiplimit-\maxdimen
   \ialign{\hfil$\m@th#1##$\hfil\cr#2\crcr}}}
\newcommand{\charfusion}[3][\mathord]{
    #1{\ifx#1\mathop\vphantom{#2}\fi
        \mathpalette\mov@rlay{#2\cr#3}
      }
    \ifx#1\mathop\expandafter\displaylimits\fi}
\title{Lois limites dans le problème des réseaux. \\
I. Le cas des boîtes.}
\author{Julien Trevisan}
\begin{document}
\maketitle
\bigskip
\begin{abstract}
We study the error of the number of unimodular lattice points that fall into a dilated 
and translated parallelogram. By using an article from Skriganov, we see that this error can be compared to an ergodic sum that involves the discrete geodesic flow over the space of unimodular lattices.
With the right normalization, we show, by using tools from a previous work of Fayad and Dolgopyat,
that a certain point process converges in law towards a Poisson process and 
deduce that the ergodic sum converges towards a Cauchy centered law when the unimodular lattice is distributed according to the normalized Haar measure. 
Strong from this experience, we apply the same kind of approach, with more difficulties, to the study of the asymptotic behaviour of the error and show that this error, normalized by $\log(t)$ with $t$ the factor of dilatation of the parallelogram, also converges in law towards a Cauchy centered law when the dilatation parameter tends to infinity and when the lattice and the vector of translation are random.
In a next article, we will show that, in the case of a ball in dimension $d$ superior or equal to $2$, the error, normalized by $t^{\frac{d-1}{2}}$ with $t$ the factor of dilatation of the ball, converges in law when $t \rightarrow \infty$ and the limit law admits a moment of order $1$.
\end{abstract}
\begin{abstract}
On étudie l'erreur commise lorsque l'on estime le nombre de points d'un réseau unimodulaire qui se trouvent dans un parallélogramme dilaté et translaté par son aire. À l'aide d'un des travaux de Skriganov, on voit que cette erreur peut être vue comme une somme ergodique portant sur le flot géodésique discret sur l'espace des réseaux unimodulaires. 
En normalisant correctement et en utilisant des outils d'un travail de Fayad et Dolgopyat, on montre qu'un certain processus converge en loi vers un processus de Poisson. On en déduit que la somme ergodique converge en loi vers une loi de Cauchy centrée lorsque le réseau est distribué selon la mesure de Haar normalisée. 
Fort de cette expérience, on applique le même type d'approche, avec plus de difficultés, pour étudier le comportement asymptotique de l'erreur et on montre que l'erreur, normalisée par $\log(t)$ avec $t$ le paramètre de dilatation du parallélogramme, converge en loi vers une loi de Cauchy centrée quand le paramètre de dilatation tend vers l'infini et lorsque le réseau est distribué selon la mesure de Haar normalisée et lorsque le vecteur de translation est aléatoire. 
Dans un prochain article, nous démontrerons que, dans le cas d'une boule de dimension $d$ supérieure ou égale à $2$, l'erreur, normalisée par $t^{\frac{d-1}{2}}$ avec $t$ le facteur de dilatation de la boule, converge en loi quand $t \rightarrow \infty$ et la loi limite admet un moment d'ordre $1$.
\end{abstract}
\section{Introduction}
On va s'intéresser dans ce papier à un cas particulier de l'estimation du nombre de points d'un réseau qui tombent dans un ensemble donné. Ce problème a une origine ancienne dans la mesure où le fameux problème du cercle de Gauss s'y rapporte. \\
 Soit $d \in \mathbb{N}-\{0,1\}$.
Soient $X \in \mathbb{R}^{d}$ et $L$ un réseau de $\mathbb{R}^{d}$ et $P$ un ensemble mesurable de $\mathbb{R}^{d}$ de mesure strictement positive et finie. On aimerait évaluer le cardinal suivant quand $t \rightarrow \infty$ : 
$$N(tP + X, L ) = | (t P + X) \cap L| \textit{.}$$
Sous certaine condition de régularité de l'ensemble $P$, on peut montrer que : 
$$ N(tP + X, L) = t^{d}\frac{\text{Vol}(P)}{\text{Covol}(L)} + o(t^{d}) \textit{.}$$
Il est donc naturel de s'intéresser à l'erreur commise : 
$$\mathcal{R}(tP + X,L) = N(tP + X, L) - t^{d}\frac{\text{Vol}(P)}{\text{Covol}(L)} \textit{.}$$
Dans le cas où $P$ est le disque unité $\mathbb{D}^{2}$, la conjecture de Hardy donnée dans $\cite{hardy1917average}$ stipule qu'on devrait avoir, pour tout $\epsilon >0$, $$\mathcal{R}(t D^{2}, \mathbb{Z}^{2}) = O(t^{\frac{1}{2}+\epsilon}) \textit{.}$$
Le meilleur résultat dans cette direction a été établi par Iwaniec et Mozzchi dans $\cite{iwaniec1988divisor}$. Ils ont prouvé que pour tout $\epsilon > 0$, $$\mathcal{R}(t D^{2}, \mathbb{Z}^{2}) = O(t^{\frac{7}{11}+\epsilon}) \textit{.}$$
Dans le cas où $P$ est un polytope convexe et où $L \in \mathscr{S}_{d}$ (l'espace des réseaux unimodulaires de $\mathbb{R}^{d}$, c'est-à-dire l'espace des réseaux de covolume $1$) est typique, Skriganov a réussi à relier $\mathcal{R}$ à une somme ergodique qui a donné, en particulier, l'estimation suivante : 
\begin{theorems}[\cite{Skriganov}]
\label{thm1}
Pour presque tout $L \in \mathscr{S}_{d}$, pour tout $\epsilon > 0$,
$$\max_{X \in \mathbb{R}^{d}} |\mathcal{R}(tP + X, L)| = O(\log(t)^{d-1+\epsilon}) $$
où $\mathscr{S}_{d}$ est munie de l'unique mesure de probabilité $\mu_{d}$ qui soit de Haar.
\end{theorems}
Notons que ce théorème n'est pas valable pour tout réseau $L \in \mathscr{S}_{d}$. Il suffit de considérer le réseau $L=\mathbb{Z}^{d}$ et pour $P$ un hypercube centré en $0$ dont les côtés sont parallèles aux axes. L'erreur maximale est dans ce cas de l'ordre de $t^{d-1}$. \\
En suivant l'approche de Kesten dans $\cite{Kesten60u}$ et dans $\cite{Kesten62u}$, nous nous intéressons au comportement asymptotique de $\mathcal{R}$ quand $L \in \mathscr{S}_{2}$ et $X \in \mathbb{R}^{2}/L$ sont aléatoires. \\
Plus formellement, on se donne $\tilde{\mu}_{2}$ une mesure de probabilité absolument continue par rapport à $\mu_{2}$, de densité régulière et bornée sur $\mathscr{S}_{2}$ et où $\mu_{2}$ désigne la mesure de Haar normalisée sur $\mathscr{S}_{2}$, en appelant $\lambda_{2}$ la mesure de Lebesgue normalisée sur $\mathbb{R}^{2}/L$, on se donne aussi $\tilde{\lambda}_{2}$ une mesure de probabilité absolument continue par rapport à $\lambda_{2}$, de densité régulière sur $ \mathbb{R}^{2}/L$. \\
Nous allons alors montrer le résultat suivant en supposant que $P$ est un parallélogramme :  
\begin{theorem}
\label{thm100}
Lorsque $L \in \mathscr{S}_{2}$ est distribué selon la mesure $\tilde{\mu}_{2}$ et $X$ est distribué selon $\tilde{\lambda}_{2}$ alors on a : 
$$\frac{\mathcal{R}(t,P,X,L)}{\log(t)} \overset{\mathcal{L}}{\underset{t \rightarrow \infty}{\rightarrow}} \mathcal{C}_{c} $$ 
où $\overset{\mathcal{L}}{\rightarrow}$ signifie que la convergence a lieu en loi et où $\mathcal{C}_{c}$ désigne une loi de Cauchy centrée. 
\end{theorem}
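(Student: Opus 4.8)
L'idée directrice est de ramener l'étude de $\mathcal{R}(t,P,X,L)$ à celle d'une somme ergodique le long du flot géodésique discret sur l'espace des réseaux unimodulaires marqués, puis d'adapter la stratégie du cas purement ergodique (où seul $L$ fluctue) en maîtrisant les termes supplémentaires dus à la translation $X$.

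Quitte à appliquer une transformation de $SL_2(\mathbb{R})$ (qui préserve $\mathscr{S}_2$) suivie d'une homothétie absorbée dans le paramètre de dilatation, je me ramènerais d'abord au cas où $P$ est le carré unité. La dilatation par $t$ correspond alors à l'action du flot diagonal $a_s = \mathrm{diag}(e^s,e^{-s})$ avec $e^s \sim t$, de sorte qu'étudier $\mathcal{R}$ quand $t \to \infty$ revient à suivre l'orbite discrète $(a_n L, a_n X)_{0 \le n \le \log t}$. En invoquant la représentation de Skriganov sous-jacente au Théorème \ref{thm1}, j'écrirais
$$\mathcal{R}(t,P,X,L) = \sum_{n=0}^{\lfloor \log t\rfloor} \Phi(a_n L, a_n X) + R_t,$$
où la fonction $\Phi$ explose lorsque $a_n L$ s'approche du cusp, c'est-à-dire possède un vecteur court dans la direction pertinente, et où le reste $R_t$ est négligeable devant $\log t$.

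Le cœur de l'argument est la convergence en loi du processus ponctuel associé. En suivant les outils de Dolgopyat et Fayad, je montrerais que le processus ponctuel des excursions cuspidales de la trajectoire $(a_n L, a_n X)_{0\le n\le \log t}$, chaque excursion $e$ étant repérée par le couple $\left(\tfrac{n_e}{\log t},\, \tfrac{\Phi_e}{\log t}\right)$ formé de son instant renormalisé et de sa contribution renormalisée $\Phi_e$ à la somme, converge, lorsque $L \sim \tilde{\mu}_2$ et $X \sim \tilde{\lambda}_2$, vers un processus de Poisson sur $[0,1]\times\mathbb{R}$ d'intensité de la forme $ds\otimes c\,\tfrac{d\xi}{\xi^2}$. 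Ceci repose sur le mélange (exponentiel) du flot géodésique, qui décorrèle les contributions d'excursions distinctes, ainsi que sur l'équidistribution conjointe de $(a_n L, a_n X)$ dans l'espace des réseaux marqués. La queue d'indice $1$ provient de ce qu'une excursion dont le vecteur court atteint la longueur $\epsilon$ apporte une contribution d'ordre $\epsilon^{-2}$ tandis qu'un tel événement a une probabilité d'ordre $\epsilon^{2}$ : en posant $\xi\sim\epsilon^{-2}$ on obtient $\mathbb{P}(|\Phi_e|>\xi)\sim c/\xi$.

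Une fois cette convergence poissonienne acquise, la somme normalisée $\mathcal{R}/\log t$ converge vers l'intégrale de $\xi$ contre le processus de Poisson limite ; l'intensité étant symétrique et de mesure de Lévy proportionnelle à $d\xi/\xi^2$, cette intégrale est une variable aléatoire $1$-stable symétrique, c'est-à-dire une loi de Cauchy centrée $\mathcal{C}_c$. Le principal obstacle, comme le suggère l'expression \emph{avec plus de difficultés} du résumé, est le traitement de la variable $X$ : dans le cas purement ergodique seul $L$ oscille, tandis qu'ici $\Phi$ dépend conjointement du réseau dilaté et du point marqué $a_n X \bmod a_n L$. Il faut alors établir l'équidistribution conjointe de $(a_n L, a_n X)$, vérifier que la phase induite par $X$ se moyennise et ne contribue à la limite qu'à travers la queue symétrique sans en détruire la structure poissonienne, et contrôler les termes croisés absents lorsque $X$ est fixé. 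Des difficultés techniques secondaires sont la troncature de la somme, qui sépare les échelles dominantes des bords du domaine de sommation, et l'uniforme intégrabilité nécessaire pour passer de la convergence du processus ponctuel à celle de la somme elle-même.
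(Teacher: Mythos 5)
Your global strategy is indeed the paper's: Skriganov's Fourier-analytic transference from $\mathcal{R}$ to a sum over cusp excursions of the discrete geodesic orbit, the Dolgopyat--Fayad point-process machinery driven by exponential mixing, and a tail of index $1$ plus symmetry producing a centered Cauchy law. But your opening step, the exact representation $\mathcal{R}(t,P,X,L)=\sum_{n\leq \log t}\Phi(a_nL,a_nX)+R_t$ with $R_t=o(\log t)$, is not available and would fail as stated: Skriganov's method only yields mollified \emph{two-sided} bounds. The paper's Proposition \ref{prop41} sandwiches $\mathcal{R}/\log t$ between $\tilde{S}^{-}-\alpha$ and $\tilde{S}^{+}+\alpha$ with probability at least $1-\alpha$, where $\tilde{S}^{\pm}$ carry the smoothing factors $\omega_1,\omega_2$ (with parameters $\tau,\rho$ tied to $t$) and the shifted dilations $t^{\pm}=t\pm\beta\tau$; one must then run the entire chain of reductions (Propositions \ref{prop18} through \ref{prop24}, collected in Proposition \ref{prop42}) for \emph{both} signs and verify at each step that the argument survives the replacement of $t^{+}$ by $t^{-}$, so that the two bounds share a common Cauchy limit.

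More seriously, your mechanism for the numerators --- joint equidistribution of $(a_nL,a_nX)$ in a space of marked lattices --- does not suffice, and it mis-locates the difficulty. After reduction, the contribution of the excursion at time $h$ is $\Gamma_{h,t}/\Xi_{h,t}$ with $\Xi_{h,t}=\mathrm{Num}(l(L,h))\log t$, and the phase inside $\Gamma_{h,t}$ is $\langle e(\delta_hL),\delta_h^{-1}(e^{M}A_2+X)\rangle \bmod 1$ with $M=\lceil\log t\rceil$: the dominant term comes from the dilation $e^{M}A_2$, \emph{not} from $X$, and it is not a fixed observable on any marked-lattice space, since its coefficient $e^{\hat{\lambda}(h)}$ diverges with $t$. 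Its asymptotic uniformity on $\mathbb{R}/\mathbb{Z}$ and independence (between excursions, and from the $\Xi$'s) are extracted conditionally via hypotheses (h7)--(h8) of Theorem \ref{thm13}: along $W_1$-leaves of length $(e^{\lambda_1(k')}M^{1000})^{-1}$ the phase sweeps many periods because $e^{\hat{\lambda}(k)}\tau$ is large when $\hat{\lambda}(k)\geq\lambda_1(k')+R\log M$, while $\nu_h$ stays essentially constant along the leaf. Without this scale-dependent oscillation argument, your Poisson claim for the pairs (renormalized time, renormalized contribution) is unsupported. Two further points need patching: the limiting ``integral of $\xi$ against the Poisson process'' exists only as a symmetric principal value, since $c\,d\xi/\xi^2$ is not integrable near $0$ --- whence the $\epsilon$-truncation and the double limit of Lemmas \ref{lemme41}--\ref{lemme42}, with negligibility of the discarded tail proved by a variance estimate as in Proposition \ref{prop9}; and in the final regrouped sum $\Sigma$ the numerator is a product of three phases $\sin(2m\pi l_1t^{+}a)\sin(2m\pi l_2t^{+}a)\cos(2m\pi\langle l,X\rangle)$ whose mutual asymptotic independence must itself be established before Lemma \ref{lemme42} applies.
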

Suivant Skriganov, il nous a semblé judicieux pour étudier cette erreur $\frac{\mathcal{R}}{\log(t)}$, et cela a été fructueux, de s'intéresser à l'étude de la distribution des quantités : $ (\lVert\delta_{t} L \rVert^{-2})_{t \in [0,T-1]} $ quand $T \rightarrow \infty$, 
où on a posé $\delta_{t} = \begin{pmatrix} e^{t} & 0 \\ 0 & e^{-t} \end{pmatrix}$ et où $\lVert L \rVert$, pour tout réseau $L$, est défini par : $$\lVert L \rVert=\inf \{ \lVert l \rVert | l \in L-\{0\} \}$$ avec $\lVert X \rVert$ qui désigne la norme euclidienne habituelle de $X \in \mathbb{R}^{2}$.  \\
Plus précisément, lorsque l'on commence à étudier la convergence asymptotique de $\frac{\mathcal{R}}{\log(t)}$, on voit que l'on est ramené à l'étude d'une somme finie sur $l \in L$ dont les termes sont des fractions. Heuristiquement d'ailleurs le numérateur se comporte comme une fonction qui oscille de plus en plus vite dans un segment centré en $0$ tandis que le dénominateur comporte un terme de la forme $\textit{Num}(l) \log(t) = l_{1} l_{2} \log(t)$. Les termes ne vont donc pas être négligeables seulement lorsque $\textit{Num}(l)$ sera petit. Or, dans ce cas-là, $\textit{Num}(l)$ peut être vu comme un $\lVert \delta_{t} L \rVert^{2}$. \\
Ainsi, un bon indicateur pour savoir si $\frac{\mathcal{R}}{\log(t)}$ converge en loi est de savoir si : $$S(\omega,L,T) = \frac{1}{T} \sum_{ t \in [0,T-1]} \frac{\theta_{t}(\omega)}{\lVert \delta_{t} L \rVert^{2}} $$ converge en loi quand $T \rightarrow \infty$ ($T$ joue ici le rôle de $\log(t)$) avec $(\omega,L) \in \Omega \times \mathscr{S}_{2}$ où $(\Omega,\mathbb{P}_{1})$ est un espace probabilisé et les $\theta_{t}$ sont des variables aléatoires réelles à support compact, symétriques, identiquement distribuées, indépendantes entre elles.  \\
On va ainsi montrer d'abord le théorème suivant avant de montrer le théorème $\ref{thm100}$ : 
\begin{theorem}
\label{thm21}
Lorsque $L$ est distribué selon la loi de probabilités $\mu_{2}$ et $\omega$ selon $\mathbb{P}_{1}$,
$$S(\omega,L,T) $$ converge, quand $T \rightarrow \infty$, vers une loi de Cauchy centrée.
\end{theorem}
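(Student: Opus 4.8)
\emph{Le plan est de} passer par la fonction caractéristique de $S(\omega,L,T)$ et de montrer qu'elle converge vers $\xi\mapsto e^{-c'|\xi|}$, fonction caractéristique d'une loi de Cauchy centrée. Je commencerais par conditionner par $L$. Les $\theta_t$ étant indépendantes, symétriques, à support compact, de fonction caractéristique commune $\phi$ (réelle et paire), on obtient
$$\mathbb{E}\left[e^{i\xi S}\,\big|\,L\right] = \prod_{t=0}^{T-1}\phi\!\left(\frac{\xi\,Y_t}{T}\right), \qquad Y_t := \lVert\delta_t L\rVert^{-2}.$$
Tout revient alors à établir que $\mathbb{E}_L\big[\prod_{t=0}^{T-1}\phi(\xi Y_t/T)\big]\to e^{-c'|\xi|}$ quand $T\to\infty$, pour $L$ distribué selon $\mu_2$. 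On notera que le facteur $1/T$ est la renormalisation correcte d'un régime $1$-stable, ce qui est cohérent avec une loi limite de Cauchy.

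Deux échelles de valeurs de $Y_t$ gouvernent cette limite. D'abord, la queue de $Y$ sous $\mu_2$ se calcule par la formule de Siegel : pour $r$ petit, le nombre de vecteurs non nuls de $L$ de norme $<r$ vaut presque sûrement $0$ ou $2$, d'où $\mu_2(\lVert L\rVert<r)\sim\frac{\pi}{2}r^2$ et donc
$$\mu_2\!\left(\lVert L\rVert^{-2}>y\right)=\mu_2\!\left(\lVert L\rVert<y^{-1/2}\right)\underset{y\to\infty}{\sim}\frac{c}{y},\qquad c=\frac{\pi}{2}.$$
L'indice de queue vaut $1$, régime précisément $1$-stable. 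Ensuite, je montrerais que le processus ponctuel des grandes valeurs renormalisées $N_T=\sum_{t=0}^{T-1}\delta_{Y_t/T}$ converge en loi, sous $\mu_2$, vers un processus de Poisson $\Pi$ sur $(0,\infty)$ d'intensité $\rho(dy)=c\,y^{-2}\,dy$ (l'espérance du nombre de points au-dessus du niveau $a$ étant $T\,\mu_2(Y>aT)\sim c/a=\rho((a,\infty))$).

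Ces deux ingrédients donnent la limite, après troncature à un niveau $\delta>0$. Pour les grandes valeurs ($Y_t>\delta T$), la convergence vague de $N_T$ vers $\Pi$ et la formule exponentielle de Poisson donnent
$$\mathbb{E}\!\left[\prod_{Y_t>\delta T}\phi\!\left(\frac{\xi Y_t}{T}\right)\right]\longrightarrow\exp\!\left(\int_\delta^\infty(\phi(\xi y)-1)\,\rho(dy)\right).$$
Pour le cœur ($Y_t\le\delta T$), chaque facteur est proche de $1$, et l'équidistribution de l'orbite géodésique discrète (théorème ergodique pour $\delta_1$ sur $(\mathscr{S}_2,\mu_2)$) donne $\sum_{Y_t\le\delta T}(\phi(\xi Y_t/T)-1)\sim T\,\mathbb{E}_{\mu_2}[(\phi(\xi Y/T)-1)\mathds{1}_{Y\le\delta T}]$, quantité qui, via le changement de variable $u=y/T$ et l'asymptotique de queue ci-dessus, tend vers $\int_0^\delta(\phi(\xi y)-1)\rho(dy)$. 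En recollant les deux régimes puis en faisant $\delta\to0$, on obtient
$$\mathbb{E}_L\!\left[\prod_{t=0}^{T-1}\phi\!\left(\frac{\xi Y_t}{T}\right)\right]\longrightarrow\exp\!\left(\int_0^\infty(\phi(\xi y)-1)\,\frac{c}{y^2}\,dy\right)=e^{-c'|\xi|},$$
où, en écrivant $\phi(\xi y)-1=\mathbb{E}[\cos(\xi y\theta)-1]$ et en utilisant $\int_0^\infty\frac{1-\cos(ay)}{y^2}dy=\frac{\pi|a|}{2}$, on trouve $c'=\frac{\pi}{2}c\,\mathbb{E}|\theta|$, fini et strictement positif car $\theta$ est à support compact et non identiquement nulle. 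C'est bien la fonction caractéristique d'une loi de Cauchy centrée.

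Le point difficile est la convergence vers un processus de Poisson des excursions dans la pointe (\emph{cusp}), et c'est là qu'interviennent les outils de Dolgopyat et Fayad. Géométriquement, une grande valeur de $Y_t$ provient d'un vecteur primitif $l=(l_1,l_2)\in L$ tel que $\lVert\delta_t l\rVert^2=e^{2t}l_1^2+e^{-2t}l_2^2$ soit petit ; ce minimum, égal à $2|l_1 l_2|$, est atteint au temps $t^*=\frac{1}{2}\log|l_2/l_1|$, de sorte que chaque vecteur $l$ d'aire $|l_1 l_2|$ petite produit une unique excursion, de hauteur de l'ordre de $1/(2|l_1 l_2|)$, localisée près de $t^*$. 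Il faudra alors (i) montrer l'indépendance asymptotique des excursions à des temps bien séparés, ce que fournit le mélange exponentiel du flot géodésique sur $\mathscr{S}_2$ ; (ii) exclure l'agglomération, c'est-à-dire que plusieurs grandes valeurs se produisent à des temps voisins, le flot chassant le réseau hors de la pointe après chaque excursion ; et (iii) contrôler la discrétisation en temps, puisqu'on échantillonne aux entiers et non au temps continu $t^*$ : la hauteur réalisée est celle au plus proche entier, atténuée par un facteur dépendant de la partie fractionnaire de $t^*$, « phase » qui s'équidistribue par la dynamique et dont le moyennage est déjà encodé dans l'asymptotique statique $\mu_2(Y>y)\sim c/y$. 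La maîtrise simultanée de (i)--(iii), via des estimées effectives d'équidistribution, constitue l'essentiel de la difficulté.
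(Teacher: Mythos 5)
Votre stratégie générale (queue en $c/y$ d'indice $1$, processus ponctuel des grandes valeurs, passage par la fonction caractéristique) recoupe la trame du papier, mais elle bute sur une lacune véritable au pas central : le processus $N_T=\sum_t \delta_{Y_t/T}$ de \emph{toutes} les valeurs ne converge pas vers un processus de Poisson d'intensité $c\,y^{-2}dy$. Chaque excursion dans la pointe produit un amas \emph{rigide} de grandes valeurs : si le minimum local de la norme est atteint en $i$ avec $Y_i/T=u$, alors $Y_{i+k}/T\approx u/r_k(\alpha)$ où $r_k(\alpha)=\cos^{2}(\alpha)e^{2k}+\sin^{2}(\alpha)e^{-2k}$ (c'est l'équation (\ref{eq110}) et le lemme \ref{lemme18} du texte). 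Le flot ne chasse le réseau de la pointe qu'à vitesse unité, donc plusieurs dépassements d'un niveau fixé se produisent à des temps voisins avec probabilité minorée : votre point (ii) est exactement le point qui échoue. Ce qui converge vers un Poisson (d'intensité constante $D$), ce sont les seules valeurs aux minima locaux $\{\Xi_i\}$ — c'est la proposition \ref{prop16} — et le processus complet des valeurs est un processus de Poisson \emph{à amas}, dont les corrélations ne sont pas poissonniennes même si son intensité moyenne vaut bien $c\,y^{-2}dy$ (votre vérification au premier moment ne distingue pas les deux).

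Conséquence concrète : la factorisation $\mathbb{E}\bigl[\prod_{Y_t>\delta T}\phi(\xi Y_t/T)\bigr]\to\exp\bigl(\int_\delta^\infty(\phi(\xi y)-1)\rho(dy)\bigr)$ est fausse, et votre constante $c'=\frac{\pi}{2}c\,\mathbb{E}|\theta|$ aussi. Le calcul correct, en regroupant chaque excursion, donne pour exposant $-\frac{\pi}{2}D\,\mathbb{E}|\Gamma|\,|\xi|$ avec $\Gamma=\sum_{k\in\mathbb{Z}}\theta_k/r_k(\alpha)$ — précisément la marque composée $\Gamma_i$ de l'équation (\ref{eq107}) du papier — alors que votre formule revient à remplacer $\mathbb{E}\bigl|\sum_k\theta_k/r_k\bigr|$ par $\sum_k\mathbb{E}|\theta_k|/r_k$ (puisque $c=D\,\mathbb{E}_\alpha\sum_k r_k(\alpha)^{-1}$), et l'inégalité triangulaire est ici stricte par symétrie et indépendance des $\theta_k$. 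La loi limite reste une Cauchy centrée, donc l'énoncé survit, mais la preuve exige le regroupement par blocs autour des minima locaux (propositions \ref{prop14} et \ref{prop15} : les $\Gamma_i$ sont asymptotiquement i.i.d., symétriques, indépendants des $\Xi_i$, via l'indépendance angle/norme du lemme \ref{lemme51}) avant d'appliquer la limite de Poisson et le lemme \ref{lemme41}. Deux points secondaires : votre constante de queue est inexacte — le lemme \ref{lemme1} donne $\mu_2(\lVert L\rVert<r)=\frac{3}{\pi}r^{2}$ et non $\frac{\pi}{2}r^{2}$, il manque le facteur $\zeta(2)^{-1}$ de la formule de Siegel pour les vecteurs \emph{premiers} (l'affirmation « 0 ou 2 vecteurs non nuls » est d'ailleurs fausse à cause des multiples du plus court vecteur) ; et l'étape du cœur invoque le théorème ergodique pour des observables dépendant de $T$, ce qui demande un contrôle quantitatif — le papier l'évite en rendant le cœur négligeable par un calcul de variance utilisant la symétrie des $\theta_t$ (proposition \ref{prop9}).
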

Dans la suite, nous présenterons seulement la démonstration du théorème $\ref{thm100}$ et du théorème $\ref{thm21}$ pour $d=2$, la preuve étant plus simple. Pour ce faire, dans la prochaine section, nous exposerons quelques préliminaires de probabilités. Ensuite, nous rappellerons quelques résultats sur l'espace des réseaux. On démontrera ensuite le théorème $\ref{thm21}$ qui est pertinent au vu de ce que nous avons énoncé précédemment et qui nous donnera d'ailleurs une idée de la démarche qu'il faudra suivre pour démontrer le théorème $\ref{thm100}$. Puis nous traiterons un cas simplifié issu de l'étude de $\mathcal{R}$ (mais qui sera toutefois plus compliqué que l'étude de $S(\omega,L,T)$ comme nous le verrons). Nous terminerons enfin l'étude de $\mathcal{R}$ et conclurons en ouvrant sur un autre problème connexe. 

\section{Préliminaires de probabilités}
Les nuages de points qui sont des processus de Poisson joueront un rôle essentiel dans nos études de convergence. Nous allons donc faire quelques rappels de base à ce sujet. Nous allons essentiellement suivre l'exposition faite dans $\cite{bassam}$. \\
Une variable aléatoire $N$ a une distribution de Poisson de paramètre $\lambda > 0$ si pour tout entier $k \geqslant 0$, $P(N=k) = e^{- \lambda } \frac{\lambda^{k}}{k!}$. Cette loi sera noté $\mathcal{P}(\lambda)$. On dispose alors des résultats suivants : \\
\begin{itemize}
\item Si $N_{1}, \dots, N_{m}$ sont des variables aléatoires indépendantes qui suivent des distributions de Poisson de paramètres respectifs $\lambda_{j}$ alors $N = \sum_{j=1}^{m} N_{j}$ admet une distribution de Poisson de paramètre $\sum_{j=1}^{m} \lambda_{j}$. \\
\item Inversement, si on se donne $N$ points distribués selon une distribution de Poisson de paramètre $\lambda$, que l'on colore chaque point indépendamment avec une couleur $j \in \{1, \cdots, m \}$ choisie avec un probabilité $p_{j}$ (et donc $\sum_{j=1}^{m} p_{j} =1$) et que l'on appelle $N_{j}$ le nombre de points de la couleur $j$ obtenus alors les $N_{j}$ sont indépendants et suivent des distributions de Poisson de paramètres respectifs $\lambda_{j} = p_{j} \lambda$.
\\
\end{itemize}
Soit $(\mathbf{X}, \mathbf{m})$ un espace mesuré. On appelle processus de Poisson sur cet espace un nuage de points aléatoire sur $\mathbf{X}$ tel que si $\mathbf{X}_{1}, \cdots, \mathbf{X}_{m}$ sont des ensembles (mesurables) disjoints et si $N_{j}$ est le nombre de poins qui tombent dans $\mathbf{X}_{j}$ alors les $N_{j}$ sont des variables aléatoires qui admettent pour lois respectives $\mathcal{P}(\mathbf{m}(\mathbf{X}_{j}))$. Cette définition est cohérente grâce au premier fait énoncé. Nous noterons $\{ x_{j} \} \sim \mathfrak{P}(\mathbf{X},\mathbf{m})$ pour indiquer que $\{x_{j} \}$ est un processus de Poisson de paramètre $(\mathbf{X},\mathbf{m})$. Si $\mathbf{X} \subset \mathbb{R}^{d}$ et si $\mathbf{m} \ll \lambda_{d}$ (la mesure de Lebesgue sur $\mathbb{R}^{d}$) alors $\mathbf{m}$ admet une densité par rapport à $\lambda_{d}$ et cette densité sera désignée comme étant l'intensité du processus de Poisson. \\
\\
On dispose ainsi du lemme suivant, qui nous sera utile par la suite : 
\begin{lemma}
\label{lemme20}
(a) Si $\{\theta_{j} \} \sim \mathfrak{P}(\mathfrak{X},\mathbf{m})$ et $\{\theta_{j}' \} \sim \mathfrak{P}(\mathfrak{X},\mathbf{m}')$ sont indépendants alors $$\{\theta_{j} \} \cup \{\theta_{j}' \} \sim \mathfrak{P}(\mathfrak{X},\mathbf{m}+\mathbf{m}') \textit{.}$$
(b) Si $\{\theta_{j} \} \sim \mathfrak{P}(\mathfrak{X},\mathbf{m})$ et $f : \mathfrak{X} \longrightarrow \mathfrak{Y}$ est une application mesurable alors 
$$\{ f(\theta_{j}) \} \sim \mathfrak{P}(\mathfrak{Y}, f^{-1} \mathbf{m}) \textit{.} $$
(c) Soient $\mathfrak{X} = \mathfrak{Y} \times Z$, $\mathbf{m} = \nu \times \lambda $ où $\lambda$ est une mesure de probabilité sur $Z$. Alors $ \{( \theta_{j}, \Gamma_{j}) \} \sim \mathfrak{P}(\mathfrak{X}, \mathbf{m})$ si et seulement si $\{ \theta_{j} \} \sim \mathfrak{P}(\mathfrak{Y}, \nu)$ et $ \Gamma_{j}$ sont des variables aléatoires indépendantes de $\{ \theta_{j} \}$ et entre elles et toutes sont distribuées selon $\lambda$. \\
\\
(d) Si en (c) $\mathfrak{Y} = Z = \mathbb{R} $ alors $\tilde{\theta} = \{ \Gamma_{j} \theta_{j} \} $ est un processus de Poisson. Si $ \{ \theta_{j} \}$ admet comme mesure $f(\theta)d \theta$ alors $\tilde{\theta}$ admet comme mesure $\tilde{f}(\theta) d\theta$ où $$\tilde{f}(\theta) = \mathbb{E}_{\Gamma} ( f( \frac{\theta}{\Gamma })\frac{1}{|\Gamma |}) \textit{.} $$
\end{lemma}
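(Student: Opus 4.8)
I would prove the four assertions as the standard superposition, mapping, marking and transformation properties of Poisson processes, treating them in the order (a), (b), then (c), then (d), since (c) carries essentially all the content and (d) is a corollary of (b) and (c). Throughout I work directly from the definition recalled above: a point cloud is a Poisson process of intensity $\mathbf{m}$ exactly when, for every finite family of pairwise disjoint measurable sets, the associated counts are independent and have the Poisson laws $\mathcal{P}$ of the corresponding $\mathbf{m}$-masses.

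For (a), I would fix disjoint measurable $A_1,\dots,A_m$ and let $N_k,N_k'$ be the numbers of points of $\{\theta_j\}$ and $\{\theta_j'\}$ falling in $A_k$. Within each process the vectors $(N_k)_k$ and $(N_k')_k$ have independent Poisson coordinates, and the two processes are independent of one another, so the $2m$ variables are jointly independent; hence the sums $N_k+N_k'$ are independent across $k$, each being the sum of independent $\mathcal{P}(\mathbf{m}(A_k))$ and $\mathcal{P}(\mathbf{m}'(A_k))$ variables, which is $\mathcal{P}((\mathbf{m}+\mathbf{m}')(A_k))$ by the first elementary fact recalled above. For (b), given disjoint measurable $B_1,\dots,B_m\subset\mathfrak{Y}$, the count of $\{f(\theta_j)\}$ in $B_k$ equals the count of $\{\theta_j\}$ in $f^{-1}(B_k)$; the preimages are disjoint and measurable, so these counts are independent $\mathcal{P}(\mathbf{m}(f^{-1}(B_k)))=\mathcal{P}((f^{-1}\mathbf{m})(B_k))$. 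Both verifications are immediate once phrased through the counting definition, and (b) needs no injectivity of $f$, since only the counts intervene.

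The core is the \og si \fg{} direction of (c). Here I would condition on the base configuration $\{\theta_j\}\sim\mathfrak{P}(\mathfrak{Y},\nu)$: given it, the marks $\Gamma_j$ are i.i.d. of law $\lambda$, so for disjoint $R_1,\dots,R_m\subset\mathfrak{Y}\times Z$ and $s_k\in[0,1]$ the conditional joint generating function of the counts $\tilde N_k=\#\{j:(\theta_j,\Gamma_j)\in R_k\}$ factorises over the points as $\prod_j\big(1-\sum_k(1-s_k)\lambda(R_k^{(y)})\big)\big|_{y=\theta_j}$, where $R_k^{(y)}$ is the $y$-section of $R_k$ and disjointness ensures each point lands in at most one $R_k$ (so the bracket lies in $[0,1]$). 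Taking expectation over the base process and using the generating functional of a Poisson process turns this product into $\exp\big(-\int\sum_k(1-s_k)\lambda(R_k^{(y)})\,\nu(dy)\big)$; by Tonelli this equals $\prod_k\exp\big(-(1-s_k)(\nu\times\lambda)(R_k)\big)$, the joint generating function of independent $\mathcal{P}((\nu\times\lambda)(R_k))$ variables. This is exactly the defining property, and it holds for arbitrary disjoint sets, not merely rectangles. For the \og seulement si \fg{} direction I would first project along $\mathfrak{Y}\times Z\to\mathfrak{Y}$ via (b), whose pushforward of $\nu\times\lambda$ is $\nu$ because $\lambda$ is a probability measure, giving $\{\theta_j\}\sim\mathfrak{P}(\mathfrak{Y},\nu)$; then, taking disjoint base sets $A_k$ and a partition $(C_i)$ of $Z$, the counts in $A_k\times C_i$ are independent Poisson, and conditioning on the total count in each $A_k$ recovers, via the coloring fact recalled above, that the marks are i.i.d. $\lambda$ and independent of the base.

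Finally (d) follows as a corollary: apply (c) with $\mathfrak{Y}=Z=\mathbb{R}$ to see that $\{(\theta_j,\Gamma_j)\}$ is Poisson of intensity $f(\theta)\,d\theta\,\lambda(d\gamma)$, then apply the mapping theorem (b) to $\Psi(\theta,\gamma)=\gamma\theta$. The density of $\Psi_*\big(f(\theta)\,d\theta\,\lambda(d\gamma)\big)$ is computed by the change of variable $x=\gamma\theta$ at fixed $\gamma\neq 0$, giving $\tilde f(x)=\int|\gamma|^{-1}f(x/\gamma)\,\lambda(d\gamma)=\mathbb{E}_\Gamma\big(f(x/\Gamma)/|\Gamma|\big)$, as claimed. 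I expect the only genuine obstacle to be the generating-functional computation in (c): establishing the Poisson generating functional from the bare counting definition (first for simple integrands, by the Poisson law $\mathbb{E}[s^N]=e^{-\nu(D)(1-s)}$ on disjoint level sets, then for general measurable $g\in[0,1]$ by monotone approximation) and justifying that the finite-dimensional counting laws so obtained indeed characterise the process is where the argument must be made careful; the rest is bookkeeping with the two elementary facts and Tonelli.
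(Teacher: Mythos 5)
Votre preuve est correcte, mais il faut d'abord noter que le papier ne démontre pas ce lemme : il le rappelle comme un fait standard sur les processus de Poisson, en suivant l'exposition de \cite{bassam}, et passe directement aux lemmes suivants (dont les preuves sont elles aussi renvoyées à la littérature, ici \cite{dolgoquenched}). Votre proposition comble donc ce que le papier laisse implicite, et elle le fait par la voie classique : superposition et \emph{mapping} via la définition par comptages pour (a) et (b), théorème de marquage via la fonctionnelle génératrice pour (c), puis (d) comme corollaire de (c) et (b) avec le changement de variable $x=\gamma\theta$ donnant bien $\tilde f(x)=\mathbb{E}_{\Gamma}\bigl(f(x/\Gamma)\,|\Gamma|^{-1}\bigr)$. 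Le calcul central de (c) est juste : pour un point de base $y$, la disjonction des $R_k$ donne la contribution $1-\sum_k(1-s_k)\lambda(R_k^{(y)})$, et la fonctionnelle génératrice du processus de base transforme le produit en $\exp\bigl(-\int\sum_k(1-s_k)\lambda(R_k^{(y)})\,\nu(dy)\bigr)$, qui se factorise par Tonelli ; vous identifiez d'ailleurs correctement le seul point demandant du soin, à savoir l'établissement de la fonctionnelle génératrice à partir de la seule définition par comptages. Deux réserves mineures, sans conséquence pour l'usage qui en est fait dans le papier : dans le sens \og seulement si \fg{} de (c), le passage des lois fini-dimensionnelles des comptages à l'énoncé \og les $\Gamma_j$ sont i.i.d.\ de loi $\lambda$ et indépendantes de $\{\theta_j\}$ \fg{} requiert en toute rigueur un argument d'étiquetage mesurable des points (votre conditionnement par coloriage le suggère sans le détailler) ; et dans (d), la formule de densité suppose implicitement $\lambda(\{0\})=0$ et l'intégrabilité locale de $\tilde f$, hypothèses satisfaites dans les applications du papier (les $\Gamma$ y sont symétriques, à support compact, avec $\mathbb{E}(|\Gamma|)<\infty$).
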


Un autre lemme qui nous sera utile est le suivant : 
\begin{lemma}
\label{lemme21}
Si $\{ \Xi_{i} \}$ suit un processus de Poisson sur $\mathbb{R}^{+}$ d'intensité constante $C > 0$ et si $\{ \Gamma_{i} \}$ sont des variables aléatoires réelles indépendantes (entre elles et avec les $\Xi_{i}$) identiquement distribuées, de lois de probabilité symétriques à support compact alors 
$$ \lim_{\epsilon  \rightarrow 0 } \frac{1}{\rho} \sum_{\Xi_{i} < \frac{1}{\epsilon}} \frac{\Gamma_{i}}{\Xi_{i}} $$ est distribuée selon la loi de Cauchy standard $\mathcal{C}(0,1)$ où $ \rho = \frac{C}{2} \mathbb{E}(|\Gamma|) \pi$.
\end{lemma}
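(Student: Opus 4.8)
The plan is to prove convergence in law by computing characteristic functions and invoking Lévy's continuity theorem, the target being the characteristic function $s \mapsto e^{-|s|}$ of the standard Cauchy law $\mathcal{C}(0,1)$. First I would note that for each fixed $\epsilon > 0$ the sum $S_\epsilon := \sum_{\Xi_i < 1/\epsilon} \Gamma_i / \Xi_i$ is almost surely a finite sum: the number of indices with $\Xi_i < 1/\epsilon$ is a Poisson variable of finite mean $C/\epsilon$, and the $\Gamma_i$ are bounded. Hence $S_\epsilon / \rho$ is a genuine real random variable, and it suffices to show that $\mathbb{E}[e^{is S_\epsilon/\rho}] \to e^{-|s|}$ as $\epsilon \to 0$ for every $s \in \mathbb{R}$.

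Next I would realize the relevant point cloud as a Poisson process on $\mathbb{R}$ using the calculus of Lemma \ref{lemme20}. By the marking property (Lemma \ref{lemme20}(c)) the pairs $(\Xi_i, \Gamma_i)$ form a Poisson process on $\mathbb{R}^+ \times \mathbb{R}$ with intensity $C\, d\xi \otimes d\nu(\gamma)$, where $\nu$ is the common law of the $\Gamma_i$; restricting to $\{\xi < 1/\epsilon\}$ and pushing forward by $(\xi,\gamma) \mapsto \gamma/\xi$ (Lemma \ref{lemme20}(b)) turns $\{\Gamma_i/\Xi_i : \Xi_i < 1/\epsilon\}$ into a Poisson process on $\mathbb{R}$ with an intensity measure $\Lambda_\epsilon$ of finite total mass $C/\epsilon$. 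Since $S_\epsilon$ is thus a finite sum over a finite Poisson process, the exponential (Campbell) formula gives
$$\log \mathbb{E}\bigl[e^{is S_\epsilon}\bigr] = \int_{\mathbb{R}} \bigl(e^{isy} - 1\bigr)\, d\Lambda_\epsilon(y) = C \int_0^{1/\epsilon}\!\! \int_{\mathbb{R}} \bigl(e^{is\gamma/\xi} - 1\bigr)\, d\nu(\gamma)\, d\xi.$$
Here I would use crucially the symmetry of $\nu$: after the change of variable $v = 1/\xi$ and Fubini (licit since the integrand is bounded by $2/v^2$ on $[\epsilon,\infty)$), the inner integral against $d\nu$ becomes real, giving
$$\log \mathbb{E}\bigl[e^{is S_\epsilon}\bigr] = C \int_{\epsilon}^{\infty} \frac{\mathbb{E}\bigl[\cos(s\Gamma v)\bigr] - 1}{v^2}\, dv.$$

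Finally I would let $\epsilon \to 0$. The integrand is $O(v^{-2})$ as $v \to \infty$ and, by the second-order Taylor expansion of cosine together with $\mathbb{E}[\Gamma^2] < \infty$ (all moments being finite since $\Gamma$ has compact support), it stays bounded as $v \to 0^+$; dominated convergence then yields, after one more Fubini and the classical integral $\int_0^\infty \frac{1 - \cos(av)}{v^2}\, dv = \frac{\pi}{2}|a|$,
$$\lim_{\epsilon \to 0} \log \mathbb{E}\bigl[e^{is S_\epsilon}\bigr] = C\, \mathbb{E}\!\left[\int_0^\infty \frac{\cos(s\Gamma v) - 1}{v^2}\, dv\right] = -\frac{\pi C}{2}\, \mathbb{E}\bigl[|\Gamma|\bigr]\, |s| = -\rho\, |s|.$$
Thus $\mathbb{E}[e^{is S_\epsilon}] \to e^{-\rho|s|}$, hence $\mathbb{E}[e^{is S_\epsilon/\rho}] \to e^{-|s|}$, and Lévy's continuity theorem delivers the convergence in law of $S_\epsilon/\rho$ towards $\mathcal{C}(0,1)$.

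The main point to get right is the role of the symmetry of the $\Gamma_i$. For fixed $\epsilon$ everything is elementary, since $\Lambda_\epsilon$ has finite total mass and $S_\epsilon$ is an honest finite sum. The delicate feature is the passage $\epsilon \to 0$: had $\nu$ not been symmetric, the imaginary part $C\int_\epsilon^\infty \mathbb{E}[\sin(s\Gamma v)]\, v^{-2}\, dv$ would contribute a term behaving like $s\,\mathbb{E}[\Gamma]\log(1/\epsilon)$, which diverges. Symmetry is precisely what cancels this would-be divergent location term and leaves the convergent even integrand, guaranteeing both that the limit exists and that it is a \emph{centered} Cauchy law. The remaining care is the uniform domination justifying the $\epsilon \to 0$ interchange and the evaluation of the single cosine integral above; everything else is the bookkeeping of the Poisson calculus from Lemma \ref{lemme20}.
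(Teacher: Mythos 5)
Votre preuve est correcte et suit essentiellement l'approche sur laquelle le papier s'appuie : le papier ne démontre pas le lemme \ref{lemme21} lui-même mais renvoie à \cite{dolgoquenched}, où l'argument est précisément celui que vous donnez — marquage du processus de Poisson via le lemme \ref{lemme20}(c), formule exponentielle de Campbell appliquée à la restriction finie $\{\Xi_i < \frac{1}{\epsilon}\}$, symétrie de la loi de $\Gamma$ pour annuler le terme de localisation divergent en $\log(1/\epsilon)$, puis passage à la limite $\epsilon \to 0$ par domination pour obtenir l'exposant $-\rho|s|$. Seule nuance : le $\lim_{\epsilon \to 0}$ de l'énoncé peut se lire comme une limite presque sûre, alors que vous établissez la convergence en loi ; c'est ce sens qui est utilisé ensuite (lemmes \ref{lemme41} et \ref{lemme42}), et votre calcul de fonction caractéristique identifie de toute façon la loi de la limite presque sûre, laquelle existe par un argument de trois séries conditionnellement à $\{\Xi_i\}$, grâce encore à la symétrie.
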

Une variante de celui-ci, qui nous sera aussi utile est la suivante : 
\begin{lemma}
\label{lemme22}
Si $\{ \Xi_{i} \}$ suit un processus de Poisson sur $\mathbb{R}$ d'intensité constante $C > 0$ et si $\{ \Gamma_{i} \}$ sont des variables aléatoires réelles indépendantes (entre elles et avec les $\Xi_{i}$) identiquement distribuées, de lois de probabilité symétriques à support compact alors 
$$ \lim_{\epsilon  \rightarrow 0 } \frac{1}{\rho} \sum_{|\Xi_{i}| < \frac{1}{\epsilon}} \frac{\Gamma_{i}}{\Xi_{i}} $$ est distribuée selon la loi de Cauchy standard $\mathcal{C}(0,1)$ où $ \rho = C \mathbb{E}(|\Gamma|) \pi$.
\end{lemma}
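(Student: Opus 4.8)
L'idée est de se ramener au lemme \ref{lemme21} en repliant le processus de Poisson sur $\mathbb{R}$ sur la demi-droite $\mathbb{R}^{+}$, la symétrie des $\Gamma_{i}$ servant à absorber le changement de signe. Je commencerais par voir le nuage marqué $\{(\Xi_{i},\Gamma_{i})\}$ comme un processus de Poisson sur $\mathbb{R}\times\mathbb{R}$ : d'après le lemme \ref{lemme20} (c), puisque les $\Gamma_{i}$ sont i.i.d. d'une loi $\mu_{\Gamma}$ symétrique à support compact et indépendants du processus $\{\Xi_{i}\}$ d'intensité $C$, le nuage $\{(\Xi_{i},\Gamma_{i})\}$ est un processus de Poisson sur $\mathbb{R}\times\mathbb{R}$ de mesure $C\,dx\times d\mu_{\Gamma}$ (les points avec $\Xi_{i}=0$ étant p.s. absents car $\{0\}$ est de mesure nulle).

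J'appliquerais ensuite la propriété de transport, lemme \ref{lemme20} (b), à l'application mesurable
$$\Phi : (\mathbb{R}-\{0\})\times\mathbb{R}\longrightarrow \mathbb{R}^{+}\times\mathbb{R},\qquad \Phi(x,g)=(|x|,\,\mathrm{sgn}(x)\,g).$$
Le point clé est le calcul de la mesure image $\Phi^{-1}(C\,dx\times d\mu_{\Gamma})$ (au sens de l'image directe). Sur la partie $x>0$, $\Phi$ est l'identité et contribue $C\,dy\times d\mu_{\Gamma}$ ; sur la partie $x<0$, le changement $(x,g)\mapsto(-x,-g)$ redonne, grâce à l'invariance de la mesure de Lebesgue par $x\mapsto -x$ et surtout grâce à la symétrie de $\mu_{\Gamma}$ (invariance par $g\mapsto -g$), encore $C\,dy\times d\mu_{\Gamma}$. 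Par superposition, la mesure image vaut donc $2C\,dy\times d\mu_{\Gamma}$ sur $\mathbb{R}^{+}\times\mathbb{R}$. C'est précisément la symétrie de $\Gamma$ qui garantit que cette mesure image reste un produit, et le facteur $2$ provient de la réunion des deux demi-droites.

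En réappliquant le lemme \ref{lemme20} (c) dans l'autre sens, $\Phi(\{(\Xi_{i},\Gamma_{i})\})=\{(\Xi_{i}',\Gamma_{i}')\}$ est alors un processus de Poisson marqué sur $\mathbb{R}^{+}$, dont la base $\{\Xi_{i}'\}$ est un processus de Poisson sur $\mathbb{R}^{+}$ d'intensité $2C$ et dont les marques $\Gamma_{i}'$ sont i.i.d. de loi $\mu_{\Gamma}$, indépendantes de la base. Or, pour chaque point, $\frac{\Gamma_{i}}{\Xi_{i}}=\frac{\mathrm{sgn}(\Xi_{i})\,\Gamma_{i}}{|\Xi_{i}|}=\frac{\Gamma_{i}'}{\Xi_{i}'}$ et $|\Xi_{i}|<\frac{1}{\epsilon}\Longleftrightarrow \Xi_{i}'<\frac{1}{\epsilon}$, de sorte que l'on a l'égalité de variables aléatoires
$$\sum_{|\Xi_{i}|<\frac{1}{\epsilon}}\frac{\Gamma_{i}}{\Xi_{i}}=\sum_{\Xi_{i}'<\frac{1}{\epsilon}}\frac{\Gamma_{i}'}{\Xi_{i}'}.$$
Il ne reste qu'à appliquer le lemme \ref{lemme21} au processus $\{\Xi_{i}'\}$ d'intensité $2C$ et aux marques $\{\Gamma_{i}'\}$ : la constante de normalisation qui y figure vaut $\frac{2C}{2}\mathbb{E}(|\Gamma|)\pi=C\,\mathbb{E}(|\Gamma|)\pi=\rho$, c'est-à-dire exactement la constante annoncée ici. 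On en déduit que $\frac{1}{\rho}\sum_{|\Xi_{i}|<1/\epsilon}\frac{\Gamma_{i}}{\Xi_{i}}$ converge en loi vers $\mathcal{C}(0,1)$.

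Plutôt qu'un véritable obstacle, le principal point de vigilance est la vérification soigneuse de la mesure image sous $\Phi$ et le fait que la symétrie de $\mu_{\Gamma}$ y est \emph{indispensable} : sans elle, la loi des marques dépendrait du signe du point, la structure produit serait perdue, et le lemme \ref{lemme21} ne s'appliquerait plus directement. À titre de contrôle, une approche alternative consisterait à séparer les points positifs et négatifs en deux processus de Poisson indépendants d'intensité $C$ sur $\mathbb{R}^{+}$, à appliquer le lemme \ref{lemme21} à chacun (ce qui fournit deux lois $\mathcal{C}(0,1)$ indépendantes après normalisation par $\frac{C}{2}\mathbb{E}(|\Gamma|)\pi$), puis à utiliser que la somme de deux lois de Cauchy standard indépendantes est une $\mathcal{C}(0,2)$ ; le facteur $2$ supplémentaire dans $\rho$ ramène alors bien la limite à $\mathcal{C}(0,1)$, en accord avec le calcul précédent.
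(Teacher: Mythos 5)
Votre preuve est correcte. Notez d'abord que le papier ne démontre pas ce lemme : il renvoie, pour le lemme \ref{lemme21} comme pour le lemme \ref{lemme22}, à \cite{dolgoquenched}, où l'argument standard est un calcul direct de la fonction caractéristique de la somme tronquée via la formule exponentielle des processus de Poisson, donnant $\exp\bigl(C\int_{|x|<1/\epsilon}\mathbb{E}(e^{i\lambda \Gamma/x}-1)\,dx\bigr)\to e^{-\rho|\lambda|}$. Votre démarche est donc une route réellement différente : plutôt qu'un calcul direct, vous déduisez la version bilatérale de la version unilatérale par un argument de repliement, et chaque étape est valable. Le calcul de la mesure image sous $\Phi(x,g)=(|x|,\mathrm{sgn}(x)g)$ est juste : pour $A\times B\subset\mathbb{R}^{+}\times\mathbb{R}$, la préimage est $(A\times B)\cup((-A)\times(-B))$, de mesure $C\,\mathrm{Leb}(A)\bigl(\mu_{\Gamma}(B)+\mu_{\Gamma}(-B)\bigr)=2C\,\mathrm{Leb}(A)\mu_{\Gamma}(B)$, et vous identifiez correctement que la symétrie de $\mu_{\Gamma}$ est le point indispensable pour conserver la structure produit exigée par le lemme \ref{lemme20} (c). L'identité ponctuelle $\Gamma_{i}/\Xi_{i}=\Gamma_{i}'/\Xi_{i}'$ et l'équivalence des conditions de troncature rendent l'égalité des sommes exacte (pas seulement en loi), et la constante se recolle bien : le lemme \ref{lemme21} appliqué à l'intensité $2C$ donne $\frac{2C}{2}\mathbb{E}(|\Gamma|)\pi=\rho$. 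Votre vérification alternative par scission en deux processus indépendants d'intensité $C$ est également valide (les restrictions d'un processus de Poisson à des ensembles disjoints sont indépendantes, l'indépendance passe à la limite, et la convolution de deux $\mathcal{C}(0,1)$ indépendantes est une $\mathcal{C}(0,2)$), et elle explique de façon transparente le facteur $2$ entre les deux constantes $\rho$ des lemmes \ref{lemme21} et \ref{lemme22}. Ce que votre approche apporte par rapport au renvoi du papier : une preuve auto-contenue modulo le lemme \ref{lemme21}, qui isole le rôle exact de la symétrie des $\Gamma_{i}$; ce que le calcul direct de \cite{dolgoquenched} apporte : il traite les deux lemmes d'un coup et ne requiert pas la machinerie de transport des processus marqués.
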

Une preuve de ces deux derniers lemmes peut être trouvée dans $\cite{dolgoquenched}$. On déduit de la définition de la convergence en loi les deux lemmes suivants : 
\begin{lemma}
\label{lemme41}
Si $\{ \Xi_{i}^{T} \}$ tend, quand $T \rightarrow \infty$, vers un processus de Poisson sur $\mathbb{R}^{+}$ d'intensité constante $C > 0$ et si $\{ \Gamma_{i}^{T} \}$ tendent vers des variables aléatoires réelles indépendantes entre elles, identiquement distribuées, de lois de probabilité symétriques à support compact et indépendantes du processus limite de $\{ \Xi_{i}^{T} \}$ alors 
$$ \lim_{\epsilon  \rightarrow 0 } \lim_{T \rightarrow \infty} \frac{1}{\rho} \sum_{\Xi_{i,T} < \frac{1}{\epsilon}} \frac{\Gamma_{i,T}}{\Xi_{i,T}} $$ est distribuée selon la loi de Cauchy standard $\mathcal{C}(0,1)$ où $ \rho = \frac{C}{2} \mathbb{E}(|\Gamma|) \pi$.
\end{lemma}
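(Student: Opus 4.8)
L'idée directrice est que ce lemme se déduit du lemme \ref{lemme21} en justifiant l'interversion des deux limites : pour $\epsilon$ fixé, la limite intérieure $T \rightarrow \infty$ remplace les processus approximants par leurs limites, puis la limite extérieure $\epsilon \rightarrow 0$ n'est autre que le contenu du lemme \ref{lemme21}. Je commencerais donc par fixer $\epsilon > 0$ et par considérer la fonctionnelle
$$F_{\epsilon}(\{\Xi_{i}\},\{\Gamma_{i}\}) = \frac{1}{\rho} \sum_{\Xi_{i} < \frac{1}{\epsilon}} \frac{\Gamma_{i}}{\Xi_{i}} \textit{.}$$
Comme la région de troncature $[0,\frac{1}{\epsilon})$ est bornée et qu'un processus de Poisson d'intensité constante sur $\mathbb{R}^{+}$ ne possède presque sûrement qu'un nombre fini de points dans tout intervalle borné, cette somme est presque sûrement finie pour le processus limite. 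De plus, un tel processus ne charge presque sûrement ni le point $0$ ni le point $\frac{1}{\epsilon}$, de sorte que les quantités $\frac{1}{\Xi_{i}}$ sont bien définies et que le bord de la troncature ne pose pas de difficulté.

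Deuxièmement, je montrerais que $F_{\epsilon}(\{\Xi_{i}^{T}\},\{\Gamma_{i}^{T}\})$ converge en loi, quand $T \rightarrow \infty$, vers $F_{\epsilon}(\{\Xi_{i}\},\{\Gamma_{i}\})$, où $\{\Xi_{i}\}$ désigne le processus de Poisson limite et $\{\Gamma_{i}\}$ les marques limites, indépendantes de $\{\Xi_{i}\}$. Interprétant l'hypothèse comme une convergence en loi jointe du processus marqué $(\Xi_{i}^{T},\Gamma_{i}^{T})$, j'invoquerais le théorème de représentation de Skorokhod afin de réaliser, sur un même espace probabilisé, la convergence presque sûre au sens vague de $\{\Xi_{i}^{T}\}$ vers $\{\Xi_{i}\}$ ainsi que celle des marques. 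Sur l'évènement de probabilité $1$ où le processus limite évite $0$ et $\frac{1}{\epsilon}$, les points de $\{\Xi_{i}\}$ tombant dans $[0,\frac{1}{\epsilon})$ sont en nombre fini, le plus petit étant strictement positif ; la convergence vague force alors les configurations $\{\Xi_{i}^{T}\}$ à présenter, pour $T$ grand, le même nombre de points dans cette région, convergeant un à un vers ceux de la limite, les marques associées convergeant également. La somme étant finie et la fonction $x \mapsto \frac{1}{x}$ continue loin de $0$, on obtient la convergence presque sûre de $F_{\epsilon}(\{\Xi_{i}^{T}\},\{\Gamma_{i}^{T}\})$ vers $F_{\epsilon}(\{\Xi_{i}\},\{\Gamma_{i}\})$, d'où la convergence en loi souhaitée. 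C'est précisément cette continuité presque sûre de $F_{\epsilon}$ qui constitue le point délicat : il faut maîtriser la singularité de $\frac{1}{x}$ au voisinage de $0$ (réglée ici par le fait que la troncature est fixe et que le plus petit point de la limite est strictement positif) et s'assurer du bon appariement des marques avec les points lors du passage à la limite.

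Enfin, une fois cette interversion justifiée, la limite $\lim_{T \rightarrow \infty} \frac{1}{\rho} \sum_{\Xi_{i,T} < \frac{1}{\epsilon}} \frac{\Gamma_{i,T}}{\Xi_{i,T}}$ coïncide en loi avec $F_{\epsilon}(\{\Xi_{i}\},\{\Gamma_{i}\})$, c'est-à-dire avec la somme tronquée associée au processus de Poisson limite et à ses marques. Le lemme \ref{lemme21} s'applique alors directement à cette quantité et donne, quand $\epsilon \rightarrow 0$, la convergence vers la loi de Cauchy standard $\mathcal{C}(0,1)$, avec exactement la constante $\rho = \frac{C}{2}\mathbb{E}(|\Gamma|)\pi$ annoncée, ce qui achève la démonstration.
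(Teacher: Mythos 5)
Votre démonstration est correcte et suit essentiellement la même voie que le papier, qui se contente d'affirmer que le lemme \ref{lemme41} « se déduit de la définition de la convergence en loi » à partir du lemme \ref{lemme21}. Vous ne faites qu'expliciter proprement cette déduction (représentation de Skorokhod, finitude presque sûre du nombre de points dans la zone de troncature, absence presque sûre de points en $0$ et en $\frac{1}{\epsilon}$, continuité de la somme tronquée), ce qui est exactement l'argument sous-entendu par les auteurs.
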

\begin{lemma}
\label{lemme42}
Si $\{ \Xi_{i,T} \}$ tend, quand $T \rightarrow \infty$, vers un processus de Poisson sur $\mathbb{R}$ d'intensité constante $C > 0$ et si $\{ \Gamma_{i}^{T} \}$ tendent vers des variables aléatoires réelles indépendantes entre elles, identiquement distribuées, de lois de probabilité symétriques à support compact et indépendantes du processus limite de $\{ \Xi_{i}^{T} \}$ alors 
$$ \lim_{\epsilon  \rightarrow 0 } \lim_{T \rightarrow \infty} \frac{1}{\rho} \sum_{|\Xi_{i,T}| < \frac{1}{\epsilon}} \frac{\Gamma_{i,T}}{\Xi_{i,T}} $$ est distribuée selon la loi de Cauchy standard $\mathcal{C}(0,1)$ où $ \rho = C \mathbb{E}(|\Gamma|) \pi$.
\end{lemma}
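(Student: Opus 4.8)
Le plan est de se ramener au Lemme \ref{lemme22}, qui traite le cas d'un authentique processus de Poisson sur $\mathbb{R}$, en découplant les deux limites de l'énoncé : on fixe d'abord $\epsilon > 0$ et l'on fait tendre $T \rightarrow \infty$, puis l'on fait tendre $\epsilon \rightarrow 0$. Fixons donc $\epsilon > 0$ et posons
$$S_{\epsilon,T} = \frac{1}{\rho} \sum_{|\Xi_{i,T}| < \frac{1}{\epsilon}} \frac{\Gamma_{i,T}}{\Xi_{i,T}} \textit{.}$$
Cette somme ne fait intervenir que les points tombant dans la fenêtre bornée $(-\frac{1}{\epsilon}, \frac{1}{\epsilon})$, donc presque sûrement un nombre fini de termes ; on la voit comme une fonctionnelle du processus ponctuel marqué $\{(\Xi_{i,T}, \Gamma_{i,T})\}$ restreint à cette fenêtre. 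Par hypothèse, $\{\Xi_{i,T}\}$ converge en loi vers le processus de Poisson limite $\{\Xi_i\}$ d'intensité $C$, tandis que les marques $\{\Gamma_{i,T}\}$ convergent vers des variables i.i.d. symétriques à support compact indépendantes de $\{\Xi_i\}$ ; le processus marqué converge donc en loi vers $\{(\Xi_i, \Gamma_i)\}$, ce qui est cohérent avec le Lemme \ref{lemme20} (c).

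La clé est alors la continuité de la fonctionnelle. L'application $(\{\xi_i\}, \{\gamma_i\}) \mapsto \frac{1}{\rho} \sum_{|\xi_i| < \frac{1}{\epsilon}} \frac{\gamma_i}{\xi_i}$ est continue, pour la topologie vague sur les configurations de la fenêtre, en toute configuration ne possédant de point ni en $0$ ni sur le bord $\{|\xi| = \frac{1}{\epsilon}\}$ : si le point le plus proche de $0$ se trouve à distance $\delta > 0$, alors au voisinage d'une telle configuration le nombre de points de la fenêtre reste constant, leurs positions demeurent minorées en valeur absolue, et $1/\xi$ y dépend continûment des positions. Or l'intensité du processus limite étant absolument continue, presque sûrement aucun de ses points ne tombe en $0$ ni sur le bord, de sorte que l'ensemble des configurations de discontinuité est négligeable sous la loi limite. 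Le théorème de l'application continue (dans sa version autorisant un ensemble négligeable de discontinuités) fournit alors
$$S_{\epsilon,T} \overset{\mathcal{L}}{\underset{T \rightarrow \infty}{\rightarrow}} S_{\epsilon} := \frac{1}{\rho} \sum_{|\Xi_i| < \frac{1}{\epsilon}} \frac{\Gamma_i}{\Xi_i} \textit{.}$$

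Il reste à faire tendre $\epsilon \rightarrow 0$. La variable $S_{\epsilon}$ obtenue est exactement celle du Lemme \ref{lemme22}, bâtie sur un authentique processus de Poisson sur $\mathbb{R}$ d'intensité $C$ et sur des marques i.i.d. symétriques à support compact indépendantes. Ce lemme assure donc que $S_{\epsilon}$ converge en loi, quand $\epsilon \rightarrow 0$, vers $\mathcal{C}(0,1)$, ce qui identifie la limite itérée et donne la conclusion.

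Le principal obstacle est l'étape de continuité : il faut maîtriser la discontinuité provenant de l'explosion de $1/\xi$ au voisinage de $0$. À $\epsilon$ fixé la somme restant finie, la symétrie des marques n'intervient pas encore et il suffit d'exploiter l'absence presque sûre de point en $0$ (et sur le bord de la fenêtre) sous la loi limite ; noter que l'on n'a besoin d'aucune intégrabilité pour appliquer le théorème de l'application continue, seule comptant la négligeabilité du lieu de discontinuité. La symétrie des $\Gamma_i$, leur support compact et la finitude de l'intensité $C$ servent en revanche de façon essentielle, mais uniquement au travers du Lemme \ref{lemme22}, pour garantir l'existence de la limite en $\epsilon$ et son caractère cauchyen.
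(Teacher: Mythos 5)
Votre démonstration est correcte et suit essentiellement la même démarche que l'article : celui-ci se contente de déduire le lemme \ref{lemme42} de la définition de la convergence en loi combinée au lemme \ref{lemme22}, ce que vous rendez précis en découplant les deux limites et en appliquant le théorème de l'application continue (avec l'absence presque sûre de points en $0$ et sur le bord de la fenêtre sous la loi de Poisson limite).
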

Pour pouvoir appliquer ces deux lemmes, il nous faudra donc prouver qu'à l'infini, des nuages de points, adaptées, se comportent comme des processus de Poisson. Pour ce faire, un outil qui nous sera particulièrement utile est le théorème 6.1 démontré dans $\cite{bassam}$ qui est particulièrement adapté quand on a affaire à des flots qui mélangent suffisamment rapidement (vitesse exponentielle, par exemple). Ce sera notre cas comme on le verra un peu plus loin. On rappelle maintenant l'énoncé de ce théorème. \\
Soit $(\Omega,\mathbb{P})$ un espace de probabilité. On désigne par $\mathbb{E}$ l'espérance relativement à $\mathbb{P}$. \\
On se donne $\lambda_{1}(x) = a x$ une forme linéaire non nulle sur $\mathbb{R}$. \\
Soit $(\mathbf{X},\mathbf{m})$ un espace mesurable. Soit $\mathbf{Q}$ une collection dénombrable de partitions finies de $\mathbf{X}$ tel que $\mathbf{Q}$ converge vers la partition ponctuelle. \\
Pour tout $M$, on considère une suite $\{\xi_{t}^{M} \}_{t \in [0,M]}$ de variables aléatoires prenant des valeurs entières positives et une suite $\{ \nu_{t}^{M} \}_{t \in [0,M]}$ de variable aléatoires de $\Omega$ dans $\mathbf{X}$. On pourra imaginer, par exemple, que $\xi_{t}^{M}$ est une fonction qui indique si un flot mélangeant, à l'instant $t$, rentre dans une boule de rayon $\frac{1}{\sqrt{M}}$ (situé à l'infini de l'espace) et $\nu_{t}^{M}$ indique jusqu'où le flot est rentré à cet instant. \\
Pour toute partition $Q=(K_{1},\cdots, K_{P}) \in \mathbf{Q}$, on suppose que $\xi_{t}^{M}$ peut s'écrire $\xi_{t}^{M} = \sum_{p=1}^{P} \xi_{t,p}^{M}$ où $\xi_{t,p}^{M}$ prend des valeurs dans les entiers naturels et sur l'ensemble $\{\xi_{t}^{M}=1 \}$, $\xi_{t,p}^{M}$ vérifie : $$\xi_{t,p}^{M} = \mathbf{1}_{\nu_{t}^{M} \in K_{p}} \textit{.}$$ \\
On pose $\eta_{t}^{M} = \mathbf{1}_{\xi_{t}^{M}=1}$ et $\eta_{t,p}^{M} = \mathbf{1}_{\xi_{t,p}^{M}=\xi_{t}^{M}=1}$.\\
Toutes les variables dépendant de $M$, dans la suite on omettra pour simplifier la lettre $M$. \\
Dans toute cette partie, quand on utilise la notation $Y= \mathcal{O}(X)$ cela signifie que $|Y| \leqslant C |X|$ où $C$ dépend éventuellement de $\mathbf{Q}$ mais pas de $M,t$ ou encore de $\overline{\delta}$ (introduit un peu plus loin). \\
On suppose que pour tout $M$ fixé, une suite de partitions $F_{t}$, $t \in \Pi$ de $(\Omega,\mathbb{P})$ est donnée. Pour $\omega \in \Omega$, on appelle $F_{t}(\omega)$ l'élément de $F_{t}$ qui possède $\omega$. On appelle $\mathcal{F}_{t}$ la $\sigma$-algèbre engendrée par $F_{t}$. \\
On suppose que les hypothèses suivantes sont vérifiées : il existe $R>0$ (qui ne dépend pas de $M$) et un ensemble mesurable $E$ tel que 
$$ \mathbb{P}(E^{c})=\mathcal{O}(M^{-100 }) $$ 
et \\
\begin{itemize}
\item (h1) Pour tout $t \in [0,M]$, $$\mathbb{E}(\xi_{t})= \mathcal{O}(M^{-1}) \textit{;} $$
\item (h2) Pour tout $t \in [0,M]$, $$\mathbb{P}(\xi_{t}>1)= \mathcal{O}(M^{-2}) \textit{;} $$ 
\item (h3) Pour tout $t \neq t' \in [0,M]$, $$\mathbf{P}(\xi_{t} \geqslant 1, \xi_{t'} \geqslant 1)= \mathcal{O}(M^{-2}) \textit{;}$$
\item (h4) Pour tout $t,t' \in [0,M]$, où $\lambda_{1}(t) > \lambda_{1}(t') + R \log(M)$, pour tout $p \in [|1,P|]$ et pour tout $\omega \in E$ : \begin{center}
(h4a) $ \mathbb{E}(\xi_{t} \textit{ } | \textit{} \mathcal{F}_{t'})(\omega) = \frac{\mathbf{c} \mathbf{m} (\mathbf{X})}{M} + \mathcal{O}(M^{-2}) $ 
\end{center}
\begin{center}
(h4b) $ \mathbb{E}(\xi_{t,p} \textit{ } | \textit{} \mathcal{F}_{t'})(\omega) = \frac{\mathbf{c} \mathbf{m} (K_{p})}{M} + \mathcal{O}(M^{-2}) $ 
\end{center}
\begin{center}
(h4c) $ \mathbb{E}(\eta_{t,p} \textit{ } | \textit{} \mathcal{F}_{t'})(\omega) = \frac{\mathbf{c} \mathbf{m} (K_{p})}{M} + \mathcal{O}(M^{-2}) $ 
\end{center}
\item (h5) Pour $t,\overline{t} \in [0,M]$ où $\lambda_{1}(\overline{t}) > \lambda_{1}(t) + R \log(M)$, pour tout $p \in [1,P]$, pour tout $\omega \in E$, $$\xi_{t,p} \textit{ est constant sur } F_{\overline{t}}(\omega)\textit{;} $$
\item (h6) Les algèbres $\{\mathcal{F}_{t} \}$ ont une propriété de type filtration au sens où pour $t, \overline{t} \in [0,M]$, avec $\lambda_{1}(\overline{t}) > \lambda_{1}(t) + R \log(M)$, pour tout $\omega \in E$, 
$$F_{\overline{t}}(\omega) \subset F_{t}(\omega) \textit{.}$$
\end{itemize}
\begin{theorem}[\cite{bassam}]
\label{thm9}
Lorsque $(\lambda_{1},\{ \xi_{t}\},\{ \nu_{t} \})$ vérifient les hypothèses (h1) jusqu'à (h6), le nuage de points $$\{\nu_{t}^{M}, \frac{t}{M} \}_{\xi_{t}^{M}=1 \textit{, }t \in [0,M]} $$ converge lorsque $M \rightarrow \infty$ vers un processus de Poisson d'intensité $\mathbf{c}$ sur $(\mathbf{X} \times J, \mathbf{b} \times Leb) \textit{.}$
\end{theorem}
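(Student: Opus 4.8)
\section*{Proposition de preuve du Théorème \ref{thm9}}

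Le plan est d'établir la convergence du nuage $\Xi^M := \sum_{t\in[0,M],\,\xi_t^M=1}\delta_{(\nu_t^M,\,t/M)}$ vers le processus de Poisson par la méthode des moments factoriels. On rappelle (critère de Kallenberg) qu'une suite de processus ponctuels simples sur un espace localement compact à base dénombrable converge en loi vers un processus de Poisson d'intensité diffuse $\Lambda$ dès que, pour toute famille finie de boîtes disjointes $A_1,\dots,A_n$ — que l'on prend ici de la forme produit $K_p\times I$, avec $K_p$ un atome d'une partition $Q\in\mathbf Q$ et $I$ un sous-intervalle de $J$ —, les moments factoriels joints de $(\Xi^M(A_1),\dots,\Xi^M(A_n))$ convergent vers ceux de variables de Poisson indépendantes, c'est-à-dire vers $\prod_j\big(\Lambda(A_j)\big)^{m_j}$, la loi de Poisson étant déterminée par ses moments. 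Je fixerais donc de telles boîtes et j'estimerais $\mathbb E\big[\prod_j(\Xi^M(A_j))_{(m_j)}\big]$, l'objectif étant de prouver que cette quantité tend vers $\prod_j\big(\mathbf c\,(\mathbf m\times\mathrm{Leb})(A_j)\big)^{m_j}$, ce qui identifie l'intensité limite à $\mathbf c$ relativement à $\mathbf m\times\mathrm{Leb}$.

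Je traiterais d'abord la mesure moyenne. Pour une boîte $A=K_p\times I$, on a exactement $\Xi^M(A)=\sum_{t:\,t/M\in I}\eta_{t,p}^M$, car sur $\{\xi_t=1\}$ on a $\xi_{t,p}=\mathbf 1_{\nu_t\in K_p}$. En intégrant (h4c) sur $E$ et en majorant la contribution de $E^c$ par $\mathbb P(E^c)=\mathcal O(M^{-100})$, on obtient $\mathbb E[\eta_{t,p}]=\mathbf c\,\mathbf m(K_p)/M+\mathcal O(M^{-2})$ pour tout $t$ admettant un instant antérieur bien séparé ; les $\mathcal O(\log M)$ instants restants, près du bord, ne contribuent que pour $\mathcal O(M^{-1}\log M)$ grâce à (h1). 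En sommant sur les $\#\{t:\,t/M\in I\}\sim M\,\mathrm{Leb}(I)$ instants pertinents, il vient $\mathbb E[\Xi^M(A)]\to\mathbf c\,\mathbf m(K_p)\,\mathrm{Leb}(I)$ ; l'apparition de la mesure de Lebesgue dans la variable de temps n'est que l'équidistribution de $\{t/M\}$ dans $J$.

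Le cœur du travail réside dans les moments factoriels d'ordre supérieur, où j'exploiterais la structure de filtration (h5)--(h6). En développant un moment factoriel comme une somme sur des uplets d'instants \emph{distincts} puis en les ordonnant selon $\lambda_1$, disons $\lambda_1(t_1)>\dots>\lambda_1(t_k)$, je scinderais la somme selon que les instants consécutifs sont « bien séparés » ($\lambda_1(t_i)>\lambda_1(t_{i+1})+R\log M$) ou « agglomérés ». Sur un uplet bien séparé, (h5) rend $\eta_{t_2},\dots,\eta_{t_k}$ mesurables par rapport à $\mathcal F_{t_2}$ et (h6) fait de $\{\mathcal F_t\}$ une véritable filtration ; en conditionnant par $\mathcal F_{t_2}$ et en appliquant (h4c), on remplace alors le facteur de tête $\eta_{t_1}\mathbf 1_{\nu_{t_1}\in K_p}=\eta_{t_1,p}$ par sa moyenne conditionnelle $\mathbf c\,\mathbf m(K_p)/M+\mathcal O(M^{-2})$ sur $E$. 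En épluchant les instants l'un après l'autre, les termes principaux reconstituent le produit $\prod_j\big(\mathbf c\,(\mathbf m\times\mathrm{Leb})(A_j)\big)^{m_j}$, tandis que chaque épluchage n'ajoute, après sommation sur l'instant libéré, qu'une erreur additive $\mathcal O(M^{-1})$ ; le nombre d'épluchages étant fini, ces erreurs s'accumulent en un $o(1)$.

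Il reste à écarter les uplets agglomérés. Pour un uplet contenant une paire proche $(t_a,t_b)$, je majorerais l'espérance correspondante en conditionnant d'abord pour éliminer les instants bien séparés (chacun fournissant un facteur $\mathcal O(M^{-1})$ comme ci-dessus), puis en invoquant (h3) pour le bloc proche, $\mathbb P(\xi_{t_a}\ge 1,\xi_{t_b}\ge 1)=\mathcal O(M^{-2})$. Le dénombrement des configurations — $\mathcal O(M\log M)$ choix pour une paire proche contre $\mathcal O(M^{k-2})$ pour les instants séparés restants — donne une contribution totale $\mathcal O(M^{-1}\log M)=o(1)$, (h1) et (h2) fournissant les bornes a priori nécessaires, notamment (h2) pour négliger les instants de multiplicité $>1$. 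J'anticipe que c'est cette comptabilité combinatoire — suivre simultanément les termes principaux multiplicatifs, l'accumulation des erreurs conditionnelles $\mathcal O(M^{-2})$ et la négligeabilité de chaque motif aggloméré, uniformément en la partition $Q\in\mathbf Q$ — qui constituera l'obstacle principal ; l'ensemble exceptionnel $E$ reste inoffensif partout, car $\mathbb P(E^c)=\mathcal O(M^{-100})$ domine toute contribution d'erreur en nombre polynomial. En rassemblant la convergence de tous les moments factoriels joints et en invoquant le critère de Kallenberg, on conclut à la convergence de $\Xi^M$ vers le processus de Poisson d'intensité $\mathbf c$ sur $(\mathbf X\times J,\ \mathbf b\times\mathrm{Leb})$, comme annoncé.
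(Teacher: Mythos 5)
Le papier ne contient aucune démonstration du théorème \ref{thm9} : il est importé tel quel de \cite{bassam}, et c'est donc à la preuve de cette référence qu'il faut comparer votre proposition. Celle-ci suit essentiellement la même route que la source : méthode des moments factoriels pour des boîtes produit $K_{p} \times I$, épluchage des instants ordonnés selon $\lambda_{1}$ par conditionnements successifs via (h4)--(h6), élimination des configurations agglomérées par (h3) avec le dénombrement $\mathcal{O}(M\log M)$, des multiplicités par (h2), et absorption de l'ensemble exceptionnel grâce à $\mathbb{P}(E^{c})=\mathcal{O}(M^{-100})$ ; l'architecture est correcte et c'est bien celle de \cite{bassam}.

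Un seul point est à rectifier dans votre épluchage : vous affirmez que (h5) rend $\eta_{t_{2}},\dots,\eta_{t_{k}}$ mesurables par rapport à $\mathcal{F}_{t_{2}}$, or (h5) ne donne la mesurabilité de $\xi_{t,p}$ pour $\mathcal{F}_{\overline{t}}$ que lorsque $\lambda_{1}(\overline{t}) > \lambda_{1}(t) + R\log M$, donc seulement pour les facteurs $j \geqslant 3$, pas pour $\eta_{t_{2}}$ lui-même ; le conditionnement par $\mathcal{F}_{t_{2}}$ tel quel ne factorise donc pas le produit. La correction est standard et ne change ni le dénombrement ni les ordres d'erreur : définir « bien séparé » avec un écart $2R\log M$ entre instants consécutifs et conditionner par $\mathcal{F}_{t'}$ pour un instant intermédiaire $t'$ vérifiant $\lambda_{1}(t_{1}) > \lambda_{1}(t') + R\log M$ et $\lambda_{1}(t') > \lambda_{1}(t_{2}) + R\log M$, de sorte que (h4c) s'applique au facteur de tête tandis que (h5) rend tous les autres facteurs $\mathcal{F}_{t'}$-mesurables ; les uplets dont un écart est inférieur à $2R\log M$ rejoignent alors la catégorie agglomérée traitée par (h3), sans altérer la conclusion.
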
 
En rajoutant des hypothèses, on peut avoir une conclusion plus forte qui nous sera utile pour obtenir, entre autres, des variables aléatoires réelles équidistribuées indépendantes entre elles, et vis-à-vis d'un processus de Poisson ad hoc, et ceci grâce au lemme $\ref{lemme20}$. Précisons ces hypothèses et cette conclusion. \\
Avec les notations du théorème $ \ref{thm9}$, supposons que l'on dispose de $\hat{\lambda}(t)$ une forme affine telle que $\hat{\lambda}(t) > \lambda_{1}(t)$ (où $\lambda_{1}$ peut maintenant être une forme affine non constante au lieu de linéaire et non nulle) sur $[0,M]$.  \\
Supposons que pour tout $M$ on dispose d'une séquence de $\zeta_{t}^{M}$ à valeurs dans $(\tilde{\mathbf{X}}, \tilde{\mathbf{m}})$, un espace probabilisé. Cet espace est d'ailleurs supposé doté d'une collection $\tilde{\mathbf{Q}}$ dénombrable de partitions finies de $\tilde{\mathbf{X}}$ qui converge vers la partition ponctuelle. Supposons enfin que pour chaque élément $\tilde{Q} = (\tilde{K}_{1}, \cdots, \tilde{K}_{P} ) \in \tilde{\mathbf{Q}}$, on dispose de $\tilde{E}$ tel que $\mathbb{P}(\tilde{E}^{c})= O(M^{-100})$ et tel que les hypothèses suivantes sont satisfaites : \\
\\
(h7) Il existe une suite $\nu_{M} \rightarrow 0$ quand $M \rightarrow \infty$ et $R > 0$ tels que pour tout $t, t' \in \Pi_{M}$ qui satisfont $\hat{\lambda}(t) \geqslant \lambda_{1}(t') + R \log(M) \geqslant \lambda_{1}(t) + 2 R \log(M) $, alors pour tout $\omega \in \tilde{E}$ tel que $\xi_{t}(\omega) = 1$,
$$|\mathbb{P}(\zeta_{t}^{M} \in \tilde{K}_{p} | \mathcal{F}_{t'})(\omega) - \tilde{\mathbf{m}}(\tilde{K}_{p})| \leqslant \nu_{M} \textit{.}$$ \\
(h8) Pour $t,\overline{t} \in \Pi_{M}$ où $\lambda_{1}(\overline{t}) > \hat{\lambda}(t) + R \log(M)$ pour tout $p \in [|1,P|]$, pour tout $\omega \in \tilde{E}$ tel que $\xi_{t}(\omega)=1$, 
$$\mathbf{1}_{\zeta_{t} \in \tilde{K}_{p}} \textit{ est constant sur }  F_{\overline{t}}(\omega) \textit{.}$$ 
On dispose alors du résultat suivant, plus fort que celui du théorème $\ref{thm9}$ : 
\begin{theorem}[\cite{bassam}]
\label{thm13}
Lorsque $(\lambda_{1},\hat{\lambda},\{ \xi_{t}\},\{ \nu_{t} \},\{ \zeta_{t} \})$ vérifient les hypothèses (h1) jusqu'à (h8), le nuage de points 
$$\{\nu_{t}^{M} \textit{, } \zeta_{t}^{M} \textit{, } \frac{t}{M} \}_{\xi_{t}^{M}=1 \textit{, } t \in \Pi_{M} } $$
converge quand $M \rightarrow \infty $ vers un processus de Poisson d'intensité $\mathbf{c}$ sur $$(\mathbf{X} \times \tilde{\mathbf{X}} \times J \textit{, } \mathbf{m} \times \tilde{\mathbf{m}} \times \text{Leb}) \text{.}$$
\end{theorem}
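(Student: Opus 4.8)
\emph{Esquisse de démonstration.}

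L'idée directrice est de déduire le théorème \ref{thm13} du théorème \ref{thm9} en voyant le nuage enrichi $\{\nu_t^M, \zeta_t^M, t/M\}$ comme un \emph{marquage} du nuage de base $\{\nu_t^M, t/M\}$ par les marques $\zeta_t^M$. Comme $\tilde{\mathbf{m}}$ est une mesure de probabilité, le lemme \ref{lemme20}(c) identifie un processus de Poisson sur $\mathbf{X} \times \tilde{\mathbf{X}} \times J$ d'intensité $\mathbf{m} \times \tilde{\mathbf{m}} \times \text{Leb}$ à un processus de Poisson de base sur $\mathbf{X} \times J$ dont chaque point est marqué, de façon indépendante, selon $\tilde{\mathbf{m}}$ ; c'est la propriété de coloriage rappelée dans les préliminaires. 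Il suffit donc de montrer, d'une part que le nuage de base converge (ce qui est exactement le théorème \ref{thm9}), et d'autre part qu'asymptotiquement les marques $\zeta_t^M$ se comportent comme un coloriage indépendant de loi $\tilde{\mathbf{m}}$, indépendant des positions $\nu_t^M$ et du processus limite de base.

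Concrètement, je vérifierais la convergence vers un processus de Poisson par la convergence des lois jointes des comptages sur une famille finie de « boîtes produit » disjointes. Pour une boîte $B = A \times \tilde{K}_p \times J_0$, où $\tilde{K}_p$ est un atome d'une partition $\tilde{Q} \in \tilde{\mathbf{Q}}$, le comptage s'écrit $N_B = \sum_{t \in \Pi_M} \mathbf{1}_{\xi_t = 1} \mathbf{1}_{\nu_t \in A} \mathbf{1}_{t/M \in J_0} \mathbf{1}_{\zeta_t \in \tilde{K}_p}$. Le but est de montrer que le facteur supplémentaire $\mathbf{1}_{\zeta_t \in \tilde{K}_p}$ agit comme un amincissement asymptotiquement indépendant de probabilité $\tilde{\mathbf{m}}(\tilde{K}_p)$, de sorte que $N_B$ converge vers une variable de Poisson de paramètre $\mathbf{c}\,\mathbf{m}(A)\,\tilde{\mathbf{m}}(\tilde{K}_p)\,\text{Leb}(J_0)$, les comptages de boîtes disjointes devenant asymptotiquement indépendants. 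Comme $\tilde{\mathbf{Q}}$ converge vers la partition ponctuelle, traiter ces comptages pour tout $\tilde{Q}$ suffit à conclure.

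Les étapes clefs seraient les suivantes. On ordonne les temps via $\lambda_1$ et on « épluche » les points un à un en conditionnant par les tribus $\mathcal{F}_t$. Grâce à la séparation $R \log(M)$ et à l'hypothèse (h8), l'événement $\{\zeta_t \in \tilde{K}_p\}$ attaché au point au temps $t$ devient $\mathcal{F}_{\overline{t}}$-mesurable pour $\overline{t}$ suffisamment postérieur, ce qui le découple des marques des points ultérieurs ; puis, par (h7), conditionnellement à $\mathcal{F}_{t'}$ (avec $t'$ convenablement postérieur) et à $\{\xi_t = 1\}$, on a $\mathbb{P}(\zeta_t \in \tilde{K}_p \mid \mathcal{F}_{t'}) = \tilde{\mathbf{m}}(\tilde{K}_p) + O(\nu_M)$, donc chaque point retenu contribue le facteur $\tilde{\mathbf{m}}(\tilde{K}_p)$ à une erreur $\nu_M \to 0$ près, indépendamment de sa position et de son temps. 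En combinant avec le théorème \ref{thm9}, qui donne la convergence des comptages de base vers des lois de Poisson indépendantes, la multiplication par ces facteurs d'amincissement asymptotiquement indépendants fournit la limite de Poisson produit annoncée.

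Le point le plus délicat sera le contrôle de l'erreur accumulée lors de cet épluchage successif. Le conditionnement de (h7) porte sur une tribu $\mathcal{F}_{t'}$ indexée par un temps \emph{postérieur} (saveur de filtration renversée), et les marques ne sont qu'asymptotiquement, et non exactement, indépendantes, avec une erreur $\nu_M$ par point. Il faudra donc composer ces approximations sans qu'elles n'explosent : le nombre de points retenus est $O(1)$ en espérance par (h1), mais le nombre de couples de temps à contrôler est $O(M^2)$, si bien qu'il faut combiner l'erreur par couple (d'ordre $\nu_M$, ou $O(M^{-2})$ via (h3)) avec une borne d'union qui reste négligeable, tout en écartant les ensembles exceptionnels $E$ et $\tilde{E}$ de probabilité $O(M^{-100})$ — l'exposant $100$ étant précisément choisi pour rendre inoffensive une borne d'union sur $O(M^2)$ couples. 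C'est là que la séparation temporelle $R \log(M)$, qui encode le mélange exponentiel du flot, est essentielle, et c'est le cœur de la démonstration.
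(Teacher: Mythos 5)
Le papier ne démontre pas le théorème \ref{thm13} : il est importé tel quel de \cite{bassam}, et votre stratégie --- réduire au théorème \ref{thm9} via le lemme \ref{lemme20}(c) en traitant les $\zeta_{t}^{M}$ comme un marquage asymptotiquement indépendant du nuage de base, l'équidistribution approchée des marques venant de (h7) et leur mesurabilité/constance à l'échelle $\hat{\lambda}$ venant de (h8) --- est précisément la voie suivie dans la référence citée, dont le lemme \ref{lemme20}(c) est d'ailleurs rappelé dans les préliminaires à cette fin. Votre esquisse identifie en outre correctement le point délicat, à savoir l'épluchage conditionnel à deux échelles (avec $t'$ contraint à la fenêtre intermédiaire $\hat{\lambda}(t) \geqslant \lambda_{1}(t') + R\log(M) \geqslant \lambda_{1}(t) + 2R\log(M)$, et non un temps postérieur arbitraire) et le cumul des erreurs $\nu_{M}$ et $O(M^{-100})$ sur $O(M^{2})$ couples ; l'approche est donc correcte et essentiellement identique à celle de la source.
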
 
Remarquons que dans ces deux théorèmes, on peut remplacer $[0,M]$ par $[-M,0]$, cela ne change rien à leurs validités. On utilisera cette remarque dans la section 5.
\section{Préliminaires sur l'espace des réseaux}
Dans cette section on rappelle quelques résultats sur l'espace des réseaux unimodulaires et l'action du flot géodésique sur celui-ci. Ils proviennent essentiellement de $\cite{Skriganov}$ et $\cite{bassam}$. \\
\\
Nous avons dit précédemment que l'espace des réseaux unimodulaires en dimension $d$ est noté $\mathscr{S}_{d}$. Autrement dit, on a l'égalité : $\mathscr{S}_{d} = SL_{d}(\mathbb{R})/ SL_{d}(\mathbb{Z})$. \\
\begin{defi}
Soit $L \in \mathscr{S}_{d}$. On définit $L^{\perp}\in \mathscr{S}_{d}$ le réseau dual de $L$ de la manière suivante : 
$$L^{\perp} = \{ l \in \mathbb{R}^{d} \text{ } | \text{ } \forall k \in L \textit{, } <k,l> \in \mathbb{Z} \}$$ où $<\cdot , \cdot >$ désigne le produit scalaire usuel.
\end{defi}
Cette définition intervient en particulier lorsque l'on commence à manipuler des séries de Fourier. \\
On a $\mathscr{S}_{d} = SL_{d}(\mathbb{R})/ SL_{d}(\mathbb{Z})$ et $\mathscr{S}_{d}$ est donc muni d'une structure de groupe de Lie. Cependant, pour $d \geqslant 2$, cet espace n'est pas compact. Mais il admet tout de même une mesure de Haar qui est de probabilités et que l'on note $\mu_{d}$. Elle est donc, en particulier, unimodulaire. On peut donc faire de la dynamique mesuré sur cet espace. \\
\begin{defi}
On dit qu'un sous-groupe $H$ de $SL_{d}(\mathbb{R})$ est ergodique (pour l'action naturelle sur $\mathscr{S}_{d}$) si tout ensemble mesurable $H$-invariant de $\mathscr{S}_{d}$ est de mesure nulle ou son complémentaire l'est.
\end{defi}
On dispose du théorème suivant (dit de Moore) : 
\begin{theorem}
\label{thm2}
Un sous-groupe $H$ est ergodique si, et seulement si, $H$ n'est contenu dans aucun sous-groupe compact de $SL_{d}(\mathbb{R})$.
\end{theorem}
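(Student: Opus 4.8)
Le plan est de déduire ce théorème (dû à Moore) de la décroissance des coefficients matriciels (théorème de Howe--Moore) pour le groupe $G = SL_{d}(\mathbb{R})$, qui est simple et de centre fini. Je commencerais par reformuler l'ergodicité en termes de vecteurs invariants : un sous-groupe $H$ agit de manière ergodique sur $(\mathscr{S}_{d},\mu_{d})$ si et seulement si les seules fonctions $H$-invariantes de $L^{2}(\mathscr{S}_{d},\mu_{d})$ sont les constantes, c'est-à-dire si et seulement si la représentation unitaire de $H$ sur $L^{2}_{0}(\mathscr{S}_{d}) = L^{2}(\mathscr{S}_{d}) \ominus \mathbb{C}\cdot 1$ (l'orthogonal des constantes) n'admet aucun vecteur non nul invariant. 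On note au passage que $L^{2}_{0}$ est une représentation unitaire de $G$ tout entier, sans vecteur $G$-invariant non nul, puisque $G$ agit transitivement, donc ergodiquement, sur $\mathscr{S}_{d}$. Comme une fonction de $L^{2}$ et sa moyenne le long de l'adhérence donnent les mêmes invariants, on peut se ramener au cas où $\overline{H}$ est fermé, et « $H$ contenu dans un sous-groupe compact » équivaut à « $\overline{H}$ compact ».

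Pour le sens direct (si $\overline{H}$ est non compact, alors $H$ est ergodique), je procéderais comme suit. Soit $f \in L^{2}_{0}(\mathscr{S}_{d})$ un vecteur $H$-invariant ; je veux montrer que $f=0$. Notant $\pi$ la représentation, le coefficient matriciel $g \mapsto \langle \pi(g)f, f\rangle$ vaut constamment $\lVert f \rVert^{2}$ sur $H$. Comme $\overline{H}$ est non compact, on peut choisir une suite $h_{n} \in H$ qui sort de tout compact de $G$. Le théorème de Howe--Moore assure que les coefficients matriciels d'une représentation unitaire de $G$ sans vecteur invariant tendent vers $0$ à l'infini, d'où $\langle \pi(h_{n})f, f\rangle \to 0$, et donc $\lVert f \rVert^{2}=0$. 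Ainsi $f=0$ et $H$ est ergodique.

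Pour la réciproque, je raisonnerais par contraposée et de façon géométrique : si $H \subseteq K$ avec $K$ compact, alors $H$ n'est pas ergodique. Quitte à conjuguer, $K$ est contenu dans un compact maximal, de sorte que $\dim K \leqslant \dim SO(d) = d(d-1)/2$. Les orbites de $K$ dans $\mathscr{S}_{d}$ sont de la forme $K/K_{x}$, donc des sous-variétés compactes de dimension au plus $\dim K$, strictement inférieure à $\dim \mathscr{S}_{d} = d^{2}-1$ dès que $d \geqslant 2$ ; elles sont donc $\mu_{d}$-négligeables. En moyennant une fonction continue $\varphi$ le long de $K$, on obtient $\overline{\varphi}(x) = \int_{K} \varphi(kx)\,dk$, qui est $K$-invariante, donc $H$-invariante. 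Comme le support de $\mu_{d}$ est $\mathscr{S}_{d}$ tout entier et que les orbites sont de mesure nulle, on peut choisir $\varphi$ de sorte que $\overline{\varphi}$ ne soit pas $\mu_{d}$-presque partout constante, ce qui fournit un invariant non trivial et interdit l'ergodicité.

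L'obstacle principal est évidemment le théorème de Howe--Moore lui-même, qui est le cœur de l'affaire. Dans une présentation autonome, je le ramènerais au cas de $SL_{2}(\mathbb{R})$ via le phénomène de Mautner : en posant $a_{t} = \mathrm{diag}(e^{t},e^{-t})$ et en notant $u_{s}$ le sous-groupe unipotent supérieur, la relation $a_{t}u_{s}a_{-t} = u_{e^{2t}s}$ jointe à l'unitarité et à un passage à la limite $t \to -\infty$ montre qu'un vecteur invariant par le flot diagonal est en fait invariant par $u_{s}$, puis par le sous-groupe engendré par $U^{+}$, $U^{-}$, c'est-à-dire par $G$ tout entier (l'égalité dans Cauchy--Schwarz force $\pi(u_{s})v = v$). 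La décroissance générale des coefficients se déduit ensuite de la décomposition $KAK$ et du plongement des copies de $SL_{2}$ dans $SL_{d}$. Ce résultat étant classique et déjà utilisé dans les références sur lesquelles s'appuie ce travail, je me contenterais en pratique de l'invoquer.
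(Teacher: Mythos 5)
Le papier ne contient aucune démonstration de ce théorème : il l'énonce comme un résultat classique (théorème de Moore) et s'en sert uniquement pour obtenir l'ergodicité du sous-groupe diagonal $\Delta$. Il n'y a donc pas de preuve interne à laquelle comparer votre proposition ; je l'évalue donc pour elle-même.

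Votre argument est la démonstration standard et il est essentiellement correct. Le sens direct (traduction de l'ergodicité en l'absence de vecteur invariant non nul dans $L^{2}_{0}(\mathscr{S}_{d})$, passage de $H$ à $\overline{H}$ par continuité forte de la représentation, puis décroissance des coefficients matriciels de Howe--Moore appliquée à une suite $h_{n} \in H$ sortant de tout compact) est exactement la route habituelle ; vous vérifiez bien au passage l'hypothèse de Howe--Moore, à savoir que $L^{2}_{0}$ n'a pas de vecteur $G$-invariant, via la transitivité de $G$ sur $\mathscr{S}_{d}$. Deux remarques mineures. D'abord, la phrase « on peut se ramener au cas où $\overline{H}$ est fermé » est mal formulée ($\overline{H}$ est toujours fermé) : ce que vous voulez dire, et ce que votre argument utilise, c'est qu'un vecteur $H$-invariant est automatiquement $\overline{H}$-invariant, de sorte que « $H$ contenu dans un compact » équivaut à « $\overline{H}$ compact ». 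Ensuite, pour la réciproque, votre comptage dimensionnel (orbites de dimension au plus $\dim SO(d) < d^{2}-1$, donc négligeables) est juste mais superflu : il suffit d'observer que les orbites de $K$ sont compactes alors que $\mathscr{S}_{d}$ ne l'est pas, donc qu'il existe au moins deux orbites distinctes ; on les sépare par Urysohn, on moyenne sur $K$, et la fonction $K$-invariante continue obtenue prend les valeurs $0$ et $1$, donc n'est pas $\mu_{d}$-presque partout constante puisque $\mu_{d}$ est de support plein — c'est d'ailleurs le seul endroit où la nullité des orbites ou le support plein intervient réellement. Enfin, invoquer Howe--Moore est légitime mais strictement plus fort que l'énoncé à démontrer ; votre esquisse via le phénomène de Mautner (la relation $a_{t} u_{s} a_{-t} = u_{e^{2t}s}$, l'égalité dans Cauchy--Schwarz, puis la décomposition $KAK$ et les copies de $SL_{2}$ dans $SL_{d}$) est le chemin classique, et le citer comme résultat connu est cohérent avec la pratique du papier, qui cite lui-même le théorème de Moore sans preuve.
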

En particulier, en posant $ \Gamma = \{ (t_{1}, \dots, t_{d}) \in \mathbb{R}^{d} \text{ } | \text{ } t_{1} + \cdots + t_{d} = 0 \}$,  le sous-groupe $ \Delta = \{ \text{Diag}(e^{t_{1}}, \cdots, e^{t_{d}}) \text{ } | \text{ } (t_{1},\cdots,t_{d}) \in \mathbb{Z}^{d} \cap \Gamma \} \subset SL_{d}(\mathbb{R})$ est ergodique. Ainsi, l'action du flot géodésique (discrétisé) $(\delta,L) \in \Delta \times \mathscr{S}_{d} \longmapsto \delta L $ est ergodique. Posons : 
$$\Delta_{r} = \{ \text{Diag}(e^{t_{1}}, \cdots, e^{t_{d}}) \text{ } | \text{ } (t_{1},\cdots,t_{d}) \in \mathbb{Z}^{d} \cap \Gamma \text{ tel que } \lVert (t_{1},\cdots,t_{d}) \rVert < r \} \textit{.}$$
 On appelle aussi $n_{r} = |\Delta_{r}|$ et on note que la suite d'ensembles $(\Delta_{r})$ est croissante et tend vers $\Delta$. De l'ergodicité de $\Delta$, on tire le théorème suivant (dit théorème ergodique individuel) : 
\begin{theorem}
\label{thm20}
Soit $\psi \in L^{1}(\mathscr{S}_{d},\mu_{d})$. Alors pour presque tout $L \in \mathscr{S}_{d}$, on a : 
$$\lim_{r \rightarrow \infty} \frac{1}{n_{r}} \sum_{\delta \in \Delta_{r}} \psi (\delta L) = \int_{\mathscr{S}_{d}} \psi(L) d \mu_{d}(L) \textit{.} $$
\end{theorem}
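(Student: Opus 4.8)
Le plan est de reconnaître le théorème \ref{thm20} comme un cas particulier du théorème ergodique ponctuel pour les actions du groupe abélien $\mathbb{Z}^{d-1}$. En effet, $\mathbb{Z}^{d} \cap \Gamma$ est un réseau de rang $d-1$ de l'hyperplan $\Gamma$, de sorte que $\Delta$, muni de la multiplication des matrices, est isomorphe à $\mathbb{Z}^{d-1}$. L'action $(\delta,L) \mapsto \delta L$ préserve la mesure $\mu_{d}$ — il s'agit d'une translation à gauche sur $SL_{d}(\mathbb{R})/SL_{d}(\mathbb{Z})$ et $\mu_{d}$ est de Haar — et elle est ergodique d'après le théorème \ref{thm2}. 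L'énoncé revient donc à établir que les moyennes $A_{r}\psi(L) = \frac{1}{n_{r}} \sum_{\delta \in \Delta_{r}} \psi(\delta L)$ convergent $\mu_{d}$-presque partout, puis à identifier leur limite.

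Je commencerais par fixer une base $(v_{1},\dots,v_{d-1})$ du réseau $\mathbb{Z}^{d} \cap \Gamma$, ce qui identifie $\Delta_{r}$ à l'ensemble des points de $\mathbb{Z}^{d-1}$ situés dans la boule de rayon $r$ pour la norme image de la norme euclidienne de $\mathbb{R}^{d}$ (norme équivalente à la norme usuelle de $\mathbb{R}^{d-1}$). La suite $(\Delta_{r})$ est croissante, épuise $\Delta$ et constitue une suite de Følner. Le point clé est qu'elle vérifie la condition de régularité de Tempelman : comme $\Delta_{r} - \Delta_{r} \subset \Delta_{2r}$ (en notation additive) et comme le comptage des points de réseau donne $n_{r} \sim c\, r^{d-1}$, on obtient $n_{2r} \leqslant C\, n_{r}$ avec une constante $C$ indépendante de $r$ (voisine de $2^{d-1}$). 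On peut alors invoquer le théorème ergodique ponctuel de Wiener-Tempelman pour les actions de $\mathbb{Z}^{d-1}$ le long d'une suite de Tempelman.

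La limite fournie par ce théorème est l'espérance conditionnelle de $\psi$ relativement à la tribu des parties $\Delta$-invariantes. L'ergodicité de l'action assure que cette tribu est triviale modulo $\mu_{d}$, de sorte que la limite est $\mu_{d}$-presque partout constante ; en intégrant et en utilisant l'invariance de $\mu_{d}$, cette constante vaut nécessairement $\int_{\mathscr{S}_{d}} \psi \, d\mu_{d}$, ce qui conclut.

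La principale difficulté est contenue dans le théorème ergodique ponctuel lui-même : il s'agit de l'inégalité maximale pour les opérateurs $A_{r}$, que l'on obtient par un argument de recouvrement de type Vitali rendu possible précisément par la condition de Tempelman, puis du passage de la convergence sur une classe dense (par exemple $L^{2}$, où l'on décompose en fonctions invariantes et cobords) à toutes les fonctions de $L^{1}$ via le principe de Banach. Pour $d=2$, tout ceci se simplifie radicalement : on a $\mathbb{Z}^{2} \cap \Gamma = \{(n,-n) : n \in \mathbb{Z}\} \cong \mathbb{Z}$, les $\Delta_{r}$ sont des intervalles symétriques, et l'énoncé n'est autre que le théorème de Birkhoff bilatère appliqué à la transformation ergodique $L \mapsto \delta_{1} L$.
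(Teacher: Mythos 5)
Votre démonstration est correcte et correspond exactement à la voie que le papier sous-entend : celui-ci ne donne aucune preuve de ce théorème, se contentant de le « tirer » de l'ergodicité de $\Delta$ (acquise via le théorème \ref{thm2}) en invoquant le théorème ergodique individuel comme résultat standard. Vous fournissez précisément les ingrédients implicites — identification de $\Delta$ à $\mathbb{Z}^{d-1}$, vérification que les boules $\Delta_{r}$ forment une suite de Følner satisfaisant la condition de régularité de Tempelman ($\Delta_{r}-\Delta_{r} \subset \Delta_{2r}$ et $n_{2r} \leqslant C\, n_{r}$ par comptage $n_{r} \sim c\, r^{d-1}$), application du théorème de Wiener--Tempelman dans $L^{1}$, puis identification de la limite par trivialité de la tribu invariante — de sorte que votre texte est un développement fidèle et complet de l'argument que le papier laisse au lecteur.
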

Dans $\cite{Skriganov}$, le lemme suivant, qui donne des exemples de fonctions intégrables sur $\mathscr{S}_{d}$ et qui nous sera utile, est démontré : 
\begin{lemma}
\label{lemme23}
Pour tout $\epsilon > 0$, $L \longmapsto \lVert L \rVert^{-d + \epsilon}$ est intégrable sur $\mathscr{S}_{d}$.

\end{lemma}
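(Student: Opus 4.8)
The plan is to reduce the integrability question to a tail estimate for the distribution of the shortest-vector length $\lVert L \rVert$ under $\mu_d$, and then to conclude by the layer-cake formula.

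First I would record two elementary facts. By Minkowski's first theorem every unimodular lattice carries a nonzero vector of length at most a dimensional constant $M_d$, so that $\lVert L \rVert \leqslant M_d$ for every $L \in \mathscr{S}_d$. In particular, when $\epsilon \geqslant d$ the exponent $-d+\epsilon$ is nonnegative and $L \mapsto \lVert L \rVert^{-d+\epsilon}$ is a bounded function; since $\mu_d$ is a probability measure, integrability is then immediate. It therefore remains to treat the case $0 < \epsilon < d$. Setting $\alpha = d - \epsilon \in (0,d)$, the goal becomes $\int_{\mathscr{S}_d} \lVert L \rVert^{-\alpha}\, d\mu_d(L) < \infty$.

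The key step is the measure estimate of the cusp neighbourhood
$$\mu_d(\{L \in \mathscr{S}_d : \lVert L \rVert < \delta\}) = O(\delta^d) \qquad (\delta \to 0).$$
For this I would invoke Siegel's mean value theorem: taking $f = \mathbf{1}_{B(0,\delta)}$, the indicator of the ball of radius $\delta$, its Siegel transform $\hat f(L) = \sum_{l \in L - \{0\}} f(l)$ satisfies $\int_{\mathscr{S}_d} \hat f\, d\mu_d = \int_{\mathbb{R}^d} f = c_d\,\delta^d$, where $c_d$ is the volume of the unit ball. Since $\lVert L \rVert < \delta$ forces the shortest vector into $B(0,\delta)$ and hence $\hat f(L) \geqslant 1$, Markov's inequality yields $\mu_d(\{\lVert L \rVert < \delta\}) \leqslant \int_{\mathscr{S}_d} \hat f\, d\mu_d = c_d\,\delta^d$. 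With this bound in hand I would integrate via the distribution function:
$$\int_{\mathscr{S}_d} \lVert L \rVert^{-\alpha}\, d\mu_d = \int_0^\infty \mu_d(\{\lVert L \rVert^{-\alpha} > \lambda\})\, d\lambda = \int_0^\infty \mu_d(\{\lVert L \rVert < \lambda^{-1/\alpha}\})\, d\lambda.$$
On $0 < \lambda \leqslant M_d^{-\alpha}$ the set is all of $\mathscr{S}_d$, contributing the finite amount $M_d^{-\alpha}$; on $\lambda > M_d^{-\alpha}$ one has $\lambda^{-1/\alpha} < M_d$ and the estimate gives $\mu_d(\{\lVert L \rVert < \lambda^{-1/\alpha}\}) \leqslant c_d\,\lambda^{-d/\alpha}$. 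Because $\alpha < d$, the exponent satisfies $d/\alpha > 1$, so $\int_{M_d^{-\alpha}}^{\infty} \lambda^{-d/\alpha}\, d\lambda$ converges. Summing the two contributions proves the claim.

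The main obstacle is the cusp measure estimate $\mu_d(\{\lVert L \rVert < \delta\}) = O(\delta^d)$; everything else is the routine elementary integral above, with the convergence hinging precisely on $\alpha = d - \epsilon < d$. If one wishes to avoid quoting Siegel's formula, the same order can be obtained directly from reduction theory: one parametrizes a short vector as a primitive vector $v$ with $\lVert v \rVert < \delta$ together with the induced unimodular lattice on $\mathbb{R}^d / \mathbb{R}v$, and integrates the Haar measure in the corresponding Iwasawa coordinates, where the region with first successive minimum below $\delta$ again has volume of order $\delta^d$.
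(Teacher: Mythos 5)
Your proof is correct, and it follows essentially the route the paper itself relies on: the paper does not prove Lemme \ref{lemme23} in the text (it is quoted from Skriganov), but the tail estimate $\mu_{d}(\{ L \in \mathscr{S}_{d} \ | \ \lVert L \rVert < \delta \}) = O(\delta^{d})$ that you derive from Siegel's mean value theorem and Markov's inequality is exactly the content of the paper's Lemme \ref{lemme1} (for $d=2$) and Lemme \ref{lemme26} (for $d \geqslant 3$), obtained there by the same Siegel--Rogers machinery of Lemme \ref{lemme24}, which even gives the sharp asymptotics $\mu_{d}(\{\lVert L \rVert^{-d} > a\}) = D a^{-1} + O(a^{-2})$. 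Your layer-cake integration, with convergence hinging precisely on $d/(d-\epsilon) > 1$, then finishes the argument; the only cosmetic difference is that you use the Siegel transform over all nonzero vectors, whereas the paper's identities are stated for primitive vectors and carry a $\zeta(d)^{-1}$ factor, which changes nothing in the $O(\delta^{d})$ bound.
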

Nous avons encore besoin de définir quelques nombres caractéristiques d'un réseau $L \in \mathscr{S}_{d}$. 
\begin{defi}
Soit $k \in \{ 1 , \cdots,  d \}$. On pose : 
$$\lVert L \rVert_{k} =  \inf \{ r > 0 \text{ } | \text{ } B_{f}(0,r) \text{ contient une famille libre de vecteurs de } L \text{ de k vecteurs} \}$$
où $B_{f}(0,r)$ désigne la boule fermée de centre $0$ et de rayon $r$ relativement à $\lVert \rVert$. En particulier, on a : $ \lVert L \rVert_{1} = \lVert L \rVert$. 
\end{defi}
\begin{defi}
Un vecteur $l$ d'un réseau $L$ sera dit premier si pour tout $l' \in L$, l'égalité $l=k l'$ avec $k \in \mathbb{Z}$ implique que $k = \pm 1$.
\end{defi}
 Tout vecteur $l$ tel que $\lVert l \rVert = \lVert L \rVert_{k}$, pour $k \in \{ 1 , \cdots,  d \}$, est un vecteur premier et pour tout $\delta \in \Delta$, si $l$ est premier, $\delta l $ est premier. \\
\\
Soient $f,f_{1},f_{2}$ des fonctions $C^{\infty}$ par morceaux à support compact de $\mathbb{R}^{d}$ dans $\mathbb{R}$. On pose, pour tout $L \in \mathscr{S}_{d}$ : 
$$F(L) = \sum_{\substack{ l \in L \\ l \text{ premier} }} f(l) \text{ et } \overline{F}(L) = \sum_{\substack{ l_{1}\neq \pm l_{2} \in L \\ l_{1} \text{, } l_{2} \text{ premiers} }} f_{1}(l_{1})f_{2}(l_{2})\textit{.} $$ 
\begin{defi}
$F$ est appelée la transformée de Siegel de $f$. 
\end{defi}
On appelle $\zeta$ la fonction zêta de Riemann. \\
Alors, on dispose des identités (dites respectivement de Siegel et de Rogers) suivantes : 
\begin{lemma}[voir \cite{marklof1998n}, \cite{vinogradov2010limiting}]
\label{lemme24}
$$\int_{\mathscr{S}_{d}} F d\mu_{d} = \zeta(d)^{-1} \int_{\mathbb{R}^{d}} f d \lambda_{d} \textit{ ,} $$ 

$$\int_{\mathscr{S}_{d}} \overline{F} d \mu_{d} = \zeta(d)^{-2} \int_{\mathbb{R}^{d}} f_{1} d \lambda_{d} \int_{\mathbb{R}^{d}} f_{2} d \lambda_{d}  \textit{ et on en déduit} $$
$$\int_{\mathscr{S}_{d}} F^{2} d \mu_{d} = \zeta(d)^{-1} \int_{\mathbb{R}^{d}} f^{2} d \lambda_{d} + \zeta(d)^{-1} \int_{\mathbb{R}^{d}} f(x) f(-x ) dx + \zeta(d)^{-2} (\int_{\mathbb{R}^{d}} f d \lambda_{d})^{2} \textit{.} $$
\end{lemma}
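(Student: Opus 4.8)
L'idée directrice est de ne pas attaquer ces identités directement, mais de les ramener aux deux formules classiques de valeur moyenne sur $\mathscr{S}_{d}$ — la formule de Siegel et la formule de Rogers pour la transformée \emph{complète}, c'est-à-dire sommée sur tous les vecteurs non nuls (resp. sur tous les couples libres) — puis de passer aux vecteurs premiers par une inversion de Möbius portant sur l'indice de dilatation. Ce sont précisément ces inversions qui feront apparaître les facteurs $\zeta(d)^{-1}$ et $\zeta(d)^{-2}$. La troisième identité, quant à elle, se déduira formellement des deux premières.

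Pour la première identité, je partirais de la formule de Siegel $\int_{\mathscr{S}_{d}}\widehat{f}\,d\mu_{d}=\int_{\mathbb{R}^{d}}f\,d\lambda_{d}$, où $\widehat{f}(L)=\sum_{l\in L\setminus\{0\}}f(l)$. Tout vecteur non nul de $L$ s'écrit de manière unique $k\,l'$ avec $k\geqslant 1$ entier et $l'$ premier ; en posant $f_{k}(x)=f(kx)$ et en notant $F^{(g)}$ la transformée de Siegel de $g$ restreinte aux vecteurs premiers, on obtient donc $\widehat{f}=\sum_{k\geqslant 1}F^{(f_{k})}$. En intégrant et en posant $P(g)=\int_{\mathscr{S}_{d}}F^{(g)}\,d\mu_{d}$, la relation $\int f_{k}=k^{-d}\int f$ fournit $\int_{\mathbb{R}^{d}}f=\sum_{k\geqslant 1}P(f_{k})$. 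J'inverserais alors cette relation par la fonction de Möbius $\mu_{\mathrm{M}}$ : en réinjectant la formule de Siegel appliquée à $f_{j}$ et en utilisant $\sum_{j\mid n}\mu_{\mathrm{M}}(j)=\mathbf{1}_{n=1}$, on trouve $P(f)=\sum_{j\geqslant 1}\mu_{\mathrm{M}}(j)\int_{\mathbb{R}^{d}}f_{j}=\sum_{j\geqslant 1}\mu_{\mathrm{M}}(j)j^{-d}\int_{\mathbb{R}^{d}}f=\zeta(d)^{-1}\int_{\mathbb{R}^{d}}f$ (les interversions étant légitimes car $P(|h|)\leqslant\int|h|$ et $\sum_{j,k}(jk)^{-d}<\infty$ pour $d\geqslant 2$).

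Pour la deuxième, j'observerais d'abord que deux vecteurs premiers proportionnels sont nécessairement égaux au signe près ; la condition $l_{1}\neq\pm l_{2}$ équivaut donc, pour des vecteurs premiers, à leur indépendance linéaire, et $\overline{F}$ somme $f_{1}(l_{1})f_{2}(l_{2})$ sur les couples \emph{libres} de vecteurs premiers. Pour $d\geqslant 3$, la formule de Rogers pour la partie libre de la transformée complète s'écrit $R(f_{1},f_{2}):=\int_{\mathscr{S}_{d}}\sum_{(v_{1},v_{2})\ \mathrm{libres}}f_{1}(v_{1})f_{2}(v_{2})\,d\mu_{d}=\int_{\mathbb{R}^{d}}f_{1}\int_{\mathbb{R}^{d}}f_{2}$. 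Comme tout couple libre s'écrit de façon unique $(k_{1}l_{1},k_{2}l_{2})$ avec $l_{1},l_{2}$ premiers et libres, on a $R(f_{1},f_{2})=\sum_{k_{1},k_{2}\geqslant 1}\overline{P}(f_{1,k_{1}},f_{2,k_{2}})$, où $\overline{P}=\int_{\mathscr{S}_{d}}\overline{F}\,d\mu_{d}$ et $f_{i,k}(x)=f_{i}(kx)$. La même inversion de Möbius, appliquée séparément dans chaque variable, donne alors $\overline{P}(f_{1},f_{2})=\big(\sum_{j\geqslant 1}\mu_{\mathrm{M}}(j)j^{-d}\int f_{1}\big)\big(\sum_{j\geqslant 1}\mu_{\mathrm{M}}(j)j^{-d}\int f_{2}\big)=\zeta(d)^{-2}\int_{\mathbb{R}^{d}}f_{1}\int_{\mathbb{R}^{d}}f_{2}$.

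La troisième identité s'en déduit en scindant $F^{2}=\sum_{l_{1},l_{2}\ \mathrm{premiers}}f(l_{1})f(l_{2})$ selon $l_{1}=l_{2}$, $l_{1}=-l_{2}$ et $l_{1}\neq\pm l_{2}$ : les deux premières sommes sont les transformées premières de $f^{2}$ et de $x\mapsto f(x)f(-x)$, d'intégrales $\zeta(d)^{-1}\int f^{2}$ et $\zeta(d)^{-1}\int f(x)f(-x)\,dx$, et la troisième vaut $\overline{P}(f,f)=\zeta(d)^{-2}(\int f)^{2}$, d'où l'identité annoncée. Le point le plus délicat sera la formule produit $R(f_{1},f_{2})=\int f_{1}\int f_{2}$ et la détermination de sa constante : elle repose sur la transitivité de l'action de $SL_{d}(\mathbb{R})$ sur les couples de vecteurs linéairement indépendants, laquelle assure l'unicité, à un scalaire près, de la mesure invariante $dx\,dy$. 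Or cette transitivité n'a lieu que pour $d\geqslant 3$ ; en dimension $2$, le déterminant $\det(l_{1},l_{2})$ — toujours un entier non nul sur un réseau unimodulaire — est un invariant de l'action, et le second moment y est sensiblement plus subtil. C'est précisément à cet endroit que je m'attends à devoir invoquer la forme précise de la formule de Rogers établie dans les références citées.
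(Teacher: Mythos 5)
Le papier ne démontre pas ce lemme : il est importé tel quel des références citées (l'en-tête dit « voir \cite{marklof1998n}, \cite{vinogradov2010limiting} »), si bien qu'il n'y a pas de preuve interne à laquelle comparer la vôtre. Votre dérivation est l'argument standard, vraisemblablement celui des références, et elle est correcte là où elle est complète. Pour la première identité, Siegel pour la transformée complète plus l'inversion de Möbius sur le contenu (tout vecteur non nul s'écrit de façon unique $k\,l'$ avec $k\geqslant 1$ entier et $l'$ premier) donne bien le facteur $\zeta(d)^{-1}$, et vos justifications d'interversion suffisent dès que $d\geqslant 2$. La troisième identité se déduit correctement des deux premières par la partition $l_{1}=l_{2}$, $l_{1}=-l_{2}$, $l_{1}\neq\pm l_{2}$ (les deux diagonales sont disjointes car $0$ n'est pas premier, et $-l$ est premier dès que $l$ l'est). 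Pour la deuxième, l'écriture unique d'un couple libre sous la forme $(k_{1}l_{1},k_{2}l_{2})$ et la double inversion de Möbius sont correctes, et la formule produit de Rogers que vous invoquez est valable pour $d\geqslant 3$ par transitivité de $SL_{d}(\mathbb{R})$ sur les couples libres.

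Votre réserve finale sur la dimension $2$ n'est pas une simple précaution : la deuxième identité, telle qu'énoncée, y est fausse en toute généralité, et vous pouvez rendre l'obstruction concrète. Pour $L=g\mathbb{Z}^{2}$ unimodulaire et $l_{1},l_{2}\in L$ indépendants, $\det(l_{1},l_{2})\in\mathbb{Z}\setminus\{0\}$, donc $|\det(l_{1},l_{2})|\geqslant 1$ ; or deux vecteurs premiers avec $l_{1}\neq\pm l_{2}$ sont indépendants (c'est le lemme $\ref{lemme16}$ du papier). Si $f_{1}$ et $f_{2}$ sont supportées par de petites boules autour de $(1,0)$ et $(0,\tfrac{1}{2})$, tout couple du support a un déterminant proche de $\tfrac{1}{2}$, donc $\overline{F}\equiv 0$ sur $\mathscr{S}_{2}$, alors que $\zeta(2)^{-2}\int f_{1}\int f_{2}>0$. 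La valeur moyenne exacte en dimension $2$ est, comme votre analyse le suggère, une somme sur les déterminants entiers $m\neq 0$, pondérée par $\phi(|m|)$, d'intégrales sur les hypersurfaces $\{\det=m\}$, et elle ne se réduit au produit $\int f_{1}\int f_{2}$ que pour des fonctions dont les supports chargent convenablement ces niveaux entiers. Comme le papier applique précisément ce lemme en dimension $2$ (lemmes $\ref{lemme9}$ et $\ref{lemme51}$ notamment), c'est la forme exacte donnée dans les références, avec ses termes correctifs, qu'il faut y substituer et vérifier au cas par cas ; votre preuve, complète pour $d\geqslant 3$, localise donc exactement le point où l'énoncé du papier demande à être précisé.
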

La formule de Rogers peut bien sûr se généraliser à un produit fini quelconque de fonctions. \\
Supposons pour l'heure $d=2$. On déduit, des formules de Rogers et de Siegel, le lemme suivant qui est une des bases de notre réflexion : 
\begin{lemma} 
\label{lemme1}
Pour tout $a > 0$ assez grand, $$ \mu_{2}(\{ L \in \mathscr{S}_{2} | \lVert L \rVert^{-2} > a \}) = C a^{-1} \textit{ }$$ où on a posé $C = \frac{1}{2} c_{1} \pi = \frac{3}{\pi}$.
\end{lemma}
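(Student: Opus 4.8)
The plan is to realize the indicator of the event $\lVert L \rVert^{-2} > a$ as one half of a Siegel transform and to apply Lemma~\ref{lemme24}. Writing $r = a^{-1/2}$, the condition $\lVert L \rVert^{-2} > a$ is exactly $\lVert L \rVert < r$, i.e. the existence of a nonzero vector of $L$ in the ball $B_f(0,r)$. So I would take $f = \mathbf{1}_{B_f(0,r)}$ and consider its Siegel transform $F(L) = \sum_{l \in L \text{ premier}} f(l)$, which counts the primitive vectors of $L$ lying in $B_f(0,r)$.

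The key geometric observation is that, for $r$ small enough, $F(L) \in \{0,2\}$ and $F(L) = 2$ precisely when $\lVert L \rVert < r$. Indeed, if $B_f(0,r)$ contains a nonzero vector, let $l_1$ be a shortest one; it is automatically primitive and $\lVert l_1 \rVert = \lVert L \rVert = \lVert L \rVert_1$. By the second theorem of Minkowski applied to the unit disk (of area $\pi$) and a unimodular lattice, one has $\lVert L \rVert_1 \, \lVert L \rVert_2 \geq 2/\pi$, hence $\lVert L \rVert_2 \geq \frac{2}{\pi \lVert L \rVert_1} > \frac{2}{\pi r}$. As soon as $r < \sqrt{2/\pi}$ this forces $\lVert L \rVert_2 > r$, so no two linearly independent vectors of $L$ can both lie in $B_f(0,r)$: every lattice vector in the ball is an integer multiple of $l_1$, and the only primitive ones among these are $\pm l_1$. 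Therefore $F(L) = 2 \cdot \mathbf{1}_{\{\lVert L \rVert < r\}}$ for almost every $L$, the set $\{\lVert L \rVert = r\}$ being $\mu_2$-negligible.

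It then remains to integrate. Since $\partial B_f(0,r)$ has Lebesgue measure zero, I would sandwich $f$ between smooth compactly supported functions $f_\varepsilon^- \le \mathbf{1}_{B_f(0,r)} \le f_\varepsilon^+$ in order to apply the Siegel identity of Lemma~\ref{lemme24} in the form in which it is stated, and then let $\varepsilon \to 0$; this yields $\int_{\mathscr{S}_2} F \, d\mu_2 = \zeta(2)^{-1} \int_{\mathbb{R}^2} f \, d\lambda_2 = \zeta(2)^{-1} \pi r^2$. Combining this with the previous paragraph,
$$\mu_2(\lVert L \rVert^{-2} > a) = \mu_2(\lVert L \rVert < r) = \tfrac12 \int_{\mathscr{S}_2} F \, d\mu_2 = \tfrac12 \zeta(2)^{-1} \pi r^2 = \tfrac12 \cdot \tfrac{6}{\pi^2} \cdot \pi \cdot \tfrac1a = \tfrac{3}{\pi a},$$
which is the claimed identity with $c_1 = \zeta(2)^{-1} = 6/\pi^2$ and $C = \tfrac12 c_1 \pi = 3/\pi$; the threshold ``$a$ assez grand'' is simply $a > \pi/2$, i.e. $r < \sqrt{2/\pi}$.

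The main obstacle is precisely the geometric reduction $F(L) = 2 \cdot \mathbf{1}_{\{\lVert L \rVert < r\}}$: one must rule out both a second short independent direction (handled by Minkowski's second theorem) and short higher multiples of $l_1$ (automatic, since multiples are non-primitive). Everything else — the passage from the indicator to smooth test functions and the final arithmetic using $\zeta(2) = \pi^2/6$ — is routine.
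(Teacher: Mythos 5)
Your proof is correct and is essentially the paper's own argument: both of you realize the event $\lVert L \rVert^{-2} > a$ as $\{F = 2\}$ for the Siegel transform $F$ of $\mathbf{1}_{B_{f}(0,a^{-1/2})}$, show $F \in \{0,2\}$ for $a$ large by excluding a second short linearly independent direction, and conclude via the Siegel identity of the lemme~\ref{lemme24} together with $\zeta(2)^{-1} = 6/\pi^{2}$. The only differences are cosmetic: the paper excludes the second short direction through the covolume identity $\mathrm{covol}(L) = \lVert L \rVert_{1} \lVert L \rVert_{2} \sin(\alpha)$ (lemmes~\ref{lemme16} et~\ref{lemme25}, with threshold $a^{-1/2} < 1$) where you invoke Minkowski's second theorem (threshold $a^{-1/2} < \sqrt{2/\pi}$), and your smoothing/sandwiching step is superfluous since the lemme~\ref{lemme24} is already stated for piecewise-$C^{\infty}$ compactly supported functions, which covers the indicator of a ball.
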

Pour démontrer ce lemme nous avons besoin de deux résultats préliminaires. 
\begin{lemma}
\label{lemme16}
Soient $l_{1},l_{2}$ deux vecteurs de $L$ premiers. Supposons qu'ils voient leurs premières coordonnées respectives être positives. Supposons que l'un des deux voit sa norme $2$ être égale à $\lVert L \rVert$. Alors, s'ils appartiennent à une même droite passant par $0$, ils sont égaux. 
\end{lemma}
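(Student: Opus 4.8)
The plan is to reduce the whole statement to the structure of the rank-one sublattice carried by the common line. Write $D$ for the line through $0$ containing both $l_1$ and $l_2$, and consider the intersection $L \cap D$. Since $L$ is a lattice, this intersection is a discrete subgroup of $D \cong \mathbb{R}$, and since it contains the nonzero vector $l_1$ it is a free abelian group of rank exactly $1$; hence $L \cap D = \mathbb{Z} v$ for some $v \in L \cap D$. This is the only point that deserves a little care, and it is precisely the standard structure theory of discrete subgroups of a line.

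From there I would exploit primitivity to force both vectors to be generators of $\mathbb{Z} v$. Both $l_1$ and $l_2$ lie in $L \cap D = \mathbb{Z} v$, so $l_1 = a v$ and $l_2 = b v$ with $a,b \in \mathbb{Z}$ and $v \in L$. Applying the definition of a primitive vector to the equality $l_1 = a v$ (with $v \in L$) forces $a = \pm 1$, and the same argument gives $b = \pm 1$; consequently $l_2 = \pm l_1$. The sign is then pinned down by the positivity hypothesis: the first coordinate of $l_1$ is strictly positive, so the first coordinate of $-l_1$ is strictly negative, which is incompatible with the assumption that $l_2$ also has a strictly positive first coordinate. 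Therefore $l_2 = l_1$.

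I do not expect any genuine obstacle: the argument is entirely elementary once $L \cap D$ is identified as $\mathbb{Z} v$. Let me note that the hypothesis that one of the two vectors realizes the minimal norm $\lVert L \rVert$ plays no role in this reasoning; it is carried along from the context in which the lemma will be invoked (notably in the proof of Lemme \ref{lemme1}) rather than being needed for the conclusion itself.
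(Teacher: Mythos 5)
Your proof is correct, and it takes a genuinely different route from the paper's. The paper argues by contradiction with a floor-function trick: writing $l_{2} = k l_{1}$ with $k$ a positive real, it excludes integer $k \geqslant 2$ by primitivity of $l_{2}$, and for non-integer $k$ it forms $v = l_{2} - \lfloor k \rfloor l_{1} = (k - \lfloor k \rfloor) l_{1} \in L - \{0\}$, a nonzero lattice vector with $\lVert v \rVert < \lVert l_{1} \rVert = \lVert L \rVert$, contradicting minimality — so the hypothesis $\lVert l_{1} \rVert = \lVert L \rVert$ is essential to the paper's argument. You instead identify $L \cap D$ as $\mathbb{Z}v$ via the structure of discrete subgroups of a line and apply the primitivity definition to \emph{both} vectors, getting $l_{1} = \pm v$ and $l_{2} = \pm v$, with the sign fixed by the positivity of the first coordinates; your observation that the minimal-norm hypothesis is superfluous is accurate and a mild strengthening of the lemma (the paper only remarks that its own proof is dimension-independent, which holds for yours as well). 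One caveat shared by both proofs: the sign-pinning step requires the first coordinates to be \emph{strictly} positive, since for a vector on the vertical axis (say $l_{1}=(0,c)$, $l_{2}=-l_{1}$, both with first coordinate $0$) the conclusion fails; this is how the lemma is meant to be read, consistently with the later definition of $e(L)$, which demands $e(L)_{1}>0$ and is only defined for almost every lattice. As for what each approach buys: the paper's shortest-vector manipulation is a warm-up for the geometry-of-numbers arguments reused in Lemme \ref{lemme25} and Lemme \ref{lemme27}, whereas yours isolates the purely algebraic content (a primitive vector of $L$ lying on a line generates the rank-one subgroup $L \cap D$) at the cost of invoking the classification of discrete subgroups of $\mathbb{R}$.
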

\begin{proof}
Supposons que $l_{1} \neq l_{2}$ et que, par exemple, $\lVert l_{1} \rVert = \lVert L \rVert$. Alors, on a $l_{2} = k l_{1}$ avec $k \geqslant 0$, $k \notin \mathbb{N}$ car $l_{2}$ est premier. Soit dit en passant, comme $\lVert L \rVert = \lVert l_{1} \rVert$, nécessairement $k >1$. On pose $v = l_{2} - \lfloor k \rfloor l_{1}$. Alors, on a $\lVert v \rVert < \lVert l_{1} \rVert$, ce qui est exclu.
\end{proof}
De cette preuve, on en déduit que l'on n'a pas besoin de supposer $d=2$. \\
On en déduit alors le lemme suivant : 
\begin{lemma}
\label{lemme25}
Si $\lVert L \rVert < 1$ alors $\lVert L \rVert$ est seulement atteint en un vecteur premier $l$ et son opposée.
\end{lemma}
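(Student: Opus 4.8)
The plan is to establish two facts: that every vector attaining the minimum $\lVert L\rVert$ is prime, and that such a minimizer is unique up to sign. The first is already recorded in the text (tout vecteur de norme $\lVert L\rVert_k$ est premier), so I would simply invoke it. The whole content therefore lies in the uniqueness statement, which I would prove by contradiction: assume $l_1$ and $l_2$ are two vectors of $L$ with $\lVert l_1\rVert = \lVert l_2\rVert = \lVert L\rVert$ and $l_1 \neq \pm l_2$, and derive an absurdity.

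I would distinguish two cases. If $l_1$ and $l_2$ are collinear, then they lie on a common line through the origin and both are prime; after replacing each by its opposite so that their first coordinates are positive (the degenerate subcase of a vanishing first coordinate is immediate, since on the vertical axis two prime vectors of equal norm are necessarily opposite), Lemma \ref{lemme16} applies --- one of the two, indeed both, having norm equal to $\lVert L\rVert$ --- and yields $l_1 = l_2$, hence $l_1 = \pm l_2$, contradicting the assumption.

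If instead $l_1$ and $l_2$ are linearly independent, this is where the hypothesis $\lVert L\rVert < 1$ enters. The sublattice $\mathbb{Z} l_1 + \mathbb{Z} l_2$ is then of full rank in $L$, so its covolume $|\det(l_1,l_2)|$ equals the index $[L : \mathbb{Z} l_1 + \mathbb{Z} l_2]$ times $\mathrm{Covol}(L) = 1$; in particular $|\det(l_1,l_2)|$ is a positive integer, whence $|\det(l_1,l_2)| \geq 1$. On the other hand, Hadamard's inequality gives $|\det(l_1,l_2)| \leq \lVert l_1\rVert\,\lVert l_2\rVert = \lVert L\rVert^2 < 1$, a contradiction.

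The only genuinely substantive step is this last one: recognizing that unimodularity forces $|\det(l_1,l_2)|$ to be an integer $\geq 1$, which collides with the area bound precisely under the hypothesis $\lVert L\rVert<1$ (and indeed fails for, e.g., the hexagonal lattice, whose shortest-vector length exceeds $1$). Everything else is bookkeeping, the collinear case being entirely dispatched by the already-established Lemma \ref{lemme16}.
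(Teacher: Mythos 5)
Your proof is correct and takes essentially the same route as the paper's: Lemma \ref{lemme16} dispatches the collinear case, and linear independence of $l_{1},l_{2}$ is excluded by an area-versus-covolume contradiction made possible by $\lVert L \rVert < 1$. The only difference is one of packaging — where the paper writes $\mathrm{covol}(L) = \lVert L \rVert_{1} \lVert L \rVert_{2} \sin(\alpha) < 1$, you get the same contradiction from the index bound $|\det(l_{1},l_{2})| = [L : \mathbb{Z}l_{1} + \mathbb{Z}l_{2}] \geqslant 1$ together with Hadamard's inequality, a marginally more robust formulation that also covers the degenerate vanishing-first-coordinate subcase which the paper's sign normalization passes over silently.
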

\begin{proof}
Supposons que $\lVert L \rVert$ soient atteints en deux vecteurs $l_{1}$ et $ l_{2}$. Ils sont nécessairement premiers et quitte à prendre leurs opposés, on peut supposer que leur première coordonnée est positive. Le lemme $\ref{lemme16}$ donne alors que soit $l_{1} = l_{2}$, soit $(l_{1}, l_{2})$ forme une famille libre de $\mathbb{R}^{2}$. On a donc dans ce dernier cas que $\lVert L \rVert_{2} = \lVert L \rVert_{1}$. \\
Or, on a : $\text{covol}(L) = \lVert L \rVert_{1} \lVert L \rVert_{2} \sin(\alpha)$ où $\alpha$ est l'angle non-orienté entre deux vecteurs de $L$ $h,k$ tels que $h,k$ voient leur première coordonnée être positive et $\lVert h \rVert = \lVert L \rVert_{1}$ et $\lVert k \rVert = \lVert L \rVert_{2}$. Ainsi, on obtient : $\text{covol} (L) < 1$, ce qui est exclu et donc $l_{1} = l_{2}$.
\end{proof}
Démontrons maintenant le lemme $\ref{lemme1}$ : 
\begin{proof}[Démonstration du lemme $\ref{lemme1}$]
On se donne $a > 0$. On pose, pour tout $x \in \mathbb{R}^{2}$, $f(x) = \mathbf{1}_{B_{f}(0,a^{-1/2})}(x)$. On appelle $F$ la transformée de Siegel associée à $f$. Pour tout $a > 0$ assez grand, on a pour tout $L \in \mathscr{S}_{2}$, $F(L) \in \{ 0, 2 \}$. En effet, pour $a > 0$ assez grand, $a^{- \frac{1}{2}} < 1$ et donc le lemme $\ref{lemme25}$ s'applique. Ainsi, on a : 
$$\frac{\mathbb{E_{\mu_{2}}}(F)}{2} = \mu_{2}(\{ L \in \mathscr{S}_{2} |\lVert L \rVert^{-2} > a \}) \textit{.} $$
Le lemme $\ref{lemme24}$ donne alors le résultat voulu.
\end{proof}
Notons au passage que le raisonnement fait au lemme $\ref{lemme25}$ donne le lemme suivant : 
\begin{lemma}
\label{lemme11}
Soit $L \in \mathscr{S}_{2}$. On suppose $\lVert L \rVert \leqslant 1$. On appelle $\lVert L \rVert_{2}$ le plus petit rayon $r$ tel que $B_{f}(0,r)$ contient une famille libre de deux vecteurs de $L$. Alors, deux cas se présentent : 
\begin{itemize}
\item Soit $\lVert L \rVert = \lVert L \rVert_{2}$. Dans ce cas, $\lVert L \rVert=1$ et $L$ a été obtenu à partir de $\mathbb{Z}^{2}$ à l'aide d'une rotation. Par ailleurs, $\lVert L \rVert$ est atteint exactement en $4$ vecteurs du réseau $L$.
\item Soit $\lVert L \rVert < \lVert L \rVert_{2}$. Dans ce cas, $\lVert L \rVert$ est atteint en exactement $2$ vecteurs du réseau $L$.
\end{itemize}
\end{lemma}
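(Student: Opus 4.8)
L'idée est de reprendre l'argument de covolume déjà employé dans la preuve du lemme \ref{lemme25}, combiné à la dichotomie fournie par le lemme \ref{lemme16}. On commencerait par fixer deux vecteurs $h$ et $k$ de $L$, chacun de première coordonnée positive (quitte à changer leur signe), réalisant les deux minima successifs, c'est-à-dire $\lVert h \rVert = \lVert L \rVert_{1} = \lVert L \rVert$ et $\lVert k \rVert = \lVert L \rVert_{2}$. En dimension $2$, un tel couple forme une base de $L$, de sorte que $\text{covol}(L) = \lVert L \rVert_{1} \lVert L \rVert_{2} \sin(\alpha) = 1$, où $\alpha \in (0,\pi)$ désigne l'angle non orienté entre $h$ et $k$.

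Je traiterais d'abord le cas $\lVert L \rVert = \lVert L \rVert_{2}$. On aurait alors $1 = \lVert L \rVert^{2} \sin(\alpha)$ avec $\lVert L \rVert \leqslant 1$ et $\sin(\alpha) \leqslant 1$, ce qui force simultanément $\lVert L \rVert = 1$ et $\sin(\alpha) = 1$. Ainsi $h$ et $k$ seraient deux vecteurs unitaires orthogonaux et $L = R \mathbb{Z}^{2}$, où $R$ est la rotation envoyant la base canonique sur $(h,k)$. Il resterait à compter les vecteurs de norme $1$ : comme une rotation est une isométrie, cela se ramène à dénombrer les vecteurs de $\mathbb{Z}^{2}$ de norme euclidienne $1$, à savoir $(\pm 1,0)$ et $(0,\pm 1)$, soit exactement $4$ vecteurs.

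Dans le second cas $\lVert L \rVert < \lVert L \rVert_{2}$, je raisonnerais exactement comme dans le lemme \ref{lemme25}. Soient $l_{1}$ et $l_{2}$ deux vecteurs réalisant $\lVert L \rVert$, normalisés pour avoir leur première coordonnée positive ; ils sont premiers car ils réalisent un minimum. Le lemme \ref{lemme16} donne alors : soit $l_{1} = l_{2}$, soit $(l_{1},l_{2})$ est une famille libre. Mais la seconde éventualité est impossible : si $l_{1}$ et $l_{2}$ étaient libres et tous deux de norme $\lVert L \rVert$, alors $B_{f}(0,\lVert L \rVert)$ contiendrait une famille libre de deux vecteurs, d'où $\lVert L \rVert_{2} \leqslant \lVert L \rVert$, ce qui contredit l'hypothèse $\lVert L \rVert < \lVert L \rVert_{2}$. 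Donc $l_{1} = l_{2}$ : il n'y a qu'un seul vecteur minimisant de première coordonnée positive, et $\lVert L \rVert$ est atteint exactement en ce vecteur et son opposé, soit en $2$ vecteurs.

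Le seul point réellement délicat est le choix de normalisation par le signe de la première coordonnée, qu'il faudra préciser pour éviter les doubles comptages : je traiterais à part les vecteurs minimisants de première coordonnée nulle (en les normalisant alors selon le signe de la seconde coordonnée), comme cela est implicitement fait dans les lemmes \ref{lemme16} et \ref{lemme25}, et je vérifierais que la correspondance $l \mapsto \{l,-l\}$ est bien une application deux-pour-un sur l'ensemble des directions minimisantes. Tout le reste n'est qu'une transcription directe du calcul de covolume déjà mené.
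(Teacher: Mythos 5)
Your proposal is correct and takes essentially the same route as the paper, which gives no separate proof of this lemma but states that it follows from "le raisonnement fait au lemme \ref{lemme25}" — that is, precisely the combination you use of the dichotomy of Lemma \ref{lemme16} with the covolume identity $\text{covol}(L)=\lVert L\rVert_{1}\lVert L\rVert_{2}\sin(\alpha)=1$, the equality case forcing $\lVert L\rVert=1$, $\sin(\alpha)=1$ and $L$ a rotated copy of $\mathbb{Z}^{2}$ with exactly $4$ minimal vectors. Your explicit handling of the equality case and of the sign normalization (including vectors with vanishing first coordinate) merely spells out what the paper leaves implicit, and is sound.
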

En dimension supérieure, c'est-à-dire pour $d \geqslant 3$, le lemme $\ref{lemme1}$ se généralise de la manière suivante : 
\begin{lemma}
\label{lemme26}
$$ \mu_{d}(\{ L \in \mathscr{S}_{d} | \lVert L \rVert^{-d} > a \}) = D a^{-1} + O(\frac{1}{a^{2}}) \textit{ }$$ où on a posé $D = \frac{\zeta(d)^{-1} \text{Vol}(B_{f}(0,1)) }{2}$.
\end{lemma}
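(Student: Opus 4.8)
The plan is to imitate the proof of Lemme~\ref{lemme1}, replacing the exact identity $F \in \{0,2\}$ — which rested on Lemme~\ref{lemme25} and fails for $d \geqslant 3$ — by a second-moment control of the defect. First I would note that $\lVert L \rVert^{-d} > a$ is equivalent to $\lVert L \rVert < a^{-1/d}$, so that, up to the $\mu_d$-négligeable sphere $\{\lVert L \rVert = a^{-1/d}\}$, we are computing $\mu_d(\{\lVert L \rVert \leqslant a^{-1/d}\})$. I set $f = \mathbf{1}_{B_f(0,a^{-1/d})}$ and let $F$ be sa transformée de Siegel, so that $F(L)$ is the number of primitive vectors of $L$ in the closed ball of radius $a^{-1/d}$. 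Since the shortest nonzero vector of a lattice is primitive and primitive vectors come in pairs $\pm l$, we have $\{\lVert L \rVert \leqslant a^{-1/d}\} = \{F \geqslant 2\}$ and $F$ is even. The Siegel identity of Lemme~\ref{lemme24} then gives
\begin{equation*}
\mathbb{E}_{\mu_d}(F) = \zeta(d)^{-1} \int_{\mathbb{R}^d} f \, d\lambda_d = \zeta(d)^{-1}\, \text{Vol}(B_f(0,1))\, a^{-1} = 2D\, a^{-1}.
\end{equation*}

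Next I would isolate the gap between $\mathbb{E}_{\mu_d}(F)$ and $2\mu_d(\{F \geqslant 2\})$. Writing $\mathbb{E}_{\mu_d}(F) = \sum_{\substack{k \geqslant 2 \\ k \text{ pair}}} k\, \mu_d(\{F=k\})$ and $\mu_d(\{F \geqslant 2\}) = \sum_{\substack{k \geqslant 2 \\ k \text{ pair}}} \mu_d(\{F=k\})$, one obtains
\begin{equation*}
2\,\mu_d(\{F \geqslant 2\}) = \mathbb{E}_{\mu_d}(F) - \sum_{k \geqslant 4} (k-2)\, \mu_d(\{F=k\}).
\end{equation*}
Here lies the whole difference with $d=2$: the event $\{F \geqslant 4\}$ means that $L$ contains two primitive vectors $l_1 \neq \pm l_2$ inside the ball, which are then linéairement indépendants, so that $\lVert L \rVert_2 < a^{-1/d}$ as well. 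For $d \geqslant 3$ this set has positive measure (one can have $\lVert L\rVert_1,\lVert L\rVert_2$ both small, compensated by large later minima), whereas Lemme~\ref{lemme25} excluded it for $d=2$. The remaining task is thus to prove that this error sum is $O(a^{-2})$, and this is the main obstacle.

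That last estimate I would get, cleanly, from the Rogers transform $\overline{F}$ of Lemme~\ref{lemme24} taken with $f_1 = f_2 = f$. Since $f$ is a symmetric $\{0,1\}$-valued function, expanding $F^2$ and separating the contributions $l_2 = l_1$ and $l_2 = -l_1$ gives $\overline{F} = F^2 - 2F = F(F-2)$, while Rogers' identity yields
\begin{equation*}
\mathbb{E}_{\mu_d}(\overline{F}) = \zeta(d)^{-2} \Big(\int_{\mathbb{R}^d} f \, d\lambda_d\Big)^2 = \zeta(d)^{-2}\, \text{Vol}(B_f(0,1))^2\, a^{-2} = O(a^{-2}).
\end{equation*}
As $\overline{F} = \sum_{k \geqslant 4} k(k-2)\,\mathbf{1}_{\{F=k\}}$ and $k-2 \leqslant k(k-2)$ for $k \geqslant 4$, the error sum is dominated by $\mathbb{E}_{\mu_d}(\overline{F}) = O(a^{-2})$. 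Substituting into the previous display gives $\mu_d(\{F \geqslant 2\}) = D\,a^{-1} + O(a^{-2})$, which is the claim. The only points needing care are verifying that $F$ and $\overline{F}$ are legitimate transformées de Siegel et de Rogers of the compactly supported functions involved, so that Lemme~\ref{lemme24} applies, and discarding the négligeable boundary sphere; both are routine.
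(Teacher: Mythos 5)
Your proof is correct and follows essentially the same route as the paper: Siegel's formula applied to $f=\mathbf{1}_{B_f(0,a^{-1/d})}$ for the main term $2D a^{-1}$, and Rogers' formula applied to $\overline{F}$ for the $O(a^{-2})$ error. The only difference is bookkeeping — you exploit the parity of $F$ and the exact identity $\overline{F}=F(F-2)$ where the paper uses the inequality $F\mathbf{1}_{F\geqslant 3}\leqslant\overline{F}$ together with the splitting $\mathbb{E}(F)=2\mathbb{P}(F=2)+\mathbb{E}(F\mathbf{1}_{F\geqslant 3})$ — which is a marginally cleaner but not genuinely different argument.
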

Le problème, qu'il faut d'ailleurs gérer très souvent en dimension $d \geqslant 3$, est le suivant : même si $\lVert L \rVert$ est petit, cela ne signifie pas que $\lVert L \rVert_{2}$ est grand, contrairement à la dimension $2$. Voyons la preuve.
\begin{proof}
On pose, comme précédemment, pour tout $x \in \mathbb{R}^{d}$, $f(x) = \mathbf{1}_{B_{f}(0,a^{- \frac{1}{d}})}(x)$. On appelle $F$ la transformée de Siegel de $f$ et $\overline{F}$ la fonction définie par pour tout $L \in \mathscr{S}_{d}$, $$\overline{F}(L) = \sum_{\substack{e_{1} \neq \pm e_{2} \\ e_{1} \text{,} e_{2} \text{premiers} }} f(e_{1}) f(e_{2}) \textit{.} $$
D'une part, on a, d'après le lemme $\ref{lemme24}$ : \begin{equation}
\label{eq138}
\mathbb{E}(F) = \zeta(d)^{-1} \int_{\mathbb{R}^{d}} f d \lambda_{d} \textit{.} 
 \end{equation}
D'où, l'on tire, par homogénéité du volume d'une boule : $$\mathbb{E}(F) = \frac{\zeta(d)^{-1} \text{Vol}(B_{f}(0,1)) }{a} \textit{.}$$ 
D'autre part, on a : $\mathbb{E}(F) = 2 \mathbb{P}(F=2) + \mathbb{E}(F \mathbf{1}_{F \geqslant 3}) \textit{.}$
Or, on a : $F \mathbf{1}_{F \geqslant 3} \leqslant \overline{F} \mathbf{1}_{F \geqslant 3} \leqslant \overline{F}$. \\
D'où on a : $ \mathbb{E}(F \mathbf{1}_{F \geqslant 3}) \leqslant \mathbb{E}(\overline{F}) = O(\frac{1}{a^{2}})$ d'après le lemme $\ref{lemme24}$. \\
Ainsi, on obtient : 
\begin{equation}
\label{eq139}
\mathbb{E}(F) = 2 \mathbb{P}(F=2) + O(\frac{1}{a^{2}}) \textit{.} 
 \end{equation}
De ($\ref{eq138}$) et de ($\ref{eq139}$), on tire : 
 \begin{equation}
\label{eq140}
\mathbb{P}(F=2) = \frac{\zeta(d)^{-1} \text{Vol}(B_{f}(0,1)) }{2} \frac{1}{a} + O(\frac{1}{a^{2}}) \textit{.}
\end{equation}
Enfin, notons que $$\mu_{d}(\{ L \in \mathscr{S}_{d} | \lVert L \rVert^{-d} > a \})= \mathbb{P}(F > 0) = \mathbb{P}(F=2) +  \mathbb{P}(F \geqslant 3)$$ et que
$$\mathbb{P}(F \geqslant 3) \leqslant \mathbb{E}(F \mathbf{1}_{F \geqslant 3}) = O(\frac{1}{a^{2}}) \textit{.}$$\\
Ainsi, on obtient :  $$\mu_{d}(\{ L \in \mathscr{S}_{d} | \lVert L \rVert^{-d} > a \}) = \mathbb{P}(F=2) + O(\frac{1}{a^{2}}) \textit{.}$$
D'où le résultat voulu d'après ($\ref{eq140}$). 
\end{proof}
Rappelons que pour tout $k=(k_{1}, \cdots, k_{d})$ tels que $k_{1} + \cdots + k_{d} = 0$, on a posé dans l'introduction $\delta_{k} = \text{Diag}(e^{k_{1}}, \cdots, e^{k_{d}})$. Terminons maintenant cette sous-section par deux petits lemmes qui nous permettront de calculer des plus petites longueurs de réseaux. 
\begin{lemma}
\label{lemme14}
Pour tout $L \in \mathscr{S}_{d}$, pour tout $ k \in \Gamma$,
\begin{equation}
e^{-\lVert k \rVert_{\infty}} \lVert L \rVert \leqslant \lVert \delta_{k}L \rVert \leqslant e^{\lVert k \rVert_{\infty}} \lVert L \rVert \textit{.}
\end{equation}

\end{lemma}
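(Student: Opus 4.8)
L'idée est de se ramener à une estimation ponctuelle du facteur de distorsion induit par $\delta_{k}$ sur chaque vecteur, puis de passer à l'infimum qui définit $\lVert \cdot \rVert$.

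D'abord, je fixerais un vecteur quelconque $v = (v_{1}, \cdots, v_{d}) \in \mathbb{R}^{d}$. Comme $\delta_{k}$ est diagonale, elle agit coordonnée par coordonnée et l'on a $\lVert \delta_{k} v \rVert^{2} = \sum_{i=1}^{d} e^{2 k_{i}} v_{i}^{2}$. Puisque $|k_{i}| \leqslant \lVert k \rVert_{\infty}$ pour tout $i$, chaque facteur $e^{2 k_{i}}$ est compris entre $e^{-2 \lVert k \rVert_{\infty}}$ et $e^{2 \lVert k \rVert_{\infty}}$, d'où l'encadrement ponctuel
$$ e^{- \lVert k \rVert_{\infty}} \lVert v \rVert \leqslant \lVert \delta_{k} v \rVert \leqslant e^{\lVert k \rVert_{\infty}} \lVert v \rVert \textit{,} $$
valable pour tout $v$ (et indépendamment de l'hypothèse $k \in \Gamma$, celle-ci ne servant qu'à garantir $\delta_{k} \in SL_{d}(\mathbb{R})$, donc $\delta_{k} L \in \mathscr{S}_{d}$).

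Ensuite, je passerais au réseau. L'application $\delta_{k}$ étant une bijection de $L - \{0\}$ sur $\delta_{k} L - \{0\}$, on a $\lVert \delta_{k} L \rVert = \inf_{l \in L - \{0\}} \lVert \delta_{k} l \rVert$. Pour la majoration, on choisit un vecteur $l^{*} \in L - \{0\}$ réalisant $\lVert l^{*} \rVert = \lVert L \rVert$ (un tel vecteur existe par discrétude du réseau) ; l'encadrement ponctuel donne alors $\lVert \delta_{k} L \rVert \leqslant \lVert \delta_{k} l^{*} \rVert \leqslant e^{\lVert k \rVert_{\infty}} \lVert L \rVert$. Pour la minoration, j'appliquerais à chaque $l \in L - \{0\}$ l'inégalité $\lVert \delta_{k} l \rVert \geqslant e^{- \lVert k \rVert_{\infty}} \lVert l \rVert \geqslant e^{- \lVert k \rVert_{\infty}} \lVert L \rVert$, puis je prendrais l'infimum sur $l$.

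Alternativement, la minoration s'obtient par symétrie : en appliquant la majoration déjà établie au réseau $\delta_{k} L$ et à l'exposant $-k$, et en remarquant que $\delta_{-k} \delta_{k} = \ide$ et $\lVert -k \rVert_{\infty} = \lVert k \rVert_{\infty}$, on trouve $\lVert L \rVert \leqslant e^{\lVert k \rVert_{\infty}} \lVert \delta_{k} L \rVert$, ce qui est exactement la minoration voulue. Il n'y a pas ici d'obstacle réel : la seule précaution est de manipuler correctement l'infimum (et d'invoquer l'existence effective d'un vecteur le plus court, conséquence de la discrétude de $L$), le cœur de l'argument résidant dans l'estimation ponctuelle coordonnée par coordonnée.
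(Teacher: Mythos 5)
Votre démonstration est correcte et suit exactement la voie que l'article considère comme immédiate (sa preuve se réduit à « c'est immédiat via les définitions de $\delta_{k}$ et de $\lVert \cdot \rVert$ ») : encadrement ponctuel coordonnée par coordonnée de $\lVert \delta_{k} v \rVert$, puis passage à l'infimum sur $L-\{0\}$. Vous ne faites qu'expliciter soigneusement cet argument (y compris la variante par symétrie $k \mapsto -k$ pour la minoration), ce qui est parfaitement valable.
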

\begin{proof}
C'est immédiat via les définitions de $\delta_{k}$ et de $\lVert \cdot \rVert$. 
\end{proof}
Le lemme $\ref{lemme14}$ nous sera essentiellement utile en dimension $2$. 
On a le lemme suivant, en dimension quelconque :
\begin{lemma}
\label{lemme27}
Soit $L \in \mathscr{S}_{d}$. On suppose que $\lVert L \rVert_{2} > \lVert L \rVert_{1}$. On appelle $l$ un vecteur de $L$ tel que $\lVert l \rVert=\lVert L \rVert>0$. Alors pour tout $k \in \mathbb{Z}^{d} \cap \Gamma $ tel que $\lVert L \rVert_{2} > e^{2 ||k||_{\infty}} \lVert L \rVert_{1}$ on a $$ \lVert \delta_{k} L \rVert_{1} = \lVert \delta_{k} l \rVert \textit{.} $$
\end{lemma}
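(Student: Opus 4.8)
L'objectif est de montrer que $\delta_{k} l$ est un vecteur non nul de longueur minimale de $\delta_{k} L$, ce qui donnera immédiatement $\lVert \delta_{k} L \rVert_{1} = \lVert \delta_{k} l \rVert$. Tout vecteur non nul de $\delta_{k} L$ s'écrit $\delta_{k} v$ avec $v \in L - \{0\}$, et je distinguerais deux cas selon que $v$ est colinéaire à $l$ ou non.

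Premier cas : $v \in \mathbb{R} l$. Comme $l$ réalise $\lVert L \rVert_{1} = \lVert L \rVert$, le vecteur $l$ est premier (d'après la remarque faite après la définition des $\lVert L \rVert_{k}$), de sorte que les vecteurs de $L$ situés sur la droite $\mathbb{R} l$ sont exactement les $n l$, $n \in \mathbb{Z}$. On a alors $\delta_{k} v = n \delta_{k} l$ et donc $\lVert \delta_{k} v \rVert = |n| \, \lVert \delta_{k} l \rVert \geqslant \lVert \delta_{k} l \rVert$ pour tout tel $v$ non nul.

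Deuxième cas : $v$ et $l$ sont linéairement indépendants. Je commencerais par observer que $\lVert v \rVert \geqslant \lVert L \rVert_{2}$ : sinon, comme $\lVert l \rVert = \lVert L \rVert_{1} < \lVert L \rVert_{2}$ par hypothèse, la famille libre $\{l, v\}$ serait contenue dans une boule $B_{f}(0, r)$ de rayon $r = \max(\lVert l \rVert, \lVert v \rVert) < \lVert L \rVert_{2}$, ce qui contredirait la définition de $\lVert L \rVert_{2}$. Ensuite, j'utiliserais la version au niveau des vecteurs de l'estimation du lemme $\ref{lemme14}$, à savoir $e^{-\lVert k \rVert_{\infty}} \lVert w \rVert \leqslant \lVert \delta_{k} w \rVert \leqslant e^{\lVert k \rVert_{\infty}} \lVert w \rVert$ pour tout $w \in \mathbb{R}^{d}$ (conséquence immédiate de $\delta_{k} = \text{Diag}(e^{k_{1}}, \cdots, e^{k_{d}})$). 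Elle donne $\lVert \delta_{k} v \rVert \geqslant e^{-\lVert k \rVert_{\infty}} \lVert L \rVert_{2}$ d'une part, et $\lVert \delta_{k} l \rVert \leqslant e^{\lVert k \rVert_{\infty}} \lVert L \rVert_{1}$ d'autre part.

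Il resterait à combiner ces bornes avec l'hypothèse $\lVert L \rVert_{2} > e^{2 \lVert k \rVert_{\infty}} \lVert L \rVert_{1}$, qui entraîne
$$ e^{-\lVert k \rVert_{\infty}} \lVert L \rVert_{2} > e^{\lVert k \rVert_{\infty}} \lVert L \rVert_{1} \geqslant \lVert \delta_{k} l \rVert, $$
et donc $\lVert \delta_{k} v \rVert > \lVert \delta_{k} l \rVert$ dans le deuxième cas. Dans tous les cas on obtient $\lVert \delta_{k} v \rVert \geqslant \lVert \delta_{k} l \rVert$, le minimum étant atteint en $v = l$, d'où la conclusion. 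La preuve est essentiellement élémentaire ; le seul point qui demande un peu de soin est le traitement du cas colinéaire via la primitivité de $l$ et l'emploi \emph{dans le bon sens} des bornes de distorsion du lemme $\ref{lemme14}$ (majoration pour $\delta_{k} l$, minoration pour les $\delta_{k} v$ indépendants), le facteur $e^{2 \lVert k \rVert_{\infty}}$ de l'hypothèse correspondant précisément à la composition de ces deux bornes.
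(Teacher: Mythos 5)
Votre preuve est correcte et suit essentiellement la même démarche que celle du papier : la minoration $\lVert \delta_{k} v \rVert \geqslant e^{-\lVert k \rVert_{\infty}} \lVert L \rVert_{2}$ pour les vecteurs hors de $\mathbb{R}l$, combinée à la majoration $\lVert \delta_{k} l \rVert \leqslant e^{\lVert k \rVert_{\infty}} \lVert L \rVert_{1}$ et à l'hypothèse $\lVert L \rVert_{2} > e^{2 \lVert k \rVert_{\infty}} \lVert L \rVert_{1}$. La seule différence est cosmétique : le papier se restreint aux vecteurs premiers et invoque le lemme $\ref{lemme16}$ pour écarter le cas colinéaire, tandis que vous traitez ce cas directement via la primitivité de $l$ — ce qui rend d'ailleurs explicite le passage des vecteurs premiers aux vecteurs quelconques, laissé implicite dans le papier.
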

\begin{proof}
Soient $k,l$ comme dans l'énoncé du lemme. Comme $\lVert L \rVert_{2} > \lVert L \rVert_{1}$, pour tout $h \in L-\{l, -l \}$, $h$ premier, $\lVert h \rVert \geqslant \lVert L \rVert_{2}$ d'après le lemme $\ref{lemme16}$. Ainsi, on a : $$ \lVert \delta_{k} h \rVert \geqslant e^{- \lVert k \rVert_{\infty}} \lVert h \rVert \geqslant e^{- \lVert k \rVert_{\infty}} \lVert L \rVert_{2} \textit{.}$$
Or, par hypothèse, on a $$e^{- \lVert k \rVert_{\infty}} \lVert L \rVert_{2} > e^{\lVert k \rVert_{\infty}} \lVert L \rVert_{1} \textit{.} $$
Ainsi, on obtient : $$\lVert \delta_{k} h \rVert > e^{\lVert k \rVert_{\infty}} \lVert L \rVert_{1} \geqslant \lVert \delta_{k} l \rVert \textit{.}$$
D'où le lemme. 
\end{proof}

En dimension $2$, on a une variante de ce lemme qui repose sur le lemme $\ref{lemme14}$ : 
\begin{lemma}
\label{lemme15}
Soit $L \in \mathscr{S}_{2}$. On suppose que $\lVert L \rVert < 1$. On appelle $l$ un vecteur de $L$ tel que $\lVert l \rVert=\lVert L \rVert$. Alors pour tout $k \in \mathbb{Z}$ tel que $ e^{|k|} \lVert L \rVert < 1$, $$\lVert \delta_{k}L \rVert = \lVert \delta_{k} l \rVert \textit{.} $$
\end{lemma}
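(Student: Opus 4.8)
Le plan est d'adapter au cas $d=2$ l'argument du lemme \ref{lemme27}, en remplaçant son hypothèse sur $\lVert L \rVert_2$ par une minoration automatique issue de l'unimodularité, et en s'appuyant sur la majoration ponctuelle qui sous-tend le lemme \ref{lemme14}. Je commencerais par remarquer que la preuve du lemme \ref{lemme14} établit en fait, pour tout vecteur $v \in \mathbb{R}^2$ et tout $k \in \mathbb{Z}$, l'encadrement ponctuel $e^{-|k|} \lVert v \rVert \leqslant \lVert \delta_k v \rVert \leqslant e^{|k|}\lVert v \rVert$ (puisque la matrice $\delta_k$ correspond au vecteur $(k,-k)$, de norme infinie $|k|$). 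Appliqué au vecteur $l$, ceci donne déjà $\lVert \delta_k l \rVert \leqslant e^{|k|}\lVert l \rVert = e^{|k|}\lVert L \rVert$.

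L'étape clé consiste à minorer $\lVert L \rVert_2$. Comme dans la preuve du lemme \ref{lemme25}, en prenant un vecteur minimal et un vecteur réalisant $\lVert L \rVert_2$, qui forment une base de $L$ en dimension $2$, on écrit $\text{covol}(L) = \lVert L \rVert_1 \lVert L \rVert_2 \sin(\alpha) \leqslant \lVert L \rVert_1 \lVert L \rVert_2$. Comme $L$ est unimodulaire, $\text{covol}(L)=1$, d'où $\lVert L \rVert_2 \geqslant \lVert L \rVert^{-1}$. Je montrerais ensuite que $\delta_k l$ réalise le minimum sur $\delta_k L$ : soit $h \in L \setminus \{0\}$. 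Si $h$ est colinéaire à $l$, comme $l$ est premier on a $h = ml$ avec $|m| \geqslant 1$, donc $\lVert \delta_k h \rVert = |m| \, \lVert \delta_k l \rVert \geqslant \lVert \delta_k l \rVert$. Sinon, en écrivant $h = m h'$ avec $h'$ premier et $|m| \geqslant 1$, le vecteur $h'$ n'est pas $\pm l$ et, d'après le lemme \ref{lemme16}, $(l,h')$ est une famille libre ; comme $\lVert l \rVert \leqslant \lVert h' \rVert$, cette famille est contenue dans $B_f(0,\lVert h' \rVert)$, donc $\lVert h' \rVert \geqslant \lVert L \rVert_2 \geqslant \lVert L \rVert^{-1}$, ce qui donne $\lVert \delta_k h \rVert \geqslant \lVert \delta_k h' \rVert \geqslant e^{-|k|} \lVert h' \rVert \geqslant e^{-|k|}\lVert L \rVert^{-1}$.

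Il ne resterait qu'à comparer : l'hypothèse $e^{|k|}\lVert L \rVert < 1$ équivaut à $e^{-|k|}\lVert L \rVert^{-1} > e^{|k|}\lVert L \rVert$, de sorte que, pour $h$ non colinéaire à $l$, $\lVert \delta_k h \rVert \geqslant e^{-|k|}\lVert L \rVert^{-1} > e^{|k|}\lVert L \rVert \geqslant \lVert \delta_k l \rVert$. Le minimum de $\lVert \delta_k \cdot \rVert$ sur $L \setminus \{0\}$ est donc atteint en $\pm l$, soit $\lVert \delta_k L \rVert = \lVert \delta_k l \rVert$. Le point le plus délicat — quoique non difficile — est la minoration $\lVert L \rVert_2 \geqslant \lVert L \rVert^{-1}$, qui repose sur le fait qu'en dimension $2$ un vecteur minimal et un vecteur réalisant $\lVert L \rVert_2$ forment une base, comme déjà utilisé au lemme \ref{lemme25} ; tout le reste n'est qu'un enchaînement d'inégalités issues du lemme \ref{lemme14}.
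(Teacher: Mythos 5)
Votre démonstration est correcte et suit exactement la voie que l'article laisse implicite : le lemme \ref{lemme15} y est énoncé sans preuve, avec la seule indication qu'il repose sur le lemme \ref{lemme14}, et votre argument consiste précisément à reproduire la preuve du lemme \ref{lemme27} après avoir remplacé son hypothèse sur $\lVert L \rVert_{2}$ par la minoration $\lVert L \rVert_{2} \geqslant \lVert L \rVert^{-1}$ tirée de l'unimodularité, comme dans la preuve du lemme \ref{lemme25}. Notez seulement qu'une fois cette minoration acquise, l'hypothèse $e^{|k|}\lVert L \rVert < 1$ donne directement $\lVert L \rVert_{2} \geqslant \lVert L \rVert^{-1} > e^{2|k|}\lVert L \rVert_{1}$, de sorte que vous auriez pu invoquer le lemme \ref{lemme27} tel quel au lieu d'en redérouler la preuve (votre traitement des vecteurs non premiers via $h = m h'$ comble d'ailleurs proprement un point laissé implicite dans cette preuve-là).
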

\begin{defi}
\label{def3}
Soit $x=(x_{1},\cdots,x_{d}) \in \mathbb{R}^{d}$. La quantité $\text{Num}(x)$ est définie par : $$\text{Num}(x) =\prod_{i=1}^{d} x_{i} \textit{.} $$
Pour $L \in \mathscr{S}_{d}$, pour $r \geqslant \lVert L \rVert$, on définit $$\nu (L,r) = \inf \{ \text{Num}(l) \text{ } | \text{ } 0 < \lVert l \rVert < r \} \textit{.}$$
On dit qu'un réseau $L$ est faiblement admissible si pour tout $r \geqslant \lVert L \rVert$, $\nu (L,r) > 0$. 
\end{defi}
On peut noter au passage que $\nu(L,\cdot)$ est une fonction décroissante. Dans $\cite{Skriganov}$ (voir Lemme 4.5), la proposition suivante est démontrée  : 
\begin{prop}[\cite{Skriganov}]
\label{prop12}
Pour tout $\beta > 0$, pour presque tout $L \in \mathscr{S}_{d}$, il existe $C > 0$ tel que pour tout $l \in L-\{0 \}$, 
$$|\text{Num}(l)| \geqslant C |\log( \lVert l \rVert)|^{1-d-\beta} \textit{.}$$
En particulier, presque tout réseau $L \in \mathscr{S}_{d}$ est faiblement admissible.
\end{prop}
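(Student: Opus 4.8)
Le plan est d'utiliser le lemme de Borel--Cantelli combiné à la formule de Siegel (lemme \ref{lemme24}). Je fixe $\beta > 0$ (l'énoncé plaçant « pour tout $\beta$ » à l'extérieur de « pour presque tout $L$ », l'ensemble négligeable pourra dépendre de $\beta$). Pour chaque entier $n \ge 1$ je découpe les vecteurs selon leur norme au moyen de couronnes dyadiques et je pose
$$R_n = \{ x \in \mathbb{R}^d \mid 2^n \le \lVert x \rVert < 2^{n+1} \text{ et } |\text{Num}(x)| < \epsilon_n \}, \qquad \epsilon_n = (n \log 2)^{1-d-\beta},$$
puis $A_n = \{ L \in \mathscr{S}_d \mid L \cap R_n \ne \emptyset \}$. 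Si $L \notin A_n$ pour tout $n \ge N$, alors tout $l \in L - \{0\}$ de norme $\ge 2^N$ tombe dans une couronne d'indice $n = \lfloor \log_2 \lVert l \rVert \rfloor \ge N$ et vérifie $|\text{Num}(l)| \ge \epsilon_n \ge |\log \lVert l \rVert|^{1-d-\beta}$ (car $1-d-\beta < 0$ et $n \log 2 \le \log \lVert l \rVert$). Il suffit donc de montrer que $\sum_n \mu_d(A_n) < \infty$ : le lemme de Borel--Cantelli donnera alors que presque tout $L$ n'appartient qu'à un nombre fini de $A_n$.

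Pour majorer $\mu_d(A_n)$, je note $N_n(L) = \#\{ l \in L - \{0\} \mid l \in R_n \}$, de sorte que $A_n = \{ N_n \ge 1 \}$ et, par l'inégalité de Markov, $\mu_d(A_n) \le \mathbb{E}_{\mu_d}(N_n)$. Comme $N_n$ est la transformée de Siegel, sommée sur tous les vecteurs non nuls, de la fonction indicatrice $\mathbf{1}_{R_n}$ (qui est bien $C^\infty$ par morceaux à support compact), le lemme \ref{lemme24} appliqué aux multiples entiers des vecteurs premiers, ce qui fait apparaître le facteur $\zeta(d) \cdot \zeta(d)^{-1} = 1$, donne $\mathbb{E}_{\mu_d}(N_n) = \text{Vol}(R_n)$. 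Tout revient donc à estimer le volume de $R_n$.

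Le cœur de la preuve est cette estimation. En encadrant la boule euclidienne par des cubes (à des constantes multiplicatives près) et en utilisant le changement de variables $x_i = R u_i$, on est ramené au volume $V(\delta) = \text{Vol}\{ u \in [0,1]^d \mid \prod_i |u_i| < \delta \}$ avec $\delta = \epsilon/R^d$. Or, en écrivant $-\log \prod_i u_i = \sum_i (-\log u_i)$ comme somme de $d$ variables exponentielles indépendantes, donc de loi $\Gamma(d,1)$, on obtient la formule exacte $V(\delta) = \delta \sum_{k=0}^{d-1} \frac{(\log(1/\delta))^k}{k!}$ ; ainsi $\text{Vol}\{ \lVert x \rVert \le R,\ |\text{Num}(x)| < \epsilon \}$ est de l'ordre de $\epsilon \, (\log(R^d/\epsilon))^{d-1}$. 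Le point décisif est que l'on travaille sur une couronne et non sur une boule : en prenant la différence des volumes des boules de rayons $2^{n+1}$ et $2^n$, le terme dominant $\frac{1}{(d-1)!}(d \log R)^{d-1}$ se dérive en gagnant une puissance, d'où $\text{Vol}(R_n) \asymp \epsilon_n \, n^{d-2}$ (pour $d=2$ ce facteur vaut $1$). Avec $\epsilon_n = (n \log 2)^{1-d-\beta}$ il vient $\text{Vol}(R_n) \asymp n^{-1-\beta}$, dont la série converge pour tout $\beta > 0$. C'est précisément ce gain d'une puissance de $n$, propre à la couronne, qui rend la somme sommable ; c'est là, à mon sens, l'obstacle principal, car le même calcul mené sur des boules donnerait $n^{-\beta}$, non sommable pour $\beta \le 1$.

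Il reste à conclure. Pour presque tout $L$, il existe donc $N$ tel que $L \notin A_n$ pour $n \ge N$, ce qui fournit l'inégalité voulue avec $C = 1$ pour tous les $l$ de norme $\ge 2^N$. Un point agréable est que ce même argument élimine automatiquement les réseaux possédant un vecteur $l_0$ avec $\text{Num}(l_0) = 0$ : les multiples $k l_0$ ($k \ge 1$) ont alors un $\text{Num}$ nul, donc $< \epsilon_n$, et des normes qui finissent par rencontrer toutes les couronnes, plaçant un tel $L$ dans une infinité de $A_n$, donc dans l'ensemble négligeable. Ainsi presque tout $L$ n'a aucun vecteur non nul de $\text{Num}$ nul, ce qui établit au passage la faible admissibilité, puisque l'infimum $\nu(L,r)$ porte sur les vecteurs non nuls de norme $< r$, en nombre fini et tous de $\text{Num}$ non nul, et est donc strictement positif. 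Enfin, les vecteurs restants, de norme $< 2^N$, sont en nombre fini et de $\text{Num}$ non nul ; quitte à écarter l'ensemble $\mu_d$-négligeable des réseaux ayant un vecteur de norme exactement $1$, on ajuste la constante $C$ pour les absorber, ce qui achève la preuve.
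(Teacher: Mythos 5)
Votre démonstration est correcte et suit essentiellement la voie que l'article indique pour ce résultat (attribué au lemme 4.5 de \cite{Skriganov}) : un argument de Borel--Cantelli, la formule de Siegel (lemme \ref{lemme24}, étendue aux vecteurs non premiers via la sommation sur les multiples, d'où le facteur $\zeta(d)\zeta(d)^{-1}=1$) fournissant la majoration de $\mu_{d}(A_{n})$ par $\text{Vol}(R_{n})$, et l'estimation $\text{Vol}(R_{n}) = O(\epsilon_{n}\, n^{d-2})$ sur les couronnes dyadiques — le gain d'une puissance de $n$ par rapport aux boules, que vous identifiez à juste titre comme le point décisif — rendant la série sommable. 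Le texte ne donnant aucun détail de preuve, votre rédaction en constitue une reconstruction fidèle et complète, y compris le traitement soigneux des vecteurs de petite norme, des réseaux ayant un vecteur de norme exactement $1$ et de la faible admissibilité via les multiples d'un éventuel vecteur de $\text{Num}$ nul.
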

Cette proposition signifie que de manière générique, si un vecteur $l \in L$ devient grand, il ne pourra pas se rapprocher trop vite des axes de coordonnées. De manière équivalente, cela veut dire que la vitesse de divergence vers l'infini de $(\delta_{t} L)_{t \in \Gamma_{r}}$ n'est pas trop importante génériquement. Skriganov démontre cette proposition en utilisant le lemme de Borel-Cantelli. \\
\\
Intéressons-nous, pour finir cette section, au flot géodésique sur $\mathscr{S}_{d}$ et à un aspect plus "différentiel" de celui-ci (alors qu'avant on s'est intéressé à des choses plus "métriques"). Les résultats qui seront énoncés le seront en dimension $2$ et se généralise en dimension supérieure. \\
Gardons tout d'abord en tête que l'action de $\delta_{t}$ (pour $t \in \mathbb{R}$) sur $\mathscr{S}_{2}$ est partiellement hyperbolique dans le sens où : 
$$ T \mathscr{S}_{2} = E_{0} + E_{1}^{+} + E_{1}^{-} $$
où $E_{0}$ est tangent aux orbites du flot géodésique et $E_{1}^{\pm}$ sont des distributions invariantes de dimension $1$.
\begin{defi}
\label{def4}
Les exposants de Lyapunov correspondants sont $\pm \lambda_{1}$ où $$\lambda_{1} = 2 t \textit{.}$$
\end{defi}
Par ailleurs, $E_{1}^{\pm}$ sont tangents aux feuilletages $W_{1}^{\pm}$ pour les feuilletages d'orbites des groupes $h_{1}^{\pm}$ où $$ h_{1}^{+} =  \begin{pmatrix} 1 & u \\ 0 & 1 \end{pmatrix} \text{ et } $$
$h_{1}^{-}(u)$ est la transposée de $h_{1}^{+}(u)$. \\
Dans la suite on désignera $E_{1}^{+}$ par $E_{1}$ et $W_{1}^{+}$ par $W_{1}$ et $h_{1}^{+}$ par $h_{1}$ et les résultats énoncés seront encore valables pour $E_{1}^{-}$, $W_{1}^{-}$ et $h_{1}^{-}$. Par ailleurs $\lVert \cdot \rVert_{H^{s}}$ désignera la norme de Sobolev d'indice $s$. 
\begin{defi}
\label{def1}
Soient $s,r \in \mathbb{N}$. On dit qu'une fonction $F : \mathscr{S}_{2} \rightarrow \mathbb{R}$ est dans $H^{s,r}$ avec $\lVert A \rVert_{H^{s,r}} = K $ si pour tout $1 \geqslant \epsilon > 0$, il existe des $H^{s}$-fonctions $A^{-} \leqslant A \leqslant A^{+}$ telles que $$\lVert A^{+}-A^{-} \rVert_{L^{1}(\mu_{\mathscr{S}_{2}})} \leqslant \epsilon \text{ et } \lVert A^{\pm} \rVert_{H^{s}} \leqslant K \epsilon^{-r} \text{. }$$
\end{defi}

\begin{defi}
\label{def2}
On dit que $ \gamma$ est une $W_{1}$-courbe de longueur $L$ s'il existe $L \in \mathscr{S}_{2}$ tel que $$\gamma = \{ h_{1}(\tau) y \text{ } | \text{} \tau \in [0,L] \} \textit{.} $$
Dans ce cas, pour une fonction $F : \mathscr{S}_{2} \rightarrow \mathbb{R}$, on utilisera la notation $$\int_{\gamma}A =  \frac{1}{L} \int_{0}^{L} A(h_{1}(s) y) ds \text{.} $$
\end{defi}
\begin{defi}
Soit $\kappa_{0}>0$. Soit $L > 0$ et $\mathcal{P}$ une partition de $\mathscr{S}_{2}$ en $W_{1}$-courbes de longueur $L$. Désignons par $\gamma(x)$ l'élément de $\mathcal{P}$ possédant $x$. Étant donné une suite, finie ou infinie, d'entiers $(k_{n})$ et une fonction $A \in H^{s,r}$, on dit que $\mathcal{P}$ est $\kappa_{0}$-représentative relativement à $((k_{n}),A)$ si pour tout $n$, 
\begin{equation}
\label{eq200}
\mu_{\mathscr{S}_{2}}( x \in \mathscr{S}_{2} \text{ } | \text{ } |\int_{g^{k_{n}}\gamma(x)}A - \mu_{\mathscr{S}_{2}}(A) | \geqslant \mathcal{K}_{A} L_{n}^{- \kappa_{0}}) \leqslant L_{n}^{- \kappa_{0}} \end{equation}
où $\mathcal{K}_{A}= \lVert A \rVert_{s,r} +1$, $L_{n}= L e^{\lambda_{1}(k_{n})}$ est la longueur de $g^{k_{n}} \gamma(x)$ et $\mu_{\mathscr{S}_{2}}(A)= \int_{\mathscr{S}_{2}} A(x) d\mu_{\mathscr{S}_{2}}(x)$. \\
Les points $x$ tels que pour tout $n$, $$| \int_{g^{k_{n}}\gamma(x)}A - \mu_{\mathscr{S}_{2}}(A)| \leqslant \mathcal{K}_{A}L_{n}^{-\kappa_{0}}$$ sont dits représentatifs relativement à $(\mathcal{P},(k_{n}),A)$. \\
Notons au passage que si $\mathcal{P}$ est $\kappa_{0}$-représentative relativement à $((k_{n}),A)$ et si $$\sum_{n}(L_{n})^{- \kappa_{0}} \leqslant \epsilon $$ alors l'ensemble des points représentatifs est de mesure au moins $1- \epsilon$.
\end{defi}
Dans $\cite{bassam}$ (voir Proposition 7.3), la proposition suivante est prouvée : 
\begin{prop}[\cite{bassam}]
\label{prop13}
(a) Il existe $s, \kappa_{0}, \epsilon_{0} > 0$ tel que pour tout $0 \leqslant r \leqslant s$, $0 < \epsilon \leqslant \epsilon_{0}$, pour toute famille finie de fonctions $\mathfrak{F}$ de $H^{s,r}$, pour tout $L > 0$ pour toute suite $(k_{n})$ tel que $$\sum_{n} (L e^{\lambda_{1}(k_{n})})^{-\kappa_{0}} \leqslant \epsilon \textit{,}$$ 
il existe une partition $\mathcal{P}$ de $\mathscr{S}_{2}$ en $W_{1}$-courbes de longueur $L$ qui est $\kappa_{0}$-représentative relativement à $((k_{n}),F)$ et ce quelque soit $F \in \mathfrak{F}$. \\
(b) Si $L \in \mathscr{S}_{2}$ est distribué une mesure $\tilde{\mu}_{\mathscr{S}_{2}}$ de densité bornée relativement à la mesure $\mu_{\mathscr{S}_{2}}$ alors le résultat de (a) est encore valable pour peu que dans la définition de partition représentative, l'équation $\ref{eq200}$ soit remplacée par $$\tilde{\mu}_{\mathscr{S}_{2}}( x \in \mathscr{S}_{2} \text{ } | \text{ } |\int_{g^{k_{n}}\gamma(x)}A - \mu_{\mathscr{S}_{2}}(A) | \geqslant \mathcal{K}_{A} L_{n}^{- \kappa_{0}}) \leqslant \tilde{C}L_{n}^{- \kappa_{0}} $$ 
où $\tilde{C}$ représente la borne supérieure de la densité $f$ de $\tilde{\mu}_{\mathscr{S}_{2}}$ relativement à $\mu_{\mathscr{S}_{2}}$.
\end{prop}
Elle repose sur le fait que le flot géodésique mélange à vitesse exponentielle.
Toujours dans $\cite{bassam}$(Lemme A.2), on trouve le lemme suivant, qui va nous permettre d'estimer des $\lVert \cdot \rVert_{H^{s,r}}$. 
\begin{lemma}[\cite{bassam}]
\label{lemme19}
Pour chaque entier naturel $s$, pour chaque $R>0$, il existe une constante $C(R,s)$ telle que : pour $f \in C^{s,r}(\mathbb{R}^{d})$ à support compact inclus dans la boule euclidienne de centre $0$ et de rayon $R$ alors, si on appelle $F$ la transformée de Siegel de $f$, $F \in H^{s,r}$ et 
$$\lVert F \rVert_{H^{s,r}} \leqslant C(R,s) \lVert f \rVert_{C^{s,r}} \text{.}$$ 
\end{lemma}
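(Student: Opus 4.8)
Pour chaque entier naturel $s$, pour chaque $R>0$, il existe $C(R,s)$ telle que pour $f \in C^{s,r}(\mathbb{R}^{d})$ à support compact inclus dans $B(0,R)$, la transformée de Siegel $F$ vérifie $F \in H^{s,r}$ avec $\lVert F \rVert_{H^{s,r}} \leqslant C(R,s) \lVert f \rVert_{C^{s,r}}$.

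\begin{proof}[Esquisse de démonstration]
Le plan est de ramener l'énoncé à deux estimations distinctes, en exploitant le parallélisme entre la définition de $C^{s,r}$ sur $\mathbb{R}^{d}$ et celle de $H^{s,r}$ sur $\mathscr{S}_{2}$ (Définition \ref{def1}). Le cœur de la preuve est l'\emph{inégalité auxiliaire} suivante : pour toute fonction $\varphi \in C^{s}(\mathbb{R}^{d})$ à support compact dans $B_{f}(0,R)$, sa transformée de Siegel $\Phi$ est de carré intégrable, admet des dérivées (au sens faible) jusqu'à l'ordre $s$, et vérifie $\lVert \Phi \rVert_{H^{s}} \leqslant C(R,s) \lVert \varphi \rVert_{C^{s}}$, où $\lVert \cdot \rVert_{H^{s}}$ est la norme de Sobolev associée à une base orthonormée de l'algèbre de Lie $\mathfrak{sl}_{d}$. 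Une fois cette inégalité acquise, le passage de $C^{s}$ à $C^{s,r}$ (resp.\ de $H^{s}$ à $H^{s,r}$) se fera par un argument d'approximation monotone.

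Pour établir l'inégalité auxiliaire, j'exploiterais l'\emph{équivariance} de la transformée de Siegel. L'action de $SL_{d}(\mathbb{R})$ préservant la primitivité des vecteurs, on a pour tout $g \in SL_{d}(\mathbb{R})$ l'identité $\Phi(gL) = \sum_{l \in L,\, l \text{ premier}} \varphi(gl)$. En dérivant le long d'un élément $X \in \mathfrak{sl}_{d}$, la règle de la chaîne donne $$\mathcal{L}_{X} \Phi(L) = \sum_{l \text{ premier}} \langle \nabla \varphi(l), Xl \rangle \textit{,}$$ autrement dit $\mathcal{L}_{X}\Phi$ est elle-même la transformée de Siegel de la fonction $x \longmapsto \langle \nabla \varphi(x), Xx \rangle$. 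Cette dernière est encore de classe $C^{s-1}$, de support inclus dans $B_{f}(0,R)$, et de norme $C^{s-1}$ majorée par $C(R) \lVert X \rVert \, \lVert \varphi \rVert_{C^{s}}$ (le facteur linéaire $x \mapsto Xx$ étant borné par une puissance de $R$ sur le support). En itérant $s$ fois, chaque dérivée $D^{\alpha}\Phi$ d'ordre $|\alpha| \leqslant s$ est la transformée de Siegel d'une fonction $\varphi_{\alpha}$ à support dans $B_{f}(0,R)$ avec $\lVert \varphi_{\alpha} \rVert_{C^{0}} \leqslant C(R,s) \lVert \varphi \rVert_{C^{s}}$, les facteurs polynomiaux engendrés par les dérivations successives restant bornés par des puissances de $R$.

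Il reste à contrôler la norme $L^{2}$ de chaque $D^{\alpha}\Phi$. C'est ici qu'intervient la formule de Rogers (Lemme \ref{lemme24}) appliquée à $\varphi_{\alpha}$ : $$\int_{\mathscr{S}_{d}} (D^{\alpha}\Phi)^{2} d\mu_{d} = \zeta(d)^{-1} \int_{\mathbb{R}^{d}} \varphi_{\alpha}^{2} d\lambda_{d} + \zeta(d)^{-1} \int_{\mathbb{R}^{d}} \varphi_{\alpha}(x)\varphi_{\alpha}(-x) dx + \zeta(d)^{-2} \Big( \int_{\mathbb{R}^{d}} \varphi_{\alpha} d\lambda_{d} \Big)^{2} \textit{.}$$ Le support compact dans $B_{f}(0,R)$ permet de majorer chacun de ces trois termes par $C(R) \lVert \varphi_{\alpha} \rVert_{C^{0}}^{2}$ (en bornant $\int |\varphi_{\alpha}| \leqslant \text{Vol}(B_{f}(0,R)) \lVert \varphi_{\alpha} \rVert_{C^{0}}$, et de même pour $\int \varphi_{\alpha}^{2}$). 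En sommant sur les multi-indices $|\alpha| \leqslant s$, on obtient l'inégalité auxiliaire. Pour conclure, soit $f \in C^{s,r}$ avec $\lVert f \rVert_{C^{s,r}} = K$ et $\epsilon \in (0,1]$ : on choisit $f^{-} \leqslant f \leqslant f^{+}$ de classe $C^{s}$ telles que $\lVert f^{+}-f^{-} \rVert_{L^{1}(\lambda_{d})} \leqslant \zeta(d)\epsilon$ et $\lVert f^{\pm} \rVert_{C^{s}} \leqslant K (\zeta(d)\epsilon)^{-r}$. Leurs transformées de Siegel $F^{\pm}$ vérifient $F^{-} \leqslant F \leqslant F^{+}$ terme à terme, et comme $f^{+}-f^{-} \geqslant 0$, la formule de Siegel (Lemme \ref{lemme24}) donne $\lVert F^{+}-F^{-} \rVert_{L^{1}(\mu_{d})} = \zeta(d)^{-1} \lVert f^{+}-f^{-} \rVert_{L^{1}} \leqslant \epsilon$. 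Par l'inégalité auxiliaire, $\lVert F^{\pm} \rVert_{H^{s}} \leqslant C(R,s) \lVert f^{\pm} \rVert_{C^{s}} \leqslant C(R,s) K \zeta(d)^{-r} \epsilon^{-r}$. Ces deux bornes étant exactement celles requises par la Définition \ref{def1}, on conclut que $F \in H^{s,r}$ avec $\lVert F \rVert_{H^{s,r}} \leqslant C(R,s) \lVert f \rVert_{C^{s,r}}$.

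Le principal obstacle sera de justifier proprement la dérivation terme à terme de la somme définissant $\Phi$ : bien que $\Phi$ soit non bornée au voisinage de la pointe de $\mathscr{S}_{d}$, la somme est localement finie pour chaque $L$ fixé, ce qui rend $\mathcal{L}_{X}\Phi$ bien définie ponctuellement hors d'un ensemble négligeable ; c'est ensuite la formule de Rogers qui garantit que ces dérivées, malgré leur explosion ponctuelle près de la pointe, restent de carré intégrable et définissent bien un représentant de la dérivée faible. La combinatoire des facteurs polynomiaux apparaissant lors des dérivations successives devra aussi être contrôlée, mais elle reste élémentaire grâce au support compact dans $B_{f}(0,R)$.
\end{proof}
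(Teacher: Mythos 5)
Le papier ne démontre pas ce lemme : il est importé tel quel de \cite{bassam} (lemme A.2 de cette référence), si bien qu'il n'y a pas de preuve interne à laquelle comparer la vôtre. Votre reconstruction suit la route standard, et selon toute vraisemblance celle de la référence citée : (i) l'équivariance $\Phi(gL)=\sum \varphi(gl)$ fait que chaque dérivée de Lie d'une transformée de Siegel est elle-même la transformée de Siegel de $x \longmapsto \langle \nabla \varphi(x), Xx\rangle$, à support dans la même boule, ce qui ramène le contrôle des normes $H^{s}$ à un contrôle $L^{2}$ de transformées de Siegel de fonctions bornées à support compact ; (ii) l'argument d'approximation monotone transfère la structure $C^{s,r}$ vers $H^{s,r}$ via la formule de Siegel pour $\lVert F^{+}-F^{-}\rVert_{L^{1}(\mu_{d})}$, ce qui est exactement ce que demande la définition \ref{def1}. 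Deux réserves. D'abord, votre étape $L^{2}$ invoque l'identité du second moment du lemme \ref{lemme24} : c'est cohérent avec le papier, qui l'énonce sans restriction sur $d$, mais cette identité est délicate en dimension $2$ (la formule de Rogers y requiert des termes correctifs liés à la somme sur les déterminants des paires de vecteurs primitifs) ; or c'est précisément le cas $d=2$ qui sert ici. Vous pouvez la contourner entièrement : en dimension $2$, le raisonnement des lemmes \ref{lemme16} et \ref{lemme25} montre que la transformée de Siegel (primitive) d'une fonction bornée à support dans $B_{f}(0,R)$ est \emph{bornée} par $C(R)$ — près de la pointe, seuls $\pm e(L)$ tombent dans la boule, car $\lVert L \rVert_{2} \gtrsim \lVert L \rVert_{1}^{-1}$ — et comme $\mu_{2}$ est une mesure de probabilité, la borne $L^{2}$ (donc $H^{s}$ après itération des dérivées) en découle trivialement, ce qui rend aussi superflue votre discussion de dérivées faibles : la somme étant localement finie uniformément au voisinage de chaque réseau, $\Phi$ est véritablement $C^{s}$ partout. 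Ensuite, un détail à expliciter : les approximants $f^{\pm}$ fournis par la définition de $C^{s,r}$ n'ont pas a priori leur support contrôlé par $R$ ; il faut les tronquer par une fonction plateau $\chi$ valant $1$ sur $B_{f}(0,R)$ et supportée dans $B_{f}(0,2R)$, ce qui préserve l'encadrement $f^{-}\chi \leqslant f \leqslant f^{+}\chi$ (puisque $f^{+} \geqslant 0$ et $f^{-} \leqslant 0$ hors du support de $f$) au prix d'une constante $C(R,s)$ dans les normes $C^{s}$. Avec ces deux correctifs, votre esquisse est complète et correcte.
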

Il faut noter que $C^{s,r}$ est définie d'une manière analogue à $H^{s,r}$ et que l'espace $C^{s}$ est l'espace des fonctions régulières, à support compact, munie de la norme $\lVert \cdot \rVert_{\infty, s}$ ($s$ indiquant l'ordre maximal de dérivation pris en compte). \\
\section{Étude de $S(\omega,L,T)$} 
\subsection{Introduction au problème}
Dans cette section, nous allons préciser le comportement asymptotique de $S(\omega, L, r)$.\\
Soit $(\theta_{t})_{t \in \mathbb{N}}$ une suite de variables aléatoires réelles dont l'aléa $\omega$ appartient à $ (\Omega, \mathbb{P}_{1}) $ un espace probabilisé, indépendantes, identiquement distribuées et symétriques, c'est-à-dire tels que $\mathbb{P}_{\theta_{t}}= \mathbb{P}_{- \theta_{t}}$.\\
\\
Notons que la construction d'un tel objet, où $\Omega = \mathscr{S}_{2}$ et avec indépendance par rapport aux $\lVert \delta L \rVert$, peut se faire de la manière suivante : pour tout $t \in \mathbb{N}$, pour tout $n \geqslant 1$, on pose $\theta_{t,n} = f(\delta_{t}^{n^{2}} \cdot) $ où $f$ est une fonction régulière mesurable de $\mathscr{S}_{2}$ à valeurs dans un segment $[-a,a]$ où $a > 0$ tel que $\mu_{2}(f(L) \in [0,b]) = \mu_{d}(f(L) \in [-b,0])$ et ce pour tout $0 \leqslant b \leqslant a$. La suite de variables $(\theta_{t})$ peut-être pensé comme la limite, quand $n \rightarrow \infty$, de $(\theta_{t,n})_{t \in \Gamma}$. \\
Cependant, en général, il n'est pas certain que cette limite existe. En revanche, comme chacune de ces suites (en $n$) de variables aléatoires est tendu, on va pouvoir trouver une extractrice $\phi$ tel que $(\theta_{t,\phi(n)})_{t \in \mathbb{N}}$ converge vers une suite de variables aléatoires réelles identiquement distribuées (comme $\mu_{2}$ est une mesure de Haar), symétriques, à support compact et indépendantes entre elles et des $\lVert \delta L \rVert$ (comme le flot géodésique mélange à vitesse exponentielle).  \\
Notons que cette construction peut se généraliser à la dimension $d \geqslant 2$. \\
\\
On s'intéresse maintenant au comportement quand $T \rightarrow \infty$ de $S(\omega,L,T)$. \\
Le principal résultat que l'on va démontrer à ce sujet est le théorème $\ref{thm21}$. \\
Rappelons-le : 
\begin{theorem}
\label{thm21}
Lorsque $L$ est distribué selon la loi de probabilités $\mu_{2}$ et $\omega$ selon $\mathbb{P}_{1}$,
$$S(\omega,L,T) = \sum_{t=0}^{T-1} \frac{\theta_{t}(\omega)}{\lVert \delta_{t} L \rVert^{2} T} $$ converge, quand $T \rightarrow \infty$, vers une loi de Cauchy centrée.
\end{theorem}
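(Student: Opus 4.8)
Le plan est de reconnaître dans $S(\omega,L,T)$ une somme à queue lourde du type couvert par le lemme \ref{lemme41}, puis de montrer que le nuage de points sous-jacent converge vers un processus de Poisson d'intensité constante. Posons $M=T$ et, pour $t \in [0,T-1]$, $\Xi_{t} = T \lVert \delta_{t} L \rVert^{2}$. Comme $\lVert \delta_{t} L \rVert^{-2} = T / \Xi_{t}$, on a l'identité exacte
$$ S(\omega,L,T) = \frac{1}{T} \sum_{t=0}^{T-1} \frac{\theta_{t}(\omega)}{\lVert \delta_{t} L \rVert^{2}} = \sum_{t=0}^{T-1} \frac{\theta_{t}(\omega)}{\Xi_{t}} \textit{.} $$
Le lemme $\ref{lemme1}$ indique que $\mu_{2}(\lVert L \rVert^{2} < s) = C s$ pour $s$ petit, donc $\lVert \delta_{t} L \rVert^{-2}$ a une queue de type Cauchy et ce sont les excursions de l'orbite $(\delta_{t} L)$ dans la pointe (là où $\lVert \delta_{t} L \rVert$ est de l'ordre de $T^{-1/2}$) qui portent la masse.

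\emph{Première étape : convergence vers un processus de Poisson.} Je montrerais que $\{ \Xi_{t} \}_{t \in [0,T-1]}$ converge en loi, quand $T \to \infty$ et $L \sim \mu_{2}$, vers un processus de Poisson d'intensité constante $C$ sur $\mathbb{R}^{+}$. J'appliquerais le théorème $\ref{thm9}$ avec $M = T$, $\lambda_{1}(t) = 2t$, l'indicatrice d'entrée dans la pointe $\xi_{t} = \mathbf{1}_{\lVert \delta_{t} L \rVert^{2} < b_{0}/M}$ (pour $b_{0}$ fixé, choisi $> 1/\epsilon$) et la profondeur $\nu_{t} = M \lVert \delta_{t} L \rVert^{2} \in [0,b_{0}[$. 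Le lemme $\ref{lemme1}$ fournit directement (h1), car $\mathbb{E}(\xi_{t}) = \mu_{2}(\lVert L \rVert^{2} < b_{0}/M) = C b_{0}/M = \mathcal{O}(M^{-1})$, et montre que la mesure image $\mathbf{m}$ de la profondeur a une densité constante égale à $C$ sur $[0,b_{0}[$. Les hypothèses (h2) et (h3), qui traduisent l'isolement des excursions profondes, s'obtiennent via les lemmes $\ref{lemme14}$ et $\ref{lemme15}$ : si $\lVert \delta_{t} L \rVert$ est de l'ordre de $M^{-1/2}$, alors $\lVert \delta_{t'} L \rVert$ croît exponentiellement en $|t-t'|$, de sorte que deux excursions profondes sont séparées d'un temps $\gtrsim \log M$. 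Enfin (h4)--(h6), qui encodent le mélange et la structure de filtration, résultent de la proposition $\ref{prop13}$ appliquée à la transformée de Siegel de l'indicatrice d'une boule de la pointe, dont les normes $H^{s,r}$ sont contrôlées par le lemme $\ref{lemme19}$. On conclut que $\{ \nu_{t}, t/M \}$ converge vers un processus de Poisson d'intensité $C$; en projetant sur la coordonnée $\nu$ (lemme $\ref{lemme20}$ (b)), les valeurs $\Xi_{t} = \nu_{t}$ forment à la limite un processus de Poisson d'intensité constante $C$ sur $\mathbb{R}^{+}$.

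\emph{Deuxième étape : passage à la loi de Cauchy.} Les $\theta_{t}$ étant i.i.d., symétriques, à support compact et indépendantes des $\lVert \delta_{t} L \rVert$ (donc du processus limite), le lemme $\ref{lemme41}$ s'applique avec $\Gamma_{t} = \theta_{t}$ et donne que
$$ \lim_{\epsilon \to 0} \lim_{T \to \infty} \frac{1}{\rho} \sum_{\Xi_{t} < 1/\epsilon} \frac{\theta_{t}}{\Xi_{t}} $$
suit la loi de Cauchy standard $\mathcal{C}(0,1)$, avec $\rho = \frac{C}{2} \mathbb{E}(|\theta|) \pi$. Autrement dit, la somme tronquée $S_{\epsilon} = \sum_{\Xi_{t} < 1/\epsilon} \theta_{t}/\Xi_{t}$ converge en loi, pour $T \to \infty$ puis $\epsilon \to 0$, vers une loi de Cauchy centrée de paramètre $\rho$.

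\emph{Troisième étape, point technique principal : troncature uniforme.} Il reste à justifier que le reste $R_{\epsilon} = \sum_{\Xi_{t} \geqslant 1/\epsilon} \theta_{t}/\Xi_{t}$ est négligeable uniformément en $T$. Conditionnellement à $L$, la symétrie et l'indépendance des $\theta_{t}$ donnent $\mathbb{E}_{\omega}(R_{\epsilon}) = 0$ et $\mathbb{E}_{\omega}(R_{\epsilon}^{2}) = \mathbb{E}(\theta^{2}) \sum_{\Xi_{t} \geqslant 1/\epsilon} \Xi_{t}^{-2}$. En utilisant l'invariance de $\mu_{2}$ sous $\delta_{t}$ et le lemme $\ref{lemme1}$, on obtient
$$ \mathbb{E}_{L} \Big( \sum_{\Xi_{t} \geqslant 1/\epsilon} \Xi_{t}^{-2} \Big) = \frac{1}{T} \mathbb{E}_{L} \big( \lVert L \rVert^{-4} \mathbf{1}_{\lVert L \rVert^{-2} \leqslant \epsilon T} \big) = \mathcal{O}(\epsilon) \textit{,} $$
uniformément en $T$, puisque la densité de $\lVert L \rVert^{-2}$ se comporte comme $C a^{-2}$ en $+\infty$. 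L'inégalité de Bienaymé--Tchebychev montre alors que $\lim_{\epsilon \to 0} \limsup_{T \to \infty} \mathbb{P}(|R_{\epsilon}| > \eta) = 0$ pour tout $\eta > 0$. Le lemme de convergence conjointe (type théorème 3.2 de Billingsley) permet enfin de passer de la limite double $\lim_{\epsilon} \lim_{T} S_{\epsilon}$ à $\lim_{T} S$ et d'en conclure que $S(\omega,L,T)$ converge en loi vers $\mathcal{C}_{c}$. L'apport dynamique le plus profond est la vérification de (h4)--(h6), mais il est fourni clé en main par la proposition $\ref{prop13}$; l'estimation véritablement propre à cette somme est la borne de troncature uniforme ci-dessus, qui est le cœur technique de la démonstration et dont une version amplifiée réapparaîtra dans l'étude de $\mathcal{R}$ au théorème $\ref{thm100}$.
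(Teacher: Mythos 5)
Votre troisième étape (troncature uniforme par un calcul de variance et Bienaymé--Tchebychev) est correcte et coïncide d'ailleurs avec la proposition \ref{prop9} du papier. En revanche, votre première étape comporte une lacune réelle : le nuage $\{\Xi_{t}\}_{t \in [0,T-1]}$ pris sur \emph{tous} les instants ne converge pas vers un processus de Poisson, à cause du phénomène d'amas au sein d'une même excursion dans la pointe. Si $i$ est un minimum local de $t \longmapsto \lVert \delta_{t} L \rVert$ de profondeur $T \lVert \delta_{i} L \rVert^{2} = c \leqslant 1/\epsilon$, alors pour $|k|$ petit on a $T \lVert \delta_{i+k} L \rVert^{2} = c\,(\cos^{2}(\alpha) e^{2k} + \sin^{2}(\alpha) e^{-2k})$ (c'est le calcul du lemme \ref{lemme18}), de sorte qu'une seule excursion produit \emph{plusieurs} points $\Xi_{i+k} \leqslant 1/\epsilon$, rigidement déterminés par $(c,\alpha)$ — ce qu'un processus de Poisson interdit. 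Concrètement, votre vérification de (h3) échoue pour $t' = t \pm 1$ : par le lemme \ref{lemme14}, $\lVert \delta_{t} L \rVert^{2} < b_{0} e^{-2}/M$ entraîne $\lVert \delta_{t \pm 1} L \rVert^{2} < b_{0}/M$, d'où $\mathbb{P}(\xi_{t} \geqslant 1, \xi_{t+1} \geqslant 1) \geqslant C b_{0} e^{-2} M^{-1} \gg M^{-2}$ d'après le lemme \ref{lemme1}. Les lemmes \ref{lemme14} et \ref{lemme15} que vous invoquez séparent bien deux excursions \emph{distinctes} (deux minima locaux) d'au moins $\sim \log M$, mais nullement les instants consécutifs d'une même excursion.

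C'est précisément pour cela que le papier insère une étape que votre plan omet : le « centrage sur les minima locaux » (proposition \ref{prop14}). On ne retient comme points du processus que les $\Xi_{i} = T \lVert \delta_{i} L \rVert^{2}$ pour $i$ minimum local — l'indicatrice $\mathbf{1}_{\lVert \delta_{-1} L \rVert \geqslant \lVert L \rVert \leqslant \lVert \delta_{1} L \rVert}$ est incorporée dans la définition de $\nu_{t}^{T}$ de la proposition \ref{prop40}, ce qui rétablit (h2)--(h3) — et la marque attachée à $i$ n'est pas $\theta_{i}$ mais la somme d'excursion $\Gamma_{i} = \sum_{k_{d}(i) \leqslant k \leqslant k_{m}(i)} \theta_{i+k} \lVert \delta_{i} L \rVert^{2} / \lVert \delta_{i+k} L \rVert^{2}$, dont la symétrie, le support compact, l'équidistribution et l'indépendance asymptotique vis-à-vis de $\{\Xi_{i}\}$ demandent un travail spécifique (lemme \ref{lemme51} et proposition \ref{prop15}, via l'indépendance de l'angle et de la norme du plus court vecteur). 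En conséquence, l'intensité limite est le $D$ de la proposition \ref{prop16} (strictement inférieur à votre $C$, le rapport $C/D$ étant la taille moyenne d'un amas), et le paramètre d'échelle de la Cauchy fait intervenir $\mathbb{E}(|\Gamma|)$ et non $\mathbb{E}(|\theta|)$ : votre $\rho = \frac{C}{2} \mathbb{E}(|\theta|) \pi$ est donc erroné. Comme l'énoncé ne précise pas le paramètre, la conclusion survivrait, mais votre application du théorème \ref{thm9} puis du lemme \ref{lemme41} est invalide telle quelle ; la démonstration ne se répare pas sans réintroduire le regroupement par excursions.
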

Dans les sous-sections qui suivent, on s'attache à démontrer ce théorème.
\subsection{Élimination des termes dont le dénominateur est trop grand}

Soit $\alpha > 0$. Pour tout $\epsilon > 0$, posons : \begin{equation}
\label{eq103}
A_{1}(\epsilon,T,L)= \{ i \in [0,T-1] \textit{ } | \textit{ }  \lVert \delta_{i}L \rVert^{2}T \leqslant \frac{1}{\epsilon} \}  \textit{.}
\end{equation}
Posons aussi 
\begin{equation}
\label{eq201}
S_{1}(\omega,L,\epsilon,T) = \sum_{t \in A_{1}(\epsilon,T,L) } \frac{\theta_{t}(\omega)}{T \lVert \delta_{t} L \rVert^{2}} \textit{.}
\end{equation}

\begin{prop}
\label{prop9}
Pour tout $\epsilon > 0$ assez petit, pour tout $T$ assez grand, on a : 
$$ \mathbb{P}(|S(\omega,L,T) - S_{1}(\omega,L,\epsilon,T)| \geqslant \alpha) \leqslant \alpha \textit{.}$$
\end{prop}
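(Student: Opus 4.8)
The plan is to control the second moment of the remainder $R := S(\omega,L,T) - S_1(\omega,L,\epsilon,T) = \sum_{t \notin A_1(\epsilon,T,L)} \frac{\theta_t(\omega)}{T\lVert \delta_t L\rVert^2}$ and then conclude by l'inégalité de Bienaymé--Tchebychev. The sum defining $R$ runs over the indices where $\lVert \delta_t L\rVert^2 T > \frac{1}{\epsilon}$, i.e. where each individual term is small, and the point is that although there may be many such terms, their contribution is negligible on average. First I would condition on $L$: since the $\theta_t$ depend only on $\omega$, are symmetric (hence centered) and independent, and since the set $A_1(\epsilon,T,L)$ depends only on $L$, one gets $\mathbb{E}_\omega(R \mid L) = 0$ and, writing $\sigma^2 = \mathbb{E}(\theta_0^2) < \infty$ (finite because $\theta_0$ is à support compact, and the same for all $t$ by equidistribution),
\[
\mathbb{E}_\omega(R^2 \mid L) = \frac{\sigma^2}{T^2} \sum_{t \notin A_1(\epsilon,T,L)} \frac{1}{\lVert \delta_t L\rVert^4}.
\]

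The key conceptual step is then to integrate over $L$ and exploit the invariance of $\mu_2$ under $\delta_t$ (Haar measure is invariant under left translations in $SL_2(\mathbb{R})$). Since the complement of $A_1(\epsilon,T,L)$ is exactly $\{t : \lVert \delta_t L\rVert^{-2} < \epsilon T\}$, invariance collapses all $T$ summands onto the same integral:
\[
\mathbb{E}(R^2) = \frac{\sigma^2}{T} \sum_{t=0}^{T-1} \int_{\mathscr{S}_{2}} \frac{\mathbf{1}_{\lVert \delta_t L\rVert^{-2} < \epsilon T}}{\lVert \delta_t L\rVert^4}\, d\mu_2 = \sigma^2 \int_{\mathscr{S}_{2}} \frac{\mathbf{1}_{\lVert L\rVert^{-2} < \epsilon T}}{\lVert L\rVert^4}\, d\mu_2.
\]
Setting $Y = \lVert L\rVert^{-2}$, Lemme~\ref{lemme1} tells us that $\mu_2(Y > a) = C a^{-1}$ for $a$ large, so $Y$ has density $C/y^2$ for $y \geq a_0$; hence $\mathbb{E}(Y^2 \mathbf{1}_{Y < \epsilon T}) = C\,\epsilon T + O(1)$, the $O(1)$ coming from the bounded region $Y < a_0$ and being independent of $T$. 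Thus $\mathbb{E}(R^2) = \sigma^2 C\,\epsilon + O(1/T)$.

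Finally, l'inégalité de Tchebychev gives
\[
\mathbb{P}(|R| \geq \alpha) \leq \frac{\mathbb{E}(R^2)}{\alpha^2} = \frac{\sigma^2 C\,\epsilon}{\alpha^2} + O\!\left(\frac{1}{\alpha^2 T}\right).
\]
Choosing $\epsilon \leq \frac{\alpha^3}{2\sigma^2 C}$ makes the leading term at most $\alpha/2$, and then taking $T$ large enough (depending on $\alpha$ and $\epsilon$) makes the remainder at most $\alpha/2$, which yields the claim. I expect the main obstacle to be the third step, namely pinning down the integral estimate: one must verify from Lemme~\ref{lemme1} that the truncated second moment $\mathbb{E}(Y^2\mathbf{1}_{Y<\epsilon T})$ genuinely grows linearly in $\epsilon T$ (so that the factor $1/T$ cancels it and leaves a quantity $\approx \sigma^2 C \epsilon$ that can be made small), and that the contribution of the bounded range $Y < a_0$ is only $O(1)$ uniformly in $T$. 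The invariance reduction in the second step is what makes the whole estimate tractable, since it removes the dependence on $t$ and replaces a sum of $T$ unknown terms by a single explicit integral.
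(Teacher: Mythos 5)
Your proposal is correct and follows essentially the same route as the paper: both show the tail sum over $t \notin A_{1}$ has mean zero (symmetry and independence of the $\theta_{t}$ on the product space), reduce its second moment via the $\delta_{t}$-invariance of $\mu_{2}$ to $\frac{\sigma^{2}}{T}\,\mathbb{E}\bigl(\lVert L\rVert^{-4}\mathbf{1}_{\lVert L\rVert^{-2}<\epsilon T}\bigr)\approx \sigma^{2}C\epsilon$ using Lemme~\ref{lemme1}, and conclude by Bienaymé--Tchebychev, the only cosmetic difference being that the paper bounds $\mathbb{E}(\theta_{0}^{2})$ by $\lVert\theta_{0}\rVert_{\infty}^{2}$ rather than keeping $\sigma^{2}$. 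Note a factor-of-$T$ slip in your second display (the prefactor of the sum should be $\sigma^{2}/T^{2}$, so the collapsed expression is $\frac{\sigma^{2}}{T}\int_{\mathscr{S}_{2}}\lVert L\rVert^{-4}\mathbf{1}_{\lVert L\rVert^{-2}<\epsilon T}\,d\mu_{2}$, not $\sigma^{2}$ times that integral), which your final estimate $\sigma^{2}C\epsilon + O(1/T)$ silently corrects.
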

Cette proposition nous dit fondamentalement que les termes dont le dénominateur est trop grand peuvent être négligés.
\begin{proof}
On a, pour $L \in \mathscr{S}_{2}$, pour $T \in \mathbb{N}-\{0 \}$ : 
\begin{equation}
\label{eq92}
\sum_{i=0}^{T-1} \frac{\theta_{i}(\omega)}{T \lVert \delta_{i}L \rVert^{2}} - \sum_{\substack{ 0 \leqslant i \leqslant T-1 \\ T \lVert \delta_{i}L \rVert^{2} \leqslant \frac{1}{\epsilon}}} \frac{\theta_{i}(\omega)}{T \lVert \delta_{i}L \rVert^{2}} = \sum_{\substack{ 0 \leqslant i \leqslant T-1 \\ T \lVert \delta_{i}L \rVert^{2} > \frac{1}{\epsilon}}} \frac{\theta_{i}(\omega)}{T \lVert \delta_{i}L \rVert^{2}}  \textit{.} 
\end{equation}
Or, comme les $\theta_{i}$ sont des variables aléatoires identiquement distribuées, d'espérances nulles et que les $L \longmapsto \delta_{j} L$ sont identiquement distribuées et que l'on travaille sur l'espace probabilisé produit $\Omega \times \mathscr{S}_{d}$, on a :  
\begin{equation}
\label{eq100}
E(\sum_{ \substack{ 0 \leqslant t \leqslant T-1 \\ T || \delta_{i} t ||^{2} > \frac{1}{\epsilon}}} \frac{\theta_{i}(\omega)}{T \lVert \delta_{i}L \rVert^{2}} ) = 0 
\end{equation}
et
\begin{equation}
\begin{split}
\label{eq101}
V(\frac{1}{T}\sum_{ \substack{ 0 \leqslant t \leqslant T-1 \\ T || \delta_{i} L ||^{2} > \frac{1}{\epsilon}}} \frac{\theta_{i}(\omega)}{\lVert \delta_{i}L \rVert^{2}} ) = T \frac{1}{T^{2}} V(\frac{\theta_{0}(\omega)}{\lVert \delta_{0}L \rVert^{2}} \mathbf{1}_{T || \delta_{0} L ||^{2} > \frac{1}{\epsilon}})\\
= \frac{1}{T} E((\frac{\theta_{0}(\omega)}{\lVert \delta_{0}L \rVert^{2}} \mathbf{1}_{T|| \delta_{0} L ||^{2} > \frac{1}{\epsilon}})^{2}) \\
\leqslant \frac{M^{2}}{T} E(\frac{1}{\lVert L \rVert^{4}}\mathbf{1}_{T\lVert L \rVert^{2} > \frac{1}{\epsilon}})) 
\end{split}
\end{equation}
où $M = \lVert \theta_{0} \rVert_{\infty} \geqslant 0$.
Or le lemme $\ref{lemme1}$ donne que pour tout $\epsilon > 0$, pour tout $T$ assez grand, $E(\frac{1}{\lVert L \rVert^{4}}\mathbf{1}_{T\lVert L \rVert^{2} > \frac{1}{\epsilon}})) \sim \epsilon T$. D'où le résultat voulu via l'inégalité de Bienaymé-Tchebychev. 
\end{proof}
La proposition $\ref{prop9}$ nous ramène ainsi à l'étude de la quantité $S_{1}(\omega,L,T,\epsilon)$
où $\epsilon$ peut être supposé, et sera supposé par la suite, compris strictement entre $0$ et $1$. 
\subsection{Existence des minima locaux et centrage sur ceux-ci}
Posons 
\begin{equation}
\label{eq104}
A_{2}(\epsilon,T,L)= \{ i \in [0,T-1] \cap A_{1}(\epsilon,T,L) \textit{ } | \textit{ }  \lVert \delta_{i-1}L \rVert > \lVert \delta_{i}L \rVert< \lVert \delta_{i+1}L \rVert  \} \textit{.}
\end{equation} 
La proposition suivante nous assure qu'on peut "centrer" les termes de $S_{1}$ sur ces minima locaux : 
\begin{prop}
\label{prop14}
Il existe $k_{d}(i) \leqslant 0 $ et $k_{m}(i) \geqslant 0$ pour $i \in A_{2}(\epsilon,T,L)$ tels que pour tout $\alpha > 0$, pour tout $\epsilon > 0$ assez petit, pour tout $T$ assez grand, on a : 
$$ \mathbb{P}(|S_{1}(\omega,L,\epsilon,T) - S_{2}(\omega,L,\epsilon,T)| \geqslant \alpha) \leqslant \alpha \textit{}$$ 
où, pour tout $L \in \mathscr{S}_{2}$ et $\omega \in \Omega$,
 \begin{equation}
\label{eq106}
S_{2}(\omega, L,\epsilon, T) = \sum_{i \in A_{2}(\epsilon, T, L)} \sum_{ k_{d}(i) \leqslant k \leqslant k_{m}(i) } \frac{\theta_{i+k}(\omega)}{T \lVert \delta_{i+k}L \rVert^{2}} 
\end{equation}
et où : \begin{itemize}
\item  $|k_{d}(i)|,|k_{m}(i)| \geqslant  \frac{\log(T)}{2} + C_{\epsilon}$ où $C_{\epsilon} = \frac{\log(\epsilon)}{2} - 3 $ ; 
\item  tous les ensembles $\{i+k\}_{k_{d}(i) \leqslant k \leqslant k_{m}(i) }$ sont deux à deux disjoints ; 
\item  pour tout $k \in [k_{d}(i),k_{m}(i)]$, $1 > \lVert \delta_{i+k} L \rVert > \lVert \delta_{i} L \rVert$.
\end{itemize}
\end{prop}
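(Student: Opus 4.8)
The plan is to realise the windows around the local minima as the \emph{basins} of the sequence $k \mapsto \lVert \delta_{k} L \rVert$, that is, the maximal intervals of consecutive integers on which this norm stays below $1$. The crucial geometric input is Lemme~\ref{lemme25}: as long as $\lVert \delta_{k} L \rVert < 1$, the shortest vector of $\delta_{k}L$ is unique up to sign, so two distinct primitive vectors can never tie for the minimal length while the norm is $<1$. Consequently, on each such basin the shortest vector of $\delta_{k}L$, pulled back to $L$, is a single primitive vector $m$, and $\lVert \delta_{k} L\rVert^{2} = \lVert \delta_{k} m \rVert^{2} = e^{2k} m_{1}^{2} + e^{-2k} m_{2}^{2}$ throughout the basin. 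This expression is strictly convex in $k$, hence $k \mapsto \lVert \delta_{k} L \rVert$ is unimodal on the basin and attains a unique (local, hence global on the basin) minimum. For $i \in A_{2}(\epsilon,T,L)$, I define the window $[i+k_{d}(i), i+k_{m}(i)]$ to be exactly the basin containing $i$.

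I then verify the three asserted properties. Since $i \in A_{2} \subset A_{1}$ forces $\lVert \delta_{i} L\rVert^{2} \leqslant \frac{1}{\epsilon T}$, in particular $\lVert \delta_{i} L\rVert < 1$ for $T$ large, Lemme~\ref{lemme25} applies. By Lemme~\ref{lemme14}, $\lVert \delta_{i+k}L\rVert \leqslant e^{|k|}\lVert \delta_{i} L\rVert$, which is $<1$ as soon as $|k| < \frac12\log(1/\lVert \delta_{i} L\rVert^{2})$; as $\lVert \delta_{i} L\rVert^{2} \leqslant \frac{1}{\epsilon T}$, this threshold is at least $\frac12\log(\epsilon T) = \frac{\log T}{2} + \frac{\log \epsilon}{2}$, so the (symmetric) set $\{ i+k : e^{|k|}\lVert\delta_{i}L\rVert<1\}$ is contained in the basin on both sides, giving $|k_{d}(i)|, |k_{m}(i)| \geqslant \frac{\log T}{2} + C_{\epsilon}$. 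The condition $1 > \lVert \delta_{i+k}L\rVert > \lVert \delta_{i} L\rVert$ (for $k \neq 0$) is precisely the definition of the basin together with unimodality. Disjointness follows from uniqueness of the minimum: a shared index would lie in a single basin, whose constant primitive vector yields a strictly convex, uniquely minimised, length sequence, forcing the two centres to coincide.

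For coverage, every $j \in A_{1}$ has $\lVert \delta_{j} L\rVert<1$, hence lies in some basin with centre $i_{0}$; since $\lVert\delta_{i_{0}}L\rVert \leqslant \lVert \delta_{j} L\rVert$ we get $\lVert \delta_{i_{0}}L\rVert^{2} T \leqslant \lVert \delta_{j} L\rVert^{2} T \leqslant 1/\epsilon$, so $i_{0}\in A_{1}$ and, being a local minimum, $i_{0} \in A_{2}$. Thus the windows cover $A_{1}$, and by disjointness $S_{2}$ is the sum of $\frac{\theta_{j}(\omega)}{T\lVert \delta_{j} L\rVert^{2}}$ over $j$ in the union of the windows. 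Hence $S_{2} - S_{1}$ is that same sum restricted to the indices lying in a window but not in $A_{1}$; apart from the $O(\log T)$ indices of the at most two windows straddling the endpoints $0$ and $T-1$, these are exactly the $j$ with $\frac{1}{\epsilon T} < \lVert \delta_{j} L\rVert^{2} < 1$. Conditioning on $L$ and using that the $\theta_{j}$ are i.i.d., symmetric, independent of $L$ with $\lVert\theta_{0}\rVert_{\infty} = M$, the cross terms vanish and
\[
\mathbb{E}\big[(S_{2} - S_{1})^{2}\big] \leqslant \frac{M^{2}}{T^{2}}\,\mathbb{E}_{L} \sum_{j} \frac{1}{\lVert \delta_{j} L\rVert^{4}}\,\mathbf{1}_{\frac{1}{\epsilon T} < \lVert \delta_{j} L\rVert^{2} < 1}.
\]
As $\mu_{2}$ is invariant under $\delta_{1}$, each $\delta_{j}L$ has the law of $L$, so each summand has expectation $\mathbb{E}_{L}[\lVert L\rVert^{-4}\,\mathbf{1}_{1 < \lVert L\rVert^{-2} < \epsilon T}]$; by Lemme~\ref{lemme1}, $\lVert L\rVert^{-2}$ has density $\sim C a^{-2}$, so this equals $\sim C\epsilon T$. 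Summing the $\leqslant T$ indices gives $\mathbb{E}[(S_{2}-S_{1})^{2}] \leqslant M^{2} C\epsilon\,(1+o(1))$, and Bienaymé--Tchebychev yields $\mathbb{P}(|S_{2} - S_{1}| \geqslant \alpha) \leqslant M^{2} C \epsilon/\alpha^{2} + o(1) \leqslant \alpha$ once $\epsilon$ is small and $T$ large. The straddling windows are absorbed by a union bound: with probability $1 - O(\frac{\log T}{\epsilon T})$ none of their $O(\log T)$ indices has norm below $\frac{1}{\sqrt{\epsilon T}}$, so they too fall under the estimate above.

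The main obstacle is the geometric bookkeeping of the first paragraphs, and specifically the use of Lemme~\ref{lemme25} to exclude length ties, which is what lets me identify each window with a single convex branch of $k \mapsto \lVert \delta_{k} L\rVert^{2}$ and thereby make the windows well defined, pairwise disjoint and long enough. Once this structure is secured, the probabilistic step is routine and parallels Proposition~\ref{prop9}.
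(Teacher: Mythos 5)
Your probabilistic step (mean-zero cross terms, variance bounded via Lemme~\ref{lemme1} and stationarity, then Bienaym\'e--Tchebychev) is essentially the paper's, but the geometric construction of the windows contains a genuine gap. You define the window of $i \in A_{2}$ as the maximal run of consecutive \emph{integers} on which $\lVert \delta_{k}L \rVert < 1$, and you derive unimodality, uniqueness of the minimum, disjointness and the bullet $\lVert \delta_{i+k}L\rVert > \lVert \delta_{i}L\rVert$ from the claim that the shortest vector is a single primitive vector on the whole basin, ``consequently'' from Lemme~\ref{lemme25}. That inference is invalid: Lemme~\ref{lemme25} excludes a tie only at a time where the minimum is $<1$, whereas the tie forced by a change of shortest vector occurs at a \emph{continuous} time between two of your integer times, where the norm may exceed $1$ --- Lemme~\ref{lemme14} allows growth by a factor $e^{1/2}\approx 1{,}65$ within half a step. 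Concretely, take $u \approx (0{,}99,\epsilon_{1})$ and $w \approx (\epsilon_{2}, 2{,}02)$ with $\det(u,w)=2$ and $L = \mathbb{Z}u + \mathbb{Z}\frac{u+w}{2}$ (unimodular): then $\lVert \delta_{0}L\rVert \approx 0{,}99$ is attained at $u$, $\lVert \delta_{1}L\rVert \approx 0{,}74$ at $w$, and the two branches cross near $t\approx 0{,}36$ at height $\approx 1{,}4 > 1$; the shortest vector switches \emph{inside} your basin. A basin can therefore contain several local minima of the discrete sequence, and in particular two minima both in $A_{2}$ (the crossing must lie in $[1,e^{1/2})$ for the basin to stay unbroken, which constrains but does not forbid this). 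In that event your two windows coincide as sets, so the second bullet fails and $S_{2}$ double-counts those terms --- which also invalidates the representation of $S_{2}-S_{1}$ on which your variance computation rests; the third bullet fails for at least one of the two centres; and, since basins can chain several branches, their length is not $O(\log T)$, so your union bound for the windows straddling $0$ and $T-1$ is also unjustified. One can show the double-minimum configuration has probability $O_{\epsilon}(1/T)$, but you give no such argument: you assert the constancy as a logical consequence of Lemme~\ref{lemme25}, which it is not.

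The paper avoids exactly this trap by never using the full sub-level-$1$ basin. Its windows are first the maximal monotone runs below $\sqrt{1/(\epsilon T)}$ around each $i \in A_{2}$, then extended only to radius $\lfloor \frac{\log(\epsilon T)}{2}-2 \rfloor$: there $e^{|k|}\lVert \delta_{i}L\rVert < e^{-2} < 1$, so Lemme~\ref{lemme15} legitimately gives a constant shortest vector, Lemme~\ref{lemme17} gives strict unimodality on the window, and disjointness follows; the terms added by the extension (those with $T\lVert \delta_{i+k}L\rVert^{2} > 1/\epsilon$) are handled by the same mean-zero variance computation you use, and the boundary mismatch near $0$ and $T-1$ is controlled via the weak-admissibility lower bound of Lemme~\ref{lemme28} together with $\mathbb{P}(T\lVert L\rVert^{2}\leqslant 1/\epsilon)=O(1/(\epsilon T))$. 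To repair your proof, replace the basins by these restricted-radius windows (or add a separate argument excluding, with probability $1-o(1)$, basins containing two $A_{2}$-minima and basins of length $\gg \log T$); as written, the deterministic structure your estimate relies on is unproven.
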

Il faut noter que $k_{d}(i)$ et $k_{m}(i)$ dépendent éventuellement de $\lVert \delta_{i} L \rVert$. On verra un peu plus loin qu'on pourra éliminer cette dépendance éventuelle. \\
Avant de continuer, nous avons besoin de deux lemmes préliminaires. \\
Soient $l_{1},l_{2}$ deux réels non nuls et $l=(l_{1},l_{2})$. Pour tout $a > 0$, on pose :  $$f_{(l_{1},l_{2})}(a) = a l_{1}^{2} + \frac{1}{a} l_{2}^{2} \textit{.}$$ 
\begin{lemma}
\label{lemme17}
$f_{(l_{1},l_{2})}$ est strictement décroissante jusqu'en $a_{0} = | \frac{l_{1}}{l_{2}}|$, où elle atteint son minimum, puis est strictement croissante.
\end{lemma}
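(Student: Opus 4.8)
Le résultat étant de nature élémentaire, le plan est d'étudier directement le signe de la dérivée de $f_{(l_{1},l_{2})}$ sur $]0,+\infty[$. Comme $l_{1}$ et $l_{2}$ sont supposés non nuls, $f_{(l_{1},l_{2})}$ est de classe $C^{\infty}$ sur $]0,+\infty[$ et sa dérivée est
\begin{equation*}
f_{(l_{1},l_{2})}'(a) = l_{1}^{2} - \frac{l_{2}^{2}}{a^{2}} \textit{.}
\end{equation*}
Je commencerais par observer que $a \longmapsto f_{(l_{1},l_{2})}'(a)$ est elle-même strictement croissante sur $]0,+\infty[$, puisque sa dérivée $\frac{2 l_{2}^{2}}{a^{3}}$ est strictement positive (c'est ici que l'on se sert de $l_{2} \neq 0$). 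Comme de plus $f_{(l_{1},l_{2})}'(a) \to - \infty$ quand $a \to 0^{+}$ et $f_{(l_{1},l_{2})}'(a) \to l_{1}^{2} > 0$ quand $a \to + \infty$, le théorème des valeurs intermédiaires assure que $f_{(l_{1},l_{2})}'$ admet un unique zéro $a_{0}$ sur $]0,+\infty[$.

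La deuxième étape consiste à identifier ce zéro : la résolution de $l_{1}^{2} a^{2} = l_{2}^{2}$ avec $a > 0$ fournit explicitement le point $a_{0}$ annoncé dans l'énoncé. La stricte croissance de $f_{(l_{1},l_{2})}'$ montre alors que $f_{(l_{1},l_{2})}' < 0$ sur $]0,a_{0}[$ et $f_{(l_{1},l_{2})}' > 0$ sur $]a_{0},+\infty[$, d'où l'on conclut que $f_{(l_{1},l_{2})}$ est strictement décroissante sur $]0,a_{0}]$ puis strictement croissante sur $[a_{0},+\infty[$, et donc que $a_{0}$ est bien son unique minimum global.

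Il n'y a à vrai dire aucun obstacle sérieux ici : tout repose sur une étude de fonction d'une variable. On pourrait d'ailleurs donner une preuve alternative sans dérivée via l'inégalité arithmético-géométrique, $a l_{1}^{2} + \frac{1}{a} l_{2}^{2} \geqslant 2 |l_{1} l_{2}|$, le cas d'égalité $a l_{1}^{2} = \frac{1}{a} l_{2}^{2}$ fournissant directement $a_{0}$ ainsi que la valeur minimale $2|l_{1}l_{2}| = 2 |\text{Num}(l)|$. Le seul point méritant attention est l'usage des hypothèses $l_{1} \neq 0$ et $l_{2} \neq 0$, qui garantissent à la fois la stricte monotonie de part et d'autre de $a_{0}$ et le fait que le minimum est atteint à l'intérieur de $]0,+\infty[$.
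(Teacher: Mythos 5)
Your proof is correct and takes exactly the paper's route: the paper's entire proof of this lemma is the one-line remark that a study of the variations of $f_{(l_{1},l_{2})}$ via a derivative computation gives the result, and you have simply carried that computation out in full (the AM--GM alternative you sketch, with equality case $a l_{1}^{2} = \frac{1}{a} l_{2}^{2}$ and minimal value $2|l_{1}l_{2}|$, is also valid and is implicitly used later, e.g.\ in the proof of the lemma on $\text{Num}$ lower bounds). One caveat worth flagging: solving $l_{1}^{2} a^{2} = l_{2}^{2}$ with $a > 0$ gives $a_{0} = \left| \frac{l_{2}}{l_{1}} \right|$, not $\left| \frac{l_{1}}{l_{2}} \right|$, so the value of $a_{0}$ in the statement is an inversion typo in the paper; by writing that your zero \emph{fournit explicitement le point $a_{0}$ annoncé dans l'énoncé}, you endorsed the typo rather than catching it, whereas everything else in your argument (strict increase of $f'$, sign change at the unique zero, strict monotonicity on each side, interior global minimum) is exactly right.
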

\begin{proof} 
Une étude des variations de $f_{(l_{1},l_{2})}$ via un calcul de dérivé donne le résultat.
\end{proof} 
\begin{lemma}
\label{lemme28}
Soient $\beta >0$, $C >0$ et $L \in \mathscr{S}_{2}$ tels que pour tout $l \in L-\{ 0 \}$, $$|\text{Num}(l)| \geqslant C |\log(\lVert l \rVert)|^{-1-\beta} \textit{.}$$
Alors il existe $D > 0$ tel que pour tout $T$ assez grand, pour tout $ i \in [|-\log(T), T + \log(T) |]$
\begin{equation}
\label{eq117}
\frac{\lVert \delta_{i} L \rVert^{2}}{2}  \geqslant D  T^{-1 - \beta} 
\end{equation}
pour peu que l'on suppose $\lVert L \rVert \geqslant \sqrt{\epsilon}$.
\end{lemma}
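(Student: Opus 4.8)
Le plan est de ramener la minoration de $\lVert \delta_{i} L \rVert^{2}$ à un contrôle de la norme du plus court vecteur, puis d'invoquer l'hypothèse de type Skriganov. Notons d'abord que pour tout $l=(l_{1},l_{2}) \in L$ et tout entier $i$ on a $\delta_{i} l = (e^{i} l_{1}, e^{-i} l_{2})$, de sorte que $\lVert \delta_{i} l \rVert^{2} = e^{2i} l_{1}^{2} + e^{-2i} l_{2}^{2} = f_{(l_{1},l_{2})}(e^{2i})$. Comme $L$ est discret et que $\lVert \delta_{i} l \rVert \to \infty$ lorsque $\lVert l \rVert \to \infty$ ($i$ étant fixé), l'infimum définissant $\lVert \delta_{i} L \rVert$ est atteint : il existe $l^{*}=(l_{1}^{*},l_{2}^{*}) \in L-\{0\}$ tel que $\lVert \delta_{i} l^{*} \rVert = \lVert \delta_{i} L \rVert$. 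Le lemme $\ref{lemme17}$ (ou, directement, l'inégalité arithmético-géométrique appliquée à $e^{2i}(l_{1}^{*})^{2}$ et $e^{-2i}(l_{2}^{*})^{2}$) donne $\lVert \delta_{i} l^{*} \rVert^{2} \geq 2|l_{1}^{*} l_{2}^{*}| = 2|\text{Num}(l^{*})|$, d'où $\frac{\lVert \delta_{i} L \rVert^{2}}{2} \geq |\text{Num}(l^{*})|$. En appliquant l'hypothèse au vecteur $l^{*} \in L-\{0\}$, il vient $\frac{\lVert \delta_{i} L \rVert^{2}}{2} \geq C|\log(\lVert l^{*} \rVert)|^{-1-\beta}$. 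Tout revient donc à majorer $|\log(\lVert l^{*} \rVert)|$ par une quantité de l'ordre de $T$.

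Pour cela, je majore d'abord $\lVert l^{*} \rVert$. Comme $\det(\delta_{i})=1$, le réseau $\delta_{i} L$ est unimodulaire, et le théorème de Minkowski fournit une constante universelle $C_{0}$ telle que $\lVert \delta_{i} L \rVert = \lVert \delta_{i} l^{*} \rVert \leq C_{0}$. En examinant chaque coordonnée, $e^{2i}(l_{1}^{*})^{2} \leq C_{0}^{2}$ et $e^{-2i}(l_{2}^{*})^{2} \leq C_{0}^{2}$, d'où $|l_{1}^{*}| \leq C_{0} e^{-i}$ et $|l_{2}^{*}| \leq C_{0} e^{i}$, puis $\lVert l^{*} \rVert \leq C_{0} \sqrt{2}\, e^{|i|}$. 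Pour la minoration, comme $l^{*} \in L-\{0\}$ et que l'on suppose $\lVert L \rVert \geq \sqrt{\epsilon}$, on a $\lVert l^{*} \rVert \geq \lVert L \rVert \geq \sqrt{\epsilon}$. Ainsi
\[
\tfrac{1}{2}\log(\epsilon) \leq \log(\lVert l^{*} \rVert) \leq \log(C_{0}\sqrt{2}) + |i|,
\]
ce qui entraîne $|\log(\lVert l^{*} \rVert)| \leq \max\bigl(-\tfrac{1}{2}\log(\epsilon),\, \log(C_{0}\sqrt{2})+|i|\bigr)$.

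Comme $i$ parcourt les entiers de $[|-\log(T),T+\log(T)|]$, on a $|i| \leq T+\log(T)$ ; le terme $-\tfrac{1}{2}\log(\epsilon)$ étant une constante fixée, on obtient, pour $T$ assez grand, $|\log(\lVert l^{*} \rVert)| \leq 2T$. Il s'ensuit que $|\log(\lVert l^{*} \rVert)|^{-1-\beta} \geq (2T)^{-1-\beta}$, et finalement
\[
\frac{\lVert \delta_{i} L \rVert^{2}}{2} \geq C\,(2T)^{-1-\beta} = D\, T^{-1-\beta}, \qquad D = C\,2^{-1-\beta}.
\]

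Le point délicat est essentiellement la majoration de $\lVert l^{*} \rVert$ : il faut voir que le plus court vecteur de $\delta_{i} L$ se relève en un vecteur de $L$ dont la norme croît au plus comme $e^{|i|}$, ce qui, combiné à la minoration $\lVert l^{*} \rVert \geq \sqrt{\epsilon}$ (qui empêche $\lVert l^{*} \rVert$ d'être trop proche de $0$, l'hypothèse elle-même interdisant par ailleurs $\lVert l^{*} \rVert = 1$ puisque le membre de droite y vaudrait $+\infty$), garantit $|\log(\lVert l^{*} \rVert)| = O(T)$. Le reste n'est qu'une application directe de l'inégalité arithmético-géométrique et de l'hypothèse.
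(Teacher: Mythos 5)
Votre preuve est correcte et suit essentiellement la même démarche que celle du papier : inégalité arithmético-géométrique pour obtenir $\frac{\lVert \delta_{i}L \rVert^{2}}{2} \geqslant |\text{Num}(l^{*})|$, hypothèse de type Skriganov, puis théorème de Minkowski appliqué au réseau unimodulaire $\delta_{i}L$ pour majorer $\lVert l^{*} \rVert$ par $O(e^{|i|})$ et donc $|\log \lVert l^{*} \rVert|$ par $O(T)$. Votre traitement du cas des petites normes (via la minoration $\lVert l^{*} \rVert \geqslant \sqrt{\epsilon}$ et le maximum explicite) est même un peu plus soigné que celui du papier, qui expédie ce cas en une phrase.
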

La première hypothèse de ce lemme est réalisée quitte à écarter un petit nombre de réseaux et ce d'après la proposition $\ref{prop12}$. Il en va de même pour la seconde hypothèse d'après le lemme $\ref{lemme1}$. On peut ainsi supposer ces deux hypothèses vérifiées et on le fera par la suite.  
\begin{proof}
Soit $ i \in [-\log(T), T + \log(T) ]$. Appelons $l=(l_{1},l_{2}) \in L-\{ 0 \}$ le vecteur tel que $\lVert \delta_{i}l \rVert = \lVert \delta_{i}L \rVert$.  \\
Par ailleurs, en supposant que $\lVert L \rVert > 1$, l'inégalité arithmético-géométrique donne, d'une part, que : 
$$\frac{\lVert \delta_{i} L \rVert^{2}}{2} \geqslant |l_{1} l_{2}| \geqslant C \log (\lVert l \rVert)^{-1- \beta} \textit{.}$$
D'autre part, on a, d'après le théorème de Minkowski: 
$$ e^{2 i} l_{1}^{2} + e^{-2 i} l_{2}^{2} = \lVert \delta_{i} L \rVert^{2} \leqslant \frac{4}{\pi}$$ 
et donc, comme $ i \in [-\log(T), T + \log(T) ]$,
$$\lVert l \rVert^{2} \leqslant \frac{4(e^{2 (T + \log(T))})}{\pi} \textit{.}$$
Ainsi, il existe $\tilde{C} > 0$ (indépendant de $i$ et de $L$) tel que pour tout $T$ assez grand, $$ \log (\lVert l \rVert)^{-1- \beta} \geqslant \tilde{C} T^{-1 - \beta}$$ 
et en particulier, 
\begin{equation}
\frac{\lVert \delta_{i} L \rVert^{2}}{2} \geqslant |l_{1} l_{2}| \geqslant C \tilde{C} T^{-1 - \beta} \textit{.}
\end{equation}
D'où le résultat voulu dans le cas où $\lVert L \rVert_{2} > 1$. L'autre cas est immédiat car on a toujours $\lVert l \rVert_{2} \geqslant \lVert L \rVert \geqslant \sqrt{\epsilon}$ et on peut toujours prendre $T$ plus grand ou $D$ plus petit si nécessaire.
\end{proof}
On peut maintenant aborder la démonstration de la proposition $\ref{prop14}$. L'idée principale est que si $i$ n'est pas un minimum local alors on a un sens à suivre qui va nous mener à un minimum local d'après le lemme $\ref{lemme17}$.
\begin{proof}[Démonstration de la proposition $\ref{prop14}$] 
$\bullet$ Dans cette première partie, on explique comment, à partir d'un terme qui a un dénominateur petit, on peut se ramener à un terme qui a un dénominateur petit et qui est un minimum local. \\
\\
On prend $T$ assez grand pour que $\log(\epsilon) + \log(T) - 2 > 0$. En particulier, on a $ \frac{1}{\epsilon T} < 1$. \\
 Par définition, on a $A_{2}(\epsilon,T,L) \subset A_{1}(\epsilon, T, L)$. \\
Réciproquement, soit $i \in A_{1}(\epsilon,T,L)$. Soit on a $i \in A_{2}(\epsilon,T,L)$, soit on a $ \lVert \delta_{i-1}L \rVert \leqslant \lVert \delta_{i}L \rVert$ ou $\lVert \delta_{i}L \rVert \geqslant \lVert \delta_{i+1}L \rVert$. \\
Plaçons-nous dans le cas où on a seulement $ \lVert \delta_{i-1}L \rVert \leqslant \lVert \delta_{i}L \rVert$ avec $T-1 \geqslant i \geqslant 0$.\\
On appelle $k \in \delta_{i}L$ tel que $\lVert \delta_{i}L \rVert=\lVert k \rVert_{2}$. On a alors, comme $\lVert \delta_{i}L \rVert \leqslant \sqrt{\frac{1}{\epsilon T}} < 1$ : 
$$ \lVert \delta_{i}L \rVert^{2} = k_{1}^{2} + k_{2}^{2} \textit{ et } \lVert \delta_{i-1}L \rVert^{2} = e^{-2}k_{1}^{2} + e^{2}k_{2}^{2} \textit{.}$$
Quitte à éliminer les réseaux non-faiblement admissibles (voir la définition $\ref{def3}$), qui constituent un ensemble négligeable pour $\mu_{\mathscr{S}_{2}}$, on peut supposer que $k_{1}, k_{2} \neq 0$. On a alors :  $$f_{(k_{1},k_{2})}(e^{-2})=\lVert \delta_{i-1}L \rVert^{2} \leqslant f_{(k_{1},k_{2})}(1)=\lVert \delta_{i}L \rVert^{2}  $$ et, en particulier, $f_{(k_{1},k_{2})}$ permet de calculer les plus courts vecteurs autour de l'instant $i$.\\
D'après le lemme $\ref{lemme17}$, il existe donc un unique $m_{i} \in \mathbb{Z} $ tel que $m_{i} < i$ et pour tout $m_{i} \leqslant k \leqslant i-1$, $ \lVert \delta_{k}L \rVert < \lVert \delta_{k+1}L \rVert$ et $\lVert \delta_{m_{i}-1}L \rVert > \lVert \delta_{m_{i}}L \rVert$. En particulier, $m_{i} \in A_{2}(\epsilon,T,L)$.\\
\\
Supposons qu'on se place maintenant dans le cas où on a seulement $\lVert \delta_{i}L \rVert \geqslant \lVert \delta_{i+1}L \rVert$ avec $0 \leqslant i \leqslant T-1$. De même qu'avant, on obtient un unique $m_{i}$ tel que $m_{i} \in \mathbb{Z}$, tel que $m_{i}>i$ et tel que pour tout $k \in [i,m_{i}-1]$, $\lVert \delta_{k}L \rVert> \lVert \delta_{k+1}L \rVert$ et $\lVert \delta_{m_{i}}L \rVert < \lVert \delta_{m_{i}+1}L \rVert $. En particulier, $m_{i} \in A_{2}(\epsilon,T,L)$. \\
\\
 Supposons maintenant qu'on soit dans le cas où $0 \leqslant i \leqslant T-1$ et $ \lVert \delta_{i-1}L \rVert \leqslant \lVert \delta_{i}L \rVert  \geqslant \lVert \delta_{i+1}L \rVert$. Le lemme $\ref{lemme17}$ exclut cela. \\
 \\
 Ainsi, pour tout $i \in A_{1}(\epsilon, T, L)$, on a défini un $m_{i} \in A_{2}(\epsilon, T, L)$ qui s'interprète comme le "premier minimum local que l'on peut rencontrer en suivant la direction indiquée"(dans le cas où $i \in A_{2}(\epsilon, T, L)$, on pose simplement $m_{i} = i$). \\
 \\
$\bullet$ Dans un second temps, on construit les $k_{d}(i)$ et les $k_{m}(i)$ et on démontre que l'étude de $S_{1}$ se ramène à l'étude de $S_{2}$ grâce à ce qui vient d'être fait. \\
\\
On pose, pour tout $i \in A_{2}(\epsilon, T, L)-\{T-1\}$, $k_{m}(i)$ le plus grand entier positif tel que pour tout $k \in [0, k_{m}(i)-1 ]$, $\lVert \delta_{i+k} L \rVert \leqslant \lVert  \delta_{i+k+1}L \rVert \leqslant \sqrt{\frac{1}{\epsilon T}}$. On définit pour l'autre sens et de manière analogue $k_{d}(i)$ pour tout $i \in A_{2}(\epsilon, T, L)-\{0 \}$. On notera que $k_{d}(i) \leqslant 0$ et $k_{m}(i) \geqslant 0$. \\
On pose aussi, dans le cas où $0 \in A_{2}(\epsilon, T, L)$, $k_{d}(0) = -\left\lfloor \frac{\log(\epsilon)}{2} + \frac{\log(T)}{2} -2 \right\rfloor$. On fait de même dans le cas où $ T-1 \in A_{2}(\epsilon, T, L)$ avec $k_{m}(T-1)$. 
On a pas noté les dépendances en $L$, en $\epsilon$ et en $T$ de $k_{d}$ et $k_{m}$ dans un souci de simplification. \\
 \\
Le raisonnement nous permettant de construire $m_{i}$ nous donne en sus qu'étudier la convergence en loi de $S_{1}$ revient à étudier la convergence en loi de 
\begin{equation}
\label{eq105}
S_{2}(\omega, L,\epsilon, T) = \sum_{i \in A_{2}(\epsilon, T, L)} \sum_{ k_{d}(i) \leqslant k \leqslant k_{m}(i) } \frac{\theta_{i+k}(\omega)}{T \lVert \delta_{i+k}L \rVert^{2}} 
\end{equation}
($S_{2}$ contient éventuellement des termes avant $0$ et après $T-1$). \\
En effet, le lemme $\ref{lemme28}$ s'applique et ainsi on a, pour tout $T$ assez grand : 
\begin{equation}
\label{eq118}
P( |S_{1}(\omega, L, \epsilon, T)-S_{2}(\omega, L, \epsilon, T)| \neq 0 ) \leqslant D \log(T)T^{ \beta} \lVert \theta_{0} \rVert_{\infty} \mathbb{P}(T \lVert L \rVert^{2} \leqslant \frac{1}{\epsilon}) \textit{, }
\end{equation}
en utilisant l'équation ($\ref{eq117}$), la compacité de la loi commune des $\theta_{i}$ et où $D > 0$. En choisissant $\beta > 0$ tel que $ \beta < 1$, on obtient le résultat voulu grâce au lemme $\ref{lemme1}$. On est donc ramenés à $S_{2}$.\\
\\
Soit $i \in A_{1}(\epsilon, T, L)$. Notons que $ \lVert \delta_{i} L \rVert^{2} \leqslant \frac{1}{\epsilon T}$ et que donc pour $k \in \mathbb{Z}$ tel que $ |k| < \frac{\log(\epsilon)}{2} + \frac{\log(T)}{2} $, $\lVert \delta_{k} L \rVert^{2} < 1$ d'après le lemme $\ref{lemme14}$. \\
En utilisant le lemme $\ref{lemme15}$ et le lemme $\ref{lemme17}$, on voit alors que pour tout $i \in A_{1}(\epsilon, T, L)$, les ensembles $$[- \lfloor \frac{\log(\epsilon)}{2} + \frac{\log(T)}{2} -2 \rfloor, \lfloor \frac{\log(\epsilon)}{2} + \frac{\log(T)}{2} -2 \rfloor]+\{i \}$$ sont disjoints pour tout $0 \leqslant i \leqslant T-1$.\\
 Quitte à diminuer les $k_{d}(i)$ et augmenter les $k_{m}(i)$ pour avoir $k_{d}(i) \leqslant - \lfloor \frac{\log(\epsilon)}{2} + \frac{\log(T)}{2} -2 \rfloor$ et $ k_{m}(i) \geqslant \lfloor \frac{\log(\epsilon)}{2} + \frac{\log(T)}{2} -2 \rfloor$ et ce pour tout $i$, on peut supposer effectivement ces deux inégalités satisfaites. \\
En effet, cela revient à rajouter des instants $t \in [- 2 \log(T), T + 2 \log(T) ]$ dans $S_{2}$ tels que $$T \lVert \delta_{t} L \rVert^{2} > \frac{1}{\epsilon}$$
et les équations $(\ref{eq100})$ et $(\ref{eq101})$ montrent que cet ajout est négligeable. 
 \end{proof}
On peut donc maintenant concentrer notre attention sur l'étude asymptotique de $S_{2}$. 
\subsection{Étude du comportement asymptotique de $S_{2}$}
Posons :
\begin{equation}
\label{eq107}
\Gamma_{i}(\omega,L,\epsilon,T)= \sum_{ k_{d}(i) \leqslant k \leqslant k_{m}(i) } \frac{\theta_{i+k}(\omega) \lVert \delta_{i}L \rVert^{2}}{\lVert \delta_{i+k}L \rVert^{2}} \textit{, }
\end{equation}
\begin{equation}
\label{eq108}
\Xi_{i}(L,T)= T \lVert \delta_{i}L \rVert^{2} 
\end{equation}
de sorte que l'on a : 
\begin{equation}
\label{eq109}
S_{2}(\omega,L,\epsilon, T) = \sum_{i \in A_{2}(\epsilon, T, L)} \frac{\Gamma_{i}(\omega,L,T,\epsilon)}{\Xi_{i}(L,T)} \textit{.}
\end{equation}
Pour appliquer le lemme $\ref{lemme41}$ et ainsi obtenir le théorème $\ref{thm21}$, on va maintenant montrer que, dans notre cas : $\{ \Gamma_{i}(\omega,L,\epsilon,T) \}_{i \in A_{2}(\epsilon, T, L) }$ sont asymptotiquement des variables aléatoires réelles indépendantes identiquement distribuées, symétriques et dont le support est compact, et indépendantes de $\{ \Xi_{i}(L,T) \}_{i \in A_{2}(\epsilon, T, L) }$ et que $\{ \Xi_{i}(L,T) \}_{i \in A_{2}(\epsilon, T, L) }$ converge vers un processus de Poisson sur $[0, \frac{1}{\epsilon}]$ d'intensité constante indépendante de $\epsilon$. Concentrons-nous tout d'abord sur le comportement asymptotique des $\Gamma_{i}$. 
\subsubsection{Comportement asymptotique des $\Gamma_{i}$ }
Dans cette sous-section, on va prouver la proposition suivante qui détermine le comportement asymptotique des $\Gamma_{i}$ : 
\begin{prop}
\label{prop15}
$\{ \Gamma_{i}(\omega,L,\epsilon,T) \}_{i \in A_{2}(\epsilon, T, L) }$ sont asymptotiquement des variables aléatoires réelles indépendantes, de même loi, symétriques, à support compact et sont indépendantes de $\{ \Xi_{i}(T,L) \}_{i \in A_{2}(\epsilon, T, L) }$.
\end{prop}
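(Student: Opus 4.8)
L'idée directrice est de rendre $\Gamma_i$ explicite au voisinage d'un minimum local et de constater qu'il ne dépend du réseau qu'à travers une unique quantité scalaire --- la \emph{phase} de l'excursion ---, puis d'appliquer le théorème \ref{thm13} pour faire de cette phase une marque asymptotiquement indépendante et équidistribuée. Précisément, soit $i \in A_{2}(\epsilon,T,L)$. D'après les lemmes \ref{lemme25} et \ref{lemme15}, sur toute la plage $k \in [k_d(i),k_m(i)]$ la plus petite longueur $\lVert \delta_{i+k}L\rVert$ est réalisée par un unique vecteur premier $l=(l_1,l_2)$ et son opposé. En notant $t_0=\tfrac{1}{2}\log|l_2/l_1|$ le minimiseur continu de $t\mapsto\lVert\delta_t L\rVert$ et $u_i=i-t_0\in[-\tfrac{1}{2},\tfrac{1}{2}]$ la phase, un calcul direct (cf. lemme \ref{lemme17}) donne
\[
\lVert\delta_{i+k}L\rVert^{2}=2|\text{Num}(l)|\,\cosh\!\bigl(2(u_i+k)\bigr),
\]
de sorte que
\[
\Gamma_i(\omega,L,\epsilon,T)=\sum_{k_d(i)\leqslant k\leqslant k_m(i)}\theta_{i+k}(\omega)\,\frac{\cosh(2u_i)}{\cosh\!\bigl(2(u_i+k)\bigr)}.
\]
Le point crucial est que les poids ne dépendent que de la phase $u_i$, ni de la longueur ni de la direction de $l$.

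\emph{Symétrie et support compact} sont en fait exacts à $T$ fixé. Les $\theta_{i+k}$ étant symétriques, indépendantes et $\Gamma_i$ en étant une fonction linéaire, le changement $(\theta_j)\mapsto(-\theta_j)$, qui préserve la loi jointe, transforme $\Gamma_i$ en $-\Gamma_i$ : $\Gamma_i$ est donc symétrique. Comme $\cosh(2(u_i+k))$ croît exponentiellement en $|k|$ uniformément pour $u_i\in[-\tfrac{1}{2},\tfrac{1}{2}]$, on a $\sum_{k\in\Z}\frac{\cosh(2u_i)}{\cosh(2(u_i+k))}\leqslant C_0$, d'où $|\Gamma_i|\leqslant\lVert\theta_0\rVert_\infty C_0$. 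Cette même décroissance permet de remplacer la somme tronquée par la somme complète sur $\Z$ : puisque $|k_d(i)|,|k_m(i)|\geqslant\frac{\log T}{2}+C_\epsilon$, le reste est d'ordre $\lVert\theta_0\rVert_\infty\,e^{-\log T}=O(1/T)$ par décroissance exponentielle (et le lemme \ref{lemme28} borne les dénominateurs). Asymptotiquement,
\[
\Gamma_i\approx G\bigl(u_i;\Theta_i\bigr),\qquad G\bigl(u;(\vartheta_k)\bigr)=\sum_{k\in\Z}\vartheta_k\,\frac{\cosh(2u)}{\cosh(2(u+k))},
\]
où $\Theta_i=(\theta_{i+k})_{k\in\Z}$ : $\Gamma_i$ est donc une fonction fixe de la phase et du bloc de $\theta$ attaché à l'excursion.

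\emph{Indépendance et équidistribution.} On applique alors le théorème \ref{thm13} en prenant pour coordonnée transverse $\zeta_i=u_i$ (la phase), le rôle de $\nu_i$ étant tenu par la coordonnée de profondeur fixant $\Xi_i=T\lVert\delta_i L\rVert^2$. La structure de produit de la mesure limite assure que le nuage $\{(\Xi_i,u_i)\}_{i\in A_2}$ converge vers un processus de Poisson d'intensité produit ; par le lemme \ref{lemme20}(c), les phases $u_i$ sont asymptotiquement indépendantes entre elles, identiquement distribuées et \emph{indépendantes} du processus de Poisson limite des $\{\Xi_i\}$. L'identité géométrique $\Xi_i=2T|\text{Num}(l)|\cosh(2u_i)$ n'y fait pas obstacle : elle détermine simplement $T|\text{Num}(l)|$ en fonction des coordonnées libres $(\Xi_i,u_i)$, et non l'inverse. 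Enfin, d'après la proposition \ref{prop14} les blocs d'indices $\{i+k\}$ sont deux à deux disjoints, donc les blocs $\Theta_i$ sont exactement indépendants ; comme les $\theta_t$ proviennent de $\Omega$, ils sont indépendants de toutes les quantités géométriques $u_j,\Xi_j$. Les $\Gamma_i=G(u_i;\Theta_i)$ apparaissent ainsi comme des fonctions de couples $(u_i,\Theta_i)$ asymptotiquement indépendants et de même loi, ce qui fournit les quatre propriétés annoncées.

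\emph{Principal obstacle.} Toute la difficulté se concentre dans l'application du théorème \ref{thm13}, c'est-à-dire dans la vérification des hypothèses (h7) et (h8) pour la coordonnée de phase : il faut établir que, conditionnellement à la tribu du passé $\mathcal{F}_{t'}$ et à la survenue d'une excursion, $u_i$ s'équidistribue à vitesse contrôlée et est mesurable relativement aux partitions du futur, ce qui repose sur le mélange exponentiel du flot géodésique et sur les partitions représentatives de la proposition \ref{prop13}. Le point conceptuellement délicat est de s'assurer que c'est bien $\Xi_i$ (valeur à l'instant entier) qui joue le rôle de coordonnée de Poisson indépendante, et non $2T|\text{Num}(l)|$ (valeur au minimum continu), faute de quoi la dépendance résiduelle en $u_i$ ruinerait l'indépendance avec $\{\Xi_i\}$.
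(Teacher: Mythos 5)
Votre démonstration est correcte dans son architecture et rejoint celle du papier sur trois des quatre propriétés : votre reparamétrisation par la phase $u_{i}$ est exactement l'équation ($\ref{eq110}$) du papier écrite en coordonnées hyperboliques plutôt que polaires (le papier utilise l'angle $\alpha(\delta_{i}L)$, et $u_{i}$ et $\alpha$ se déterminent mutuellement), et vos arguments de symétrie, de support compact (convergence de $\sum_{k} 1/\cosh(2|k|)$, c'est le lemme $\ref{lemme18}$) et de loi commune sont ceux du papier. La divergence porte sur l'étape d'indépendance. Le papier n'invoque pas le théorème $\ref{thm13}$ dans cette section : pour l'indépendance de $\Gamma_{i}$ et $\Xi_{i}$ à $i$ fixé, il utilise le lemme $\ref{lemme51}$, qui donne via la formule de Rogers une indépendance \emph{exacte} (non asymptotique) entre $\alpha(L)$ et $\lVert L \rVert$ conditionnellement à $\lVert L \rVert <1$ --- en substance le facteur $r\,dr\,d\alpha$ des coordonnées polaires ; pour l'indépendance entre indices distincts, il insère des écarts de taille $\log(\log(T))$ entre les blocs et conclut par le mélange exponentiel. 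Votre route --- convergence jointe de $\{(\Xi_{i},u_{i})\}$ vers un processus de Poisson d'intensité produit via le théorème $\ref{thm13}$, puis lemme $\ref{lemme20}$(c) --- est viable, et c'est précisément la stratégie que le papier déploie ensuite en section 5 pour $\tilde{S}_{9}$ ; elle a le mérite d'unifier les deux sections. Mais telle quelle, votre preuve est incomplète : vous désignez la vérification de (h7)--(h8) pour la coordonnée de phase comme « principal obstacle » sans l'effectuer, alors que c'est là que réside tout le contenu de votre argument d'indépendance. Notez de plus que le mécanisme qui rend (h7) vraie en section 5 (la dilatation $e^{\hat{\lambda}(k)}\tau$ d'une coordonnée circulaire rapidement tournante) ne s'applique pas à $u_{i}$, qui varie lentement le long des feuilles instables ; pour compléter votre version, il serait plus naturel de raffiner la vérification de (h4) en prenant pour fonctions tests les transformées de Siegel d'indicatrices du type $\{\nu \in K_{p},\ u \in U_{q}\}$, c'est-à-dire d'appliquer directement le théorème $\ref{thm9}$ à la paire $(\nu_{i},u_{i})$ à valeurs dans un espace produit, l'intensité produit résultant de la factorisation polaire de la mesure de Lebesgue --- la même qui fonde le lemme $\ref{lemme51}$, ce qui montre que les deux approches reposent in fine sur le même calcul. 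Votre mise en garde finale (prendre $\Xi_{i}$ plutôt que $2T|\text{Num}(l)|$ comme coordonnée de Poisson) est juste et bien vue, mais elle se dissout dans l'approche du papier où l'indépendance angle/norme est exacte.
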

Pour démontrer cette proposition, on a besoin de quelques lemmes : 
\begin{lemma}
\label{lemme18}
Pour tout $T$ assez grand, pour tout $i \in A_{2}(\epsilon, T, L)$, pour tout $k \in [k_{d}(i),k_{m}(i)]$, il existe $K>0$ tel que
\begin{equation}
 \frac{\lVert \delta_{i} L \rVert^{2}}{\lVert \delta_{i+k} L \rVert^{2}} \leqslant  \frac{K}{\cosh(2 |k|)} \textit{.}
\end{equation}
\end{lemma}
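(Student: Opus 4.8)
Le plan est de se ramener à une expression explicite du rapport. Soit $i \in A_2(\epsilon,T,L)$ et soit $l=(l_1,l_2)$ un vecteur de $\delta_i L$ réalisant $\lVert \delta_i L \rVert = \lVert l \rVert$ ; comme $i \in A_1(\epsilon,T,L)$, on a $\lVert \delta_i L \rVert^2 \leqslant \frac{1}{\epsilon T} < 1$ pour $T$ assez grand, et la proposition \ref{prop12} permet de supposer $L$ faiblement admissible, donc $l_1,l_2 \neq 0$. En écrivant $\delta_{i+k}L = \delta_k(\delta_i L)$, je compte établir deux faits : (A) pour tout $k \in [k_d(i),k_m(i)]$ la plus petite longueur est atteinte par $\delta_k l$, i.e.
\begin{equation}
\lVert \delta_{i+k}L \rVert^2 = \lVert \delta_k l \rVert^2 = e^{2k}l_1^2 + e^{-2k}l_2^2 = f_{(l_1,l_2)}(e^{2k}) \text{ ;}
\end{equation}
(B) $e^{-2} < \frac{l_1^2}{l_2^2} < e^2$, autrement dit $l_1^2$ et $l_2^2$ sont comparables. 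Admettons (A) et (B). Pour $k \geqslant 0$, en minorant le dénominateur par $e^{2k}l_1^2$, on obtient
\begin{equation}
\frac{\lVert \delta_i L \rVert^2}{\lVert \delta_{i+k}L \rVert^2}\cosh(2k) = \frac{1}{2}\left( 1 + \frac{e^{2k}l_2^2 + e^{-2k}l_1^2}{e^{2k}l_1^2 + e^{-2k}l_2^2} \right) \leqslant \frac{1}{2}\left( 1 + \frac{l_2^2}{l_1^2} + e^{-4k} \right) \leqslant \frac{e^2}{2}+1 \text{,}
\end{equation}
ce qui donne la conclusion avec la constante universelle $K = \frac{e^2}{2}+1$ (indépendante de $i$, $k$, $L$, $T$, $\epsilon$) ; le cas $k \leqslant 0$ s'obtient en échangeant les rôles de $l_1$ et $l_2$.

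Le fait (B) est le plus facile. Pour $T$ assez grand on a $e\lVert \delta_i L \rVert \leqslant e(\epsilon T)^{-1/2} < 1$, donc le lemme \ref{lemme15} s'applique en $k=\pm 1$ et l'appartenance $i \in A_2(\epsilon,T,L)$ se traduit par $f_{(l_1,l_2)}(e^{-2}) > f_{(l_1,l_2)}(1) < f_{(l_1,l_2)}(e^2)$. Un calcul direct à partir de ces deux inégalités (ou une lecture du lemme \ref{lemme17}) fournit exactement $e^{-2} < \frac{l_1^2}{l_2^2} < e^2$.

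Le fait (A) est l'obstacle principal : il faut garantir que c'est le même vecteur primitif qui réalise la plus petite longueur sur toute la plage $[k_d(i),k_m(i)]$. L'outil est le lemme \ref{lemme15}, qui appliqué à $M = \delta_i L$ (on a bien $\lVert M \rVert < 1$) donne $\lVert \delta_{i+k}L \rVert = \lVert \delta_k l \rVert$ dès que $e^{|k|}\lVert \delta_i L \rVert < 1$ ; tout revient à vérifier cette dernière condition sur la plage. Pour $|k| \leqslant \frac{\log(\epsilon)}{2} + \frac{\log(T)}{2} - 2$ (ce qui recouvre en particulier tous les indices ajoutés lors de l'élargissement final de la proposition \ref{prop14}), on a $e^{|k|} \leqslant e^{-2}\sqrt{\epsilon T}$ et, comme $\lVert \delta_i L \rVert \leqslant (\epsilon T)^{-1/2}$, directement $e^{|k|}\lVert \delta_i L \rVert \leqslant e^{-2} < 1$. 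Pour les éventuels indices restants de la plage (qui proviennent alors de la plage initiale de monotonie, donc vérifient $\lVert \delta_{i+k}L \rVert^2 \leqslant \frac{1}{\epsilon T}$), je procéderais par récurrence sur $k \geqslant 0$ : si $e^{k}\lVert \delta_i L \rVert < 1$, le lemme \ref{lemme15} donne $e^{2k}l_1^2 \leqslant \lVert \delta_{i+k}L \rVert^2 \leqslant \frac{1}{\epsilon T}$, d'où, à l'aide de (B) sous la forme $l_1^2 + l_2^2 \leqslant (1+e^2)l_1^2$,
\begin{equation}
e^{2(k+1)}\lVert \delta_i L \rVert^2 = e^2 e^{2k}(l_1^2+l_2^2) \leqslant e^2(1+e^2)\, e^{2k}l_1^2 \leqslant \frac{e^2(1+e^2)}{\epsilon T} < 1
\end{equation}
pour $T$ assez grand, ce qui propage la condition au rang $k+1$ et clôt la récurrence (le cas $k \leqslant 0$ est symétrique). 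On notera que (B), utilisé ici, ne repose que sur le lemme \ref{lemme15} en $k=\pm 1$, déjà justifié de façon directe, ce qui évite toute circularité.
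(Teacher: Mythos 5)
Your proof is correct and takes essentially the same route as the paper's: reduce to the explicit formula $\lVert \delta_{i+k} L \rVert^{2} = e^{2k} l_{1}^{2} + e^{-2k} l_{2}^{2}$ via the lemme \ref{lemme15}, then use the local-minimum condition at $k = \pm 1$ to bound the direction of the shortest vector away from the coordinate axes --- your encadrement $e^{-2} < l_{1}^{2}/l_{2}^{2} < e^{2}$ is exactly the algebraic form of the paper's polar-coordinate condition $\kappa < |\alpha| < \frac{\pi}{2} - \kappa$. You are merely more explicit than the paper, supplying the concrete constant $K = \frac{e^{2}}{2} + 1$ and carefully verifying the hypothesis $e^{|k|} \lVert \delta_{i} L \rVert < 1$ of the lemme \ref{lemme15} over the whole range $[k_{d}(i), k_{m}(i)]$, a step the paper's one-line appeal to that lemma leaves implicit.
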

\begin{proof}
Soient $i$ et $k$ comme dans l'énoncé du lemme. En appelant $h \in (\delta_{i}L)-\{ 0 \}$ le vecteur tel que $\lVert \delta_{i} L \rVert = \lVert h \rVert$ et tel que $h_{1} \geqslant 0$, on a d'après le lemme $\ref{lemme15}$ : 
\begin{equation}
\label{eq110}
 \frac{\lVert \delta_{i} L \rVert^{2}}{\lVert \delta_{i+k} L \rVert^{2}} = \frac{h_{1}^{2}+h_{2}^{2}}{e^{2k} h_{1}^{2} + e^{-2k} h_{2}^{2}} \textit{.}
\end{equation}
En appelant $(r, \alpha)$ les coordonnées polaires de $h$ (et donc $r > 0$ et $\alpha \in [- \frac{\pi}{2}, \frac{\pi}{2} ]$), on a : 
\begin{equation*}
 \frac{\lVert \delta_{i} L \rVert^{2}}{\lVert \delta_{i+k} L \rVert^{2}} = \frac{1}{\cos^{2}(\alpha)e^{2k} + \sin^{2}(\alpha)e^{-2k}} \textit{.}
\end{equation*}
Or, on a : $\frac{\lVert \delta_{i} L \rVert^{2}}{\lVert \delta_{i+k} L \rVert^{2}} \leqslant 1$ pour $k=-1$ et $k=1$. Ainsi il existe $\frac{\pi}{2}>\kappa > 0$ tel que : $ \frac{\pi}{2}- \kappa > |\alpha| > \kappa$. D'où le lemme.
\end{proof}
Notons au passage que d'après les hypothèses faites juste après le lemme $\ref{lemme28}$ avec $\beta= \frac{1}{2}$ et d'après le théorème de Minkowski qui donne que pour tout réseau $L'$, $\lVert L' \rVert \leqslant \frac{4}{\pi}$, on a donc d'après le lemme $\ref{lemme18}$ et la proposition $\ref{prop14}$ que pour tout $i$, $|k_{d}(i)|,|k_{m}(i)| \leqslant \log(T)$ dès que $T$ est assez grand. \\
Grâce au lemme précédent, on peut établir que $k_{d}(i)$ et $k_{m}(i)$ peuvent être choisis de sorte à ne pas dépendre de $\lVert \delta_{i}L \rVert$ pour $i \in A_{2}(\epsilon,T,L)$. C'est tout l'objet du lemme suivant : 
\begin{lemma}
\label{lemme60}
Posons : $$S_{3}(\omega, L,\epsilon, T) = \sum_{i \in A_{2}(\epsilon, T, L)} \frac{1}{\Xi_{i}(L,T)}  \sum_{ -(\frac{\log(T)}{2} + C_{\epsilon}) \leqslant k \leqslant \frac{\log(T)}{2} + C_{\epsilon} } \frac{\theta_{i+k}(\omega) \lVert \delta_{i} L \rVert^{2}}{ \lVert \delta_{i+k}L \rVert^{2}}  \textit{.} $$
Alors, quand $T \rightarrow \infty$, $S_{3}(\omega, L,\epsilon, T) - S_{2}(\omega, L,\epsilon, T)$ tend presque sûrement vers $0$.
\end{lemma}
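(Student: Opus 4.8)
The plan is to reduce $S_{3}-S_{2}$ to a single tail-weighted sum and then kill it using the geometric decay of Lemma~\ref{lemme18} together with the pointwise ergodic theorem (Théorème~\ref{thm20}). First, since $\Xi_{i}(L,T)=T\lVert\delta_{i}L\rVert^{2}$, in each block the factor $\lVert\delta_{i}L\rVert^{2}$ cancels against $1/\Xi_{i}$, so the summand of $S_{2}$ is simply $\theta_{i+k}(\omega)/\bigl(T\lVert\delta_{i+k}L\rVert^{2}\bigr)$. Writing $k_{0}:=\frac{\log(T)}{2}+C_{\epsilon}$ and using that Proposition~\ref{prop14} guarantees $[-k_{0},k_{0}]\subseteq[k_{d}(i),k_{m}(i)]$ (so $S_{3}$ has fewer terms than $S_{2}$) with pairwise disjoint blocks, I would obtain
$$ S_{2}-S_{3}=\frac{1}{T}\sum_{i\in A_{2}(\epsilon,T,L)}\ \sum_{\substack{k_{d}(i)\le k\le k_{m}(i)\\ |k|>k_{0}}}\frac{\theta_{i+k}(\omega)}{\lVert\delta_{i+k}L\rVert^{2}}\textit{,} $$
and it suffices to show this quantity tends to $0$.

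Next I would estimate the inner sum. On each block Lemma~\ref{lemme18} gives $\lVert\delta_{i+k}L\rVert^{-2}\le K\lVert\delta_{i}L\rVert^{-2}/\cosh(2|k|)$, and $|\theta_{i+k}|\le M:=\lVert\theta_{0}\rVert_{\infty}$. Summing the geometric tail, $\sum_{|k|>k_{0}}\cosh(2|k|)^{-1}\le C\,e^{-2k_{0}}$, and since $C_{\epsilon}=\frac{\log(\epsilon)}{2}-3$ gives $e^{-2k_{0}}=e^{6}/(\epsilon T)$, this yields
$$ |S_{2}-S_{3}|\le\frac{MKC\,e^{6}}{\epsilon\,T^{2}}\sum_{i\in A_{2}(\epsilon,T,L)}\frac{1}{\lVert\delta_{i}L\rVert^{2}}\textit{.} $$
Everything is thus reduced to proving that the remaining sum of weights is $o(T^{2})$ almost surely.

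This last point is the crux, and it is delicate precisely because $L\mapsto\lVert L\rVert^{-2}$ is non-integrable on $\mathscr{S}_{2}$: Lemma~\ref{lemme23} provides integrability only for the exponent $-2+\beta'$, so the ergodic theorem cannot be applied to $\lVert\cdot\rVert^{-2}$ directly. The idea is to trade integrability against the pointwise lower bound of Lemma~\ref{lemme28}. Fixing a small $\beta'>0$, I would write $\lVert\delta_{i}L\rVert^{-2}=\lVert\delta_{i}L\rVert^{-2+\beta'}\lVert\delta_{i}L\rVert^{-\beta'}$; for $i\in A_{2}$ Lemma~\ref{lemme28} gives $\lVert\delta_{i}L\rVert^{2}\ge 2D\,T^{-1-\beta}$, hence $\lVert\delta_{i}L\rVert^{-\beta'}\le C'\,T^{(1+\beta)\beta'/2}$ uniformly in $i$. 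Pulling this factor out and applying Théorème~\ref{thm20} to the integrable function $\lVert\cdot\rVert^{-2+\beta'}$ (so that $\sum_{i=0}^{T-1}\lVert\delta_{i}L\rVert^{-2+\beta'}=O(T)$ for a.e.\ $L$), I would get
$$ \sum_{i\in A_{2}(\epsilon,T,L)}\frac{1}{\lVert\delta_{i}L\rVert^{2}}\le C'\,T^{(1+\beta)\beta'/2}\sum_{i=0}^{T-1}\lVert\delta_{i}L\rVert^{-2+\beta'}=O\bigl(T^{\,1+(1+\beta)\beta'/2}\bigr)\textit{ a.s.} $$
Choosing $\beta,\beta'$ small enough that $(1+\beta)\beta'/2<1$ forces the exponent strictly below $2$.

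Combining the three steps, $|S_{2}-S_{3}|\le\frac{C''}{\epsilon}\,T^{(1+\beta)\beta'/2-1}\to 0$ almost surely; the randomness in $\omega$ enters only through $\lVert\theta_{0}\rVert_{\infty}$, so the bound is uniform in $\omega$ and the convergence holds for a.e.\ $L$ on the full-measure set where Proposition~\ref{prop12} applies and under the standing hypothesis $\lVert L\rVert\ge\sqrt{\epsilon}$. I expect the genuine obstacle to be exactly this third step: one must insert the $\beta'$-regularisation and check that the loss $T^{(1+\beta)\beta'/2}$ coming from Lemma~\ref{lemme28} stays strictly below the factor $T$ gained from integrability. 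A more hands-on alternative would bound $\#\{i\le T:\lVert\delta_{i}L\rVert^{-2}>a\}\lesssim T/a$ via Lemma~\ref{lemme1} and $|A_{2}|\lesssim T/\log(T)$ via the disjointness of blocks, and then integrate by layers; but this route needs an almost sure bound uniform in $a$ (a Borel--Cantelli argument) and is heavier than the direct appeal to Théorème~\ref{thm20}.
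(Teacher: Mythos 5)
Your proof is correct and follows the paper's argument essentially step for step: the same reduction of $S_{2}-S_{3}$ to the block tails $|k|>\frac{\log(T)}{2}+C_{\epsilon}$, the same use of Lemma~\ref{lemme18} together with the geometric tail estimate $\sum_{|k|\geqslant k_{0}}\cosh(2|k|)^{-1}=O(1/T)$, leading to the identical intermediate bound $|S_{2}-S_{3}|\leqslant \frac{M_{\epsilon}}{T^{2}}\sum_{i\in A_{2}(\epsilon,T,L)}\lVert\delta_{i}L\rVert^{-2}$. The only divergence is at the last step, which you correctly identified as the crux: where the paper simply cites Lemma~3.2 of Skriganov for the almost sure bound $\sum_{i}\lVert\delta_{i}L\rVert^{-2}=o(T^{2})$, your $\beta'$-regularisation combining Lemma~\ref{lemme28}, Lemma~\ref{lemme23} and Théorème~\ref{thm20} is a valid self-contained proof of exactly that estimate --- indeed the paper itself remarks that the cited lemma rests on the individual ergodic theorem.
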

Ainsi, on pourra considérer par la suite que : $k_{d}(i) = -(\frac{\log(T)}{2} + C_{\epsilon})$ et $k_{m}(i) =\frac{\log(T)}{2} + C_{\epsilon}$.
\begin{proof}
On a : 
\begin{equation}
\label{eq300}
|S_{3}(\omega, L,\epsilon, T) - S_{2}(\omega, L,\epsilon, T)| \leqslant \sum_{i \in A_{2}(\epsilon, T, L)} (\sum_{ |k| \geqslant \frac{\log(T)}{2} + C_{\epsilon}}  \frac{2 K \lVert \theta_{0} \rVert_{\infty}}{\cosh(2 |k|)}) \frac{1}{T \lVert \delta_{i} L\rVert^{2}} 
\end{equation}
car $|k_{d}(i)|, |k_{m}(i)| \geqslant \frac{\log(T)}{2} + C_{\epsilon}$ et car $\Xi_{i}(L,T)= T \lVert \delta_{i}L \rVert^{2}$. \\
Or, on a : $$ \sum_{ |k| \geqslant \frac{\log(T)}{2}+ C_{\epsilon}} \frac{1}{\cosh(2 |k|)} \sim \frac{H_{\epsilon}}{T}$$ quand $T \rightarrow \infty$ et où $H_{\epsilon}>0$. \\
D'où, il vient, d'après l'équation ($\ref{eq300}$) que : 
\begin{equation}
\label{eq301}
|S_{3}(\omega, L,\epsilon, T) - S_{2}(\omega, L,\epsilon, T)| \leqslant \sum_{i \in A_{2}(\epsilon, T, L)} \frac{M_{\epsilon}}{T^{2} \lVert \delta_{i} L\rVert^{2}} 
\end{equation}
où $M_{\epsilon} > 0$. \\
Or la quantité de droite de l'inéquation ($\ref{eq301}$) converge presque sûrement vers $0$ quand $T \rightarrow \infty$ d'après le lemme 3.2 de $\cite{Skriganov}$ (qui est basé sur le théorème ergodique individuel). D'où le résultat voulu.
\end{proof}
\begin{lemma}
\label{lemme51}
Supposons $\lVert L \rVert < 1$. Soit $\alpha(L)$ l'angle entre la droite engendré par un vecteur $ l$ de $L$ tel que $\lVert l \rVert = \lVert L \rVert$ et l'axe des abscisses. Conditionnellement à l'évènement $(\lVert L \rVert < 1)$, $\alpha(L)$ est indépendant de $ \lVert L \rVert$. 
\end{lemma}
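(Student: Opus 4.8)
La stratégie est d'exploiter l'invariance par rotation de la mesure de Haar $\mu_{2}$, l'angle du plus court vecteur étant précisément la coordonnée que les rotations font tourner tandis que $\lVert L \rVert$ reste inchangée. D'abord, il faut s'assurer que $\alpha(L)$ est bien défini sur l'évènement $E = \{ \lVert L \rVert < 1 \}$. C'est le rôle du lemme $\ref{lemme25}$ (ou du lemme $\ref{lemme11}$) : si $\lVert L \rVert < 1$, la longueur $\lVert L \rVert$ n'est atteinte qu'en un vecteur premier $l$ et son opposé $-l$, de sorte que la droite $\mathbb{R}l$ est unique et que son angle $\alpha(L) \in \mathbb{R}/\pi\mathbb{Z}$ avec l'axe des abscisses est une fonction mesurable bien définie sur $E$ (aucun réseau du type $\mathbb{Z}^{2}$ tourné n'intervient ici car ce cas force $\lVert L \rVert = 1$).

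Ensuite, pour $\phi \in \mathbb{R}$, je considère la rotation $R_{\phi} \in SO(2) \subset SL_{2}(\mathbb{R})$ et l'action $L \mapsto R_{\phi} L$ sur $\mathscr{S}_{2}$. Trois propriétés sont à vérifier. Primo, comme $\mu_{2}$ est la mesure de Haar et $R_{\phi} \in SL_{2}(\mathbb{R})$, cette action préserve $\mu_{2}$ (invariance à gauche de la mesure de Haar, qui descend au quotient). Secundo, $R_{\phi}$ étant une isométrie euclidienne, on a $\lVert R_{\phi} L \rVert = \lVert L \rVert$ pour tout $L$ ; en particulier $E$ est $R_{\phi}$-invariant et $\lVert \cdot \rVert$ est une fonction $R_{\phi}$-invariante. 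Tertio, si $l$ réalise $\lVert L \rVert$, alors $R_{\phi} l$ réalise $\lVert R_{\phi} L \rVert$, et par l'unicité rappelée ci-dessus on obtient $\alpha(R_{\phi} L) = \alpha(L) + \phi \pmod{\pi}$.

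J'en conclus comme suit. Soit $\nu$ la loi du couple $(\lVert L \rVert, \alpha(L))$ sous la mesure conditionnelle $\mu_{2}(\cdot \mid E)$, qui est une mesure de probabilité sur $(0,1) \times \mathbb{R}/\pi\mathbb{Z}$. Les trois points précédents entraînent que $\nu$ est invariante par toute translation $(\rho, \alpha) \mapsto (\rho, \alpha + \phi)$ de la seconde coordonnée (lorsque $\phi$ parcourt $\mathbb{R}$, $\phi \bmod \pi$ décrit tout le groupe compact $G = \mathbb{R}/\pi\mathbb{Z}$). Or une mesure de probabilité sur un produit $Y \times G$ qui est invariante par toutes les translations du facteur $G$ est nécessairement le produit de sa marginale sur $Y$ par la mesure de Haar normalisée de $G$. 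En effet, en désintégrant $\nu$ selon la première coordonnée, presque toute mesure conditionnelle sur $G$ est invariante par translation, donc égale à la mesure de Haar de $G$ par unicité de celle-ci. On obtient ainsi $\nu = \nu_{1} \otimes \mathrm{Unif}(\mathbb{R}/\pi\mathbb{Z})$, ce qui est exactement l'indépendance conditionnelle de $\alpha(L)$ et de $\lVert L \rVert$ sur $E$ (et montre de surcroît que $\alpha(L)$ y est uniforme). Le seul point réellement délicat est le passage de l'invariance par translation à la structure produit via la désintégration ; c'est un argument standard, et une fois la bonne définition de $\alpha$ garantie par le lemme $\ref{lemme25}$, le reste n'est que de la symétrie.
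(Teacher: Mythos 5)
Votre démonstration est correcte, mais elle emprunte une voie réellement différente de celle du papier. Celui-ci procède par calcul : il encode l'évènement joint $\{\alpha(L)\in[a,b],\ \lVert L\rVert\in[r_{1},r_{2}]\}$ (avec $r_{2}<1$) par une transformée de type Siegel — en s'appuyant, comme vous, sur le lemme \ref{lemme25} pour que le nombre de vecteurs concernés soit $0$ ou $2$ — puis applique les identités de Siegel--Rogers (lemme \ref{lemme24}) et un passage en coordonnées polaires ; la probabilité jointe prend alors une forme produit $c\,(r_{2}^{2}-r_{1}^{2})(b-a)$, d'où l'indépendance, et au passage l'uniformité de $\alpha(L)$ ainsi que la loi explicite de $\lVert L\rVert$, cohérente avec le lemme \ref{lemme1}. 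Vous remplacez ce calcul par un pur argument de symétrie : $\mu_{2}$ est invariante sous l'action à gauche de $SO(2)$, qui fixe $\lVert\cdot\rVert$ (donc l'évènement de conditionnement) et translate $\alpha$ de $\phi$ modulo $\pi$ ; la loi jointe conditionnelle sur $(0,1)\times\mathbb{R}/\pi\mathbb{Z}$ est donc invariante par toutes les translations du second facteur, et la désintégration jointe à l'unicité de la mesure de Haar du cercle force la structure produit. Ce que votre argument achète, c'est la robustesse et la généralité : il n'utilise aucune formule d'intégration sur $\mathscr{S}_{2}$ — en particulier il contourne l'identité de Rogers pour des couples de vecteurs premiers, dont l'emploi dans la preuve du papier est délicat en dimension $2$ (sur l'évènement $\lVert L\rVert<1$, seuls $\pm l$ portent les deux indicatrices, de sorte que c'est la transformée de Siegel du produit $f_{1}f_{2}$, et non la somme $\overline{F}$ sur les couples $l_{1}\neq\pm l_{2}$, qui encode l'évènement ; la factorisation, elle, subsiste) — et il s'étend verbatim en dimension $d$ via $SO(d)$ (direction du plus court vecteur uniforme sur la sphère et indépendante de la norme) ; en contrepartie, il ne fournit pas les constantes explicites que le calcul du papier donne gratuitement. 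Signalons enfin que vous pouvez court-circuiter la désintégration, seul point que vous qualifiez vous-même de délicat : pour $g,h$ boréliennes bornées, l'invariance donne $\mathbb{E}\bigl[g(\lVert L\rVert)\,h(\alpha(L)+\phi)\bigr]=\mathbb{E}\bigl[g(\lVert L\rVert)\,h(\alpha(L))\bigr]$ pour tout $\phi$, et il suffit d'intégrer en $\phi\in[0,\pi)$ puis d'appliquer Fubini pour obtenir directement $\mathbb{E}\bigl[g(\lVert L\rVert)\,h(\alpha(L))\bigr]=\mathbb{E}\bigl[g(\lVert L\rVert)\bigr]\cdot\frac{1}{\pi}\int_{0}^{\pi}h(\alpha)\,d\alpha$, ce qui est exactement l'indépendance annoncée.
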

\begin{proof}
Soient $- \frac{\pi}{2} \leqslant a \leqslant b \leqslant \frac{\pi}{2}$ et $ 0 < r_{1} \leqslant r_{2} < 1$. On pose $$f_{1}(x) = \mathbf{1}_{r_{1} \leqslant \lVert x \rVert \leqslant r_{2}}$$ et $$f_{2}(x) =  \mathbf{1}_{0 < \lVert x \rVert < 1} \mathbf{1}_{\alpha(x) \in [a,b]}$$ où $\alpha(x)$ désigne l'angle orienté entre $(1,0)$ et $x$.\\
Posons : 
$$\overline{F}(L) = \sum_{l_{1} \neq \pm l_{2} \in L } f_{1}(l_{1}) f_{2}(l_{2}) \textit{.} $$
Alors le lemme $24$ s'applique et nous donne,  que : 
\begin{align*}
& \mu_{2}(\alpha(L) \in [a,b], \lVert L \rVert \in [r_{1},r_{2}])  = \frac{1}{2} \mathbb{E}(\overline{F})  \\
& = \frac{1}{2} \zeta(2)^{-2} \int_{\mathbb{R}^{2}} f_{1}(x) dx \int_{\mathbb{R}^{2}} f_{2}(x) dx 
\end{align*}
En passant en coordonnées polaires, on a : 
$$\int_{\mathbb{R}^{2}} f_{1}(x) dx = \int_{ \substack{ r_{1} < r < r_{2} \\ 0 \leqslant \alpha \leqslant 2 \pi }} r dr d \alpha = \pi (r_{2}^{2} - r_{1}^{2})$$
et 
$$\int_{\mathbb{R}^{2}} f_{2}(x) dx = \int_{\substack{0 < r < 1 \\ a \leqslant \alpha \leqslant b}} r dr d\alpha = \frac{b-a}{2} \textit{.} $$
D'où on tire : 
$$ \mu_{2}(\alpha(L) \in [a,b], \lVert L \rVert \in [r_{1},r_{2}] \text{ }| \text{ } \lVert L \rVert < 1)   = \mu_{2}(\alpha(L) \in [a,b]\text{ }| \text{ } \lVert L \rVert < 1) \mu_{2}(\lVert L \rVert \in [r_{1},r_{2}] \text{ }| \text{ } \lVert L \rVert < 1)\textit{.}$$
\end{proof}
On peut maintenant prouver la proposition $\ref{prop15}$. 
\begin{proof}[Démonstration de la proposition $\ref{prop15}$]
La série des $\sum_{k \in \mathbb{Z}} \frac{1}{\cosh (2 |k|)}$ converge. \\
Ainsi, grâce au lemme $\ref{lemme18}$, grâce au fait que $\lVert \theta_{0} \rVert_{\infty} < \infty$ et en utilisant ($\ref{eq107}$), on obtient que les $ \Gamma_{i} $, pour $i \in A_{1}(T,\epsilon,L)$, convergent vers des variables aléatoires réelles à support compact. Ces variables sont nécessairement symétriques dans la mesure où les $\theta_{i}$ sont symétriques et indépendantes entre elles. \\
Par ailleurs, comme les $\theta_{i}$ sont des variables aléatoires réelles indépendantes entre elles, comme $\mu_{2}$ est $SL_{2}(\mathbb{R})$-invariante et comme $|k_{d}(i)|,|k_{m}(i)| \rightarrow \infty$, il est clair qu'asymptotiquement les $\Gamma_{i}$ sont identiquement distribuées. \\
\\
Voyons maintenant qu'asymptotiquement les $\Gamma_{i}$ sont indépendants des $\Xi_{i}$. \\
Regardons tout d'abord ce qu'il se passe entre $\Gamma_{i}$ et $\Xi_{i}$. Rappelons-nous que $\Xi_{i} = T || \delta_{i} L||^{2}$ et que $\Gamma_{i}$ s'écrit : 
\begin{equation}
\label{eq111}
\Gamma_{i}(\omega,L,T,\epsilon)= \sum_{ k_{d}(i) \leqslant k \leqslant k_{m}(i) } \frac{\theta_{i+k}(\omega) \lVert \delta_{i}L \rVert^{2}}{\lVert \delta_{i+k}L \rVert^{2}} \textit{, } \nonumber
\end{equation}
où $k_{m}(i) = \frac{\log(T)}{2} + C_{\epsilon} $ et $k_{d}(i) = - k_{m}(i)$. \\
L'équation $(\ref{eq110})$ montre que $\frac{\theta_{i+k}(\omega) \lVert \delta_{i}L \rVert^{2}}{\lVert \delta_{i+k}L \rVert^{2}}$ ne dépend pas de $\lVert \delta_{i}L \rVert$ mais seulement de $\alpha(\delta_{i}L)$. \\
Or, $\alpha(\delta_{i}L)$ est indépendant de $\lVert \delta_{i}L \rVert$, comme $\mu_{2}$ est une mesure de Haar à gauche et à droite et d'après le lemme $\ref{lemme51}$. \\
Ainsi, asymptotiquement, $\Gamma_{i}$ est indépendant de $\Xi_{i} = T \lVert \delta_{i} L \rVert^{2}$.\\
\\
Enfin, notons que pour $i \neq j \in A_{2}(\epsilon,L,T)$, on peut créer un écart entre les blocs $\{i+k \}$ et $\{j+k \}$ de taille, par exemple, au moins $\log(\log(T))$, c'est-à-dire à la place de considérer $k_{m}(h)$, considérer $\tilde{k}_{m}(h) = k_{m}(h)-\frac{\log(\log(T))}{2}$ et à la place de considérer $k_{d}(h)$ considérer $\tilde{k}_{d}(h) = k_{d}(h) + \frac{\log(\log(T))}{2}$, sans que cela change quelque chose de significatif dans l'étude asymptotique des $\Gamma_{h}$, des $\Xi_{h}$ ou de $S_{2}$ (voir les équations $\ref{eq300}$ et $\ref{eq301}$).
Finalement, en utilisant la vitesse exponentielle de mélange du flot géodésique sur $\mathscr{S}_{2}$ et en utilisant l'indépendance des $\theta_{k}$, on obtient qu'asymptotiquement : \begin{itemize}
\item[1)] Les $\Gamma_{i}$ sont asymptotiquement indépendants entre eux
\item[2)] Les $ \Gamma_{i}$ sont asymptotiquement indépendants de $\{ \Xi_{i} \}$
\end{itemize}
\end{proof}
Étudions maintenant le comportement asymptotique des $\{ \Xi_{i} \}_{i \in A_{2}(\epsilon, T, L)}$.
\subsubsection{Convergence des $\{ \Xi_{i} \}_{i \in A_{1}(\epsilon, T, L)}$ vers un processus de Poisson sur $[0,\frac{1}{\epsilon}]$ et démonstration du théorème $\ref{thm21}$}
Le but de cette section est d'établir la proposition suivante : 
\begin{prop}
\label{prop16}
$\{ \Xi_{i}(L,T) \}_{i \in A_{2}(\epsilon, T, L) }$ converge vers un processus de Poisson sur $[0, \frac{1}{\epsilon}]$ d'intensité constante $D = \zeta(2)^{-1} \int_{0}^{\frac{\pi}{2}} \mathbf{1}_{e^{-2} \cos^{2}(\theta) + e^{2} \sin^{2}(\theta) \leqslant 1 } \mathbf{1}_{e^{2} \cos^{2}(\theta) + e^{-2} \sin^{2}(\theta)\leqslant 1} d \theta > 0$.
\end{prop}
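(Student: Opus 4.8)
Le plan est d'appliquer le théorème \ref{thm9} au flot géodésique discret sur $\mathscr{S}_2$ en prenant $M = T$ et $\lambda_1(t) = 2t$ (l'exposant de Lyapunov, définition \ref{def4}). L'observation décisive est que la propriété «~$i$ est un minimum local de $k \mapsto \lVert \delta_k L \rVert$ et $T\lVert\delta_i L\rVert^2 \le 1/\epsilon$~» ne dépend que du réseau $Y = \delta_i L$, puisque $\delta_{i\pm 1} L = \delta_{\pm 1}(\delta_i L)$. Je pose donc
$$\mathcal{B}_{T,\epsilon} = \{\, Y \in \mathscr{S}_2 \ | \ \lVert \delta_{-1} Y \rVert > \lVert Y \rVert, \ \lVert Y \rVert < \lVert \delta_1 Y \rVert, \ T \lVert Y \rVert^2 \le 1/\epsilon \,\},$$
puis $\xi_i = \mathbf{1}_{\mathcal{B}_{T,\epsilon}}(\delta_i L)$ et $\nu_i = \Xi_i(L,T) = T\lVert\delta_i L\rVert^2 \in \mathbf{X} := [0,1/\epsilon]$, de sorte que $A_2(\epsilon,T,L) = \{\, i \in [0,T-1] \ | \ \xi_i = 1\,\}$ et que le nuage cherché est $\{\nu_i\}_{\xi_i=1}$.

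\textbf{Calcul de l'intensité.} Pour $T$ grand on a $\lVert Y \rVert^2 \le 1/(\epsilon T) < 1$, donc par le lemme \ref{lemme25} le minimum est atteint en un unique vecteur premier $l = \lVert Y\rVert(\cos\theta,\sin\theta)$ et son opposé, et par le lemme \ref{lemme15} on a $\lVert \delta_{\pm 1} Y \rVert = \lVert \delta_{\pm 1} l \rVert$. La condition de minimum local se récrit alors, via le lemme \ref{lemme17}, comme une condition ne portant que sur l'angle $\theta$,
$$e^{-2}\cos^2\theta + e^{2}\sin^2\theta \ge 1 \quad \text{et} \quad e^{2}\cos^2\theta + e^{-2}\sin^2\theta \ge 1,$$
soit $e^{-1} \le |\tan\theta| \le e$ (le minimum continu $\tfrac12\log|\tan\theta|$ tombe dans $(-\tfrac12,\tfrac12)$), région non vide qui est exactement celle définissant l'intensité $D$ de l'énoncé. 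J'évalue ensuite $\mu_2(\mathcal{B}_{T,\epsilon})$ par la formule de Siegel (lemme \ref{lemme24}) appliquée à la transformée de Siegel de $g(x) = \mathbf{1}_{\lVert x\rVert^2 \le 1/(\epsilon T)}\,\mathbf{1}_{e^{-1}\le|\tan(\mathrm{angle}(x))|\le e}$ : comme $F = 2\,\mathbf{1}_{\mathcal{B}_{T,\epsilon}}$ à cette échelle, un passage en coordonnées polaires donne $\mu_2(\mathcal{B}_{T,\epsilon}) = \tfrac12\zeta(2)^{-1}\int g = D/(\epsilon T)$ avec
$$D = \zeta(2)^{-1}\int_0^{\pi/2} \mathbf{1}_{e^{-1} \le \tan\theta \le e}\, d\theta > 0.$$
Le même calcul avec $\lVert x\rVert^2 \le s/T$, respectivement avec une couronne $\Xi \in K_p$, montre que le nombre moyen de points vérifie $T\mu_2(\cdots) \to Ds$ (resp. est proportionnel à $|K_p|$) : l'intensité limite en la variable $\Xi$ est bien la constante $D$, et la loi de $\Xi_i$ conditionnée à un minimum local est uniforme sur $[0,1/\epsilon]$.

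\textbf{Vérification des hypothèses et principale difficulté.} On munit $\mathbf{X}$ de $D\cdot\mathrm{Leb}$ et la filtration $\mathcal{F}_{t'}$ est donnée par les partitions en $W_1$-courbes de la proposition \ref{prop13}. Les hypothèses (h1)--(h2) résultent du calcul d'intensité et de $\xi_i\in\{0,1\}$ (unicité du plus court vecteur, lemme \ref{lemme25}), les configurations à deux vecteurs courts indépendants étant de mesure $O(T^{-2})$ par la formule de Rogers (lemme \ref{lemme24}). Pour (h3) on distingue $|t-t'|$ petit — exclu car une excursion n'a qu'un seul minimum local par unimodalité (lemme \ref{lemme17}) — et $|t-t'|$ grand, traité par mélange. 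Le cœur de la preuve est l'hypothèse (h4) : conditionnellement à $\mathcal{F}_{t'}$, la probabilité de toucher la cible $\mathcal{B}_{T,\epsilon}$ à un instant $t$ avec $2t > 2t' + R\log T$ doit être proche de sa valeur stationnaire. La cible rétrécissant comme une boule de rayon $\sim (\epsilon T)^{-1/2}$, la norme de Sobolev d'une régularisée de $\mathbf{1}_{\mathcal{B}_{T,\epsilon}}$ explose avec $T$, et toute la difficulté est de faire jouer ce rétrécissement contre la vitesse exponentielle de mélange. C'est précisément l'objet du formalisme $H^{s,r}$ : j'approche $\mathbf{1}_{\mathcal{B}_{T,\epsilon}}$ par des fonctions régulières dont les transformées de Siegel sont dans $H^{s,r}$ à norme contrôlée (lemme \ref{lemme19}), puis la proposition \ref{prop13} fournit une partition $\kappa_0$-représentative telle que la moyenne de la transformée de Siegel sur la $W_1$-courbe $\delta_t\gamma(x)$ (de longueur $\sim Le^{2t}$) soit $L_n^{-\kappa_0}$-proche de $\mu_2(\cdot)$, d'où (h4a)--(h4c). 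Les hypothèses (h5)--(h6) découlent de la structure de filtration (raffinement le long de la direction instable) de ces mêmes partitions.

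\textbf{Conclusion.} Le théorème \ref{thm9} donne alors la convergence, quand $T\to\infty$, du nuage $\{\Xi_i, i/T\}_{i\in A_2(\epsilon,T,L)}$ vers un processus de Poisson sur $[0,1/\epsilon]\times[0,1]$ d'intensité $D$. En projetant sur la première coordonnée, le lemme \ref{lemme20}(b) assure que le nuage image reste un processus de Poisson, d'intensité la mesure image, soit la constante $D$ sur $[0,1/\epsilon]$. On obtient ainsi que $\{\Xi_i(L,T)\}_{i\in A_2(\epsilon,T,L)}$ converge vers un processus de Poisson sur $[0,1/\epsilon]$ d'intensité constante $D$, ce qui est l'énoncé de la proposition \ref{prop16}.
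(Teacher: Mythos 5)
Votre proposition suit, pour l'essentiel, exactement la route du papier : même réduction au théorème \ref{thm9} avec $M=T$ et $\lambda_{1}(t)=2t$, mêmes observables $\xi_{i}=\mathbf{1}_{\mathcal{B}_{T,\epsilon}}(\delta_{i}L)$ et $\nu_{i}=T\lVert \delta_{i}L\rVert^{2}$, même calcul d'intensité par transformée de Siegel (lemme \ref{lemme24}), même recours aux partitions $\kappa_{0}$-représentatives de la proposition \ref{prop13} et au contrôle $H^{s,r}$ du lemme \ref{lemme19} pour (h4), même projection finale via le lemme \ref{lemme20}. Votre identification de la condition de minimum local avec $e^{-1}\leqslant|\tan\theta|\leqslant e$ est correcte et coïncide avec la forme fermée $D=\zeta(2)^{-1}(\arccos(1/\sqrt{e^{2}+1})-\arccos(e/\sqrt{e^{2}+1}))$ donnée dans la preuve du papier ; vous corrigez au passage, sans le signaler, une coquille de l'énoncé (les inégalités dans les indicatrices doivent être $\geqslant 1$ : avec $\leqslant 1$, les deux régions angulaires sont disjointes et $D$ serait nul).

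Le seul endroit où vous déviez du papier est la vérification de (h3), et, telle quelle, votre dichotomie y laisse un trou. L'argument géométrique (unimodalité du lemme \ref{lemme17}, joint à $|\det(l,l')|\geqslant 1$ pour deux vecteurs courts non proportionnels) n'interdit la coexistence de deux minima marqués que pour $|t-t'|\lesssim \log T+2C_{\epsilon}$ ; de l'autre côté, le mélange exponentiel appliqué à des indicatrices de cibles de mesure $\sim 1/T$ coûte, après régularisation, une puissance de $T$ dans les normes de Sobolev, et ne livre $\mathbb{P}(\xi_{t}\geqslant 1,\xi_{t'}\geqslant 1)=O(T^{-2})$ que pour $|t-t'|\geqslant R\log T$ avec $R$ grand dépendant du taux de mélange. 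La plage intermédiaire $\log T\lesssim|t-t'|\lesssim R\log T$, où de telles paires de minima existent réellement, n'est couverte par aucun de vos deux cas. Le papier la traite d'un coup par l'identité exacte de type Rogers du lemme \ref{lemme9} : $\mathbb{P}(T\lVert\delta_{t}L\rVert^{2}\leqslant b,\,T\lVert\delta_{t'}L\rVert^{2}\leqslant b)=\frac{D_{1}b^{2}}{T^{2}}+\frac{D_{2}b}{T}\arccos\bigl(\frac{1-e^{-2|t-t'|}}{1+e^{-2|t-t'|}}\bigr)$, dont le second terme est $O(T^{-2})$ dès que $|t-t'|\geqslant\log T+2C_{\epsilon}$ — sans aucun recours au mélange. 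Signalons aussi que votre phrase unique pour (h5)--(h6) passe sous silence l'argument de marges du papier (remplacement de $A_{2}$ par $\tilde{A}_{2}$ avec une marge $T^{-2001}$, exclusion des réseaux tombant dans un $T^{-1000}$-voisinage de $\partial K_{p}$, de mesure $O(T^{-1000})$ par le lemme \ref{lemme30}) : la constance de $\xi_{t,p}$ le long des courbes instables contractées n'est pas automatique. Ce sont les deux compléments à apporter ; le reste de votre schéma est celui du papier.
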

Cette proposition nous donne une meilleure idée de la répartition à l'infini des points de la trajectoire géodésique dans l'espace $\mathscr{S}_{2}$. C'est aussi à ce genre de chose que l'on s'intéresse dans $\cite{paulin2016logarithm}$. \\
Pour établir la convergence voulue, on va s'appuyer sur le théorème $\ref{thm9}$.\\
Ici, quitte à regarder la somme qui nous intéresse comme allant de $0$ à $T$ (au lieu de $T-1$), $T$ joue le rôle de $M$. \\
Par ailleurs, on pose $$\nu_{0}^{T}(L) = (\mathbf{1}_{\lVert \delta_{-1} L \rVert \geqslant \lVert L \rVert \leqslant \lVert \delta_{1} L \rVert}) T\lVert L \rVert^{2}  \textit{,} $$ $$\nu_{t}^{T}(L) = \nu_{0}^{T}(\delta_{t} L)  \textit{,} $$ $$\xi_{t}^{T}(L)= \mathbf{1}_{\nu_{t}^{T}(L) \in ]0,\frac{1}{\epsilon}]}$$
et la suite de partitions considérée $\mathbf{Q}$ est donnée par $(]\frac{k}{p}\frac{1}{\epsilon}, \frac{k+1}{p} \frac{1}{\epsilon}])_{p \in \mathbb{N}-\{0 \} \textit{, } k \in [0, p-1 ] }$. \\
Remarquons que l'on a, pour $T$ assez grand (par rapport à $\epsilon > 0$) : \begin{equation}
\label{eq113}
\xi_{t}^{T}(L) = \frac{1}{2} F_{0}^{T}(\delta_{t} L)
\end{equation}
où $F_{0}^{T}(L)$ est la transformée de Siegel de la fonction $f_{0}^{T}$ telle que : 
\begin{equation*}
 f_{0}^{T}(x)= \mathbf{1}_{\lVert \delta_{-1} x \rVert \geqslant \lVert x \rVert \leqslant \lVert \delta_{1}  x \rVert} \mathbf{1}_{0 < T \lVert  x \rVert^{2} \leqslant \frac{1}{\epsilon}} \textit{.}
 \end{equation*} 
Enfin, $\lambda_{1}$ a été définie dans la définition $\ref{def4}$.  \\
Maintenant que nous avons fait ces choix, nous pouvons énoncer la proposition suivante qui s'énonce en deux parties : 
\begin{prop}
\label{prop40}
On a : \\
\begin{itemize}
\item $\{ \Xi_{t}(L,T) \}_{t \in A_{2}(\epsilon, T, L) }$ est exactement le même processus que $\{\nu_{t}^{M} \}_{\xi_{t}^{M}=1 \textit{, }t \in [0,M]} $
\item $(\lambda_{1},\{ \xi_{t} \},\{ \nu_{t} \})$ satisfont les hypothèses (h1) jusqu'à (h6) du théorème $\ref{thm9}$.
\end{itemize}
\end{prop}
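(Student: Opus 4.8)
Le premier point est essentiellement une identification de notations. Par construction, $\xi_t^T(L) = \mathbf{1}_{\nu_t^T(L) \in ]0,\frac{1}{\epsilon}]}$ vaut $1$ exactement lorsque $\delta_t L$ réalise un minimum local de $r \mapsto \lVert \delta_r L \rVert$ (c'est l'indicatrice $\mathbf{1}_{\lVert \delta_{t-1} L \rVert \geqslant \lVert \delta_t L \rVert \leqslant \lVert \delta_{t+1} L \rVert}$) et que de plus $T\lVert \delta_t L \rVert^2 \leqslant \frac{1}{\epsilon}$. C'est précisément la condition définissant $A_2(\epsilon,T,L)$ dans l'équation (\ref{eq104}). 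Sur cet ensemble $\{\xi_t^T=1\}$, on a $\nu_t^T(L) = T\lVert \delta_t L\rVert^2 = \Xi_t(L,T)$. Il n'y a donc rien à prouver au-delà de confronter les définitions. Je commencerais par expliciter cette vérification en une ou deux lignes.

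Le cœur de la proposition est la vérification des hypothèses (h1)--(h6). L'idée directrice est que, via l'égalité (\ref{eq113}), toutes les espérances en jeu se calculent au moyen de la transformée de Siegel $F_0^T$ et des formules de Siegel-Rogers (lemme \ref{lemme24}). Pour (h1), j'écrirais $\mathbb{E}(\xi_t) = \frac{1}{2}\mathbb{E}_{\mu_2}(F_0^T)$ en utilisant l'invariance de $\mu_2$ sous $\delta_t$, puis $\mathbb{E}_{\mu_2}(F_0^T) = \zeta(2)^{-1}\int_{\mathbb{R}^2} f_0^T$ par la formule de Siegel ; un passage en coordonnées polaires sur le support de $f_0^T$, où la contrainte de minimum local fixe le domaine angulaire et la contrainte $0 < T\lVert x\rVert^2 \leqslant \frac{1}{\epsilon}$ fixe un anneau d'aire $\sim \frac{1}{\epsilon T}$, donne bien $\mathbb{E}(\xi_t) = \frac{D}{\epsilon}\cdot\frac{1}{T} + o(T^{-1}) = \mathcal{O}(M^{-1})$, avec la constante $D$ de la proposition \ref{prop16}. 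Pour (h2) et (h3), je majorerais $\mathbb{P}(\xi_t>1)$ et $\mathbb{P}(\xi_t\geqslant 1,\xi_{t'}\geqslant 1)$ par des moments d'ordre deux contrôlés par la formule de Rogers pour $\overline{F}$ : le terme diagonal étant d'ordre $T^{-1}$, le terme produit donne l'ordre $T^{-2}$ recherché ; c'est ici que l'on voit que deux minima locaux petits sont asymptotiquement décorrélés.

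Les hypothèses (h4), (h5) et (h6) sont celles qui exploitent la dynamique et où le gros du travail se concentre. Pour (h4), il s'agit de montrer que l'espérance conditionnelle de $\xi_t$ sachant la tribu $\mathcal{F}_{t'}$ (engendrée par une partition en $W_1$-courbes adaptée) est égale, à $\mathcal{O}(M^{-2})$ près, à sa valeur moyenne, dès que $t$ et $t'$ sont séparés par $R\log(M)$. C'est exactement le contenu de la proposition \ref{prop13} : la $\kappa_0$-représentativité de la partition appliquée à la fonction $F_0^T$ (dont on contrôle la norme $H^{s,r}$ par le lemme \ref{lemme19}, puisque $f_0^T$ est une indicatrice régularisable d'un anneau angulaire) entraîne que la moyenne de $F_0^T$ le long d'une $W_1$-courbe assez longue approche $\mu_2(F_0^T)$ ; l'invariance sous le flot transporte cette estimation à l'instant $t$. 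Les hypothèses (h5) et (h6) sont structurelles : elles demandent que $\xi_{t,p}$ soit constant sur les éléments $F_{\overline{t}}(\omega)$ et que les partitions $F_t$ forment une filtration renversée lorsqu'on remonte le temps géodésique. Ceci découle de la géométrie des feuilletages instables $W_1$ : l'action de $\delta_{\overline{t}}$ contracte les $W_1$-courbes, de sorte qu'un élément de $F_{\overline{t}}$ est inclus dans un élément de $F_t$ pour $\overline{t}$ suffisamment postérieur, et la dépendance de $\xi_{t,p}$ en les vecteurs courts de $\delta_t L$ devient mesurable par rapport à la tribu grossière. \textbf{Le principal obstacle} sera (h4), car il faut y contrôler simultanément la dépendance en $T$ (via le rayon $\frac{1}{\sqrt{\epsilon T}}$ de l'anneau, qui rend $f_0^T$ de plus en plus concentré) et la vitesse de mélange : il faudra s'assurer que la norme $\lVert F_0^T\rVert_{H^{s,r}}$ ne croît pas trop vite en $T$ pour que le terme d'erreur de la proposition \ref{prop13} reste un $\mathcal{O}(M^{-2})$ après renormalisation. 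C'est précisément le point où la régularisation de l'indicatrice et le choix des paramètres $s,r,\kappa_0$ doivent être ajustés avec soin.
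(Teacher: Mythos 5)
Your overall strategy is the same as the paper's: the first bullet is indeed a pure confrontation of definitions, (h1) is computed exactly as you say (the identity $\xi_t^T=\frac{1}{2}F_t^T$, the Siegel formula, change of variables and polar coordinates, yielding the constant $D$ of Proposition \ref{prop16}), (h2) is immediate since $\xi_t\in\{0,1\}$, and (h4) is obtained via $\kappa_0$-representative partitions from Proposition \ref{prop13} together with the bound $\lVert F_0^T\rVert_{H^{s,s}}=O(1)$. On that last point, the ``principal obstacle'' you flag is in fact already resolved: Lemma \ref{lemme29} shows the $H^{s,s}$ norm of $\xi_0^T$ stays bounded uniformly in $T$, precisely because the two-parameter definition of $H^{s,r}$ (Definition \ref{def1}) absorbs the shrinking of the support of $f_0^T$; with the curve length $L_t=(e^{\lambda_1(t)}T^{1000})^{-1}$ and $R$ large, the error terms are $O(T^{-1000})$.

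The genuine gap is (h3). You propose to bound $\mathbb{P}(\xi_t\geqslant 1,\xi_{t'}\geqslant 1)$ by a second moment and conclude by Rogers, asserting that ``the diagonal term is of order $T^{-1}$ and the product term gives the order $T^{-2}$''. But a diagonal term of order $T^{-1}$ ruins the bound: (h3) demands $\mathcal{O}(M^{-2})$ for \emph{every} pair $t\neq t'$, and for ball indicators the overlap term in the Rogers expansion is exactly the quantity computed in Lemma \ref{lemme9}, namely of order $\frac{1}{T}\arccos\bigl(\frac{1-e^{-2|t-t'|}}{1+e^{-2|t-t'|}}\bigr)$, which is $\mathcal{O}(T^{-1})$ (not $\mathcal{O}(T^{-2})$) whenever $|t-t'|=O(1)$. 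The paper closes this with a geometric input you omit: if $\xi_t=\xi_{t'}=1$ with $t\neq t'$, then $t,t'\in A_2(\epsilon,T,L)$ are local minima with small norms, hence are automatically separated by at least $\log(T)+2C_\epsilon$ (this comes from the unimodality of $s\mapsto\lVert\delta_s l\rVert$, Lemma \ref{lemme17}, and the disjoint windows of Proposition \ref{prop14}); for such separations the $\arccos$ factor is itself $\mathcal{O}(1/T)$ and Lemma \ref{lemme9} delivers $\mathcal{O}(T^{-2})$. (One could alternatively keep the local-minimum sector condition inside the overlap integral and check it forces $\int f_tf_{t'}$ to vanish for $t\neq t'$, but that is not the argument you wrote.) A secondary weakness: you treat (h5)--(h6) as ``structural'', but since $\xi_{t,p}$ is an indicator, constancy along $F_{\overline{t}}(\omega)$ fails precisely when the contracted $W_1$-curve straddles $\partial K_p$; the paper must excise an exceptional set via the neighborhoods $\hat{K}_p$ of measure $O(T^{-1000})$ (Lemma \ref{lemme30}) and the margin modification $\tilde{A}_2$, and (h6) is not an automatic consequence of contraction — the partitions at different times are built independently, so the filtration property is obtained by a measure estimate (at most $2L_{\overline{t}}$ of bad length per curve), all of which is needed to produce the set $E$ with $\mathbb{P}(E^c)=O(M^{-100})$.
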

La première partie de la proposition est évidente au vu des choix faits. Il reste donc à démontrer la deuxième partie de la proposition sur laquelle on va se concentrer maintenant. \\
Pour voir que les hypothèses (h1) jusqu'à (h6à sont vérifiées, nous allons suivre une démarche analogue à celle suivie dans \cite{bassam}. Seulement analogue, car les zones à l'infini dans $\mathscr{S}_{2}$ qui nous intéressent ne sont pas les mêmes, ce qui explique que des différences, des adaptations apparaissent dans la preuve. \\
Avant de montrer $\ref{prop40}$, nous avons besoin de quelques lemmes préliminaires.
\begin{lemma}
\label{lemme9}
Pour tout $t \neq t' \in \mathbb{Z}$, pour tout $b > 0$, pour tout $T > 0$ assez grand (la grandeur dépendant uniquement de $b$), on a : 
$$\mathbb{P}(T \lVert \delta_{t} L \rVert^{2} \leqslant b, T ||\delta_{t'} L ||^{2} \leqslant b) = \frac{D_{1}b^{2}}{T^{2}} + \frac{D_{2}b}{T} \arccos( \frac{1-e^{-2 |t-t'|}}{1+e^{-2 |t-t'|}})  $$
où $D_{1},D_{2} > 0$.
\end{lemma}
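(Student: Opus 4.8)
Je poserais $g=\mathbf{1}_{B_{f}(0,\sqrt{b/T})}$ et noterais $G$ sa transformée de Siegel (sur les vecteurs premiers), de sorte que $\{T\lVert\delta_{i}L\rVert^{2}\leqslant b\}=\{G(\delta_{i}L)\geqslant 1\}$. Par invariance à gauche et à droite de $\mu_{2}$ (en remplaçant $L$ par $\delta_{-t}L$), la probabilité cherchée ne dépend que de $s:=|t-t'|$, et l'on peut supposer $s\geqslant 0$. Le premier point serait d'établir que, dès que $T>b$ (donc $\sqrt{b/T}<1$), on a $G(\delta_{i}L)\in\{0,2\}$ : en effet le théorème de Minkowski impose $\lVert\delta_{i}L\rVert_{2}>\sqrt{b/T}$ (sinon le covolume serait $<1$), donc la boule $B_{f}(0,\sqrt{b/T})$ ne rencontre qu'une seule droite de $\delta_{i}L$, sur laquelle seuls le plus court vecteur premier et son opposé y figurent (lemme \ref{lemme25}). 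On en tirerait l'\emph{égalité exacte}
$$\mathbb{P}(T\lVert\delta_{t}L\rVert^{2}\leqslant b,\ T\lVert\delta_{t'}L\rVert^{2}\leqslant b)=\tfrac{1}{4}\mathbb{E}\big[G(\delta_{t}L)G(\delta_{t'}L)\big]=\tfrac{1}{4}\mathbb{E}\big[G(L)G(\delta_{s}L)\big].$$

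Ensuite, je développerais $G(L)G(\delta_{s}L)=\sum_{l,l'\text{ premiers}}g(l)g(\delta_{s}l')$ et scinderais la somme selon $l'=\pm l$ ou $l'\neq\pm l$. La partie hors-diagonale est la fonction $\overline{F}$ de la formule de Rogers (lemme \ref{lemme24}) associée à $f_{1}=g$ et $f_{2}=g\circ\delta_{s}$ ; comme $\delta_{s}$ préserve le volume, son espérance vaut $\zeta(2)^{-2}\big(\int_{\mathbb{R}^{2}}g\,dx\big)^{2}=\zeta(2)^{-2}(\pi b/T)^{2}$, ce qui fournit le terme $D_{1}b^{2}/T^{2}$ avec $D_{1}=\tfrac{\pi^{2}}{4}\zeta(2)^{-2}$. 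La partie diagonale, en utilisant la parité de $g$, vaut $2\sum_{l\text{ premier}}g(l)g(\delta_{s}l)$, c'est-à-dire $2$ fois la transformée de Siegel de $h(x)=g(x)g(\delta_{s}x)$ ; la formule de Siegel (lemme \ref{lemme24}) donne pour son espérance $2\zeta(2)^{-1}\int_{\mathbb{R}^{2}}g(x)g(\delta_{s}x)\,dx=2\zeta(2)^{-1}\tfrac{b}{T}A(s)$, où, après le changement d'échelle $x\mapsto x/\sqrt{b/T}$, $A(s)$ désigne l'aire de l'intersection du disque unité et de l'ellipse $\{e^{2s}x_{1}^{2}+e^{-2s}x_{2}^{2}\leqslant 1\}$.

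Le cœur du calcul serait l'évaluation de $A(s)$. En passant en coordonnées polaires, $A(s)=2\int_{0}^{\pi/2}\min\!\big(1,\rho(\theta)^{-1}\big)\,d\theta$ avec $\rho(\theta)=e^{2s}\cos^{2}\theta+e^{-2s}\sin^{2}\theta$ ; on vérifierait que, sur $[0,\pi/2]$, $\rho(\theta)\geqslant 1\Leftrightarrow\tan\theta\leqslant e^{s}$, puis, par le changement de variable $u=\tan\theta$, que $\int_{0}^{\arctan(e^{s})}\rho(\theta)^{-1}\,d\theta=\arctan(e^{-s})$. On obtiendrait $A(s)=4\arctan(e^{-s})$, et comme $\cos(2\arctan x)=\tfrac{1-x^{2}}{1+x^{2}}$, l'identité $A(s)=2\arccos\!\big(\tfrac{1-e^{-2s}}{1+e^{-2s}}\big)$. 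En recollant, la partie diagonale contribue $\zeta(2)^{-1}\tfrac{b}{T}\arccos\!\big(\tfrac{1-e^{-2s}}{1+e^{-2s}}\big)$, d'où $D_{2}=\zeta(2)^{-1}$, et les deux constantes sont bien strictement positives.

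La difficulté principale ne réside pas dans l'application des formules de Siegel et de Rogers, qui fournissent les deux termes \emph{sans reste} (comme dans la preuve du lemme \ref{lemme1}, on les applique à des indicatrices de parties bornées), mais dans deux points plus délicats. D'abord, la justification soigneuse de la dichotomie $G(\delta_{i}L)\in\{0,2\}$, qui est précisément ce qui rend l'égalité annoncée \emph{exacte} (et non seulement asymptotique) pour $T$ grand devant $b$, et qui repose sur le théorème de Minkowski et le lemme \ref{lemme25}. Ensuite, le calcul trigonométrique de l'aire $A(s)$ et sa mise sous la forme en $\arccos$ requise, qui constitue la partie la plus technique de l'argument.
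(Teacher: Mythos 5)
Votre démonstration est correcte et suit essentiellement la même voie que l'article : transformées de Siegel d'indicatrices avec la dichotomie $G\in\{0,2\}$ issue du lemme \ref{lemme25} (c'est la « démarche du lemme \ref{lemme1} » invoquée dans le texte), puis les formules de Siegel et de Rogers du lemme \ref{lemme24}, le terme en $\arccos$ provenant exactement de l'intégrale croisée $\int_{\mathbb{R}^{2}}\mathbf{1}_{B_{f}(0,r)}(\delta_{t}x)\mathbf{1}_{B_{f}(0,r)}(\delta_{t'}x)\,dx=2r^{2}\arccos\big(\tfrac{1-e^{-2|t-t'|}}{1+e^{-2|t-t'|}}\big)$ que l'article énonce sans calcul et que vous vérifiez correctement. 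Seule remarque mineure : la borne $\lVert\delta_{i}L\rVert_{2}>\sqrt{b/T}$ relève de la formule de covolume (comme dans la preuve du lemme \ref{lemme25}) plutôt que du théorème de Minkowski proprement dit, mais cela n'affecte en rien l'argument.
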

\begin{proof}
Posons pour tout $x \in \mathbb{R}$, $f_{t}(x) = \mathbf{1}_{\lVert\delta_{t}x \rVert^{2} \leqslant \frac{b}{T}}$ et $f_{t'}(x) = \mathbf{1}_{||\delta_{t'}x||^{2} \leqslant \frac{b}{T}}$ et $F_{t}$ et $F_{t'}$ les transformées de Siegel correspondantes. Alors, en suivant la démarche du lemme $\ref{lemme1}$ et en appliquant les formules du lemme $\ref{lemme24}$ et en remarquant que pour tout $r > 0$ 
$$\int_{\mathbb{R}^{2}} \mathbf{1}_{B_{f}(0,r)} (\delta_{t} \cdot) \mathbf{1}_{B_{f}(0,r)} (\delta_{t'} \cdot) = 2 r^{2} \arccos (\frac{1-e^{-2 |t-t'|}}{1+e^{-2 |t-t'|}}) \textit{, }$$
on obtient le résultat voulu.
\end{proof}
Pour voir que (h4) et (h5) sont vérifiées, on va s'inspirer de ce qui a été fait dans $\cite{bassam}$. On pose ainsi $K = ]0, \frac{1}{\epsilon}]$ et $K_{i} = ]\frac{i}{p} \frac{1}{\epsilon}, \frac{i+1}{p} \frac{1}{\epsilon}]$ pour $0 \leqslant i \leqslant p-1$ avec $p \geqslant 1$ ($p$ est sous-entendu). On a avec ces notations : $\xi_{t} = \mathbf{1}_{\nu_{t} \in K}$ et $\xi_{t,i} = \mathbf{1}_{\nu_{t} \in K_{i}}$. \\
On pose de même $\hat{K} = \{ x \in ]0, \infty [ \text{ } | \text{ } d(x,\partial K) \leqslant T^{-1000} \}$ et de même $\hat{K_{i}}= \{ x \in ]0, \infty [ \text{ } | \text{ } d(x,\partial K_{i}) \leqslant T^{-1000} \}$. \\ 
On pose $\hat{\xi_{t}}= \mathbf{1}_{ \nu_{t} \in \hat{K}}$ et de même $\hat{\xi_{t,i}} = \mathbf{1}_{ \nu_{t} \in \hat{K_{i}}}$. On dispose alors des lemmes suivants : 
\begin{lemma}
\label{lemme29}
On a : 
\begin{equation}
\label{eq141} 
\lVert \xi_{0}^{T} \rVert_{H^{s,s}} = O(1) \textit{.}
\end{equation}
Pour tout $i \in \{1,\dots, p\}$, on a aussi : 
\begin{equation}
\label{eq142}
\lVert \xi_{0,i}^{T} \rVert_{H^{s,s}} = O(1) \textit{.}
\end{equation}

\end{lemma}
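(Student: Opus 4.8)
The plan is to recognize $\xi_0^T$ as one half of a Siegel transform and then to reduce everything, via Lemma~\ref{lemme19}, to a single estimate: that the profile $f_0^T$ lies in $C^{s,s}$ with a norm bounded uniformly in $T$, while its support sits in a fixed ball. Concretely, equation~(\ref{eq113}) gives $\xi_0^T = \tfrac12 F_0^T$, where $F_0^T$ is the Siegel transform of
$$ f_0^T(x) = \mathbf{1}_{\lVert \delta_{-1} x \rVert \geq \lVert x \rVert \leq \lVert \delta_1 x \rVert}\,\mathbf{1}_{0 < T \lVert x \rVert^2 \leq \frac{1}{\epsilon}}, $$
the factor $\tfrac12$ being legitimate for $T$ large by Lemma~\ref{lemme25} (the shortest vector of a lattice of norm $<1$ is attained at exactly one primitive vector and its opposite). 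Hence $\lVert \xi_0^T \rVert_{H^{s,s}} = \tfrac12 \lVert F_0^T \rVert_{H^{s,s}} \leq \tfrac12\, C(R,s)\, \lVert f_0^T \rVert_{C^{s,s}}$ by Lemma~\ref{lemme19}, so it suffices to bound $\lVert f_0^T \rVert_{C^{s,s}}$ uniformly in $T$ and to fix the support radius $R$.

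The decisive observation is a scaling identity. Set $g_\epsilon(y) = \mathbf{1}_{\lVert \delta_{-1} y \rVert \geq \lVert y \rVert \leq \lVert \delta_1 y \rVert}\,\mathbf{1}_{0 < \lVert y \rVert^2 \leq 1/\epsilon}$, a fixed function independent of $T$. Because the two angular constraints are homogeneous of degree $0$ and only the radial constraint sees $T$, one has exactly $f_0^T(x) = g_\epsilon(\sqrt{T}\,x)$. The set $\{g_\epsilon \neq 0\}$ is a bounded union of angular sectors truncated at radius $1/\sqrt\epsilon$, so $f_0^T$ is supported in $B(0, 1/\sqrt{\epsilon T}) \subseteq B(0, 1/\sqrt\epsilon)$ for $T \geq 1$; this fixes $R = 1/\sqrt\epsilon$, independently of $T$.

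The heart of the matter is then a dilation estimate for the $C^{s,r}$-norm: for a fixed compactly supported $g$ and any $\lambda \geq 1$, the contraction $g(\lambda\,\cdot)$ satisfies $\lVert g(\lambda\,\cdot)\rVert_{C^{s,r}} \leq C\,\lambda^{\max(0,\,s-dr)}\,\lVert g\rVert_{C^{s,r}}$ in dimension $d$. This is exactly where the matched exponents $r=s$ in dimension $d=2$ are essential: since $dr = 2s \geq s$, the exponent $\max(0, s-dr)$ vanishes and the bound becomes uniform in $\lambda$. The mechanism, reading off Definition~\ref{def1}, is that to approximate $g(\lambda\,\cdot)$ within $L^1$-error $\eta$ one rescales an approximation of $g$ having $L^1$-error $\lambda^{d}\eta$: the dilation divides the $L^1$ difference by $\lambda^{d}$ and multiplies the $C^s$-norm by at most $\lambda^{s}$, and the two effects balance precisely because $s \leq dr$. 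Applying this with $\lambda = \sqrt{T}$ and $g = g_\epsilon$ gives $\lVert f_0^T\rVert_{C^{s,s}} \leq C\,\lVert g_\epsilon\rVert_{C^{s,s}} = O(1)$, the finiteness of $\lVert g_\epsilon\rVert_{C^{s,s}}$ following from $g_\epsilon$ being the indicator of a bounded region with piecewise-smooth boundary.

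Finally, the assertion for $\xi_{0,i}^T$ is proved verbatim: by the same reasoning that yields~(\ref{eq113}) one has $\xi_{0,i}^T = \tfrac12 F_{0,i}^T$, the Siegel transform of the indicator obtained by replacing the radial condition $T\lVert x\rVert^2 \in\ ]0,1/\epsilon]$ by $T\lVert x\rVert^2 \in K_i$; this indicator is again a dilate $g_{\epsilon,i}(\sqrt{T}\,\cdot)$ of a fixed function, now supported on an angular annulus rather than a full sector but still contained in $B(0,1/\sqrt\epsilon)$, so the same dilation estimate delivers $\lVert \xi_{0,i}^T\rVert_{H^{s,s}} = O(1)$. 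I expect the only genuinely delicate point to be the dilation estimate, and within it the coarse-$\eta$ regime $\eta \gtrsim \lambda^{-d}$, where the rescaled approximation of $g$ is no longer admissible and one must instead dominate $g(\lambda\,\cdot)$ by a fixed-scale smoothing whose cost is a $T$-independent constant; the single structural input that makes the whole scheme work is the inequality $s \leq dr$.
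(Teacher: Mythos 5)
Your proposal is correct and follows essentially the same route as the paper's proof: via the identity ($\ref{eq113}$) and the lemme $\ref{lemme19}$ you reduce to a $C^{s,s}$ bound, uniform in $T$, on the profile $f_{0}^{T}$, which the paper obtains in the same spirit by sandwiching the indicator $\mathbf{1}_{B_{f}(0,\frac{1}{\sqrt{\epsilon T}})}$ between piecewise-affine functions regularized by an approximation of the identity. Your scaling identity $f_{0}^{T}=g_{\epsilon}(\sqrt{T}\,\cdot)$ together with the dilation estimate with exponent $\max(0,s-dr)$ simply makes explicit, and correctly so (including the coarse regime $\eta\gtrsim\lambda^{-d}$ that you flag, which is handled by mollifying at width $\sim\eta^{1/d}$ with cost $\eta^{-s/d}\leqslant\eta^{-r}$), the balance --- the $L^{1}$ error contracts by $\lambda^{-d}$ while the $C^{s}$ cost grows by $\lambda^{s}$, and $s\leqslant dr$ when $r=s$, $d=2$ --- that the paper's one-line mollification argument leaves implicit.
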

\begin{proof}
Le lemme $\ref{lemme19}$, appliqué à $F_{0}^{T}$, donne que $F_{0}^{T} \in H^{s,s}$ et que $\lVert F_{0}^{T} \rVert_{H^{s,s}} = O(1)$. \\
Pour démontrer ce dernier point, on procède en approchant via des fonctions affines par morceaux la fonction indicatrice $\mathbf{1}_{B_{f}(0, \frac{1}{\sqrt{\epsilon T}})}$ et en régularisant ces fonctions à l'aide d'approximation de l'unité. \\
D'où le premier résultat voulu d'après ($\ref{eq113}$). On procède de même pour obtenir le second résultat. 
\end{proof}
\begin{lemma}
\label{lemme30}
On a : 
\begin{align}
\label{eq123}
\mu_{\mathscr{S}_{2}}(\hat{\xi_{0,i}} ) = O(T^{-1000}) & \textit{ et } \\
\mu_{\mathscr{S}_{2}}(\hat{\xi_{0}}) = O(T^{-1000})
\end{align}
\end{lemma}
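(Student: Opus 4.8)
L'idée est que $\hat\xi_{0}$ et $\hat\xi_{0,i}$ sont des indicatrices de fines couronnes autour des bords des intervalles considérés, et que la mesure de ces couronnes se calcule directement grâce au lemme \ref{lemme1}. Je commencerais donc par expliciter les ensembles de bords. Puisque $\partial K = \{0, \frac{1}{\epsilon} \}$ dans $\mathbb{R}$, on a, en restant dans $]0,\infty[$,
\begin{equation}
\label{eq_plan1}
\hat{K} = \big]0, T^{-1000}\big] \cup \big[\tfrac{1}{\epsilon} - T^{-1000}, \tfrac{1}{\epsilon} + T^{-1000}\big],
\end{equation}
et de même $\hat{K_{i}}$ est la réunion (intersectée avec $]0,\infty[$) de deux intervalles de rayon $T^{-1000}$ centrés respectivement en $\frac{i}{p}\frac{1}{\epsilon}$ et $\frac{i+1}{p}\frac{1}{\epsilon}$. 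Comme $\nu_{0}^{T}(L) = \mathbf{1}_{\lVert \delta_{-1}L \rVert \geqslant \lVert L \rVert \leqslant \lVert \delta_{1}L \rVert}\, T\lVert L \rVert^{2}$ s'annule hors des minima locaux, l'évènement $\{\nu_{0}^{T} \in \hat{K}\}$ (resp. $\{\nu_{0}^{T}\in \hat{K_{i}}\}$) est contenu dans $\{T\lVert L \rVert^{2} \in \hat{K}\}$ : abandonner l'indicatrice de minimum local ne fait qu'agrandir l'ensemble et fournit donc une majoration, ce qui est tout ce dont on a besoin.

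Je me ramènerais ensuite à une estimation de la loi de $\lVert L \rVert^{2}$ au voisinage de $0$. En effet, $T\lVert L \rVert^{2}$ appartient à l'intervalle $[\frac{1}{\epsilon} - T^{-1000}, \frac{1}{\epsilon} + T^{-1000}]$ si et seulement si $\lVert L \rVert^{2}$ appartient à un intervalle de longueur $2T^{-1001}$ centré en $\frac{1}{\epsilon T}$, et $T\lVert L \rVert^{2} \in \,]0,T^{-1000}]$ si et seulement si $\lVert L \rVert^{2} \in \,]0,T^{-1001}]$. Comme ces valeurs de $\lVert L \rVert^{2}$ sont de l'ordre de $\frac{1}{T}$, donc petites pour $T$ grand, le lemme \ref{lemme1} s'applique : en posant $u = 1/a$, il donne $\mu_{2}(\lVert L \rVert^{2} \leqslant u) = \mu_{2}(\lVert L \rVert^{-2} \geqslant \tfrac{1}{u}) = Cu$ pour $u$ assez petit, l'égalité étant \emph{exacte}. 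La loi de $\lVert L \rVert^{2}$ est donc de densité constante $C$ près de $0$, de sorte que la mesure d'un intervalle $[u_{1},u_{2}]$ avec $u_{2}$ petit vaut exactement $C(u_{2}-u_{1})$. On en tire
\begin{equation}
\label{eq_plan2}
\mu_{\mathscr{S}_{2}}(\hat\xi_{0}) \leqslant \mu_{2}\big(T\lVert L \rVert^{2} \in \hat{K}\big) = C\,T^{-1001} + 2C\,T^{-1001} = 3C\,T^{-1001} = O(T^{-1000}).
\end{equation}

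Pour $\hat\xi_{0,i}$, le raisonnement est identique : on majore $\mu_{\mathscr{S}_{2}}(\hat\xi_{0,i})$ par la mesure de deux couronnes en $\lVert L \rVert^{2}$, chacune de largeur $2T^{-1001}$ et centrée en un point $\leqslant \frac{1}{\epsilon T}$, donc petit ; chaque couronne contribue $C\cdot 2T^{-1001}$ et l'on obtient de nouveau $O(T^{-1000})$, uniformément en $i$ (la constante ne dépend que de la partition, comme le permet la convention sur $O(\cdot)$). Le cas $i=0$ se traite comme $\hat{K}$, le bord inférieur étant en $0$. L'essentiel du travail se réduit ainsi au lemme \ref{lemme1} ; le seul point à surveiller est de vérifier que les valeurs pertinentes de $\lVert L \rVert^{2}$ restent bien dans le régime $a = 1/\lVert L \rVert^{2}$ grand où la formule du lemme \ref{lemme1} est exacte, ce qui est assuré dès que $T$ est assez grand par rapport à $\epsilon$. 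La contribution du bord en $0$ (qui provient du fait que $0 \in \partial K$) est prise en compte séparément par la petite couronne $\lVert L \rVert^{2} \in \,]0,T^{-1001}]$, et ne pose pas de difficulté.
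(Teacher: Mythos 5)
Votre démonstration est correcte et suit essentiellement la même voie que le papier, dont la preuve se résume à une application du lemme \ref{lemme24} : dans les deux cas on évalue la mesure des fines couronnes $\hat{K}$ et $\hat{K_{i}}$ autour de $\partial K$ et $\partial K_{i}$ via la formule de Siegel, que vous invoquez sous la forme déjà conditionnée du lemme \ref{lemme1} (lui-même corollaire du lemme \ref{lemme24}). Vos deux points de soin — majorer en abandonnant l'indicatrice de minimum local, puisque $\hat{K} \subset \left]0,\infty\right[$ exclut la valeur $\nu_{0}^{T}=0$, et vérifier que les valeurs pertinentes de $\lVert L \rVert^{2}$, d'ordre $\frac{1}{\epsilon T}$, restent dans le régime où le lemme \ref{lemme1} est exact — sont exactement les justifications qu'il fallait.
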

\begin{proof}
C'est une application du lemme $\ref{lemme24}$.
\end{proof}
Prouvons maintenant la proposition $\ref{prop40}$.
\begin{proof}[Démonstration de la proposition $\ref{prop40}$]
$\bullet$ Vérification de (h1) \\
On rappelle que pour $T$ assez grand (par rapport à $\epsilon > 0$)$$
\xi_{t}^{T}(L) = \frac{1}{2} F_{t}^{T}(L)
$$
où $F_{t}^{T}(L)$ est la transformée de Siegel de la fonction $f_{t}^{T}$ telle que : 
\begin{equation}
\label{eq114}
 f_{t}^{T}(x)= \mathbf{1}_{\lVert \delta_{-1}\delta_{t}x \rVert\geqslant \lVert\delta_{t}x \rVert \leqslant \lVert \delta_{1} \delta_{t} x \rVert} \mathbf{1}_{0 < T \lVert \delta_{t} x \rVert^{2} \leqslant \frac{1}{\epsilon}} \textit{.}
 \end{equation}
En utilisant le lemme $\ref{lemme24}$, en faisant un changement de variable $u= \delta_{t} x$ et en passant en coordonnées polaires, on trouve que : 
\begin{equation}
\label{eq115}
E(\xi_{t}^{T}) = \frac{D}{T} \frac{1}{\epsilon}
\end{equation}
où $$D = \zeta(2)^{-1} \int_{0}^{\frac{\pi}{2}} \mathbf{1}_{e^{-2} \cos^{2}(\theta) + e^{2} \sin^{2}(\theta) \leqslant 1 } \mathbf{1}_{e^{2} \cos^{2}(\theta) + e^{-2} \sin^{2}(\theta)\leqslant 1} d \theta \textit{.}$$ 
Avant de continuer, notons qu'une étude des variations de la fonction $\theta \longmapsto \frac{1}{a^{2}} \cos^{2}(\theta) + a^{2} \sin^{2}(\theta)$, avec $a > 0$, permet de voir qu'en réalité $D$ vaut :
$$D = \zeta(2)^{-1} (\arccos(\frac{1}{\sqrt{e^{2}+1}}) - \arccos(\frac{e}{\sqrt{e^{2}+1}})) \textit{.} $$
(qui peut encore se simplifier en utilisation une formule de différence d'$\arccos$) \\
\\
$\bullet$ Vérification de (h2)  \\
(h2) est vérifiée car $\xi_{t} \in \{0, 1 \}$. \\
\\
$\bullet$ Vérification de (h3) \\
Si on a $t,t' \in \Pi$, différents tels que $\xi_{t},\xi_{t'}=1$ alors $t,t' \in A_{2}(\epsilon, T, L)$ et sont donc séparés au moins de $\log(T)+ 2C_{\epsilon}$ d'après ce que l'on a vu précédemment. \\
En utilisant le lemme $\ref{lemme9}$, on voit donc que l'on a :
 \begin{equation}
\label{eq120}
P(\xi_{t} \geqslant 1, \xi_{t'} \geqslant 1) \leqslant \mathcal{O}(\frac{1}{T} \arccos (\frac{1-e^{-2 (\log(T)+ 2C_{\epsilon})}}{1+e^{-2 (\log(T)+ 2C_{\epsilon})}}) )
\end{equation}
et le terme en $\arccos$ est un $\mathcal{O}(\frac{1}{T})$. \\
\\
$\bullet$ Vérification de (h4a), (h4b) et (h4c) \\
Dans notre cas, notons que $\eta_{t,p}= \xi_{t,p}$. Il suffit donc d'avoir (h4a) et (h4b) vérifiées pour obtenir que les trois hypothèses sont vérifiées. \\
On se fixe $T > 0$. On se fixe $t \in \Pi$ (l'exposant $T$ est sous-entendu). On appelle $\Pi^{+}(t)$ l'ensemble des éléments $t' \in [0,T]$ tels que $\lambda_{1}(t') > \lambda_{1}(t) + R \log(T)$ où $R > 0$ sera fixé par la suite. On considère $\mathfrak{F}= \{\xi_{0}, \xi_{0,1},  \cdots , \xi_{0,p} \}$. On appelle $F_{t}$ une partition en $W_{1}$-courbes de taille $L_{t}= (e^{\lambda_{1}(t)}T^{1000})^{-1}$ donnée par la proposition $\ref{prop13}$ qui est $\kappa_{0}$-représentative relativement à $(\Pi^{+}(t), \mathfrak{F})$. C'est possible dans la mesure où : 
\begin{equation}
\label{eq122}
\sum_{t \in \Pi \text{ } t' \in \Pi^{+}(t)} (L_{t}e^{\lambda_{1}(t')})^{-\kappa_{0}} \leqslant T^{2} T^{1000 \kappa_{0}} T^{-R \kappa_{0}} = O(T^{-1000})
\end{equation}
pour $R$ choisi assez grand.\\
En particulier, si on appelle $\tilde{E}_{1}$ l'ensemble des $L \in \mathscr{S}_{2}$ tel que pour tout $t \in \Pi$, $L$ est un point représentatif de $(F_{t}, \Pi^{+}(t), \mathfrak{F})$, on voit que l'on a $\mu_{2}(\tilde{E}_{1}^{c}) = O(T^{-1000})$. \\
Pour $L \in \tilde{E}_{1}$, d'après la définition de point représentatif, d'après le calcul fait pour obtenir l'égalité $(\ref{eq115})$, d'après le lemme $\ref{lemme29}$, on voit que $(h4)$ est vérifiée pour ce choix de $E=\tilde{E}_{1}$ et cette partition $F_{t}$. \\ 
\\
$\bullet$ Vérification de (h5) \\
Donnons-nous $t \in [0,T]$ et $\overline{t} \in \Pi^{+}(t)$ et $\gamma_{\overline{t}} \in F_{\overline{t}}$. Quitte à prendre $R$ encore plus grand, on peut supposer que la longueur de $\delta_{t} \gamma_{t}$, qui vaut $e^{- \lambda_{1}(\overline{t})} T^{-1000} e^{\lambda_{1}(t)}$, est plus petite que $T^{-10^{9}}$. \\
Ceci étant dit, si $\xi_{t}$ est nul sur $\gamma_{t}$, le résultat est clairement acquis sur $\gamma_{\overline{t}}$. Sinon, il existe $\overline{L} \in \gamma_{\overline{t}}$ et $p$ tel que $\xi_{t,p}(\overline{L})= 1$ et il s'en suit donc qu'il existe $u \in  \overline{L}$ tel que $ \nu_{t} \in K_{p}$.\\
Par ailleurs, notons que $\delta_{t} \gamma_{\overline{t}} = \{ h(\tau) \delta_{t} \overline{L} \}_{\tau \in \mathscr{I} }$ où $\mathscr{I}$ est un intervalle fermé possédant $0$ de longueur plus petite que $T^{-10^{9}}$. On observe alors que $h(\tau) \delta_{t} \overline{L}$ admet $h(\tau) \delta_{t} u$ comme plus petit vecteur et $$T \lVert h(\tau) \delta_{t} u \rVert^{-2} = T \lVert \delta_{t} u \rVert^{-2} + O(T^{-10^{9}+1})\textit{.}$$ 
Pour peu que l'on remplace la condition $t \in A_{2}(\epsilon, T, L)$ par $$t \in \tilde{A}_{2}(\epsilon, L,T) = \{ i \in [0,T-1] \cap A(\epsilon,T,L) \textit{ } | \textit{ }  \lVert \delta_{i-1}L \rVert > \lVert \delta_{i}L \rVert+ \frac{1}{T^{2001}}< \lVert \delta_{i+1}L \rVert  \} \textit{,}$$ si $\delta_{t} \gamma_{\overline{t}}$ n'intersecte pas (ou du moins son image) $$ \{ x \in ]0, \infty [ \text{ } | \text{ } d(x,\partial K_{p}) \leqslant T^{-10^{3}} \} \textit{, }$$ elle est complètement incluse dans $K_{p}$ et donc $\xi_{t,p} = 1$ le long de cette courbe.\\
La mesure de l'ensemble des $L$ tel que $\delta_{t} \gamma_{\overline{t}}$ intersecte $\{ x \in ]0, \infty [ \text{ } | \text{ } d(x,\partial K_{p}) \leqslant T^{-10^{3}} \}$ est donc $O(T^{-1000})$ car cet ensemble est inclus dans l'ensemble des $L$ qui « sont » dans $\hat{K_{p}}$ et qui a une mesure en $O(T^{-1000})$ d'après le lemme $\ref{lemme30}$. Et l'ensemble négligé par le remplacement au début de ce paragraphe ne coûte aussi qu'un  $O(T^{-1000})$.\\
En prenant le complémentaire de l'union de tous ces exceptionnels $L$ pour $t \in [0,T]$, pour $\overline{t} \in \Pi^{+}(t)$ (et en considérant son intersection avec $E_{1}$), on obtient un ensemble mesurable $E_{2}$ tel que $\mu_{2}(E_{2}^{c})=O(T^{-998})$ sur lequel (h5) est valide (ainsi que (h4)). \\
\\
$\bullet$ Vérification de (h6) \\
La taille des pièces de $F_{t}$ est $L_{t} = (e^{\lambda_{1}(t)} T^{1000})^{-1}$ et celle de $F_{\overline{t}}$ est $L_{\overline{t}} = (e^{\lambda_{1}(\overline{t})}T^{1000})^{-1}$, si on pose $$E_{3} = \{ L \in \mathscr{S}_{2} \text{ } | \text{ } F_{\overline{t}}(L) \subset F_{t}(L) \text{ pour tout } t \in [0,T] \text{, } \overline{t} \in \Pi^{+}(t) \} \textit{,} $$ on voit que son complémentaire est de mesure en $O(T^{-998})$ quitte à prendre $R$ assez grand. En effet, à $t,\overline{t}$ fixé, pour chaque courbe de $F_{t}$, la longueur totale des ensembles ne vérifiant pas la condition voulue est au plus de $2 L_{\overline{t}}$. \\
\\
Sur l'ensemble $E=E_{1} \cap E_{2} \cap E_{3}$, on voit que (h4) jusqu'à (h6) sont vérifiées. De plus, $\mu_{2}(E^{c}) = O(T^{-100})$. Par ailleurs, (h1) jusqu'à (h3) sont vérifiées. D'où le résultat voulu.
\end{proof}
\begin{proof}[Démonstration du théorème $\ref{thm21}$]
En utilisant le lemme $\ref{lemme42}$ conjugué à la proposition $\ref{prop14}$, la proposition $\ref{prop15}$ et la proposition $\ref{prop40}$ qui nous permet d'appliquer le théorème $\ref{thm9}$ on obtient finalement la validité du théorème $\ref{thm21}$.
\end{proof}
Note : le résultat reste valable si l'on suppose $L$ distribué selon $\tilde{\mu}_{2}$ car le flot géodésique mélange à vitesse exponentielle.

\section{Étude asymptotique de $\mathcal{R}$}
\subsection{Premières considérations sur l'erreur $\mathcal{R}$}

Le problème qui nous intéresse maintenant est celui de la convergence en loi de $\frac{\mathcal{R}(t,P,L,X)}{\log(t)}$ où $X \in \mathbb{R}^{2}$, $t >0$, $P$ est un parallélogramme d'intérieur non vide et $L \in \mathscr{S}_{2}$. On peut supposer pour simplifier, étant donné que l'on travaille avec $L$ distribué selon $\tilde{\mu}_{2}$ et $X$ selon $\tilde{\lambda}_{2}$, que le parallélogramme $P$ d'intérieur non vide est un rectangle dont les côtés sont parallèles aux axes de coordonnées (cela peut se faire en déformant $P$ via les matrices de rotation et via les matrices de la forme $\begin{pmatrix} 1 & \tau \\ 0 & 1 \end{pmatrix}$) et, quitte à déformer le rectangle $P$ via les matrices de la forme $\begin{pmatrix} \lambda & 0 \\ 0 & \frac{1}{\lambda} \end{pmatrix}$, on peut supposer que $P$ est en réalité un carré dont les côtés sont parallèles aux axes de coordonnées et enfin, quitte à translater $P$, on peut supposer que le centre du carré $P$ est $(0,0)$. On notera qu'on a utilisé tous les degrés de liberté du problème dont on disposait a priori. \\
Cette simplification faite, appelons $A_{1}$ le sommet inférieur droit de $P$ de coordonnées $(a,-a)$, $A_{2}$ le sommet supérieur droit de $P$ de coordonnées $(a,a)$, $A_{3}$ le sommet supérieur gauche de $P$ de coordonnées $(-a,a)$ et $A_{4}$ le sommet inférieur gauche de $P$ de coordonnées $(-a,-a)$ avec $a > 0$. \\
Nous avons maintenant besoin d'introduire quelques notations pour rappeler un résultat de $\cite{Skriganov}$ duquel on partira pour établir la convergence en loi désirée. Appelons : 
\begin{itemize}
\item  $\tau = \frac{\log(t)^{\frac{1}{4}}}{t}$
\item  $t^{\pm} = t \pm \beta \tau$ avec $\beta \in \mathbb{R}$ approprié
\item  $\rho > 0$ tel que $ \tau = \rho^{- \theta}$ où $\theta \in ]0,1[$ avec $\theta$ aussi proche de $1$ que l'on veut fixé
\item  $\lambda(l) = \frac{l_{1}^{2}}{l_{1}^{2}+ l_{2}^{2}}$
\item  $\omega_{1}$ est une fonction à support compact $\subset B_{f}(0,\frac{1}{4})$, de classe $C^{\infty}$, positive, plus petite que $1$ et tel que $\omega_{1} = 1$ sur $B_{B_{f}(0,\frac{1}{8})}$, à symétrie sphérique
\item  $\omega_{2}$ est la transformée de Fourier de $\frac{\omega_{1}}{\int \omega_{1}}$ (et est donc une fonction à décroissance rapide et à symétrie sphérique)
\end{itemize}
Avec ces notations, on peut désormais poser : 
$$\tilde{S}_{1}^{\pm}(\tau,t,P^{0},R_{1},X,L) = \frac{1}{\log(t)} \frac{1}{(2 \pi i)^{2}} \sum_{l \in L^{\perp} -\{ 0 \} }  \frac{1}{(R_{1} l)_{1}(R_{1} l)_{2}} \lambda(R_{1} l) \omega_{2}(\tau  l) \omega_{1}(\rho^{-1} l )e^{2 i \pi <l, t^{\pm} P^{0} + X > }$$ où $R_{1} \in SO_{2}(\mathbb{R})$ et où $R_{1} L^{\perp}$ est supposée faiblement admissible (voir définition $\ref{def3}$). \\
Enfin, appelons : 
 $R= \begin{pmatrix} 0 & 1 \\ -1 & 0 \end{pmatrix} \textit{, } I_{2} = \begin{pmatrix} 1 & 0 \\ 0 & 1 \end{pmatrix} \text{ et} $
 
$$ \tilde{S}^{\pm}(\tau,t,X,L) = \sum_{i=1}^{4} (-1)^{i} (\tilde{S}_{1}^{\pm}(\tau,t,A_{i},I_{2},X,L)- \tilde{S}_{1}^{\pm}(\tau,t,A_{i},R,X,L)) \textit{.}$$ 
Les deux sommes $\tilde{S}^{\pm}$ correspondent en fait à deux sommes sur ce qui s'appelle l'ensemble des drapeaux du carré $P$, qui est composé dans ce cas particulier de 8 éléments (voir $\cite{Skriganov}$ pour plus de détails).\\
On dispose du résultat suivant grâce à $\cite{Skriganov}$: 
\begin{prop}
\label{prop41}
Pour tout $\alpha > 0$, pour tout $t$ assez grand, 
\begin{equation*}
\mathbb{P}\left( \alpha + \tilde{S}^{+}(\tau,t,X,L) \geqslant \frac{\mathcal{R}(t,P,L,X)}{\log(t)} \geqslant \tilde{S}^{-}(\tau,t,X,L) - \alpha \right) \geqslant 1 - \alpha \textit{.}
\end{equation*}
\end{prop}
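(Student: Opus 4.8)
The plan is to reconstruct Skriganov's reduction (\cite{Skriganov}) in the special case of the axis-parallel square $P=[-a,a]^2$, inserting a real-space mollification so that Poisson summation applies. Since $L\in\mathscr{S}_2$ is unimodular, Poisson summation on the dual lattice gives, for any Schwartz $g$, the identity $\sum_{l\in L}g(l)=\sum_{m\in L^\perp}\widehat{g}(m)$, whose $m=0$ term is $\int g=\mathrm{Vol}$ and whose remaining terms produce the error. The obstruction is that $\mathbf{1}_{tP+X}$ is not smooth, so its Fourier transform decays too slowly; the whole role of the parameters $\tau,t^{\pm},\rho$ is to replace $\mathbf{1}_{tP+X}$ by smooth sandwiching functions.

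First I would build the sandwich. Let $\phi=\omega_1/\int\omega_1$, so $\phi\geq 0$, $\int\phi=1$, $\mathrm{supp}\,\phi\subset B_f(0,\tfrac14)$ and $\widehat{\phi}=\omega_2$; set $\phi_\tau(x)=\tau^{-2}\phi(x/\tau)$, an approximate identity of width $\tau$ with $\widehat{\phi_\tau}(m)=\omega_2(\tau m)$. Writing $g^{\pm}=\mathbf{1}_{t^{\pm}P+X}*\phi_\tau$, the inflation $t^{+}=t+\beta\tau$ and deflation $t^{-}=t-\beta\tau$, with $\beta$ chosen so that $\beta a\geq\tfrac14$, guarantee $g^{-}\leq\mathbf{1}_{tP+X}\leq g^{+}$ pointwise, hence $\sum_{l}g^{-}(l)\leq N(tP+X,L)\leq\sum_{l}g^{+}(l)$. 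Applying Poisson summation to $g^{\pm}$ and using $\int g^{\pm}=\mathrm{Vol}(t^{\pm}P)=t^2\mathrm{Vol}(P)+O(t\tau)$ yields the two-sided bound
$$\sum_{m\neq 0}\widehat{\mathbf{1}_{t^{-}P+X}}(m)\,\omega_2(\tau m)-O(t\tau)\;\leq\;\mathcal{R}(t,P,L,X)\;\leq\;\sum_{m\neq 0}\widehat{\mathbf{1}_{t^{+}P+X}}(m)\,\omega_2(\tau m)+O(t\tau),$$
the sums running over $m\in L^\perp-\{0\}$. Dividing by $\log t$ kills the $O(t\tau)=O(\log(t)^{1/4})$ terms, since $\tau=\log(t)^{1/4}/t$.

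Next I would insert the high-frequency cutoff $\omega_1(\rho^{-1}m)$ to pass to $\tilde{S}^{\pm}$. The discarded tail $\sum_{m\neq 0}\widehat{\mathbf{1}_{t^{\pm}P+X}}(m)\,\omega_2(\tau m)\bigl(1-\omega_1(\rho^{-1}m)\bigr)$ is supported where $|m|\gtrsim\rho$, hence where $|\tau m|\gtrsim\tau\rho=\tau^{1-1/\theta}\to\infty$ because $\theta\in(0,1)$; the rapid decay of $\omega_2$ then makes this tail $o(\log t)$, and in fact negligible once $R_1L^\perp$ is weakly admissible (Proposition \ref{prop12}), which bounds the small denominators $(R_1m)_1(R_1m)_2=\mathrm{Num}(R_1m)$ from below. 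It then remains to compute $\widehat{\mathbf{1}_{t^{\pm}P+X}}(m)=e^{-2\pi i\langle m,X\rangle}\prod_{j=1,2}\frac{\sin(2\pi t^{\pm}a\,m_j)}{\pi m_j}$ and to reorganise it. Expanding each sine into the two exponentials $e^{\pm 2\pi i t^{\pm}am_j}$ produces the four vertex phases matching $e^{2\pi i\langle m,\,t^{\pm}A_i+X\rangle}$ with signs $(-1)^i$ (in the Fourier convention of \cite{Skriganov}), while the factor $\frac{1}{(2\pi i)^2 m_1m_2}$ emerges from $\prod_j(2\pi i m_j)^{-1}$; Skriganov's flag decomposition, realised here through the degree-zero partition $\lambda(m)+\lambda(Rm)=1$ (since $(Rm)_1=m_2$, $(Rm)_2=-m_1$), splits each vertex contribution into its horizontal part ($R_1=I_2$) and vertical part ($R_1=R$) and reassembles the sum exactly as $\tilde{S}^{\pm}(\tau,t,X,L)$.

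The main obstacle is not any single estimate but the simultaneous, uniform control of all the approximation errors on one event of probability $\geq 1-\alpha$. Each reduction — the volume distortion $O(t\tau)/\log t$, the mollification of the indicator along the edges, the high-frequency tail, and the gap between the $+$ and $-$ sandwiches — is individually $o(1)$ after normalisation, but its bound depends on a lower bound for $\mathrm{Num}(m)$ and on the measure of lattices carrying an abnormally short vector. I would therefore quantify the exceptional sets using Lemma \ref{lemme1} and the Siegel--Rogers formulas (Lemma \ref{lemme24}) together with the weak-admissibility estimate of Proposition \ref{prop12}, choose $\theta$ close enough to $1$ that $\tau\rho\to\infty$ sufficiently fast, and intersect the finitely many exceptional events so that their total measure stays below $\alpha$. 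On the complement all the slacks are at most $\alpha$, and the sandwich then gives the stated two-sided bound with probability at least $1-\alpha$.
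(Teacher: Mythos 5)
The paper offers no proof of Proposition \ref{prop41} at all: it is imported verbatim from \cite{Skriganov} (``On dispose du résultat suivant grâce à \cite{Skriganov}''), so there is no internal argument to compare yours against. Your blind reconstruction is, however, a faithful account of the argument behind the citation, and the key mechanisms are all correct: the sandwich $g^{-}\leqslant \mathbf{1}_{tP+X}\leqslant g^{+}$ with $g^{\pm}=\mathbf{1}_{t^{\pm}P+X}\ast\phi_{\tau}$ and $\beta a\geqslant\tfrac14$ matched to $\mathrm{supp}\,\omega_{1}\subset B_{f}(0,\tfrac14)$; Poisson summation on the unimodular $L$ producing the main term $\mathrm{Vol}(t^{\pm}P)=t^{2}\mathrm{Vol}(P)+O(t\tau)$ with $t\tau=\log(t)^{1/4}=o(\log t)$; the expansion of $\prod_{j}\sin(2\pi t^{\pm}am_{j})/(\pi m_{j})$ into the four vertex phases with signs $(-1)^{i}$ and prefactor $(2\pi i)^{-2}(m_{1}m_{2})^{-1}$; and, crucially, the observation that the difference over $R_{1}\in\{I_{2},R\}$ reassembles the full sum because $(Rl)_{1}(Rl)_{2}=-l_{1}l_{2}$ turns the minus sign in $\tilde{S}_{1}^{\pm}(\cdot,I_{2})-\tilde{S}_{1}^{\pm}(\cdot,R)$ into the partition of unity $\lambda(l)+\lambda(Rl)=1$ — this is exactly Skriganov's two-element flag decomposition in this special case. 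You also correctly identify why the statement is probabilistic rather than deterministic: the tail cut at $\lVert m\rVert\gtrsim\rho$ (with $\tau\rho=\rho^{1-\theta}\to\infty$) and the denominators $\mathrm{Num}(R_{1}m)$ are only controlled off an exceptional set of lattices. One point deserves more care than your sketch gives it: Proposition \ref{prop12} yields a lattice-dependent constant $C(L)$ in the lower bound $|\mathrm{Num}(l)|\geqslant C|\log\lVert l\rVert|^{1-d-\beta}$, valid almost everywhere, so to get a single event of measure $\geqslant 1-\alpha$ you must uniformize $C$ over a set of measure $1-\alpha/2$ (a Chebyshev-type truncation, as the paper itself implicitly does when it fixes $D$ in ($\ref{eq146}$) and $\sqrt{\epsilon}$ in ($\ref{eq147}$) via Lemma \ref{lemme1}); your final paragraph gestures at this union bound but does not name the uniformization step. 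With that caveat, the proposal is sound and is precisely the argument the paper delegates to \cite{Skriganov}.
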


Ainsi, pour montrer le théorème $\ref{thm100}$, il nous suffit de montrer que $\tilde{S}^{-}$ et $\tilde{S}^{+}$ ont une distribution asymptotique commune qui est une loi de Cauchy centrée.\\
Par ailleurs, l'étude de la convergence asymptotique de $\tilde{S}^{-}$ pouvant se conduire de manière tout à fait analogue à celle de $\tilde{S}^{+}$, la limite trouvée sera indépendante du signe $\pm$. On se concentre donc sur l'étude $\tilde{S}^{+}$ et on la renomme $\tilde{S}$. On signalera tout au long de la preuve ce qui se serait passé si on avait traité $\tilde{S}^{-}$ à la place de $\tilde{S}^{-}$. \\
\subsection{Étude d'un seul terme}
Dans un but pédagogique, on va simplifier la situation en se concentrant sur l'étude d'un seul terme de $\tilde{S}$ à savoir : 
\begin{equation}
\label{eq144}
\tilde{S}_{2}(\tau,t,X,L) = \tilde{S}_{1}^{+}(\tau,t,A_{2},I_{2},X,L) \textit{.}
\end{equation}
Le but de cette sous-section est de montrer le résultat suivant : 
\begin{theorem}
\label{thm23}
Quand $L \in \mathscr{S}_{2}$ est distribué selon $\tilde{\mu}_{2}$, quand $X$ est distribué selon $\tilde{\lambda}_{2}$, $\tilde{S}_{2}$ converge en loi vers une loi de Cauchy centrée.
\end{theorem}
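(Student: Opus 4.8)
Le plan est de reproduire, dans sa structure, la démonstration du théorème \ref{thm21}, en remplaçant la suite des temps entiers $t$ par les vecteurs $l \in L^{\perp}$ et la quantité $\lVert \delta_{t} L \rVert^{2}$ par $\text{Num}(l) = l_{1} l_{2}$. Le lien entre les deux points de vue est fourni par le lemme \ref{lemme17} : pour $l = (l_{1},l_{2})$ fixé, la fonction $s \longmapsto \lVert \delta_{s} l \rVert^{2} = e^{2s} l_{1}^{2} + e^{-2s} l_{2}^{2}$ atteint son minimum $2 |l_{1} l_{2}| = 2 |\text{Num}(l)|$ en $s = \frac{1}{2}\log|l_{2}/l_{1}|$. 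Ainsi un terme de $\tilde{S}_{2}$ de dénominateur petit correspond exactement à un instant où $\delta_{s} L^{\perp}$ possède un vecteur court, c'est-à-dire à un minimum local de $s \longmapsto \lVert \delta_{s} L^{\perp} \rVert$, exactement comme les éléments de $A_{2}(\epsilon,T,L)$ dans l'étude de $S$. Le rôle tenu par les variables $\theta_{t}$ (indépendantes, symétriques, i.i.d. par hypothèse) sera ici tenu par l'exponentielle oscillante $e^{2 i \pi \langle l, t^{+} A_{2} + X \rangle}$, dont le caractère aléatoire devra être extrait de l'aléa sur $X$.

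Premièrement, je regrouperais les termes $l$ et $-l$ : comme $\omega_{1}$, $\omega_{2}$ et $\lambda$ sont paires et $\text{Num}(-l) = \text{Num}(l)$, leur somme vaut $-\frac{1}{4 \pi^{2}}\frac{2 \lambda(l)\, \omega_{2}(\tau l)\, \omega_{1}(\rho^{-1} l)}{l_{1} l_{2}} \cos(2 \pi \langle l, t^{+} A_{2} + X \rangle)$, ce qui rend $\tilde{S}_{2}$ réelle. Ensuite, par un argument analogue à la proposition \ref{prop9}, j'éliminerais les termes dont le dénominateur $|\text{Num}(l)|$ est trop grand (leur contribution a une variance négligeable via le lemme \ref{lemme1} appliqué à $L^{\perp}$), puis, comme dans la proposition \ref{prop14}, je regrouperais les termes restants autour des minima locaux $s_{i}$ du flot géodésique sur $L^{\perp}$, indexés par $i$. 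On obtient ainsi, à une constante multiplicative près, une écriture $\tilde{S}_{2} \approx \sum_{i} \frac{\Gamma_{i}}{\Xi_{i}}$, où $\Xi_{i}$ est un multiple convenable de $(\log t) |\text{Num}(l_{i})|$ (de sorte que $\log t$ joue le rôle de $T$) et où $\Gamma_{i}$ regroupe le facteur d'angle $\lambda$, les troncatures $\omega_{1},\omega_{2}$ et le cosinus de la phase.

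Il reste alors à établir les deux convergences requises par le lemme \ref{lemme41} (ou \ref{lemme42} si l'on garde le signe de $\text{Num}(l_{i})$). Pour les dénominateurs, je montrerais que $\{\Xi_{i}\}$ converge vers un processus de Poisson d'intensité constante en appliquant le théorème \ref{thm9} exactement comme dans la proposition \ref{prop40} : les hypothèses (h1)--(h6) se vérifient grâce aux formules de Siegel et de Rogers (lemme \ref{lemme24}), au contrôle des normes de Sobolev des transformées de Siegel (lemme \ref{lemme19}) et aux partitions représentatives (proposition \ref{prop13}), l'espace $\mathscr{S}_{2}$ étant remplacé par son image par $L \mapsto L^{\perp}$. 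Pour les numérateurs, plutôt que de postuler l'indépendance comme au théorème \ref{thm21}, j'utiliserais la version renforcée, le théorème \ref{thm13}, en prenant pour marque supplémentaire $\zeta_{t}$ la phase $\langle l, X \rangle \bmod 1$ (couplée éventuellement à la direction $\alpha(\delta_{s}L^{\perp})$, dont l'indépendance vis-à-vis de la longueur est assurée par le lemme \ref{lemme51}). La conclusion du théorème \ref{thm13}, à savoir que le nuage limite est un processus de Poisson pour une mesure \emph{produit} $\mathbf{m} \times \tilde{\mathbf{m}} \times \text{Leb}$, fournit précisément que les $\Gamma_{i}$ sont asymptotiquement i.i.d. et indépendants de $\{\Xi_{i}\}$. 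La symétrie des $\Gamma_{i}$ provient de ce que $\cos$ d'une phase asymptotiquement équirépartie sur le cercle est une variable symétrique, l'équirépartition de $\langle l, X \rangle \bmod 1$ découlant de ce que $X$ est tiré selon une loi à densité $\tilde{\lambda}_{2}$ et que $\lVert l \rVert \to \infty$.

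Le point le plus délicat, et qui constitue précisément la difficulté supplémentaire annoncée par rapport au théorème \ref{thm21}, sera la vérification des hypothèses (h7) et (h8) du théorème \ref{thm13} pour la marque de phase : il faut montrer que, conditionnellement à l'information portée par les instants antérieurs (les $\sigma$-algèbres $\mathcal{F}_{t'}$), la loi de $\langle l, X \rangle \bmod 1$ est proche de la loi uniforme à $\nu_{M} \to 0$ près, et que cette marque est constante sur les courbes $F_{\overline{t}}$ pertinentes. Contrairement au cas de $S$, où l'indépendance et la symétrie étaient données gratuitement, il s'agit ici de les \emph{produire} à partir de l'équirépartition jointe des phases $\{\langle l_{i}, X \rangle\}$ et du mélange exponentiel du flot géodésique, ce qui demande de contrôler simultanément l'aléa arithmétique porté par $L^{\perp}$ et l'aléa indépendant porté par $X$.
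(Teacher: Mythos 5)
Votre plan suit la même architecture que la preuve du papier (réductions successives de $\tilde{S}_{2}$ vers une somme géodésique $\sum_{h}\Gamma_{h,t}/\Xi_{h,t}$ avec $\Xi_{h,t}=\text{Num}(l(L,h))\log t$, processus de Poisson via le théorème \ref{thm9}, marques via le théorème \ref{thm13}, conclusion par les lemmes \ref{lemme41} et \ref{lemme42}), mais il contient une lacune réelle au point précis que vous identifiez vous-même comme le plus délicat : la production des marques i.i.d. Vous proposez d'extraire l'équidistribution et l'indépendance des phases de l'aléa porté par $X$. Cela ne peut pas fonctionner pour l'indépendance : dans un réseau de rang $2$, trois vecteurs quelconques $l_{1},l_{2},l_{3}$ de $L^{\perp}$ vérifient une relation entière non triviale $c_{1}l_{1}+c_{2}l_{2}+c_{3}l_{3}=0$, de sorte que $c_{1}\langle l_{1},X\rangle+c_{2}\langle l_{2},X\rangle+c_{3}\langle l_{3},X\rangle\equiv 0 \pmod 1$ ; la loi jointe des phases $\{\langle l_{i},X\rangle \bmod 1\}$ est donc portée par un sous-groupe fermé propre du tore et ne peut jamais être asymptotiquement celle de variables uniformes indépendantes sous le seul aléa $X$, quelle que soit la densité $\tilde{\lambda}_{2}$. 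Le papier fait exactement l'inverse : il remarque que le terme dominant de la phase est $\langle e(\delta_{h}L),\delta_{h}^{-1}e^{M}A_{2}\rangle$ et \emph{oublie} $X$ ; l'équidistribution conditionnelle (h7) est alors produite par la dynamique elle-même — le long d'une feuille $F_{k'}(L)=\{h_{\tau}^{T}\overline{L}\}$, la phase contient le terme $\tau e_{1}e^{\hat{\lambda}(k)}(A_{2})_{2}$, qui balaie le cercle un grand nombre de fois dès que $\hat{\lambda}(k)\geqslant \lambda_{1}(k')+R\log M$ — et l'indépendance entre marques vient de la structure de filtration (h5)--(h8), pas de $X$. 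Votre dernière phrase mentionne bien le mélange exponentiel, mais le mécanisme concret (dilatation $e^{M}$ couplée au feuilletage unipotent) manque, et c'est lui qui porte toute la vérification de (h7) et (h8).

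Deux points secondaires. D'abord, votre chaîne de réductions omet le regroupement sur les multiples entiers des vecteurs premiers (proposition \ref{prop19} du papier), qui produit les numérateurs $\sum_{k\geqslant 1}k^{-2}\cos(2k\pi\langle l,\cdot\rangle)$ : sans lui, le nuage $\{\text{Num}(l)\log t\}$ contient, avec chaque point $x$, les points $k^{2}x$ issus des multiples $kl$, rigidement corrélés, et ne peut pas converger vers un processus de Poisson ; regrouper « autour des minima locaux » comme dans la proposition \ref{prop14} ne suffit pas à le formuler, car ici chaque vecteur ne contribue qu'un seul terme et le regroupement pertinent est arithmétique ($kl$, $k\geqslant 1$) et non temporel. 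Ensuite, votre argument de symétrie (« $\cos$ d'une phase équirépartie est symétrique ») est insuffisant pour le vrai numérateur : la loi de $\sum_{k\geqslant 1}k^{-2}\cos(2\pi k\zeta)=\pi^{2}(\zeta^{2}-\zeta+\tfrac{1}{6})$ pour $\zeta$ uniforme n'est pas symétrique ; c'est le caractère bilatère et symétrique du processus limite des $\{\Xi_{h,t}\}$ sur $[-\frac{1}{\epsilon},\frac{1}{\epsilon}]-\{0\}$ (le signe de $\text{Num}$) qui centre la limite de Cauchy, via le lemme \ref{lemme22} — votre parenthèse « ou \ref{lemme42} si l'on garde le signe de $\text{Num}(l_{i})$ » est donc en réalité la seule option viable, et c'est celle du papier.
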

Une précision toutefois : comme on l'a vu dans la section précédente, on peut supposer, et on le fera, qu'il existe $D > 0$ tel que pour tout $l \in L-\{ 0 \}$, \begin{equation}
\label{eq146} |l_{1} l_{2} | \geqslant D | \log(\lVert l \rVert_{2})|^{-1 - \beta} 
\end{equation} (avec $\beta > 0$) et que \begin{equation}
\label{eq147}
\lVert L \rVert_{1} \geqslant \sqrt{\epsilon} \textit{.}
\end{equation}
Par ailleurs on remplacera $L^{\perp}$ par $L$ dans l'étude (ce qui ne pose aucun soucis car $L \longmapsto L^{\perp}$ est une bijection, d'inverse elle-même, continue). \\
\\
Heuristiquement, le terme au dénominateur $|Num(l)|$ est en fait relié à un certain $\lVert \delta_{k} L\rVert$, tandis qu'au numérateur on a un terme qui va se comporter à l'infini comme un $\theta_{k}$. Comme on a réussi à prouver le théorème $\ref{thm21}$, on devrait réussir à prouver ce dernier théorème. \\
\\
Introduisons : 
\begin{equation}
\label{eq161}
 \tilde{S}_{9}(\tau,t,\epsilon,X,L) =  \frac{2}{(2 \pi i)^{2}} \sum_{h \in \tilde{I}_{-} (t)}  \frac{\Gamma_{h,t}}{\Xi_{h,t}}   
\end{equation}
où 
\begin{equation}
\label{eq211}
\Xi_{h,t} = \text{Num}(l(L,h)) \log(t) \textit{ , }
\end{equation}
\begin{equation}
\label{eq212}
\Gamma_{h,t}= \sum_{k \in \mathbb{N}-\{ 0 \}} \frac{1}{k^{2}} \cos(2 k \pi <l(L,h), t^{+}A_{2} + X > ) \textit{ et }
\end{equation}
\begin{align} \tilde{I}_{-} (t)= \{ h \in [- \lceil \log(t) \rceil, 0]  \text{ } | \text{ } \lVert \delta_{h} L \rVert^{2} < 2 \cosh(1)|\text{Num}(e(\delta_{h}L))| & \text{ et } \\
 |\text{Num}(e(\delta_{h}L))|  \leqslant \frac{1}{\epsilon \log(t)} \} \nonumber
\end{align}
avec $e(L)$ l'unique vecteur d'un réseau $L$ de première coordonnée strictement positive et tel que $\lVert e(L) \rVert = \lVert L \rVert$ ($e(L)$ est défini pour presque tout réseau $L$)
et $\epsilon > 0$ et où $l(L,h) = \delta_{h}^{-1} e(\delta_{h} L)$. \\
Avec ces notations, on va procéder de la manière suivante pour montrer le théorème $\ref{thm23}$. D'abord,
dans la prochaine sous-sous-section, on va montrer la proposition suivante qui permet de passer de l'étude asymptotique de $\tilde{S}_{2}$ à celle de $\tilde{S}_{9}$ 
\begin{prop}
\label{prop42}
Pour tout $\alpha > 0$, pour tout $\epsilon > 0$ assez petit, pour tout $t$ assez grand, 
$$\mathbb{P}(|\tilde{S}_{2} - \tilde{S}_{9}| \geqslant \alpha) \leqslant \alpha \textit{.}$$ 
\end{prop}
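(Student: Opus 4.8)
La démonstration réunit les idées des propositions \ref{prop9} et \ref{prop14}, transposées au cadre de Fourier. On commence par réorganiser la somme définissant $\tilde{S}_2$ selon les vecteurs primitifs de $L$ et leurs multiples : en écrivant chaque $l \in L-\{0\}$ sous la forme $l = \pm k l'$ avec $l'$ primitif de première coordonnée strictement positive et $k \in \mathbb{N}-\{0\}$, et en regroupant $+kl'$ et $-kl'$ (ce qui fait apparaître, via la parité de $\lambda$, $\omega_1$ et $\omega_2$, le facteur $2$ et le cosinus), on obtient, en utilisant $\text{Num}(kl')=k^2\text{Num}(l')$ et l'invariance d'échelle de $\lambda$,
\begin{equation*}
\tilde{S}_2 = \frac{2}{(2\pi i)^2}\frac{1}{\log(t)}\sum_{\substack{l' \text{ primitif}\\ (l')_1 > 0}} \frac{\lambda(l')}{\text{Num}(l')}\sum_{k \geq 1}\frac{1}{k^2}\cos\big(2k\pi\langle l', t^{+} A_2 + X\rangle\big)\,\omega_2(\tau k l')\,\omega_1(\rho^{-1}k l').
\end{equation*}
Il s'agit alors de montrer que l'on peut, à une erreur petite en probabilité près : (i) ne garder que les $l'$ tels que $|\text{Num}(l')|\log(t)\leq\tfrac{1}{\epsilon}$ ; (ii) remplacer les troncatures $\omega_1,\omega_2$ par $1$ et étendre la somme à tout $k\geq 1$ ; (iii) remplacer $\lambda(l')$ par $1$ ; (iv) réindexer les $l'$ restants par les minima locaux $h\in\tilde{I}_-(t)$.

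\textbf{Élimination des grands $\text{Num}$ (analogue de la proposition \ref{prop9}).} On scinde la somme ci-dessus selon $|\text{Num}(l')|\log(t)\leq\tfrac1\epsilon$ ou $>\tfrac1\epsilon$ et l'on majore la seconde partie par un calcul de variance. Comme $X$ est aléatoire de densité bornée, l'intégration en $X$ annule les termes croisés (les phases associées à des multiples distincts $kl'$ sont orthogonales) et diagonalise la somme ; il reste à contrôler
\begin{equation*}
\frac{C}{\log(t)^2}\,\mathbb{E}_L\!\!\sum_{\substack{l' \text{ primitif}\\ |\text{Num}(l')|\log(t) > 1/\epsilon,\ \lVert l'\rVert \lesssim \rho}} \frac{\lambda(l')^2}{\text{Num}(l')^2},
\end{equation*}
où $C$ absorbe le facteur convergent $\sum_k k^{-4}$. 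La formule de Siegel (lemme \ref{lemme24}) transforme ceci en une intégrale sur $\mathbb{R}^2$ dont une estimation directe donne un $O(\epsilon)$, et l'inégalité de Bienaymé-Tchebychev fournit la majoration voulue pour $\epsilon$ assez petit (le caractère borné $|\Gamma_{h,t}|\leq\sum_k k^{-2}<\infty$ jouant le rôle des $\theta_t$ de la section précédente).

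\textbf{Passage aux minima locaux et suppression des troncatures.} Comme $\text{Num}$ est invariant par $\delta_h$ et que $\lVert\delta_h l'\rVert^2 = e^{2h}(l'_1)^2 + e^{-2h}(l'_2)^2 \geq 2|\text{Num}(l')|$ (inégalité arithmético-géométrique, dans l'esprit du lemme \ref{lemme17}), à chaque $l'$ de petit $\text{Num}$ correspond l'entier $h$ le plus proche du minimiseur continu $s^{\ast}(l')=\tfrac12\log|l'_2/l'_1|$ ; la condition $\lVert\delta_h L\rVert^2 < 2\cosh(1)|\text{Num}(e(\delta_h L))|$ exprime exactement $|h-s^{\ast}(l')|<\tfrac12$, d'où la bijection $l'\mapsto h$ avec $l(L,h)=l'$ et $h\in\tilde{I}_-(t)$. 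Pour ces $l'$, on vérifie comme au lemme \ref{lemme28} que $\lVert l'\rVert \lesssim t/\log(t)^{1/2}\ll 1/\tau\leq\rho$, si bien que $\tau\lVert l'\rVert = O(\log(t)^{-1/4})\to 0$ : les facteurs $\omega_2(\tau k l')$ et $\omega_1(\rho^{-1}k l')$ valent $1+o(1)$ pour les petites valeurs de $k$ et les queues en $k$ sont des $O(1/K)$ avec $K$ grand, ce qui autorise le remplacement par $1$ et l'extension à tout $k\geq 1$. Enfin $\lambda(l(L,h)) = (1+e^{4s^{\ast}})^{-1} = 1+O(e^{4h})$ pour $h\leq 0$ : le remplacement de $\lambda$ par $1$ ne coûte qu'une série géométrique en $h$, les quelques indices proches de $0$ (où $\lambda$ s'écarte de $1$) étant négligeables car, avec probabilité tendant vers $1$, aucun minimum local pertinent n'y tombe.

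\textbf{Difficulté principale et conclusion.} Le point le plus délicat est la réindexation par les minima locaux conjuguée au contrôle \emph{uniforme} des remplacements de troncatures et du facteur $\lambda$ : il faut garantir que les termes exceptionnels — ceux près de $h=0$ (où $\lambda\not\approx 1$) et ceux près de $\lVert l'\rVert\sim 1/\tau$ (où $\omega_2$ décroît) — ont une contribution petite en probabilité, ce qui repose sur le fait que les minima locaux pertinents sont en nombre fini (de l'ordre de $1/\epsilon$) et génériquement éloignés de ces zones, ainsi que sur les estimations de type lemme \ref{lemme1} contrôlant la loi de $\text{Num}$. En choisissant $\epsilon$ assez petit puis $t$ assez grand, chacune des erreurs des étapes (i)--(iv) est majorée par une fraction de $\alpha$, et leur somme donne $\mathbb{P}(|\tilde{S}_2-\tilde{S}_9|\geq\alpha)\leq\alpha$.
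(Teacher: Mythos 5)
Votre schéma de réduction est pour l'essentiel celui du papier (chaîne des propositions \ref{prop18} à \ref{prop24}) : regroupement des multiples d'un vecteur premier avec apparition du facteur $2$ et du cosinus, élimination des grands $\text{Num}$ par Parseval en $X$ combinée à la formule de Rogers (lemme \ref{lemme24}), au lemme \ref{lemme32} et à Bienaymé-Tchebychev, passage à une somme géodésique via le domaine fondamental de l'action de $\Delta$ (lemmes \ref{lemme33} à \ref{lemme36}), puis suppression des troncatures $\omega_{1},\omega_{2}$ (vos estimations de queues en $k$ correspondent au $\min(\frac{F}{k^{2}},\frac{F}{k\log(t)^{1/4}})$ de la proposition \ref{prop24}) et du facteur $\lambda$. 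La réorganisation que vous proposez — garder $\omega_{1}$ jusqu'à la fin au lieu de couper d'abord à $\lVert l \rVert \leqslant \frac{1}{8}t^{\alpha}$ via la décroissance rapide de $\omega_{2}$ et le lemme \ref{lemme31} — est légitime, et votre observation que la condition $\lVert \delta_{h}L \rVert^{2} < 2\cosh(1)|\text{Num}(e(\delta_{h}L))|$ équivaut à $|h-s^{*}(l')|<\frac{1}{2}$ est exacte, de même que $\lambda(l(L,h)) = 1 + O(e^{4h})$ pour $h \leqslant 0$.

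En revanche, l'étape de réindexation contient une affirmation fausse telle quelle : la condition en $\cosh(1)$ ne force nullement $h \in \tilde{I}_{-}(t)$. Les vecteurs proches de l'axe des ordonnées ($|l'_{2}| \gg l'_{1}$) donnent $h > 0$, et les vecteurs de norme comprise entre $t$ et $\rho/4 \sim t^{1/\theta}$ — autorisés par le support de $\omega_{1}$ puisque vous ne coupez pas la somme avant — donnent $h < -\lceil \log(t) \rceil$ ; votre borne $\lVert l' \rVert \lesssim t/\sqrt{\log(t)}$ présuppose déjà $h \geqslant -\log(t)$, le raisonnement est donc circulaire à cet endroit. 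Le papier consacre deux propositions distinctes à ces deux familles : la proposition \ref{prop22} tronque à $|h| \leqslant \lceil \log(t) \rceil$ au prix d'une probabilité $O(\eta/\epsilon)$ — c'est précisément ce coût, petit mais non infinitésimal, qui explique que l'énoncé soit de la forme $\mathbb{P}(|\tilde{S}_{2}-\tilde{S}_{9}| \geqslant \alpha) \leqslant \alpha$ plutôt qu'une convergence en probabilité — et la proposition \ref{prop23} élimine les $h \geqslant 0$ en exploitant $0 \leqslant \lambda(l) \leqslant \frac{1}{\epsilon^{2}\log(t)}$ (conséquence de l'hypothèse $\lVert L \rVert \geqslant \sqrt{\epsilon}$ de (\ref{eq147})) puis le lemme 3.2 de Skriganov pour contrôler $\sum_{h}\lVert \delta_{h}L \rVert^{-2}$. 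Votre paragraphe sur $\lambda$ ne traite que $h \leqslant 0$ ; l'argument symétrique $\lambda(l(L,h)) = O(e^{-4h})$ pour $h > 0$ permettrait de réparer l'élimination des $h$ positifs, mais la troncature $h \geqslant -\lceil \log(t) \rceil$ et son coût en probabilité doivent être ajoutés explicitement, faute de quoi votre chaîne de réductions ne se referme pas sur $\tilde{S}_{9}$.
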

Cette proposition sera l'objet de la sous-sous-section suivante et s'obtient après un certain nombre de réductions successives, qui nécessitent des estimations d'intégrales, d'utiliser des outils de l'analyse de Fourier et de l'analyse classique, de faire de la géométrie des réseaux et d'utiliser des outils dynamiques sur l'espace des réseaux muni du flot géodésique (théorème ergodique par exemple). \\
Une troisième sous-section sera réservée à la mise en place d'un cadre permettant de ramener l'étude asymptotique des $\{\Xi_{h,t} \} $ et des $\{ \Gamma_{h,t} \}$ à la vérification des hypothèses (h1) jusqu'à (h8) du théorème $\ref{thm13}$. La sous-section qui suivra sera dédiée à la vérification de ces hypothèses ce qui nous assurera que les $\{\Xi_{h,t} \} $ sont asymptotiquement un processus de Poisson et les $\{ \Gamma_{h,t} \}$ sont asymptotiquement des variables aléatoires réelles indépendantes, identiquement distribuées, symétriques, à support compact et indépendantes de $\{\Xi_{h,t} \} $. Cela nous permettra, $\textit{in fine}$, via le lemme $\ref{lemme41}$ d'obtenir le théorème $\ref{thm23}$. \\ 
Dans la cinquième et dernière sous-section de cette section, fort des résultats des sous-sections précédentes, nous reviendrons sur l'étude de $\tilde{S}_{1}$ et prouverons le théorème $\ref{thm100}$. 
\subsubsection{Preuve de la proposition $\ref{prop42}$ - « Oubli » de la fonction $\omega_{1}$}
On introduit la quantité suivante : 
$$\tilde{S}_{3}(\tau,t,X,L) = \frac{1}{\log(t)} \frac{1}{(2 \pi i)^{2}} \sum_{l \in J_{3}(L, \alpha,t) }  \frac{1}{l_{1}l_{2}} \lambda(l) \omega_{2}(\tau  l) e^{2 i \pi <l, t^{+}A_{2} + X > } $$
où  $$J_{3}(L, \alpha,t) = \{ l \in L-\{ 0 \} \text{ } | \text{ }\lVert l \rVert \leqslant \frac{1}{8} t^{\alpha} \} $$ avec $1 < \alpha < \frac{1}{\theta}$ est quelconque et on le fera tendre plus tard vers 1. \\
Le but de cette sous-sous-section est alors de prouver la proposition suivante : 
\begin{prop}
\label{prop18}
$|\tilde{S}_{2} -\tilde{S}_{3}|$ converge en probabilités vers $0$.
\end{prop}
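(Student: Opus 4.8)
Le titre de cette sous-sous-section, « oubli » de $\omega_{1}$, résume la stratégie : on veut montrer que le facteur $\omega_{1}(\rho^{-1} l)$ et la troncature $\lVert l \rVert \leqslant \frac{1}{8} t^{\alpha}$ jouent asymptotiquement le même rôle. Le plan est donc de comparer terme à terme $\tilde{S}_{2}$ (somme sur tout $l \in L - \{ 0 \}$, pondérée par $\omega_{1}(\rho^{-1} l)$, après avoir remplacé $L^{\perp}$ par $L$) et $\tilde{S}_{3}$ (somme sur $J_{3}(L,\alpha,t)$, sans ce facteur). La première étape consiste à remarquer que, puisque $\tau = \rho^{-\theta} = \frac{\log(t)^{1/4}}{t}$, on a $\rho = (t \log(t)^{-1/4})^{1/\theta}$ ; comme $\alpha < \frac{1}{\theta}$, on en déduit $\frac{1}{8} t^{\alpha} \leqslant \frac{\rho}{8}$ pour $t$ assez grand. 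Or $\omega_{1} \equiv 1$ sur $B_{f}(0,\frac{1}{8})$, donc $\omega_{1}(\rho^{-1} l) = 1$ pour tout $l \in J_{3}(L,\alpha,t)$ : les termes de $\tilde{S}_{2}$ et de $\tilde{S}_{3}$ coïncident exactement sur $J_{3}$.

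Il restera alors à majorer la queue des termes de $\tilde{S}_{2}$ situés hors de $J_{3}$, à savoir
\begin{equation*}
\tilde{S}_{2} - \tilde{S}_{3} = \frac{1}{\log(t)} \frac{1}{(2\pi i)^{2}} \sum_{\substack{l \in L - \{ 0 \} \\ \lVert l \rVert > \frac{1}{8} t^{\alpha}}} \frac{\lambda(l)}{l_{1} l_{2}} \omega_{2}(\tau l) \omega_{1}(\rho^{-1} l) e^{2 i \pi <l, t^{+} A_{2} + X>}.
\end{equation*}
En utilisant $0 \leqslant \lambda(l) \leqslant 1$, $|\omega_{1}| \leqslant 1$ et $|e^{2 i \pi <l, t^{+} A_{2} + X>}| = 1$ (le vecteur aléatoire $X$ disparaît ainsi, ce qui rend la majoration uniforme en $X$), on se ramène à contrôler
\begin{equation*}
|\tilde{S}_{2} - \tilde{S}_{3}| \leqslant \frac{1}{4 \pi^{2} \log(t)} \sum_{\substack{l \in L - \{ 0 \} \\ \lVert l \rVert > \frac{1}{8} t^{\alpha}}} \frac{|\omega_{2}(\tau l)|}{|l_{1} l_{2}|}.
\end{equation*}

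On estimerait cette somme par découpage dyadique selon $2^{j} \leqslant \lVert l \rVert < 2^{j+1}$, avec $2^{j} \geqslant \frac{1}{8} t^{\alpha}$. Sur chaque couronne, l'hypothèse d'admissibilité faible $(\ref{eq146})$ donne $\frac{1}{|l_{1} l_{2}|} \leqslant D^{-1} ((j+1)\log 2)^{1+\beta}$ ; l'hypothèse $(\ref{eq147})$, c'est-à-dire $\lVert L \rVert \geqslant \sqrt{\epsilon}$, borne le nombre de points du réseau dans la couronne par $O(2^{2j}/\epsilon)$ (les boules de rayon $\frac{\sqrt{\epsilon}}{2}$ centrées aux points de $L$ étant disjointes) ; enfin la décroissance rapide de $\omega_{2}$ donne, pour tout $N$, $|\omega_{2}(\tau l)| \leqslant C_{N} (\tau 2^{j})^{-N}$. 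Le point décisif est que $\tau \lVert l \rVert > \frac{1}{8} \log(t)^{1/4} t^{\alpha - 1} \rightarrow \infty$ puisque $\alpha > 1$ : c'est exactement cette condition qui place la queue dans le régime où la décroissance rapide de $\omega_{2}$ est efficace. La contribution de la couronne $j$ est alors de l'ordre de $(j+1)^{1+\beta} 2^{j(2-N)} \tau^{-N}$ ; pour $N > 2$, la somme géométrique en $j$ est dominée par son premier terme, d'où une majoration de l'ordre de $(\log t)^{1+\beta} t^{\alpha(2-N)} \tau^{-N} = (\log t)^{1+\beta - N/4} t^{2\alpha + N(1-\alpha)}$. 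Comme $\alpha > 1$, on choisit $N$ assez grand (par exemple $N > \frac{2\alpha}{\alpha - 1}$) pour rendre l'exposant de $t$ strictement négatif ; la majoration tend alors vers $0$, et ce même après division par $\log(t)$.

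La difficulté principale ne réside pas dans le calcul lui-même mais dans l'articulation entre la petitesse de $\tau$, qui repousse le début de la décroissance de $\omega_{2}(\tau l)$ jusqu'à $\lVert l \rVert \sim \tau^{-1} \approx t$, et la troncature à $\lVert l \rVert > \frac{1}{8} t^{\alpha}$ : il faut vérifier que le choix $\alpha > 1$ garantit bien que l'on travaille dans le régime de décroissance rapide, tout en dominant simultanément la croissance polynomiale du nombre de points du réseau et le facteur logarithmique $|\log \lVert l \rVert|^{1+\beta}$ issu de l'admissibilité faible. Enfin, comme $(\ref{eq146})$ et $(\ref{eq147})$ sont vérifiées pour $\mu_{2}$-presque tout $L$, la majoration obtenue est déterministe (uniforme en $X$) et tend vers $0$ pour presque tout $L$ ; on obtient ainsi la convergence presque sûre de $|\tilde{S}_{2} - \tilde{S}_{3}|$ vers $0$, donc \emph{a fortiori} la convergence en probabilités voulue.
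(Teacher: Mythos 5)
Votre démonstration est correcte, mais elle suit dans sa seconde moitié une route véritablement différente de celle du papier. La première étape est la même : comme $\alpha < \frac{1}{\theta}$ donne $\frac{1}{8}t^{\alpha} \leqslant \frac{\rho}{8}$ pour $t$ grand et que $\omega_{1} \equiv 1$ sur $B_{f}(0,\frac{1}{8})$, les termes coïncident sur $J_{3}$, et la queue restante est placée dans le régime de décroissance rapide de $\omega_{2}$ grâce à $\tau \lVert l \rVert \gtrsim t^{\alpha - 1}$ avec $\alpha > 1$. Le papier, lui, exploite en plus le support compact de $\omega_{1}$ pour confiner la queue dans la couronne $\frac{1}{8}t^{\alpha} \leqslant \lVert l \rVert \leqslant \frac{1}{4}t^{1/\theta}$, en extrait le facteur uniforme $t^{-17(\alpha-1)}$, puis contrôle la somme résiduelle $\sum 1/|\text{Num}(l)|$ \emph{en moyenne} : la formule de Siegel (lemme $\ref{lemme24}$) ramène son espérance à l'intégrale du lemme $\ref{lemme31}$, qui est $O(\log(t)^{2})$, et l'on conclut à la convergence en probabilités par Markov. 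Vous remplacez ce passage en moyenne par un argument entièrement déterministe : découpage dyadique, comptage des points de $L$ dans chaque couronne par empilement de boules disjointes (ce qui utilise $(\ref{eq147})$), et majoration ponctuelle de $1/|l_{1}l_{2}|$ via $(\ref{eq146})$ ; vos exposants sont corrects et le choix $N > \frac{2\alpha}{\alpha-1}$ rend bien l'exposant de $t$ strictement négatif. Votre variante se dispense du cutoff supérieur et des lemmes $\ref{lemme24}$ et $\ref{lemme31}$, et donne une conclusion plus forte — une majoration uniforme en $X$ et une convergence presque sûre sur l'ensemble des réseaux retenus — tandis que la version du papier est plus courte parce qu'elle recycle des lemmes déjà établis. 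Une seule imprécision à corriger dans votre phrase finale : $(\ref{eq147})$ n'est \emph{pas} vérifiée pour $\mu_{2}$-presque tout $L$ (d'après le lemme $\ref{lemme1}$, elle échoue sur un ensemble de mesure d'ordre $\epsilon$), et la constante $D$ de $(\ref{eq146})$ dépend du réseau ; mais comme le papier adopte précisément ces deux conditions comme hypothèses permanentes, quitte à écarter un ensemble de réseaux de petite mesure, votre argument fournit bien la convergence en probabilités annoncée.
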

Heuristiquement, cette proposition signifie qu'on remplace $\omega_{1}$ par sa seule valeur en $0$, qui vaut $1$, et qu'on coupe les termes en $l$ dont la norme est trop importante, ce que fait la fonction $\omega_{1}$ lorsque le terme $l$ en norme est plus grand que $\frac{1}{4} t^{\frac{1}{\theta}}$. C'est donc assez naturel de prouver cela.
Pour démontrer cette proposition, on a besoin du lemme suivant, énoncé en dimension $d \geqslant 2$.
\begin{lemma}
\label{lemme31}
Pour tout $\epsilon_{1} > 0$, pour tout $\beta > 0$, pour tout $\gamma > 0$,
$$\int_{\substack{ l \in \mathbb{R}^{d} \\ 1 < \lVert l \rVert \leqslant t^{1+\epsilon_{1}} \\ |l_{1} \cdots l_{d} | \geqslant D |\log(\lVert l \rVert)|^{1-d-\beta}}} \frac{1}{|l_{1} \cdots l_{d}|} dl_{1} \cdots dl_{d} = O(\log(t)^{d}) \textit{.}$$
\end{lemma}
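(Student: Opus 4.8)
L'idée est de ramener l'intégrale, grâce à un changement de variables logarithmique, au volume d'une région de $\mathbb{R}^{d}$, après avoir découpé le domaine en couronnes dyadiques. L'intégrande et le domaine étant invariants par changement de signe des coordonnées, je me placerais d'abord dans l'orthant positif $\{ l_{i} > 0 \}$, quitte à multiplier le résultat par $2^{d}$. Je découperais ensuite le domaine selon les couronnes $S_{j} = \{ 2^{j} < \lVert l \rVert \leqslant 2^{j+1} \}$, pour $j = 0, \dots, J$ avec $2^{J} \asymp t^{1+\epsilon_{1}}$, de sorte que le nombre de couronnes soit $J = O(\log(t))$. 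Sur $S_{j}$, puisque $1 - d - \beta < 0$ et $\log(\lVert l \rVert) \leqslant (j+1)\log(2)$, la contrainte $|l_{1} \cdots l_{d}| \geqslant D |\log(\lVert l \rVert)|^{1-d-\beta}$ entraîne $|l_{1} \cdots l_{d}| \geqslant c_{j}$ où $c_{j} = D((j+1)\log(2))^{1-d-\beta}$ ; la portion du domaine située dans $S_{j}$ est donc incluse dans $S_{j} \cap \{ |l_{1} \cdots l_{d}| \geqslant c_{j} \}$.

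Sur chaque couronne, j'effectuerais le changement de variables $u_{i} = \log(l_{i})$, qui transforme la forme $\frac{dl_{1} \cdots dl_{d}}{|l_{1} \cdots l_{d}|}$ en la mesure de Lebesgue $du_{1} \cdots du_{d}$ : l'intégrale sur la couronne devient alors simplement le volume de l'image du domaine. En posant ensuite $v_{i} = (j+1)\log(2) - u_{i}$, cette image s'écrit
$$\left\{ v \in \mathbb{R}^{d} \ : \ \tfrac{1}{4} < \sum_{i=1}^{d} e^{-2 v_{i}} \leqslant 1, \ \sum_{i=1}^{d} v_{i} \leqslant A_{j} \right\}$$
où $A_{j} = d(j+1)\log(2) - \log(c_{j})$. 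Comme $\log(c_{j})$ ne contribue qu'un terme en $O(\log(j))$, on a $A_{j} = O(j)$ pour $j \geqslant 1$, le terme linéaire $d(j+1)\log(2)$ dominant.

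Le point crucial de la preuve est la majoration du volume de cette région. D'une part, la condition $\sum_{i} e^{-2 v_{i}} \leqslant 1$ impose $v_{i} \geqslant 0$ pour tout $i$. D'autre part, la condition $\sum_{i} e^{-2 v_{i}} > \tfrac{1}{4}$ impose $\max_{i} e^{-2 v_{i}} > \tfrac{1}{4d}$, c'est-à-dire l'existence d'un indice $i_{0}$ tel que $v_{i_{0}} < B := \tfrac{1}{2}\log(4d)$. En majorant le volume par l'union sur le choix de cet indice $i_{0}$, chaque morceau $\{ v_{i} \geqslant 0 \ \forall i, \ v_{i_{0}} \leqslant B, \ \sum_{i} v_{i} \leqslant A_{j} \}$ se contrôle en intégrant sur $v_{i_{0}} \in [0,B]$ le volume du simplexe formé par les $d-1$ coordonnées restantes, soit au plus $B \cdot \frac{A_{j}^{d-1}}{(d-1)!}$. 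On obtient ainsi que le volume de la région est $O(A_{j}^{d-1}) = O(j^{d-1})$.

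Il ne resterait plus qu'à sommer : la contribution totale serait $2^{d} \sum_{j=0}^{J} O(j^{d-1}) = O(J^{d}) = O(\log(t)^{d})$, la couronne $j = 0$ (où $\log(\lVert l \rVert)$ peut s'approcher de $0$) se traitant à part, la contrainte y rendant le domaine borné — et même vide près de $\lVert l \rVert = 1$ —, d'où une contribution $O(1)$. La principale difficulté est bien cette étape de majoration du volume : il est essentiel de conserver la borne inférieure $\tfrac{1}{4} < \sum_{i} e^{-2 v_{i}}$ issue de la structure en couronnes. Sans elle, en majorant naïvement par la boule pleine $\{ \sum_{i} e^{-2 v_{i}} \leqslant 1 \}$, on obtiendrait $O(A_{j}^{d}) = O(j^{d})$ par couronne, donc $O(\log(t)^{d+1})$ au total, soit une puissance de $\log(t)$ de trop.
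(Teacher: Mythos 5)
Votre preuve est correcte, mais elle emprunte un chemin réellement différent de celui du papier. Le papier se ramène par symétrie et réarrangement au domaine $0 < l_{1} \leqslant \cdots \leqslant l_{d}$, puis tire de la contrainte $l_{1} \cdots l_{d} \geqslant D \log(\lVert l \rVert)^{1-d-\beta} \geqslant C \log(t^{1+\epsilon_{1}})^{1-d-\beta}$ (l'exposant étant négatif) et de la majoration $l_{i} \leqslant t^{1+\epsilon_{1}}$ une minoration individuelle $l_{i} \geqslant C_{i} \log(t)^{\frac{1-d-\beta}{i}} t^{-(1+\epsilon_{1})\frac{d-i}{i}}$ : le domaine est alors inclus dans un produit d'intervalles dont chacun a une longueur logarithmique en $O(\log t)$, et Fubini majore l'intégrale par le produit de $d$ intégrales unidimensionnelles $\int \frac{dl_{i}}{l_{i}} = O(\log t)$, sans aucun découpage. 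Votre schéma — couronnes dyadiques, changement de variables $u_{i} = \log l_{i}$, puis majoration du volume d'une tranche de simplexe — est correct dans tous ses détails (l'inclusion dans $\{ |l_{1} \cdots l_{d}| \geqslant c_{j} \}$, la positivité des $v_{i}$, l'existence de l'indice $i_{0}$ avec $v_{i_{0}} \leqslant B$, la sommation $\sum_{j \leqslant J} j^{d-1} = O(J^{d})$ et le traitement de la couronne $j=0$) et il fournit une information plus fine, échelle par échelle : la contribution de la couronne $2^{j} < \lVert l \rVert \leqslant 2^{j+1}$ est $O(j^{d-1})$, ce que l'argument produit du papier ne voit pas. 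En revanche, votre remarque finale selon laquelle la borne inférieure $\frac{1}{4} < \sum_{i} e^{-2 v_{i}}$ serait essentielle ne vaut qu'à l'intérieur de votre propre schéma : la preuve du papier obtient $O(\log(t)^{d})$ sans aucune décomposition en couronnes, précisément parce qu'en coordonnées logarithmiques le domaine entier tient dans une boîte de côté $O(\log t)$ ; la perte d'un facteur $\log$ que vous décrivez provient de la sommation, sur $O(\log t)$ couronnes, d'une majoration relâchée, et non d'une obstruction intrinsèque au problème.
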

\begin{proof} Par continuité et par symétrie, cela revient à démontrer que : 
$$\int_{\substack{ l \in \mathbb{R}^{d} \\ 1 < \lVert l \rVert \leqslant t^{1+\epsilon_{1}} \\ l_{1} \cdots l_{d}  \geqslant D \log(\lVert l \rVert)^{1-d-\beta} \\ 0 < l_{1} \leqslant l_{2} \leqslant \cdots \leqslant l_{d} }} \frac{1}{l_{1} \cdots l_{d}} dl_{1} \cdots dl_{d} = O(\log(t)^{d}) \textit{.}$$
Appelons $I$ cette dernière intégrale. \\
Le système d'inégalités portant sur $l$ implique que :\\
pour tout $i \in \{1, \cdots, d \}$, $l_{i} \leqslant t^{1+\epsilon_{1}}$ et $l_{i}^{i} (t^{1+\epsilon_{1}})^{d-i} \geqslant D \log( \lVert l \rVert)^{1-d-\beta} \geqslant C \log(t^{1+\epsilon_{1}})^{1-d-\beta}$. \\
Ainsi, on a, pour tout $i$ : $$t^{1+\epsilon_{1}} \geqslant l_{i} \geqslant C_{i} \frac{\log(t)^{\frac{1-d-\beta}{i}}}{t^{(1+\epsilon_{1})\frac{d-i}{i}}} \textit{.}$$
D'où, pour tout $t$ assez grand, par le théorème de Fubini, on a : $$I \leqslant \prod_{i=1}^{d} \int_{t^{1+\epsilon_{1}} \geqslant l_{i} \geqslant C_{i} \frac{\log(t)^{\frac{1-d-\beta}{i}}}{t^{(1+\epsilon_{1})\frac{d-i}{i}}}} \frac{1}{l_{i}}dl_{i} \textit{.}$$
Or pour tout $i$, $\int_{t^{1+\epsilon_{1}} \geqslant l_{i} \geqslant C_{i} \frac{\log(t)^{\frac{1-d-\beta}{i}}}{t^{(1+\epsilon_{1})\frac{d-i}{i}}}} \frac{1}{l_{i}}dl_{i} = O(\log(t)) \textit{.}$
D'où le résultat voulu.
\end{proof}
Démontrons maintenant la proposition $\ref{prop18}$.
\begin{proof}[Démonstration de la proposition $\ref{prop18}$]
Vu les propriétés sur $\omega_{1} $ et vu la définition de $\rho$, on a (en sous-entendant les diverses variables) : 
\begin{equation}
\label{eq148}
|\tilde{S}_{2} -\tilde{S}_{3}| \leqslant \frac{M}{\log(t)} \sum_{\substack{ l \in L-\{0 \} \\  \frac{t^{\frac{1}{\theta}}}{4} \geqslant \lVert l \rVert \geqslant \frac{1}{8} t^{\alpha}}} \frac{1}{|\text{Num}(l)|} | \omega_{2} (\tau l) | \textit{.} 
\end{equation}
Or $\omega_{2}$ est à décroissance rapide et pour $l$ comme dans la somme, on a $\lVert \tau l \rVert \geqslant \tau t^{\alpha} \geqslant t^{\alpha-1}$ et $\alpha - 1 > 0$. \\
D'où de ($\ref{eq148}$) on tire que : 
\begin{equation}
\label{eq149}
|\tilde{S}_{2} -\tilde{S}_{3}| \leqslant \frac{M}{\log(t) t^{17(\alpha -1)}} \sum_{\substack{ l \in L-\{0 \} \\  \frac{t^{\frac{1}{\theta}}}{4} \geqslant \lVert l \rVert \geqslant \frac{1}{8} t^{\alpha}}} \frac{1}{|\text{Num}(l)|}  \textit{.} 
\end{equation}
D'où le résultat voulu grâce au lemme $\ref{lemme24}$ et grâce à ($\ref{eq146}$) qui nous permet d'appliquer le lemme $\ref{lemme31}$.
\end{proof}
On est donc ramené à l'étude de $\tilde{S}_{3}$ \footnote{ On voit dans la preuve de la proposition $\ref{prop18}$ que l'on a majoré le module des termes $e^{2 i \pi <l, t^{+}A_{2} + X > }$ par $1$. Donc cette preuve est aussi valable en ayant $t^{-}$ à la place de $t^{+}$. }. 
\subsubsection{Preuve de la proposition $\ref{prop42}$ - Centrage sur les vecteurs premiers}
On introduit : 
\begin{equation}
\label{eq150} \tilde{S}_{4}(\tau,t,X,L) = \frac{2}{\log(t)} \frac{1}{(2 \pi i)^{2}} \sum_{l \in J_{4}(L,\alpha,t)}  \frac{1}{l_{1}l_{2}} \lambda(l) \sum_{k \in \mathbb{N}-\{ 0 \}} \frac{\omega_{2}(k \tau  l)}{k^{2}} \cos(2 k \pi <l, t^{+} A_{2} + X > ) 
\end{equation}
où $$J_{4}(L, \alpha,t) = \{ l \in L-\{ 0 \} \text{ } | \text{ }\lVert l \rVert \leqslant \frac{1}{8} t^{\alpha} \text{, } l \text{ premier} \text{, } l_{1} > 0 \} \textit{.} $$
Le but de cette sous-sous-section est de prouver la proposition suivante :

\begin{prop}
\label{prop19}
$\tilde{S}_{4}-\tilde{S}_{3}$ tend en probabilités vers $0$.
\end{prop}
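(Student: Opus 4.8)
Le plan est de réindexer $\tilde{S}_{3}$ selon la primitivité et de reconnaître que $\tilde{S}_{4}$ n'en est que la version où la somme intérieure n'est plus tronquée. Tout d'abord, j'écrirais chaque $l \in L - \{0\}$ de manière unique sous la forme $l = k p$ avec $k \geqslant 1$ et $p$ premier. Comme $\lambda$ est invariante par homothétie (donc $\lambda(kp) = \lambda(p)$), comme $\omega_{2}$ est paire (symétrie sphérique) et comme $<kp, t^{+}A_{2}+X> = k<p,t^{+}A_{2}+X>$, le regroupement des contributions de $p$ et de $-p$ transforme les exponentielles en $2\cos(2k\pi<p,t^{+}A_{2}+X>)$ et permet de se restreindre aux $p$ premiers avec $p_{1} > 0$ (il n'y a aucun vecteur du réseau avec $p_{1}=0$ grâce à l'admissibilité faible ($\ref{eq146}$)). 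On obtient ainsi
\begin{equation*}
\tilde{S}_{3} = \frac{2}{\log(t)} \frac{1}{(2\pi i)^{2}} \sum_{\substack{p \text{ premier} \\ p_{1} > 0}} \frac{\lambda(p)}{p_{1}p_{2}} \sum_{1 \leqslant k \leqslant \frac{t^{\alpha}}{8 \lVert p \rVert}} \frac{\omega_{2}(k\tau p)}{k^{2}} \cos(2k\pi<p,t^{+}A_{2}+X>) \textit{,}
\end{equation*}
la troncature $k \leqslant \frac{t^{\alpha}}{8\lVert p \rVert}$ encodant exactement la condition $\lVert kp \rVert \leqslant \frac{1}{8}t^{\alpha}$.

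En comparant avec la définition ($\ref{eq150}$) de $\tilde{S}_{4}$, on voit que les deux sommes coïncident terme à terme, à ceci près que dans $\tilde{S}_{4}$ la somme en $k$ porte sur tous les $k \geqslant 1$. Ainsi $\tilde{S}_{4} - \tilde{S}_{3}$ est exactement la queue $\sum_{k > \frac{t^{\alpha}}{8\lVert p \rVert}}$. Pour un tel $k$, on a $k\tau\lVert p \rVert > \frac{\tau t^{\alpha}}{8} = \frac{t^{\alpha-1}\log(t)^{1/4}}{8}$, qui tend vers l'infini puisque $\tau = \frac{\log(t)^{1/4}}{t}$ et $\alpha > 1$. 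La décroissance rapide de $\omega_{2}$ donne, pour tout $N$, $|\omega_{2}(k\tau p)| \leqslant C_{N}(k\tau\lVert p \rVert)^{-N}$ ; en sommant $\sum_{k > K} k^{-N-2} \leqslant \frac{K^{-N-1}}{N+1}$ avec $K = \frac{t^{\alpha}}{8\lVert p \rVert}$ et en utilisant $\lVert p \rVert \leqslant \frac{1}{8}t^{\alpha}$, on aboutit à une majoration de la queue, uniforme en $p$, de l'ordre de $\frac{t^{-(\alpha-1)N}}{\log(t)^{N/4}}$ (à des constantes ne dépendant que de $N$ près).

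Il reste à sommer sur $p$. En majorant $\lambda(p) \leqslant 1$, on est ramené à contrôler $\sum_{p \text{ premier}, \lVert p \rVert \leqslant \frac{1}{8}t^{\alpha}} \frac{1}{|p_{1}p_{2}|}$. Comme dans la preuve de la proposition $\ref{prop18}$, j'estimerais cette somme en espérance via la formule de Siegel (lemme $\ref{lemme24}$) : sur le bon ensemble de réseaux, l'hypothèse ($\ref{eq146}$) assure que cette somme coïncide avec la transformée de Siegel de $\frac{1}{|l_{1}l_{2}|}\mathbf{1}_{\lVert l \rVert \leqslant \frac{1}{8}t^{\alpha}} \mathbf{1}_{|l_{1}l_{2}| \geqslant D|\log\lVert l \rVert|^{-1-\beta}}$, dont la moyenne vaut $\zeta(2)^{-1}$ fois l'intégrale correspondante, majorée par $O(\log(t)^{2})$ grâce au lemme $\ref{lemme31}$. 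On peut d'ailleurs aussi obtenir une majoration polynomiale en $t$ de manière purement déterministe en combinant ($\ref{eq146}$) avec le comptage de Minkowski rendu possible par ($\ref{eq147}$). Dans tous les cas, la somme est au plus polynomiale en $t$, de sorte qu'en choisissant $N$ assez grand le facteur $t^{-(\alpha-1)N}$ l'emporte : on obtient $\mathbb{E}(|\tilde{S}_{4}-\tilde{S}_{3}|) \to 0$, d'où la convergence en probabilités par l'inégalité de Markov, la contribution de l'ensemble $\{\lVert L \rVert < \sqrt{\epsilon}\}$ (de mesure $O(\epsilon)$ d'après le lemme $\ref{lemme1}$) étant éliminée en faisant tendre $\epsilon$ vers $0$.

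Le seul point réellement délicat est que l'intégrande $\frac{1}{|l_{1}l_{2}|}$ n'est pas intégrable au voisinage des axes de coordonnées : c'est précisément la troncature d'admissibilité faible ($\ref{eq146}$) qui rend la comparaison somme-intégrale licite et qui permet d'invoquer le lemme $\ref{lemme31}$. Une fois ce point acquis, le reste n'est qu'une estimation de queue standard, rendue efficace par la décroissance rapide de $\omega_{2}$ et par le fait que $\alpha > 1$.
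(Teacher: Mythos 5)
Votre démonstration est correcte et suit pour l'essentiel la même démarche que celle de l'article : regroupement par parité des termes $l$ et $-l$ (qui donne l'équation $(\ref{eq151})$ du texte, que vous réorganisez de façon équivalente via l'écriture unique $l=kp$ avec $p$ primitif), identification de $\tilde{S}_{4}-\tilde{S}_{3}$ comme la contribution des multiples $kp$ de norme $>\frac{1}{8}t^{\alpha}$, puis contrôle de cette queue par la décroissance rapide de $\omega_{2}$ (exploitant $\tau t^{\alpha}\geqslant t^{\alpha-1}\log(t)^{1/4}$ avec $\alpha>1$) et de la somme sur le réseau par la formule de Siegel (lemme $\ref{lemme24}$) combinée à $(\ref{eq146})$ et au lemme $\ref{lemme31}$, avec conclusion en moyenne puis par Markov. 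La seule différence est d'organisation — l'article majore directement $|\tilde{S}_{3}-\tilde{S}_{4}|$ par $\frac{M}{\log(t)}\sum_{\lVert l\rVert\geqslant\frac{1}{8}t^{\alpha},\,l_{1}>0}\frac{|\omega_{2}(\tau l)|}{|\text{Num}(l)|}$ et conclut comme pour la proposition $\ref{prop18}$, là où vous sommez d'abord la queue en $k$ à $p$ fixé — et vos estimations sont au même niveau de rigueur que celles du texte.
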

Cette proposition nous dit d'une part, qu'on peut regrouper tous les termes qui sont multiples d'un vecteur premier $l$, et, d'autre part, qu'on peut se contenter de considérer les vecteurs premiers $l$ qui voient leur première coordonnée être strictement positive (via un argument de parité et quitte à éliminer un ensemble de réseaux négligeable). 
\begin{proof}
Tout d'abord, en utilisant l'invariance par la transformation $l \longmapsto -l$ de $\omega_{2}(l)$, de $\text{Num}(l)$ (cela dépend en général de la dimension $d$) et de $\lambda(l)$, il vient : 
\begin{equation}
\label{eq151}
\tilde{S}_{3}(\tau,t,X,L) = \frac{2}{\log(t)} \frac{1}{(2 \pi i)^{2}} \sum_{\substack{ l \in L \\ \lVert l \rVert \leqslant \frac{1}{8} t^{\alpha} \\ l_{1} > 0} }  \frac{1}{l_{1}l_{2}} \lambda(l) \omega_{2}(\tau  l) \cos (2  \pi <l, t^{+} A_{2} + X > ) \textit{.}
\end{equation}
De ($\ref{eq150}$) et de ($\ref{eq151}$), on obtient : 
$$|\tilde{S}_{3}- \tilde{S}_{4}| \leqslant \frac{M}{\log(t)} \sum_{\substack{ l \in L \\ \lVert l \rVert \geqslant \frac{1}{8} t^{\alpha} \\ l_{1} > 0} } \frac{|\omega_{2}(\tau l)|}{| \text{Num} (l)|} \textit{.} $$
On conclut comme pour la proposition $\ref{prop18}$.
\end{proof}
On est donc ramenés à l'étude de $\tilde{S}_{4}$ \footnote{ On voit que dans la preuve de la proposition $\ref{prop19}$, on a essentiellement utilisé un argument de parité et que l'on a majoré les termes $|\cos (2  \pi <l, t^{+} A_{2} + X > )|$ par 1. Cette preuve est donc encore valable en considérant $t^{-}$ à la place de $t^{+}$.}.
\subsubsection{Preuve de la proposition $\ref{prop42}$ - Réduction de la somme aux termes $l$ tels que $Num(l)$ est petit}
Soit $\epsilon > 0$. On introduit : 
\begin{equation}
\label{eq152}
 \tilde{S}_{5}(\tau,t,\epsilon,X,L) = \frac{2}{\log(t)} \frac{1}{(2 \pi i)^{2}} \sum_{l \in J_{5}(L, \alpha, \epsilon, t)}  \frac{1}{l_{1}l_{2}} \lambda(l) \sum_{k \in \mathbb{N}-\{ 0 \}} \frac{\omega_{2}(k \tau  l)}{k^{2}} \cos(2 k \pi <l, t^{+}A_{2} + X > ) 
\end{equation}
où $$J_{5}(L, \alpha, \epsilon, t) = \{ l \in L-\{ 0 \} \text{ } | \text{ }\lVert l \rVert \leqslant \frac{1}{8} t^{\alpha} \text{, } l \text{ premier} \text{, } l_{1} > 0 \text{, }  |Num(l)| \leqslant \frac{1}{ \log(t) \epsilon} \} \textit{.} $$
Le but de cette sous-sous-section est alors de démontrer la proposition suivante :  
\begin{prop}
\label{prop20}
Pour tout $\kappa > 0$, pour tout $\epsilon > 0$ assez petit, pour tout $t$ assez grand, on a 
$$\mathbb{P}(| \tilde{S}_{4}(\tau,t,X,L) - \tilde{S}_{5}(\tau,t,\epsilon,X,L)| \geqslant \kappa) \leqslant \kappa \textit{.} $$ 
\end{prop}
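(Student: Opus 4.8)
La différence $\tilde{S}_{4} - \tilde{S}_{5}$ est la somme, sur l'ensemble $J_{4}(L,\alpha,t) \setminus J_{5}(L,\alpha,\epsilon,t)$ des vecteurs premiers $l$ tels que $l_{1} > 0$, $\lVert l \rVert \leqslant \frac{1}{8}t^{\alpha}$ et $|\text{Num}(l)| > \frac{1}{\epsilon \log(t)}$, des termes $\frac{2}{\log(t)} \frac{1}{(2\pi i)^{2}} \frac{\lambda(l)}{l_{1}l_{2}} \sum_{k \geqslant 1} \frac{\omega_{2}(k\tau l)}{k^{2}} \cos(2k\pi <l, t^{+}A_{2}+X>)$. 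Remarquons d'emblée qu'une majoration en valeur absolue est insuffisante : le lemme \ref{lemme24} et un calcul d'intégrale analogue au lemme \ref{lemme31} donnent $\frac{1}{\log(t)} \mathbb{E}_{L} \sum_{l} \frac{1}{|\text{Num}(l)|} = O(\log(t))$ sur cette région, ce qui ne tend pas vers $0$. Il faut donc impérativement exploiter les oscillations de la phase, c'est-à-dire le caractère aléatoire de $X$, via un argument de variance --- exactement comme les $\theta_{i}$ fournissaient la décorrélation dans la proposition \ref{prop9}.

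Le plan est donc de majorer $\mathbb{E}\big((\tilde{S}_{4}-\tilde{S}_{5})^{2}\big)$, l'aléa portant conjointement sur $L \sim \tilde{\mu}_{2}$ et $X \sim \tilde{\lambda}_{2}$, puis de conclure par l'inégalité de Bienaymé-Tchebychev. On intègre d'abord en $X$, à $L$ fixé. En développant le carré et en utilisant $\cos A \cos B = \frac{1}{2}(\cos(A-B)+\cos(A+B))$, chaque terme croisé indexé par $(l,k),(l',k')$ fait intervenir $\mathbb{E}_{X}\big(\cos(2\pi <kl \pm k'l', t^{+}A_{2}+X>)\big)$, qui est, au déphasage près, un coefficient de Fourier, évalué en $kl \pm k'l'$, de la densité (régulière) de $\tilde{\lambda}_{2}$ sur le tore sur lequel vit $X$. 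Comme cette densité est $C^{\infty}$, ses coefficients de Fourier décroissent plus vite que toute puissance ; comme $l,l'$ sont premiers de première coordonnée strictement positive et $k,k' \geqslant 1$, le vecteur $kl+k'l'$ n'est jamais nul et $kl-k'l'=0$ force $(l,k)=(l',k')$. Seule la diagonale survit donc, les contributions hors-diagonale étant rendues négligeables par cette décroissance rapide. On obtient ainsi une majoration de la forme $\mathbb{E}_{X}\big((\tilde{S}_{4}-\tilde{S}_{5})^{2}\big) \leqslant \frac{C}{\log(t)^{2}} \sum_{l \in J_{4} \setminus J_{5}} \frac{\lambda(l)^{2}}{(l_{1}l_{2})^{2}} + o(1)$, le facteur $\sum_{k \geqslant 1} \omega_{2}(k\tau l)^{2}/k^{4}$ étant borné puisque $\omega_{2}$ l'est.

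On intègre ensuite en $L$ en reconnaissant la somme diagonale comme une transformée de Siegel et en appliquant la première identité du lemme \ref{lemme24} : $\mathbb{E}_{L} \sum_{l \text{ premier}} \phi(l) = \zeta(2)^{-1} \int_{\mathbb{R}^{2}} \phi$, avec ici $\phi(l) = \frac{\lambda(l)^{2}}{(l_{1}l_{2})^{2}} \mathbf{1}_{\lVert l \rVert \leqslant \frac{1}{8}t^{\alpha}} \mathbf{1}_{|l_{1}l_{2}| > 1/(\epsilon \log t)}$. La contrainte $|l_{1}l_{2}| > 1/(\epsilon \log t)$ minore $|l_{1}l_{2}|$ et rend l'intégrale convergente près des axes (sans même recourir à (\ref{eq146})). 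Un calcul élémentaire (en coordonnées $l_{1},l_{2} > 0$, par intégrations successives) donne, avec $c = 1/(\epsilon \log t)$ et $R = \frac{1}{8}t^{\alpha}$, l'estimation $\int_{|l_{1}l_{2}| > c,\ \lVert l \rVert \leqslant R} \frac{dl}{(l_{1}l_{2})^{2}} \sim \frac{1}{c} \log(R^{2}/c) = O\big(\epsilon (\log t)^{2}\big)$, le terme dominant provenant de $\frac{1}{c} \int_{c/R}^{R} \frac{dl_{1}}{l_{1}}$. En reportant, on trouve $\mathbb{E}\big((\tilde{S}_{4}-\tilde{S}_{5})^{2}\big) = O(\epsilon)$ uniformément pour $t$ assez grand, d'où $\mathbb{P}(|\tilde{S}_{4}-\tilde{S}_{5}| \geqslant \kappa) \leqslant O(\epsilon)/\kappa^{2} \leqslant \kappa$ dès que $\epsilon$ est assez petit.

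Le point délicat est le contrôle des termes hors-diagonale après intégration en $X$ : il faut vérifier que la somme $\sum |g(l)g(l')| \sum_{k,k'} \frac{1}{k^{2}k'^{2}} \lVert kl \pm k'l' \rVert^{-N}$ (où $g(l) = \lambda(l)/(l_{1}l_{2})$) reste bornée, ce qui repose sur le fait que $kl \pm k'l'$ est un vecteur du réseau $L$, de norme minorée par $\lVert L \rVert \geqslant \sqrt{\epsilon}$ d'après (\ref{eq147}) lorsqu'il est non nul, et sur le choix d'un $N$ assez grand autorisé par la régularité $C^{\infty}$ de la densité de $\tilde{\lambda}_{2}$. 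Il faut aussi prendre garde à ce que l'intégration en $X$ se fasse sur un tore dépendant de $L$, puis justifier l'interversion avec l'intégration en $L$ (théorème de Fubini) avant d'appliquer la formule de Siegel. Le reste n'est que calcul d'intégrale et application de Bienaymé-Tchebychev.
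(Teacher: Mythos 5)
Your overall strategy coincides with the paper's: bound the second moment of $\tilde{S}_4-\tilde{S}_5$ in $X$, integrate in $L$ via the Siegel formula, estimate the resulting integral over the region $|{\rm Num}(l)|>\frac{1}{\epsilon\log t}$ (this is exactly Lemma \ref{lemme32}, and your computation $\frac{1}{c}\log(R^2/c)=O(\epsilon\log^2 t)$ agrees with it), and conclude by Bienaymé--Tchebychev. The diagonal term you obtain matches the paper's equation (\ref{eq155}). The difference — and the gap — lies in how the expectation in $X$ is handled. The paper applies Parseval with respect to the \emph{uniform} measure on the torus, for which the characters $e^{2i\pi\langle l,X\rangle}$ are exactly orthonormal: there are no cross terms at all. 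The non-uniform law $\tilde{\lambda}_2$ is absorbed beforehand, since its density is bounded and the integrand $(\tilde{S}_4-\tilde{S}_5)^2$ is nonnegative, so $\mathbb{E}_{\tilde{\lambda}_2}[(\cdot)^2]\leqslant \lVert \rho\rVert_\infty\,\mathbb{E}_{\rm Leb}[(\cdot)^2]$. You instead expand the square directly under $\tilde{\lambda}_2$ and must control off-diagonal terms through the Fourier coefficients of its density.

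That off-diagonal control, as you state it, fails. Rapid decay of $\hat{\rho}$ only helps when $\lVert kl\pm k'l'\rVert$ is large; your lower bound is merely $\lVert kl-k'l'\rVert\geqslant\lVert L\rVert\geqslant\sqrt{\epsilon}$, a constant, so for near-collision pairs the coefficient $\hat{\rho}(kl-k'l')$ is bounded but in no way small, and no choice of $N$ changes this. Such pairs are abundant: for $k=k'=1$ and any short lattice vector $e$ (of norm as small as $\sqrt{\epsilon}$), every $l$ in the domain pairs with $l'=l+e$. Bounding these terms in absolute value with $|g(l')|\leqslant\epsilon\log t$ and $\sum_{l}|g(l)|=O(\log^2 t)$ yields a contribution $O_\epsilon(\epsilon\log^3 t)$ before the $\log(t)^{-2}$ normalization, i.e. a divergent $O_\epsilon(\log t)$ — so the cross terms are \emph{not} negligible by your argument, and your own flagged "point délicat" (norm bounded below plus $N$ large) does not repair it. The gap is repairable without changing your framework, e.g. by Cauchy--Schwarz, $\sum_l |g(l)||g(l+e)|\leqslant\sum_l|g(l)|^2$ up to boundary effects, which reduces the near-diagonal cross terms to diagonal-type sums of the same order $O(\epsilon\log^2 t)$ handled by Lemma \ref{lemme32} (with some care, since the sum over $e$ and the expectation in $L$ couple through Rogers-type formulas); but the paper's domination-by-Lebesgue trick makes all of this unnecessary and is the cleaner route.
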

Cette proposition nous dit essentiellement qu'on peut se ramener aux termes $l \in J_{4}(L,\alpha,t)$ qui voient leur $\textit{Num}$ être petit (ce qui est naturel puisque comme la quantité inverse intervient dans la somme, cela correspond à des termes qui contribuent beaucoup à la somme). \\
La démonstration s'appuie sur le lemme suivant énoncé en dimension $d \geqslant 2$ : 
\begin{lemma}
\label{lemme32}
Il existe $M > 0$ tel que pour tout $\epsilon_{1} > 0$,
$$\int_{\substack{ l \in \mathbb{R}^{d} \\ 1 \leqslant \lVert l \rVert \leqslant t^{1+\epsilon_{1}} \\ |\text{Num}(l)| \geqslant \frac{1}{\epsilon \log(t)^{d-1}}}} \frac{1}{l_{1}^{2} \cdots l_{d}^{2}} dl_{1} \cdots dl_{d} \leqslant M \log(t)^{2d-2} \epsilon \textit{.}$$
\end{lemma}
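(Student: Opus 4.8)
Le plan consiste à se ramener par symétrie au domaine ordonné, à intégrer d'abord la plus petite variable, et à exploiter le fait que la contrainte $|\text{Num}(l)| \geqslant \frac{1}{\epsilon \log(t)^{d-1}}$ produit exactement le facteur $\epsilon$ annoncé. C'est le pendant « complémentaire » du lemme $\ref{lemme31}$ : ici l'intégrande $\frac{1}{l_1^2\cdots l_d^2}$ converge à l'infini, si bien que ce sont les petites valeurs des $l_i$ qui dominent, et c'est précisément là que la minoration du produit intervient.

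D'abord, on symétrise. L'intégrande et les deux contraintes $\lVert l \rVert \leqslant t^{1+\epsilon_1}$ et $|\text{Num}(l)| = |l_1\cdots l_d| \geqslant \frac{1}{\epsilon\log(t)^{d-1}}$ sont invariantes par les changements de signe $l_i \mapsto -l_i$ et par permutation des coordonnées ; on se restreint donc au domaine $\{ 0 < l_1 \leqslant \cdots \leqslant l_d \}$ quitte à multiplier par $2^d d!$. Sur ce domaine $l_d \leqslant \lVert l \rVert \leqslant t^{1+\epsilon_1}$, donc $l_i \leqslant t^{1+\epsilon_1}$ pour tout $i$, et l'on peut majorer en supprimant la contrainte $\lVert l \rVert \geqslant 1$ (on ne fait qu'agrandir le domaine).

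Ensuite on intègre en $l_1$. À $l_2,\dots,l_d$ fixés, la contrainte sur le produit impose $l_1 \geqslant a_1 := \frac{1}{\epsilon\log(t)^{d-1} l_2\cdots l_d}$, tandis que $l_1 \leqslant l_2$ ; ainsi $\int_{a_1}^{l_2}\frac{dl_1}{l_1^2} \leqslant \frac{1}{a_1} = \epsilon\log(t)^{d-1} l_2\cdots l_d$. En reportant, l'intégrale ordonnée est majorée par $\epsilon\log(t)^{d-1}\int_{R'} \frac{dl_2\cdots dl_d}{l_2\cdots l_d}$, où $R' = \{ 0 < l_2 \leqslant \cdots \leqslant l_d \leqslant t^{1+\epsilon_1},\ a_1 \leqslant l_2 \}$. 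C'est ici que réside l'obstacle principal : il faut minorer chaque variable restante par une \emph{puissance} de $t$, et non par une quantité exponentiellement petite, sans quoi les intégrales logarithmiques exploseraient. Or, puisque $l_2\cdots l_d \leqslant (t^{1+\epsilon_1})^{d-1}$ on a $a_1 \geqslant L_{\min} := \frac{1}{\epsilon\log(t)^{d-1}(t^{1+\epsilon_1})^{d-1}}$, et la condition $a_1 \leqslant l_2$ force $l_2 \geqslant L_{\min}$, donc $l_i \geqslant L_{\min}$ pour tout $i \geqslant 2$.

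Enfin on conclut par des intégrales logarithmiques. On a $R' \subset \{ L_{\min} \leqslant l_i \leqslant t^{1+\epsilon_1} \}_{i=2,\dots,d}$, d'où $\int_{R'}\frac{dl_2\cdots dl_d}{l_2\cdots l_d} \leqslant (\log\frac{t^{1+\epsilon_1}}{L_{\min}})^{d-1}$. Comme $\log\frac{t^{1+\epsilon_1}}{L_{\min}} = d(1+\epsilon_1)\log t + (d-1)\log\log t + \log\epsilon = O(\log t)$ quand $t\to\infty$ (le terme $\log\epsilon$ étant négatif puisque $\epsilon < 1$, il ne fait que diminuer la borne), cette intégrale est un $O(\log(t)^{d-1})$. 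En recombinant, l'intégrale de départ est donc majorée par $2^d d!\,\epsilon\log(t)^{d-1}\cdot O(\log(t)^{d-1}) = M\,\epsilon\,\log(t)^{2d-2}$ pour $t$ assez grand, avec $M$ ne dépendant que de $d$ et $\epsilon_1$. L'essentiel aura été de voir que la contrainte $|\text{Num}(l)| \geqslant \frac{1}{\epsilon\log(t)^{d-1}}$ fournit le facteur $\epsilon$ via le terme $\frac{1}{a_1}$, tandis que la troncature $\lVert l \rVert \leqslant t^{1+\epsilon_1}$ garantit des bornes polynomiales assurant que chacune des $d-1$ variables restantes ne contribue qu'un facteur $\log t$.
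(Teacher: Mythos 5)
Your proof is correct, and it reaches the bound by a somewhat more elementary route than the paper. The paper works on the positive orthant (by symmetry) and performs the multiplicative change of variables $\phi(l_{1},\ldots,l_{d})=(l_{1},\,l_{1}l_{2},\ldots,\,l_{1}\cdots l_{d})$, whose Jacobian is $\prod_{i=1}^{d-1}\phi_{i}$, so that the integrand becomes $\bigl(u_{d}^{2}\prod_{i=1}^{d-1}u_{i}\bigr)^{-1}$; the factor $\epsilon$ then comes from $\int_{b_{d}}^{\infty}u_{d}^{-2}\,du_{d}=1/b_{d}=\epsilon\log(t)^{d-1}$, and each of the $d-1$ remaining variables $u_{i}$ is confined between explicit bounds $b_{i}\leqslant u_{i}\leqslant h_{i}$ whose ratio is polynomial in $t$ (up to $\log$ factors), contributing $O(\log t)$ apiece. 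You instead symmetrize to the ordered cone $0<l_{1}\leqslant\cdots\leqslant l_{d}$ (factor $2^{d}d!$) and integrate the smallest variable first: $\int_{a_{1}}^{l_{2}}l_{1}^{-2}\,dl_{1}\leqslant 1/a_{1}=\epsilon\log(t)^{d-1}\,l_{2}\cdots l_{d}$, which simultaneously produces the $\epsilon$ and cancels one power of each remaining coordinate; your observation that non-emptiness of the $l_{1}$-range forces $l_{2}\geqslant a_{1}\geqslant L_{\min}$ with $L_{\min}$ only polynomially small in $t$ plays exactly the role of the paper's lower bounds $b_{i}$ on the $\phi_{i}$. The two mechanisms are dual — the paper harvests the quadratic decay in the full product $u_{d}=l_{1}\cdots l_{d}$, you harvest it in $l_{1}$ — and yours avoids the Jacobian computation at the cost of the combinatorial factor and the short extra argument for $L_{\min}$. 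Two caveats you share with the paper rather than introduce: your constant $M$ depends on $d$ and $\epsilon_{1}$ (so does the paper's implicit constant, the statement's quantifier order notwithstanding), and discarding $\log\epsilon$ uses $0<\epsilon<1$, which the paper indeed assumes from Proposition \ref{prop9} onward.
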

\begin{proof}
Par symétrie, on se ramène au domaine où les $l_{i}$ sont strictement positifs. \\
On pose $ \phi : (l_{1}, \cdots , l_{d}) \longmapsto (l_{1}, l_{1}l_{2}, \cdots, l_{1} \cdots l_{d})$. Alors $\phi$ réalise un $C^{\infty}$-difféomorphisme de $(\mathbb{R}_{+}-\{ 0 \})^{d}$ sur lui-même et la matrice jacobienne évaluée en $(l_{1}, \cdots, l_{d})$, à savoir $\text{Jac}(l_{1}, \cdots , l_{d})$, vérifie : 
\begin{equation}
\label{eq153}
\text{Jac}(l_{1}, \cdots , l_{d}) = \prod_{i=1}^{d-1} \phi_{i}(l_{1},\cdots,l_{d}) \textit{.}
\end{equation}
Par ailleurs, puisque $l$ appartient au domaine de l'intégrale dont il est question dans le lemme $\ref{lemme32}$, on voit que pour tout $i \in \{1, \cdots, d-1 \}$, $b_{i} = \frac{1}{\epsilon \log(t)^{d-1} t^{(1+\epsilon_{1})(n-i)}} \leqslant \phi_{i}(l) \leqslant t^{(1+\epsilon_{1})i}=h_{i}$ et $\phi_{d}(l) \geqslant \frac{1}{\epsilon \log(t)^{d-1}}=b_{d}$. \\
D'où de ($\ref{eq153}$), on tire : 
\begin{equation}
\label{eq154}
\int_{\substack{\forall 1 \leqslant i \leqslant d \textit{, } l_{i} > 0 \ \\ 1 \leqslant \lVert l \rVert_{2} \leqslant t^{1+\epsilon_{1}} \\ \text{Num}(l) \geqslant \frac{1}{\epsilon \log(t)^{d-1}}}} \frac{1}{l_{1}^{2} \cdots l_{d}^{2}} dl_{1} \cdots dl_{d} \leqslant \int_{ \substack{ \forall 1 \leqslant i \leqslant d-1 \textit{, } b_{i} \leqslant u_{i} \leqslant h_{i} \\ b_{d} \leqslant u_{d} }} \frac{1}{u_{d}^{2} \prod_{i=1}^{d-1} u_{i}} du \textit{.} 
\end{equation}
Le membre de droite se calcule et donne le résultat voulu.
\end{proof}
Passons maintenant à la démonstration de la proposition $\ref{prop20}$.
\begin{proof}[Démonstration de la proposition $\ref{prop20}$]
En appelant $\Delta$ la norme $2$ au carré de la différence entre $\tilde{S}_{4}$ et $\tilde{S}_{5}$ relativement à $X$, la formule de Parseval donne que : 
\begin{equation}
\label{eq155}
\Delta = \frac{M_{1}}{\log(t)^{2}}\sum_{\substack{l \in J_{4}(L, \alpha, t) \\ |Num(l)| > \frac{1}{\epsilon \log(t)} }}  \frac{1}{(l_{1}l_{2})^{2}} (\lambda(l))^{2} \sum_{k \in \mathbb{N}-\{ 0 \}} \frac{(\omega_{2}(k \tau  l))^{2}}{k^{4}}
\end{equation}
où $M_{1}> 0$. \\
En utilisant le fait que $\omega_{2}$ et $\lambda$ sont bornées et en intégrant selon $L$ et en utilisant le lemme $\ref{lemme24}$ puis le lemme $\ref{lemme32}$ (dans le cas où $d=2$), on obtient que pour tout $t$ assez grand : 
$$\mathbb{E}(\Delta) \leqslant \frac{M_{2}}{\log(t)^{2}} \epsilon \log(t)^{2} $$
où $M_{2} > 0$.
D'où le résultat voulu.
\end{proof}
On est donc ramené à l'étude, pour $\epsilon > 0$, de $\tilde{S}_{5}$ \footnote{ On voit que dans la preuve, si l'on remplace $t^{+}$ par $t^{-}$, celle-ci fonctionne encore : le terme $\cos(2 k \pi <l, t^{+}A_{2} + X > )$ intervient seulement au début de la preuve et on l'élimine via la formule de Parseval, qui fonctionne encore en remplaçant $t^{+}$ par $t^{-}$.}.
\subsubsection{Preuve de la proposition $\ref{prop42}$ - Passage à une somme géodésique}
Le but de cette sous-sous-section est de passer d'une somme sur $l \in L-\{ 0 \}$ à une somme sur $\delta_{h} L$ avec $h$ entier. Pour parler plus précisément, introduisons quelques notations. Pour $L \in \mathscr{S}_{2}$ faiblement admissible, soit $e(L)$ l'unique vecteur de $L$ tel que $e(L)_{1} > 0$ et $\lVert e(L) \rVert = \lVert L \rVert_{1}$ ($e(L)$ est nécessairement premier) et on se donne $\eta > \alpha - 1$, aussi proche que l'on veut de $\alpha -1$.\\
 Introduisons maintenant l'ensemble 
\begin{align*} I(t, \eta )= \{ h \in [- \lceil (1+\eta) \log(t) \rceil, \lceil (1+ \eta) \log(t) \rceil   ]  \text{ } | \text{ } \lVert \delta_{h} L \rVert^{2} < 2 \cosh(1)|\text{Num}(e(\delta_{h}L))| & \text{ et } \\
 |\text{Num}(e(\delta_{h}L))|  \leqslant \frac{1}{\epsilon \log(t)} \} 
\end{align*}
les vecteurs de $L$ de la forme
\begin{equation}
\label{eq157}
l(L,h) = \delta_{h}^{-1} e(\delta_{h}L)
\end{equation}
où $h$ parcourt $\mathbb{Z}$ et la somme
\begin{align}
\label{eq156}
 \tilde{S}_{6}(\tau,t,\epsilon,X,L) =  \frac{2}{(2 \pi i)^{2}} \sum_{h \in I(t, \eta)}  \frac{1}{\Xi_{h,t}} \lambda(l(L,h)) & \\
  (\sum_{k \in \mathbb{N}-\{ 0 \}} \frac{\omega_{2}(k \tau l(L,h))}{k^{2}} \cos(2 k \pi <l(L,h), t^{+} A_{2} + X > )) \nonumber \textit{.}
\end{align}
On rappelle que $\Xi_{h,t}$ a été défini par l'équation ($\ref{eq211}$). \\
Le but de cette sous-sous-section est de démontrer la proposition qui suit :
\begin{prop}
\label{prop21}
$\tilde{S}_{5} - \tilde{S}_{6}$ converge en probabilités vers $0$. 
\end{prop}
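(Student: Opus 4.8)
L'idée est de voir $\tilde{S}_{5}$ et $\tilde{S}_{6}$ comme deux indexations d'une \emph{même} somme, à un reste négligeable près. L'observation cruciale est que les termes coïncident un à un : si $l = l(L,h)$, alors $\mathrm{Num}(l) = \mathrm{Num}(e(\delta_{h} L))$ puisque $\delta_{h}$ préserve le produit des coordonnées, de sorte que $\Xi_{h,t} = \mathrm{Num}(l)\log(t) = l_{1} l_{2} \log(t)$ et donc $\frac{1}{\Xi_{h,t}} = \frac{1}{\log(t)}\frac{1}{l_{1} l_{2}}$ ; tous les autres facteurs $\lambda(\cdot)$, $\omega_{2}(k\tau\,\cdot)$ et $\cos(2k\pi\langle\cdot,t^{+}A_{2}+X\rangle)$ sont alors identiques. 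Tout se ramène donc à comparer les deux ensembles d'indices : les vecteurs premiers $l\in J_{5}(L,\alpha,\epsilon,t)$ et les temps géodésiques $h\in I(t,\eta)$.

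Je mettrais d'abord en place la correspondance $l\mapsto h(l)$. Pour $l$ premier avec $l_{1},l_{2}\neq 0$ (automatique sous l'hypothèse d'admissibilité faible ($\ref{eq146}$)), le lemme $\ref{lemme17}$ fournit un unique temps minimisant $h_{0}(l)$ de $h\mapsto\lVert\delta_{h} l\rVert^{2}$, en lequel les deux coordonnées s'équilibrent, $\lVert\delta_{h_{0}}l\rVert^{2} = 2|\mathrm{Num}(l)|$, et plus généralement $\lVert\delta_{h} l\rVert^{2} = 2|\mathrm{Num}(l)|\cosh(2(h-h_{0}(l)))$. On note $h(l)$ l'entier le plus proche de $h_{0}(l)$, de sorte que $|h(l)-h_{0}(l)|\leqslant\tfrac{1}{2}$ et $\lVert\delta_{h(l)}l\rVert^{2}\leqslant 2\cosh(1)|\mathrm{Num}(l)|$. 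Le point clef est que $\delta_{h(l)}l$ est réellement le plus court vecteur de $\delta_{h(l)}L$ : sinon, un second vecteur premier serait au moins aussi court, d'où $\lVert\delta_{h(l)}L\rVert_{1}\lVert\delta_{h(l)}L\rVert_{2}\leqslant\lVert\delta_{h(l)}l\rVert^{2}\leqslant\frac{2\cosh(1)}{\epsilon\log(t)}$, alors que le théorème de Minkowski impose à ce produit d'être $\geqslant\frac{2}{\pi}$ ; pour $t$ assez grand (en fonction de $\epsilon$) c'est contradictoire. Ainsi $e(\delta_{h(l)}L)=\delta_{h(l)}l$, donc $l(L,h(l))=l$, et les inégalités définissant $I(t,\eta)$ sont satisfaites (la borne sur $\mathrm{Num}$ est exactement $l\in J_{5}$ ; la borne en $\cosh(1)$ équivaut à $|h(l)-h_{0}|<\tfrac{1}{2}$, stricte pour $L$ générique ; l'appartenance $|h(l)|\leqslant(1+\eta)\log(t)$ découle de $\lVert l\rVert\leqslant\frac18 t^{\alpha}$, qui force $|h_{0}(l)|\leqslant\alpha\log(t)+O(\log\log t)$ avec $\alpha<1+\eta$). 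Cette application est injective car $l(L,h)$ est déterminé par $h$, et réalise une bijection de $J_{5}$ sur $\{h\in I(t,\eta):\lVert l(L,h)\rVert\leqslant\frac18 t^{\alpha}\}$.

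Après simplification terme à terme, il ne subsiste donc que les temps $h\in I(t,\eta)$ tels que $\lVert l(L,h)\rVert>\frac18 t^{\alpha}$, et
\[
\tilde{S}_{6}-\tilde{S}_{5} = \frac{2}{(2\pi i)^{2}}\sum_{\substack{h\in I(t,\eta)\\ \lVert l(L,h)\rVert>\frac18 t^{\alpha}}}\frac{\lambda(l(L,h))}{\Xi_{h,t}}\sum_{k\geqslant 1}\frac{\omega_{2}(k\tau l(L,h))}{k^{2}}\cos\big(2k\pi\langle l(L,h),t^{+}A_{2}+X\rangle\big).
\]
Pour de tels $h$, on a $\lVert\tau l(L,h)\rVert>\frac18\log(t)^{1/4}t^{\alpha-1}\to\infty$ puisque $\alpha>1$, de sorte que la décroissance rapide de $\omega_{2}$ donne $\sum_{k\geqslant 1} k^{-2}|\omega_{2}(k\tau l(L,h))|\leqslant C_{N}\lVert\tau l(L,h)\rVert^{-N}\leqslant C_{N}' t^{-N(\alpha-1)}$ pour tout $N$. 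En majorant $\lambda\leqslant 1$, en utilisant ($\ref{eq146}$) pour obtenir $\frac{1}{|\Xi_{h,t}|}=O((\log t)^{\beta})$, et en notant $|I(t,\eta)|=O(\log t)$, on obtient $|\tilde{S}_{6}-\tilde{S}_{5}|=O((\log t)^{1+\beta}\,t^{-N(\alpha-1)})$, qui tend vers $0$ pour $N$ assez grand. Cette majoration étant déterministe sur l'ensemble (de mesure $1-O(\epsilon)$) où ($\ref{eq146}$) et ($\ref{eq147}$) sont vérifiées, on en déduit $\tilde{S}_{5}-\tilde{S}_{6}\to 0$ en probabilités.

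Le principal obstacle est la construction et la validation de cette bijection, notamment l'assertion que $\delta_{h(l)}l$ est vraiment le plus court vecteur du réseau : c'est là qu'interviennent de concert la borne sur $\mathrm{Num}$ définissant $J_{5}$, l'identité d'équilibrage $\lVert\delta_{h_{0}}l\rVert^{2}=2|\mathrm{Num}(l)|$ et la minoration de Minkowski du produit des minima successifs, ce qui rend superflue, pour $t$ grand, toute estimation probabiliste d'un ensemble exceptionnel de réseaux possédant deux vecteurs courts. La seule divergence véritable provient alors du décalage entre les deux troncatures ($\lVert l\rVert\leqslant\frac18 t^{\alpha}$ contre $|h|\leqslant(1+\eta)\log t$ avec $1+\eta>\alpha$), inoffensif précisément parce que les vecteurs survivants sont longs, donc annihilés par le facteur $\omega_{2}(\tau\,\cdot)$ à décroissance rapide.
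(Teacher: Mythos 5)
Votre démonstration est correcte et, pour l'essentiel, suit la même route que le papier : votre correspondance $l \mapsto h(l)$ (entier le plus proche du minimisant réel $h_{0}(l)$, avec l'identité $\lVert \delta_{h} l \rVert^{2} = 2|\text{Num}(l)|\cosh(2(h-h_{0}(l)))$) est exactement celle que le papier met en place via le domaine fondamental $\mathcal{F}_{\Delta}$ aux lemmes $\ref{lemme33}$--$\ref{lemme36}$ (y compris le traitement de la stricte inégalité en $\cosh(1)$ par exclusion d'un ensemble de mesure nulle), et votre vérification que $\delta_{h(l)}l$ est réellement le plus court vecteur — deux vecteurs courts indépendants rendraient le produit des minima successifs trop petit — est précisément le « raisonnement qui a mené au lemme $\ref{lemme25}$ » invoqué dans la preuve du lemme $\ref{lemme34}$ ; de même, l'identification des termes résiduels comme provenant de vecteurs de norme $> \frac{1}{8}t^{\alpha}$, tués par la décroissance rapide de $\omega_{2}$, est celle du papier (ensemble $H(L,\alpha,\epsilon,\eta,t)$). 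La seule divergence véritable est l'estimation finale : le papier conclut « comme aux propositions $\ref{prop18}$ et $\ref{prop19}$ », c'est-à-dire en intégrant sur $L$ par la formule de Siegel (lemme $\ref{lemme24}$) combinée au lemme $\ref{lemme31}$, puis en appliquant Markov, tandis que vous donnez une majoration déterministe sur l'évènement où ($\ref{eq146}$)--($\ref{eq147}$) sont en vigueur, en comptant $|I(t,\eta)| = O(\log t)$ termes valant chacun $O((\log t)^{\beta} t^{-N(\alpha-1)})$. Votre variante est plus élémentaire et évite le calcul d'espérance ; notez seulement qu'elle requiert explicitement la borne $\lVert l(L,h) \rVert \leqslant C t^{1+\eta}$ (Minkowski, comme au lemme $\ref{lemme35}$) pour que $|\log \lVert l(L,h) \rVert| = O(\log t)$ dans la minoration de $|\Xi_{h,t}|$ — point laissé implicite dans votre dernier paragraphe mais immédiat —, là où l'argument en espérance du papier se recycle tel quel des réductions précédentes sans comptage ponctuel.
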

Cette proposition nous permet de nous ramener à l'étude de sommes géodésiques, à savoir $\tilde{S}_{6}$. L'idée principale est que les points $l$ des sommes $\tilde{S}_{5}$ sont en fait de la forme $l(L,h)$ avec $h \in I(t, \eta )$ (voir les lemmes $\ref{lemme34}$ et $\ref{lemme36}$) et que les termes de $\tilde{S}_{6}$ qui sont en plus par rapport aux sommes $\tilde{S}_{5}$ sont en quantité négligeable (ce qui est donné par les lemmes $\ref{lemme34}$, $\ref{lemme35}$ et $\ref{lemme36}$). \\
Pour démontrer cette proposition, nous avons d'abord besoin de revenir et de préciser le début de la section 7 de $\cite{Skriganov}$.\\
On pose $\mathcal{P} = ]0, \infty[ \times (\mathbb{R}-\{ 0 \} ) $. \\
Dans cette section, Skriganov introduit l'action de groupe qui à $(\delta,x) \in \Delta \times \mathcal{P} \longmapsto \delta x $. \\
Celle-ci admet pour domaine fondamentale : $$\mathcal{F}_{\Delta} = \{ \text{ } m (e^{y},e^{-y}) \text{ } | \text{ } m > 0, \text{ } y \in [-\frac{1}{2},\frac{1}{2}[ \text{ } \} \dot{\cup} \{ \text{ } m (e^{y},-e^{-y}) \text{ } | \text{ } m > 0, \text{ } y \in [-\frac{1}{2},\frac{1}{2}[ \text{ } \} \textit{.} $$ 
\begin{lemma}
\label{lemme33}
Pour tout $x=(x_{1},x_{2}) \in \mathcal{P}$, l'unique $h(x) \in \mathbb{Z}$ tel que $\delta_{h(x)} x \in \mathcal{F}_{\Delta}$ est donné par : \\ $h(x) = \frac{1}{2} \lceil  \log( \frac{|x_{2}|}{x_{1}} )  \rceil$ si $\lceil  \log( \frac{|x_{2}|}{x_{1}} )  \rceil$ est paire et sinon $h(x) = \frac{1}{2} \lfloor  \log( \frac{|x_{2}|}{x_{1}} )  \rfloor$. \\
Par ailleurs, on a :  $\delta_{h(x)}x= m (e^{y},\text{sgn}(x_{2})e^{-y})$ où $m = \sqrt{x_{1} |x_{2}|}$ et $y = h(x) - \frac{1}{2} \log( \frac{|x_{2}|}{x_{1}} ) \textit{.}$
\end{lemma}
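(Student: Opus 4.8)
The plan is to reduce everything to the one-dimensional translation action of $\Delta \cong \mathbb{Z}$ on the hyperbolic angle coordinate attached to $\mathcal{F}_{\Delta}$. Given $x = (x_1,x_2) \in \mathcal{P}$, so $x_1 > 0$ and $x_2 \neq 0$, I would write $x$ in the coordinates adapted to the fundamental domain by setting $m = \sqrt{x_1 |x_2|} > 0$ and $y_x = \tfrac{1}{2}\log(x_1/|x_2|)$, so that $x = m\bigl(e^{y_x}, \text{sgn}(x_2)\, e^{-y_x}\bigr)$. The first observation is that $\delta_h$ acts on these coordinates by $m \mapsto m$, $\text{sgn}(x_2) \mapsto \text{sgn}(x_2)$, and $y_x \mapsto y_x + h$, since $\delta_h x = (e^h x_1, e^{-h} x_2) = m\bigl(e^{y_x+h}, \text{sgn}(x_2)\,e^{-(y_x+h)}\bigr)$. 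Hence $\delta_h x \in \mathcal{F}_{\Delta}$ if and only if $y_x + h \in [-\tfrac12, \tfrac12[$, and because $[-\tfrac12,\tfrac12[$ is a fundamental domain for the action $y \mapsto y+h$ of $\mathbb{Z}$ on $\mathbb{R}$, there is exactly one such $h$. This already gives existence and uniqueness of $h(x)$.

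Next I would make the membership condition explicit. Writing $u = \log(|x_2|/x_1) = -2 y_x$, the constraint $-\tfrac12 \le y_x + h < \tfrac12$ rewrites as $\tfrac{u-1}{2} \le h < \tfrac{u+1}{2}$, that is, $2h$ must be the unique even integer in the half-open interval $[u-1, u+1[$ of length $2$. For $u \notin \mathbb{Z}$, the two integers of this interval are precisely the consecutive integers $\lfloor u \rfloor$ and $\lceil u \rceil$ (one readily checks both lie in $[u-1,u+1[$), exactly one of which is even. Selecting it yields the stated dichotomy: if $\lceil u \rceil$ is even then $2h = \lceil u \rceil$, while if $\lceil u \rceil$ is odd, so that $\lfloor u \rfloor$ is even, then $2h = \lfloor u \rfloor$; this is precisely the claimed formula for $h(x)$.

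Finally I would read off $m$ and $y$ for the representative $\delta_{h(x)} x = m\bigl(e^{y}, \text{sgn}(x_2)\, e^{-y}\bigr)$ with $y = y_x + h(x)$. Invariance of $m$ under the action gives $m = \sqrt{x_1 |x_2|}$ directly, and $y = y_x + h(x) = h(x) - \tfrac12 \log(|x_2|/x_1)$ is exactly the asserted expression; the sign of the second coordinate is unchanged since $e^{-h(x)} > 0$. This closes the computation.

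The main obstacle is purely the parity bookkeeping, together with the boundary behaviour when $u = \log(|x_2|/x_1)$ is an integer. There $\lfloor u \rfloor = \lceil u \rceil = u$, and the even integer in $[u-1,u+1[$ is $u$ when $u$ is even (where the ceiling formula still applies) but $u-1$ when $u$ is odd, so the floor/ceiling dichotomy degenerates in the odd-integer case. I would handle this by noting that $\{\,x \in \mathcal{P} \mid \log(|x_2|/x_1) \in \mathbb{Z}\,\}$ is $\mu_2$-negligible and thus harmless for the subsequent applications, where $e(\cdot)$ is in any case only defined almost everywhere; alternatively, the half-open convention $y \in [-\tfrac12, \tfrac12[$ resolves the choice unambiguously and one simply records the resulting value of $h$ directly.
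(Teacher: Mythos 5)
Votre preuve est correcte et constitue précisément le calcul élémentaire que le papier laisse au lecteur, sa démonstration se réduisant à la phrase « La démonstration est élémentaire » : réduction à l'action par translation $y_x \mapsto y_x + h$ sur la coordonnée hyperbolique, unicité par le domaine fondamental $[-\tfrac12,\tfrac12[$, puis identification de $2h$ comme l'unique entier pair de $[u-1,u+1[$ avec $u=\log(|x_2|/x_1)$. Votre remarque finale est d'ailleurs pertinente : la formule plancher/plafond telle qu'énoncée dégénère lorsque $u$ est un entier impair (le pair de l'intervalle est alors $u-1$, non couvert par la dichotomie), cas de mesure nulle que le papier écarte lui-même juste après le lemme en retirant de $\mathcal{P}$ une réunion dénombrable de portions de droites pour avoir $y \in \left]-\tfrac12,\tfrac12\right[$.
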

\begin{proof}
La démonstration est élémentaire
\end{proof}
On remarque que, quitte à enlever à $\mathcal{P}$ un ensemble dénombrable de portion de droites, on peut supposer que $y \in ]-\frac{1}{2}, \frac{1}{2}[$. Cela symétrise la situation et à cause de ce fait on considèrera des réseaux $L$ qui ne touchent jamais cet ensemble de morceaux de droites. Cet ensemble constitue un ensemble mesurable de mesure pleine. \\
On dispose alors du lemme suivant : 
\begin{lemma}
\label{lemme34}
Soit $l \in L$ comme dans le domaine de la somme $S_{5}$. Alors $h(l) \in I(t, \eta)$. 
\end{lemma}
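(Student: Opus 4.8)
The plan is to compute $h(l)$ explicitly via Lemma~\ref{lemme33}, to recognise $\delta_{h(l)}l$ as the shortest vector of $\delta_{h(l)}L$, and then to check the three defining conditions of $I(t,\eta)$ one by one.

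First I would fix $l\in J_{5}(L,\alpha,\epsilon,t)$: so $l$ is primitive, $l_{1}>0$, $\lVert l\rVert\leqslant\frac{1}{8}t^{\alpha}$ and $|\text{Num}(l)|\leqslant\frac{1}{\epsilon\log(t)}$. Having discarded the non weakly admissible lattices (Proposition~\ref{prop12}), one has $\text{Num}(l)=l_{1}l_{2}\neq 0$, hence $l_{2}\neq 0$ and $l\in\mathcal{P}$, so $h(l)$ is well defined. Lemma~\ref{lemme33} then gives $\delta_{h(l)}l=m(e^{y},\text{sgn}(l_{2})e^{-y})$ with $m=\sqrt{l_{1}|l_{2}|}=\sqrt{|\text{Num}(l)|}$ and $y\in\,]{-\tfrac{1}{2}},\tfrac{1}{2}[$. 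Since $\delta_{h}$ preserves $\text{Num}$, namely $\text{Num}(\delta_{h}l)=l_{1}l_{2}=\text{Num}(l)$, I obtain
\[
\lVert\delta_{h(l)}L\rVert^{2}\leqslant\lVert\delta_{h(l)}l\rVert^{2}=2|\text{Num}(l)|\cosh(2y)\leqslant 2\cosh(1)\,|\text{Num}(l)|\leqslant\frac{2\cosh(1)}{\epsilon\log(t)},
\]
a quantity tending to $0$; in particular $\lVert\delta_{h(l)}l\rVert<1$ once $t$ is large enough.

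The key step, and the one I expect to be the main obstacle, is to show that $\delta_{h(l)}l$ is actually the shortest vector of $\delta_{h(l)}L$, i.e. $e(\delta_{h(l)}L)=\delta_{h(l)}l$, equivalently $l=l(L,h(l))$. Here $\delta_{h(l)}l$ is primitive (the image of a primitive vector by an element of $\Delta$ is primitive) and of norm $<1$ in the unimodular lattice $\delta_{h(l)}L$. I would then reuse the planar argument of Lemma~\ref{lemme25}: if a primitive vector $v$ with $\lVert v\rVert\leqslant\lVert\delta_{h(l)}l\rVert<1$ were linearly independent from $\delta_{h(l)}l$, then both successive minima of $\delta_{h(l)}L$ would be $<1$ and the identity $\text{covol}(\delta_{h(l)}L)=\lVert\delta_{h(l)}L\rVert_{1}\lVert\delta_{h(l)}L\rVert_{2}\sin(\alpha)<1$ would contradict unimodularity. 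Hence the shortest vector is $\pm\delta_{h(l)}l$, and, its first coordinate $m e^{y}$ being positive, $e(\delta_{h(l)}L)=\delta_{h(l)}l$.

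With this identification the three conditions defining $I(t,\eta)$ follow. The bound $|\text{Num}(e(\delta_{h(l)}L))|\leqslant\frac{1}{\epsilon\log(t)}$ is nothing but $|\text{Num}(l)|\leqslant\frac{1}{\epsilon\log(t)}$. The inequality $\lVert\delta_{h(l)}L\rVert^{2}<2\cosh(1)|\text{Num}(e(\delta_{h(l)}L))|$ is read off the displayed chain above, the first $\leqslant$ now being an equality (because $e(\delta_{h(l)}L)=\delta_{h(l)}l$) and the last step strict since $|y|<\tfrac{1}{2}$ forces $\cosh(2y)<\cosh(1)$. For the range condition I would write $h(l)=\tfrac{1}{2}\log(|l_{2}|/l_{1})+y$ and bound $\log(|l_{2}|/l_{1})$ using $l_{1},|l_{2}|\leqslant\lVert l\rVert\leqslant\frac{1}{8}t^{\alpha}$ together with $l_{1},|l_{2}|\geqslant|\text{Num}(l)|/\lVert l\rVert\geqslant D|\log\lVert l\rVert|^{-1-\beta}/\lVert l\rVert$ coming from ($\ref{eq146}$); as $\lVert l\rVert\geqslant\sqrt{\epsilon}$ by ($\ref{eq147}$), this yields $|h(l)|\leqslant\alpha\log(t)+O(\log\log t)$. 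Since $\eta>\alpha-1$, i.e. $1+\eta>\alpha$, the gap $(1+\eta-\alpha)\log(t)$ dominates the $O(\log\log t)$ term, so $|h(l)|\leqslant\lceil(1+\eta)\log(t)\rceil$ for $t$ large, which finishes the proof that $h(l)\in I(t,\eta)$.
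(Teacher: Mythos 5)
Your proposal is correct and follows essentially the same route as the paper's proof: the explicit computation of $\delta_{h(l)}l$ via le lemme \ref{lemme33} giving $2|\text{Num}(l)|\leqslant\lVert\delta_{h(l)}l\rVert^{2}<2\cosh(1)|\text{Num}(l)|$, the identification $e(\delta_{h(l)}L)=\delta_{h(l)}l$ by the argument of le lemme \ref{lemme25}, the invariance of $\text{Num}$ under $\Delta$, and the estimate of $h(l)$ from ($\ref{eq146}$) combined with $\lVert l\rVert\leqslant\frac{1}{8}t^{\alpha}$ and $\eta>\alpha-1$. Your write-up is in fact somewhat more explicit than the paper's (notably on the covolume contradiction and on the $O(\log\log t)$ bookkeeping in the range condition), but there is no substantive difference of method.
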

\begin{proof}
Soit $l$ comme dans l'énoncé du lemme. Alors on a $\lVert \delta_{h(l)}l \rVert^{2} = 2 \cosh(2 y) |l_{1} l_{2} |$ et donc, comme $y \in ]-\frac{1}{2}, \frac{1}{2}[$, 
\begin{equation}
\label{eq126}
2 |l_{1} l_{2} | \leqslant \lVert \delta_{h(l)}l \rVert^{2} < 2 \cosh(1) |l_{1} l_{2}| \textit{.}
\end{equation}
Ainsi, comme $|l_{1} l_{2}|$ est suffisamment petit et que $l$ est premier, pour peu que $t$ soit assez grand, $ \delta_{t(l)}l $ est un plus court vecteur du réseau $\delta_{t(l)} L$ d'après le raisonnement qui a mené au lemme $\ref{lemme25}$ et comme $(\delta_{h(l)}l)_{1} > 0$, on a 
\begin{equation}
\label{eq158}
e(\delta_{h(l)}L) = \delta_{h(l)}l \textit{.}
\end{equation}
De plus, comme pour tout $x$, pour tout $k \in \mathbb{Z}$, $\text{Num}(\delta_{k}x) = \text{Num}(x)$, on a, d'après ($\ref{eq158}$) : 
\begin{equation}
\label{eq158}
Num(e(\delta_{h(l)}L)) \leqslant \frac{1}{\epsilon \log(t)} \textit{.}
\end{equation}
Enfin, on a : $ h(l) = \lceil \frac{1}{2} \log( \frac{|l_{2}|}{l_{1}} )  \rceil $ ou $h(l) = \lfloor \frac{1}{2} \log(\frac{|l_{2}|}{l_{1}}) \rfloor$
et $ \frac{|l_{2}|}{l_{1}} = \frac{l_{1} |l_{2}|}{l_{1}^{2}} = \frac{ l_{2}^{2}}{l_{1}|l_{2}|} \textit{.} $ \\
Or, d'après ($\ref{eq146}$) : $$D \alpha^{-1 - \beta} \log(t)^{-1 - \beta} \leqslant  D \log(\lVert l \rVert)^{-1 - \beta} \leqslant l_{1} |l_{2}| \leqslant \frac{1}{\epsilon \log(t)} \textit{.}$$
D'où, comme $\lVert l \rVert \leqslant t^{\alpha}$, il vient que pour $t$ assez grand (la grandeur dépendant de $D$, $\alpha$, $\beta$ et $\epsilon$), $$- \lceil (1+\eta) \log(t) \rceil \leqslant t(l) \leqslant \lceil (1+\eta) \log(t) \rceil $$
où $\eta  >  \alpha - 1 $ et est aussi proche que l'on veut de $\alpha - 1$. D'où le résultat voulu.
\end{proof}
Le lemme suivant assure une forme de réciproque : 
\begin{lemma}
\label{lemme35}
Pour $h \in I(t,\eta)$, $$l(L,h) \in V(L,t) = \{ l \in L | l_{1} > 0 \text{, } l \text{ est premier, } \lVert l \rVert \leqslant C t^{1 + \eta} \text{, } l_{1}|l_{2}| \leqslant \frac{1}{\epsilon \log(t)} \}$$ où $C > 0$ est une constante.
\end{lemma}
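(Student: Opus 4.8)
The plan is to verify the four defining conditions of $V(L,t)$ one at a time for the vector $l = l(L,h) = \delta_h^{-1} e(\delta_h L)$, assuming $h \in I(t,\eta)$. Most of these verifications are formal; only the size bound requires a genuine estimate, so that is where I expect the (mild) difficulty to lie.

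First I would dispatch the membership and the easy conditions. Since $e(\delta_h L) \in \delta_h L$, we get $l = \delta_h^{-1} e(\delta_h L) \in \delta_h^{-1}(\delta_h L) = L$, so $l \in L$. Its first coordinate is $l_1 = e^{-h}\,(e(\delta_h L))_1 > 0$, because $e^{-h} > 0$ and $(e(\delta_h L))_1 > 0$ by the very definition of $e(\cdot)$. For primality, $e(\delta_h L)$ realises $\lVert \delta_h L\rVert_1$ and is therefore prime, and since $\delta_h^{-1} = \delta_{-h} \in \Delta$ preserves primality (as noted just after the definition of $\lVert L\rVert_k$), the vector $l$ is prime in $L$. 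The $\text{Num}$ condition is then immediate from the invariance $\text{Num}(\delta_k x) = \text{Num}(x)$ (the diagonal entries of $\delta_k$ have product $1$): indeed $l_1|l_2| = |\text{Num}(l)| = |\text{Num}(e(\delta_h L))| \leq \frac{1}{\epsilon \log t}$, the last inequality being exactly one of the two constraints defining $I(t,\eta)$, and using that $l_1>0$.

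It remains to establish the size bound, the only step with real content. Writing $w = e(\delta_h L)$, so that $\lVert w\rVert = \lVert\delta_h L\rVert$ and $l = \delta_{-h} w$, a direct coordinate computation gives $\lVert l\rVert^2 = e^{-2h} w_1^2 + e^{2h} w_2^2 \leq e^{2|h|}\lVert w\rVert^2$. Here I would combine both constraints of $I(t,\eta)$: on the one hand $\lVert w\rVert^2 = \lVert \delta_h L\rVert^2 < 2\cosh(1)\,|\text{Num}(w)| \leq \frac{2\cosh(1)}{\epsilon \log t}$; on the other hand $|h| \leq \lceil (1+\eta)\log t\rceil$ forces $e^{2|h|} \leq e^2\, t^{2(1+\eta)}$. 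Multiplying the two yields
\begin{equation}
\lVert l\rVert^2 \leq \frac{2 e^2 \cosh(1)}{\epsilon \log t}\, t^{2(1+\eta)},
\end{equation}
so that $\lVert l\rVert \leq C\, t^{1+\eta}$ for a suitable constant $C$ and $t$ large (the factor $1/\log t$ even absorbs the $\epsilon$-dependence, allowing $C$ to be taken absolute). The interplay in this last step is precisely the anticipated obstacle: the exponent $1+\eta$ comes from the allowed range of $h$, while the smallness of $\text{Num}(w)$ is what keeps the prefactor under control. Having checked all four conditions, we conclude $l(L,h) \in V(L,t)$.
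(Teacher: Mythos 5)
Your proof is correct, and its skeleton coincides with the paper's: both verify the four conditions of $V(L,t)$ one by one, treating membership in $L$, positivity of $l_{1}$, primality (inherited from $e(\delta_{h}L)$ since $\delta_{-h} \in \Delta$ preserves prime vectors) and the $\text{Num}$-condition (via $\text{Num}(\delta_{k}x)=\text{Num}(x)$) as formalities. The one substantive step, the norm bound, is obtained by a different input. The paper invokes Minkowski's theorem, which gives an absolute constant with $\lVert e(\delta_{h}L) \rVert \leqslant C$ for any unimodular lattice, and then pulls back through $\delta_{h}^{-1}$ using only the admissible range $|h| \leqslant \lceil (1+\eta)\log t\rceil$; neither of the two inequalities defining $I(t,\eta)$ enters this estimate. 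You instead derive $\lVert e(\delta_{h}L)\rVert^{2} < 2\cosh(1)\,|\text{Num}(e(\delta_{h}L))| \leqslant \frac{2\cosh(1)}{\epsilon \log t}$ from the defining constraints of $I(t,\eta)$ themselves, then multiply by $e^{2|h|} \leqslant e^{2} t^{2(1+\eta)}$. Both arguments are valid. The paper's route is shorter and shows the bound requires no information beyond the range of $h$; yours is self-contained (no geometry-of-numbers input) and yields the slightly stronger, decaying prefactor $O\bigl(1/\sqrt{\log t}\bigr)$, which, as you correctly note, lets the constant $C$ be taken independent of $\epsilon$ for $t$ large. (Incidentally, where the paper's proof writes $\lVert L \rVert_{2} \leqslant C t^{1+\eta}$, the intended quantity is the Euclidean norm of $l(L,h)$ — exactly what you bound.)
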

\begin{proof}
On pose $l = l(L,h)= \delta_{h}^{-1} e(\delta_{h} L)$. Il est clair que $l_{1} > 0$ et que $l$ est premier par définition de $e$.\\
Par ailleurs, le théorème de Minkowski nous donne qu'il existe $C>0$ tel que : $$ \lVert e(\delta_{h} L)\rVert  \leqslant C $$ d'où l'on tire qu'il existe $C > 0$ tel que $$\lVert L \rVert_{2} \leqslant C t^{1+ \eta} \textit{.}$$ 
Enfin, on note que $\text{Num}(l) = \text{Num}(e(\delta_{h}L))$ et ainsi $l(L,h) \in  V(L,t).$
\end{proof}
On voit qu'avec $l(L,\cdot)$ on ne retombe pas exactement dans le domaine de $S_{5}$ a priori (mais on en tombe pas trop loin). Mais $l(L,\cdot)$ joue quand même le rôle de réciproque de $t(l)$ : 
\begin{lemma}
\label{lemme36}
Soit $k \in L$ premier tel que $|Num(k)| \leqslant \frac{1}{\epsilon \log(t)}$. Pour peu que $t$ soit assez grand, on a $l(L,t(k)) = k$. \\
Réciproquement, soit $h \in \mathbb{Z}$ tel que $|\text{Num}(e(\delta_{h}L))| \leqslant \frac{1}{\epsilon \log(t)}$ et $ \lVert \delta_{h}L \rVert^{2} < 2 \cosh(1) |\text{Num}(e(\delta_{h}L))|$. Pour peu que $t$ soit assez grand, $t(l(L,h))= h$.
\end{lemma}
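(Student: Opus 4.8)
Le plan est de montrer que, sur leurs domaines respectifs, les applications $k \longmapsto t(k)$ (fournie par le lemme $\ref{lemme33}$) et $h \longmapsto l(L,h) = \delta_{h}^{-1} e(\delta_{h}L)$ sont réciproques l'une de l'autre. Le ressort commun aux deux sens est le suivant : lorsque $\text{Num}$ est suffisamment petit, le vecteur dilaté cesse d'être seulement court pour devenir \emph{le} plus court vecteur du réseau dilaté, ce qui est précisément ce que contrôle l'argument de covolume du lemme $\ref{lemme25}$.

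Pour le premier sens, je pars d'un vecteur $k \in L$ premier avec $|\text{Num}(k)| \leqslant \frac{1}{\epsilon \log(t)}$, que je suppose de première coordonnée $> 0$ (licite puisque $e$ normalise la première coordonnée), et je pose $h = t(k)$. Le lemme $\ref{lemme33}$ donne $\delta_{h} k = m(e^{y}, \text{sgn}(k_{2}) e^{-y})$ avec $m = \sqrt{k_{1}|k_{2}|}$ et $y \in \, ]-\tfrac{1}{2}, \tfrac{1}{2}[\,$, d'où $\lVert \delta_{h} k \rVert^{2} = 2 |\text{Num}(k)| \cosh(2y) < 2 \cosh(1) |\text{Num}(k)|$. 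Comme $|\text{Num}(k)| \leqslant \frac{1}{\epsilon \log(t)} \to 0$, on a $\lVert \delta_{h} k \rVert < 1$ pour $t$ assez grand. Je veux alors établir $e(\delta_{h} L) = \delta_{h} k$ : d'après le lemme $\ref{lemme25}$ le plus court vecteur de $\delta_{h} L$ est atteint en un unique vecteur premier $w$ (et son opposé), et si $w \neq \delta_{h} k$, le lemme $\ref{lemme16}$ force $(w, \delta_{h} k)$ à être libre, d'où $\lVert \delta_{h} L \rVert_{2} \leqslant \lVert \delta_{h} k \rVert < 1$ et donc $\text{covol}(\delta_{h} L) = \lVert \delta_{h} L \rVert_{1} \lVert \delta_{h} L \rVert_{2} \sin \alpha < 1$, ce qui est exclu. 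Comme $(\delta_{h} k)_{1} = m e^{y} > 0$, on conclut $e(\delta_{h} L) = \delta_{h} k$, puis $l(L, t(k)) = \delta_{h}^{-1} e(\delta_{h} L) = k$.

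Pour la réciproque, je pars de $h$ vérifiant les deux conditions de l'énoncé et je pose $u = e(\delta_{h} L)$ et $l = l(L,h) = \delta_{h}^{-1} u$. L'observation-clé est purement algébrique : en écrivant $u_{1}/|u_{2}| = e^{2s}$, on a $\frac{\lVert u \rVert^{2}}{|\text{Num}(u)|} = \frac{u_{1}}{|u_{2}|} + \frac{|u_{2}|}{u_{1}} = 2 \cosh(2s)$. L'hypothèse $\lVert u \rVert^{2} < 2 \cosh(1) |\text{Num}(u)|$ équivaut donc à $\cosh(2s) < \cosh(1)$, c'est-à-dire $|s| < \tfrac{1}{2}$, ce qui est exactement la condition d'appartenance de $u$ au domaine fondamental $\mathcal{F}_{\Delta}$ au vu de la paramétrisation du lemme $\ref{lemme33}$ (la $y$-coordonnée de $u$ vaut $s$). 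Ainsi $t(u) = 0$. Enfin, puisque $u = \delta_{h} l$ et que $\delta_{a} \delta_{b} = \delta_{a+b}$, on a $\delta_{t(u)} u = \delta_{t(u)+h} l \in \mathcal{F}_{\Delta}$, d'où par unicité du représentant dans $\mathcal{F}_{\Delta}$ : $t(l) = t(u) + h = h$.

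Le point le plus délicat est le premier sens, où il faut garantir que $\delta_{h} k$ est \emph{le} plus court vecteur et pas simplement un vecteur court : c'est là qu'intervient de façon essentielle la combinaison du lemme $\ref{lemme16}$ et de l'argument de covolume du lemme $\ref{lemme25}$, et c'est là que l'on a besoin de prendre $t$ assez grand pour forcer $\lVert \delta_{h} k \rVert < 1$. La réciproque se réduit quant à elle à l'identité $\frac{\lVert u \rVert^{2}}{|\text{Num}(u)|} = 2 \cosh(2s)$ et à l'unicité du représentant dans $\mathcal{F}_{\Delta}$, et ne présente pas de difficulté sérieuse.
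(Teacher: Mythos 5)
Votre démonstration est correcte et suit essentiellement la même voie que le papier : pour le premier sens, vous ne faites qu'expliciter ce que le papier condense en invoquant l'équation (\ref{eq126}) et « le raisonnement qui a mené au lemme \ref{lemme25} », à savoir l'estimation $\lVert \delta_{t(k)} k \rVert^{2} = 2\cosh(2y)\,|\text{Num}(k)| < 2\cosh(1)\,|\text{Num}(k)|$ issue de la paramétrisation du lemme \ref{lemme33}, puis l'argument combinant le lemme \ref{lemme16} et le covolume pour identifier $\delta_{t(k)}k$ au plus court vecteur de $\delta_{t(k)}L$. Pour la réciproque, votre calcul $\lVert u \rVert^{2}/|\text{Num}(u)| = 2\cosh(2s) < 2\cosh(1)$, qui donne $u = e(\delta_{h}L) \in \mathcal{F}_{\Delta}$ et donc $t(l(L,h)) = h$ par unicité du représentant dans le domaine fondamental, est exactement celui du papier.
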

Remarquons que ce lemme implique que les $h \in I(\eta,t)$ sont des minima locaux (au sens où on l'a défini dans la section précédente). 
\begin{proof}
Occupons-nous de la première partie du lemme et prenons $t > 2 \cosh(1)$ et $k$ comme dans l'énoncé. Alors d'après ($\ref{eq126}$), $e(\delta_{t(k)}L) = \delta_{t(k)}k$. Ainsi, on a : 
$$l(L,t(k)) = \delta_{t(k)}^{-1} e(\delta_{t(k)}L) = k \textit{.}$$
Occupons-nous maintenant de la seconde partie du lemme et prenons à nouveau $t > 2 \cosh(1)$ et $h$ comme dans l'énoncé. On rappelle que $t(l(L,h))$ est l'unique $p \in \mathbb{Z}$ tel que $$\delta_{p} l(L,h) \in \mathcal{F}_{\Delta} \textit{.} $$
Or, on a : $l(L,h) = \delta_{h}^{-1} e(\delta_{h}L)$ et donc $\delta_{h} l(L,h)=e(\delta_{h}L)$. \\
Or, on a : $$ ||\delta_{h}L||^{2} < 2 \cosh(1) |\text{Num}(e(\delta_{h}L))| \textit{.}$$
Comme $e(\delta_{h}L)$ s'écrit : $e(\delta_{h}L) = \sqrt{|\text{Num}(e(\delta_{h}L))|}(e^{y},\pm e^{-y})|$ où $y \in \mathbb{R}$, l'inéquation qui précède implique que : 
$$2 \cosh(2y) < 2 \cosh(1) \textit{.}$$
D'où $y \in ]-\frac{1}{2}, \frac{1}{2}[$ et $e(\delta_{h}L) \in \mathcal{F}_{\Delta}$.
\end{proof}

Nous pouvons maintenant démontrer la proposition $\ref{prop21}$. 
\begin{proof}[Démonstration de la proposition $\ref{prop21}$]
On a, d'après les lemmes $\ref{lemme33}$, $\ref{lemme34}$, $\ref{lemme35}$ et $\ref{lemme36}$, pour tout $t$ assez grand : 
$$|(\tilde{S}_{5} - \tilde{S}_{6})(\tau,t,\epsilon,X,L)| \leqslant \frac{M}{\log(t)} \sum_{l \in H(L,\alpha,\epsilon,\eta,t)}  |\lambda(l)| \frac{1}{|l_{1} l_{2}|} \sum_{k \in \mathbb{N}-\{ 0 \}} \frac{|\omega_{2}(k \tau  l)|}{k^{2}} |\cos(2 k \pi <l, t^{+} A_{2} + X > )| $$
où $$ H(L,\alpha,\epsilon,\eta,t) = \{ l \in L-\{ 0 \} \text{ } | \text{ } \frac{1}{8} t^{\alpha} < \lVert l \rVert \leqslant C t^{1 + \eta } \text{, } l \text{ premier} \text{, } l_{1} > 0 \text{, }  |Num(l)| \leqslant \frac{1}{ \log(t) \epsilon} \} \textit{.} $$
On conclut comme on a conclu la démonstration de la proposition $\ref{prop19}$ \footnote{À la fin de la preuve, on majore le terme en $|\cos|$ par $1$. La preuve reste donc encore valable en remplaçant $t^{+}$ par $t^{-}$.}.
\end{proof}

\subsubsection{Preuve de la proposition $\ref{prop42}$ - Réduction du nombre de termes de la somme}
On introduit $\tilde{I}(t) =  I(t, \eta) \cap \{ h \in \mathbb{Z} \text { } | \text{ } |h| \leqslant \lceil \log(t) \rceil \}$ puis
\begin{align}
\label{eq159}
 \tilde{S}_{7}(\tau,t,\epsilon,X,L) =  \frac{2}{(2 \pi i)^{2}} \sum_{h \in \tilde{I}(t) }  \frac{1}{\Xi_{h,t}} \lambda(l(L,h)) & \\
  (\sum_{k \in \mathbb{N}-\{ 0 \}} \frac{\omega_{2}(k \tau l(L,h))}{k^{2}} \cos(2 k \pi <l(L,h), t^{+} A_{2} + X > )) \nonumber \textit{.}
\end{align}
\begin{prop} 
\label{prop22}
$\mathbb{P}(\tilde{S}_{7}-\tilde{S}_{6} \neq 0) \leqslant M \frac{\eta}{\epsilon}$
\end{prop}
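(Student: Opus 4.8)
The plan is to exploit that $\tilde{I}(t) = I(t,\eta)\cap\{|h|\leqslant\lceil\log(t)\rceil\}$ is contained in $I(t,\eta)$, so that $\tilde{S}_{7}$ and $\tilde{S}_{6}$ share all their terms except those indexed by the ``supplémentaires'' $h\in I(t,\eta)$ with $\lceil\log(t)\rceil<|h|\leqslant\lceil(1+\eta)\log(t)\rceil$. In particular the event $\{\tilde{S}_{7}-\tilde{S}_{6}\neq 0\}$ is contained in the event that at least one such extra index belongs to $I(t,\eta)$. First I would therefore write, by a union bound,
\[
\mathbb{P}(\tilde{S}_{7}-\tilde{S}_{6}\neq 0)\leqslant\sum_{\lceil\log(t)\rceil<|h|\leqslant\lceil(1+\eta)\log(t)\rceil}\mathbb{P}(h\in I(t,\eta)),
\]
observing that the event $\{h\in I(t,\eta)\}$ depends only on $L$ and not on $X$, so that the distribution $\tilde{\lambda}_{2}$ of $X$ plays no role here.

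Next I would estimate the single-index probability $\mathbb{P}(h\in I(t,\eta))$, the key point being that it does not depend on $h$. Indeed $h\in I(t,\eta)$ is the condition that $\lVert\delta_{h}L\rVert^{2}<2\cosh(1)|\text{Num}(e(\delta_{h}L))|$ and $|\text{Num}(e(\delta_{h}L))|\leqslant\frac{1}{\epsilon\log(t)}$; since $\mu_{2}$ is $\delta_{h}$-invariant, this has the same probability as the event that $L'=\delta_{h}L$ (again $\mu_{2}$-distributed) satisfies $\lVert L'\rVert^{2}<2\cosh(1)|\text{Num}(e(L'))|$ and $|\text{Num}(e(L'))|\leqslant\frac{1}{\epsilon\log(t)}$. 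I would then introduce $f=\mathbf{1}_{U_{t}}$, the indicator of $U_{t}=\{x\in\mathbb{R}^{2}\ |\ \lVert x\rVert^{2}<2\cosh(1)|x_{1}x_{2}|,\ |x_{1}x_{2}|\leqslant\frac{1}{\epsilon\log(t)}\}$, and its transformée de Siegel $F$. Since $e(L')$ is primitif, one has $\{h\in I(t,\eta)\}\subset\{F\geqslant 1\}$, whence, by l'inégalité de Markov and le lemme \ref{lemme24},
\[
\mathbb{P}(h\in I(t,\eta))\leqslant\mathbb{E}_{\mu_{2}}(F)=\zeta(2)^{-1}\int_{\mathbb{R}^{2}}f\,d\lambda_{2}.
\]
Passing to polar coordinates exactly as in the computation leading to $(\ref{eq115})$ and au lemme \ref{lemme1}, the two constraints become $|\sin(2\theta)|>\cosh(1)^{-1}$ and $r^{2}\leqslant\frac{2}{\epsilon\log(t)|\sin(2\theta)|}$, so that $\int_{\mathbb{R}^{2}}f\,d\lambda_{2}=\frac{c_{0}}{\epsilon\log(t)}$ with $c_{0}=\int_{0}^{2\pi}\frac{\mathbf{1}_{|\sin(2\theta)|>\cosh(1)^{-1}}}{|\sin(2\theta)|}\,d\theta<\infty$. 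Hence $\mathbb{P}(h\in I(t,\eta))\leqslant\frac{C'}{\epsilon\log(t)}$ for a constant $C'$ independent of $h$ and $t$.

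Finally I would count the extra indices: there are at most $2(\lceil(1+\eta)\log(t)\rceil-\lceil\log(t)\rceil)\leqslant 2\eta\log(t)+2$ of them. Combining this with the single-index bound yields
\[
\mathbb{P}(\tilde{S}_{7}-\tilde{S}_{6}\neq 0)\leqslant(2\eta\log(t)+2)\frac{C'}{\epsilon\log(t)}\leqslant\frac{M\eta}{\epsilon}
\]
for $t$ assez grand, the residual term $\frac{2C'}{\epsilon\log(t)}$ being absorbed into $\frac{M\eta}{\epsilon}$ as soon as $\frac{1}{\log(t)}\leqslant\eta$, which holds eventually since $\eta>\alpha-1$ is fixed and bounded below. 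When $L$ is distributed according to $\tilde{\mu}_{2}$ rather than $\mu_{2}$, its density is bounded by some $\tilde{C}$ (voir la proposition \ref{prop13}), so each probability above is multiplied by at most $\tilde{C}$ and $M$ is replaced by $\tilde{C}M$. I do not expect a genuine obstacle here: the only steps requiring care are the Siegel computation of $\int_{\mathbb{R}^{2}}f\,d\lambda_{2}$ — in particular the convergence of the angular integral $c_{0}$, which holds precisely because the constraint $|\sin(2\theta)|>\cosh(1)^{-1}$ keeps the integrand $1/|\sin(2\theta)|$ bounded — and the bookkeeping that converts the $O(\eta\log(t))$ count of indices together with the $O(1/(\epsilon\log(t)))$ single-index mass into the advertised $O(\eta/\epsilon)$ bound.
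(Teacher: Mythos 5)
Your proposal is correct and follows essentially the same route as the paper: a union bound over the $O(\eta\log(t))$ indices $h$ with $\lceil\log(t)\rceil<|h|\leqslant\lceil(1+\eta)\log(t)\rceil$, the $\delta_{h}$-invariance of $\mu_{2}$, and a per-index probability of order $\frac{1}{\epsilon\log(t)}$, which combine to give the bound $M\frac{\eta}{\epsilon}$. The only cosmetic difference is in the per-index estimate: the paper relaxes the defining conditions of $I(t,\eta)$ to $\lVert\delta_{h}L\rVert^{2}\leqslant\frac{2\cosh(1)}{\epsilon\log(t)}$ and invokes the lemme $\ref{lemme1}$ directly, whereas you recompute the measure of the exact region via l'inégalité de Markov and the Siegel formula (lemme $\ref{lemme24}$) — both yield the same $O\bigl(\frac{1}{\epsilon\log(t)}\bigr)$.
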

Cette proposition dit, $\textit{grosso modo}$, que le facteur $\eta$ ne touche aucun rôle dans le problème que l'on étudie (ce qui se comprend bien, c'est une quantité qui n'est pas intrinsèque au problème) et qu'on peut se ramener entre ce qui passe entre $- \lceil \log(t) \rceil$ et $\lceil \log(t) \rceil$. 
\begin{proof}
On a, pour tout $h \in I(t,\eta)$, $\lVert \delta_{h}L \rVert^{2} < 2 \cosh(1) | \text{Num}(e(\delta_{h}L))|$ et $$| \text{Num}(e(\delta_{h}L))| \leqslant \frac{1}{\epsilon \log(t)} \textit{.}$$ En utilisant cette remarque, combinée au lemme $\ref{lemme1}$, on a  : 
$$\mathbb{P}(\tilde{S}_{7}-\tilde{S}_{6} \neq 0) \leqslant \frac{M  \eta \log(t)}{\epsilon \log(t)} \textit{.}$$
D'où le résultat voulu.
\end{proof}
Comme on peut prendre le paramètre $\eta$ aussi petit que l'on veut (en prenant $\alpha$ aussi proche de $1$ que l'on veut), on est donc ramené à l'étude de $\tilde{S}_{7}$.
\subsubsection{Preuve de la proposition $\ref{prop42}$ - Élimination des termes "proches" de l'axe des ordonnées}
On introduit $\tilde{I}_{-} (t) = \tilde{I}(t) \cap \mathbb{R}_{-}$ et les deux quantités suivantes : 
\begin{align}
\label{eq160}
 \tilde{S}_{8}(\tau,t,\epsilon,X,L) =  \frac{2}{(2 \pi i)^{2}} \sum_{h \in \tilde{I}_{-} (t)}  \frac{1}{\Xi_{h,t}}  & \\
  (\sum_{k \in \mathbb{N}-\{ 0 \}} \frac{\omega_{2}(k \tau l(L,h))}{k^{2}} \cos(2 k \pi <l(L,h), t^{+} A_{2} + X > )) \nonumber \textit{.}
\end{align}
Le but de cette sous-sous-section est de démontrer la proposition suivante 
\begin{prop}
\label{prop23}
$\tilde{S}_{7}-\tilde{S}_{8}$ converge vers $0$ en probabilités.
\end{prop}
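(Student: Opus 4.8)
Le plan est de reprendre exactement la méthode de la proposition \ref{prop20} : je majorerai $\mathbb{E}_{L}\left( \lVert \tilde{S}_{7} - \tilde{S}_{8} \rVert_{L^{2}(dX)}^{2} \right)$ puis je conclurai par l'inégalité de Bienaymé-Tchebychev. Je commencerai par séparer les indices de signes opposés et écrire
$$\tilde{S}_{7} - \tilde{S}_{8} = \frac{2}{(2 \pi i)^{2}} \left( \sum_{\substack{h \in \tilde{I}(t) \\ h > 0}} \frac{\lambda(l(L,h))}{\Xi_{h,t}} C_{h}(X) + \sum_{h \in \tilde{I}_{-}(t)} \frac{\lambda(l(L,h)) - 1}{\Xi_{h,t}} C_{h}(X) \right)$$
où $C_{h}(X) = \sum_{k \in \mathbb{N} - \{0\}} \frac{\omega_{2}(k \tau l(L,h))}{k^{2}} \cos(2 k \pi <l(L,h), t^{+} A_{2} + X >)$. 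Tout reviendra alors à contrôler le facteur $\lambda(l(L,h))$ pour $h > 0$ (les termes « proches de l'axe des ordonnées » que l'on supprime) et le facteur $\lambda(l(L,h)) - 1$ pour $h \leqslant 0$ (les termes où l'on remplace $\lambda$ par $1$).

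Le point-clé sera une estimation géométrique de $\lambda(l(L,h))$. J'écrirai $e(\delta_{h} L) = m(e^{y}, \pm e^{-y})$ avec $m = \sqrt{|\text{Num}(e(\delta_{h}L))|}$ et, d'après le lemme \ref{lemme33} et la remarque qui le suit, $y \in ]-\frac{1}{2}, \frac{1}{2}[$. Comme $l(L,h) = \delta_{h}^{-1} e(\delta_{h} L) = m(e^{y - h}, \pm e^{h - y})$, il viendra
$$\lambda(l(L,h)) = \frac{(l(L,h))_{1}^{2}}{(l(L,h))_{1}^{2} + (l(L,h))_{2}^{2}} = \frac{1}{1 + e^{4(h - y)}} \textit{.}$$
Puisque $|y| < \frac{1}{2}$, j'en déduirai que $\lambda(l(L,h)) \leqslant e^{-4h + 2}$ dès que $h > 0$, et que $1 - \lambda(l(L,h)) \leqslant e^{4h + 2}$ dès que $h \leqslant 0$ : ces deux quantités décroissent exponentiellement en $|h|$ et leurs carrés sont sommables en $h$, de somme $O(1)$ indépendante de $t$ et de $\epsilon$.

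J'appliquerai ensuite la formule de Parseval en $X$, comme dans la preuve de la proposition \ref{prop20}. Le lemme \ref{lemme36} assure que $h \longmapsto l(L,h)$ est injectif sur $\tilde{I}(t)$ et, les $l(L,h)$ étant premiers de première coordonnée strictement positive, les fréquences $\{ k \, l(L,h) \}_{h,k}$ seront deux à deux distinctes ; il n'y aura donc pas de termes croisés et
$$\lVert \tilde{S}_{7} - \tilde{S}_{8} \rVert_{L^{2}(dX)}^{2} = M_{1} \left( \sum_{\substack{h \in \tilde{I}(t) \\ h > 0}} \frac{\lambda(l(L,h))^{2}}{\Xi_{h,t}^{2}} B_{h} + \sum_{h \in \tilde{I}_{-}(t)} \frac{(1 - \lambda(l(L,h)))^{2}}{\Xi_{h,t}^{2}} B_{h} \right)$$
où $M_{1} > 0$ et $B_{h} = \sum_{k} \frac{\omega_{2}(k \tau l(L,h))^{2}}{k^{4}} \leqslant \lVert \omega_{2} \rVert_{\infty}^{2} \zeta(4)$. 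En majorant $\lambda^{2}$ (resp. $(1 - \lambda)^{2}$) et $B_{h}$ par leurs bornes déterministes, puis en prenant l'espérance selon $L$, je serai ramené à estimer $\mathbb{E}_{L}\left( \frac{1}{\Xi_{h,t}^{2}} \mathbf{1}_{h \in \tilde{I}(t)} \right)$, quantité qui ne dépend pas de $h$ puisque $\mu_{2}$ est invariante par $\delta_{h}$.

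La difficulté principale sera précisément de montrer que cette dernière espérance est un $O\left( \frac{1}{\epsilon \log(t)^{2}} \right)$. Pour cela je me placerai en $h = 0$ et j'appliquerai la formule de Siegel (lemme \ref{lemme24}) à $f(l) = \frac{1}{(l_{1} l_{2})^{2} \log(t)^{2}} \mathbf{1}_{\lVert l \rVert^{2} < 2 \cosh(1) |l_{1} l_{2}|} \mathbf{1}_{|l_{1} l_{2}| \leqslant \frac{1}{\epsilon \log(t)}} \mathbf{1}_{\lVert l \rVert \geqslant \sqrt{\epsilon}}$, la condition $\lVert l \rVert^{2} < 2 \cosh(1)|l_{1} l_{2}|$ garantissant pour $\epsilon$ petit que $l$ est bien le plus court vecteur, d'où
$$\mathbb{E}_{L}\left( \frac{\mathbf{1}_{0 \in \tilde{I}(t)}}{\Xi_{0,t}^{2}} \right) \leqslant \frac{\zeta(2)^{-1}}{\log(t)^{2}} \int_{\substack{\lVert l \rVert \geqslant \sqrt{\epsilon} \\ |l_{1} l_{2}| \leqslant \frac{1}{\epsilon \log(t)}}} \frac{\mathbf{1}_{\lVert l \rVert^{2} < 2 \cosh(1)|l_{1} l_{2}|}}{l_{1}^{2} l_{2}^{2}} dl \textit{.}$$
Le passage en coordonnées polaires transformera la condition $\lVert l \rVert^{2} < 2 \cosh(1)|l_{1} l_{2}|$ en une condition angulaire $|\sin(2\theta)| > \frac{1}{\cosh(1)}$ sur un secteur de mesure bornée où $\frac{1}{\sin^{2}(2\theta)}$ reste borné, tandis que l'intégrale radiale $\int_{\sqrt{\epsilon}}^{\cdot} \frac{dr}{r^{3}}$ se majore par $\frac{1}{2\epsilon}$ grâce à la coupure $\lVert l \rVert \geqslant \sqrt{\epsilon}$ (équation $\ref{eq147}$). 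C'est là que se joue tout le problème : seules les coupures issues de la faible admissibilité et de ($\ref{eq147}$) permettent de dompter la quasi-singularité de $\frac{1}{\text{Num}(l)^{2}}$ près des axes. Il en résultera $\mathbb{E}_{L}\left( \lVert \tilde{S}_{7} - \tilde{S}_{8} \rVert_{L^{2}(dX)}^{2} \right) = O\left( \frac{1}{\epsilon \log(t)^{2}} \right) \underset{t \rightarrow \infty}{\longrightarrow} 0$, et Bienaymé-Tchebychev donnera la convergence en probabilités voulue. Comme je n'aurai majoré que des modules, la même preuve restera valable en remplaçant $t^{+}$ par $t^{-}$.
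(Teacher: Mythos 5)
Votre route est réellement différente de celle du papier, et sa première moitié est correcte et même plus précise. Le papier ne passe ni par Parseval ni par un moment d'ordre deux : il établit, en utilisant ($\ref{eq147}$), les bornes ponctuelles $0 \leqslant \lambda(l(L,h)) \leqslant \frac{1}{\epsilon^{2}\log(t)}$ pour $h > 0$ et $|\lambda(l(L,h))-1| \leqslant \frac{1}{\epsilon^{2}\log(t)}$ pour $h \leqslant 0$, en déduit la majoration déterministe $|\tilde{S}_{7}-\tilde{S}_{8}| \leqslant \frac{F}{\epsilon^{2}\log(t)^{2}} \sum_{h \in \tilde{I}(t)} \lVert \delta_{h} L \rVert^{-2}$, puis conclut presque sûrement en $L$ (et uniformément en $X$) via le lemme 3.2 de $\cite{Skriganov}$. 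Votre formule exacte $\lambda(l(L,h)) = (1+e^{4(h-y)})^{-1}$, tirée du lemme $\ref{lemme33}$, est juste et donne une décroissance exponentielle en $|h|$ plus forte que les bornes du papier ; l'application de Parseval est également licite (les fréquences $k\,l(L,h)$ sont bien deux à deux distinctes, par primitivité, positivité de la première coordonnée et injectivité de $h \longmapsto l(L,h)$ issue du lemme $\ref{lemme36}$).

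En revanche, l'estimation-clé de $\mathbb{E}_{L}$ contient une vraie erreur : la coupure $\mathbf{1}_{\lVert l \rVert \geqslant \sqrt{\epsilon}}$ est mal transportée par l'invariance. L'hypothèse ($\ref{eq147}$) porte sur le réseau initial et donne $\lVert l(L,h) \rVert = \lVert \delta_{h}^{-1} e(\delta_{h}L) \rVert \geqslant \sqrt{\epsilon}$ ; après le changement de variable $L \mapsto \delta_{h}L$, la condition transportée est donc $\lVert \delta_{h}^{-1} l \rVert \geqslant \sqrt{\epsilon}$, qui dépend de $h$, et non $\lVert l \rVert \geqslant \sqrt{\epsilon}$. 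Telle qu'écrite, votre inégalité est même fausse pour $t$ grand : sur le domaine de votre $f$ on a $\lVert l \rVert^{2} < 2\cosh(1)|l_{1}l_{2}| \leqslant \frac{2\cosh(1)}{\epsilon \log(t)}$, donc dès que $\log(t) > \frac{2\cosh(1)}{\epsilon^{2}}$ ce domaine est vide et votre majorant vaut $0$, alors que l'espérance à gauche est strictement positive puisque $\mathbb{P}(0 \in \tilde{I}_{-}(t)) \asymp \frac{1}{\epsilon \log(t)}$ d'après ($\ref{eq131}$) et que $|\Xi_{0,t}| \leqslant \frac{1}{\epsilon}$ sur cet évènement. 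La réparation est heureusement simple : avec la coupure correcte $e^{-2h}l_{1}^{2} + e^{2h}l_{2}^{2} \geqslant \epsilon$, sur le secteur angulaire $|\sin(2\theta)| > \frac{1}{\cosh(1)}$ (où $|\cos\theta|$ et $|\sin\theta|$ sont minorés) la borne radiale devient $r \gtrsim \sqrt{\epsilon}\, e^{-|h|}$, d'où une espérance par $h$ en $O\bigl(e^{2|h|}/(\epsilon \log(t)^{2})\bigr)$, que vos poids $e^{-8|h|+4}$ absorbent sans peine, et l'on retrouve bien $O\bigl(1/(\epsilon \log(t)^{2})\bigr)$ au total ; une variante consiste à utiliser la faible admissibilité ($\ref{eq146}$), comme au lemme $\ref{lemme28}$, pour obtenir une coupure $r \gtrsim \log(t)^{-\frac{1+\beta}{2}}$ uniforme en $h$. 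Moyennant cette correction, votre argument aboutit, avec une conclusion légèrement plus faible que celle du papier (convergence en probabilités via Bienaymé-Tchebychev, au lieu d'une borne presque sûre en $L$ uniforme en $X$).
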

Le but de cette proposition est de retirer les termes $h > 0$ et de simplifier le terme $\lambda$. En réalité, les termes $h$ en question vont devenir grand au fur et à mesure que $t$ grandit et cela correspondra à des termes $l$ tel que $l_{1}l_{2}$ sera petit et $l_{2}$ grand (en valeur absolue), c'est-à-dire « proches » de l'axe des ordonnées et le terme en $\lambda$ sera donc proche de $0$. Quant aux termes qui correspondent à $h < 0$, ceux-ci vont devenir de plus en plus petits au fur et à mesure que $t$ grandit et cela correspondra à des termes $l$ tels que $l_{1}l_{2}$ sera petit et $l_{1}$ grand (en valeur absolue), c'est-à-dire « proches » de l'axe des abscisses et le terme en $\lambda$ sera donc proche de $1$. 
\begin{proof}
Supposons que l'on se donne $h \in \tilde{I}(t)$ tel que $h \geqslant 0$. On omet $L$ et $h$ dans le $l(L,h)$ par soucis de simplification et on pose $l(L,h) = (l_{1},l_{2})$. Supposons que $l_{1} \geqslant l_{2}$. Comme $h \in \tilde{I}(t)$, on a $$e^{2 h} l_{1}^{2} + e^{-2 h} l_{2}^{2} < \frac{2 \cosh(1)}{ \epsilon \log (t)} \textit{.}$$
D'où, on obtient : $$l_{1}^{2} < \frac{2 \cosh(1) e^{-2 h}}{\epsilon \log(t)} $$ et $$ l_{2}^{2} < \frac{ \cosh(1)}{\epsilon \log(t) \cosh(2 h)} \textit{.}$$
Mais, d'après ($\ref{eq147}$), $l_{1}^{2} + l_{2}^{2} \geqslant \epsilon$, ce qui est donc exclu pour $t$ assez grand. \\
Ainsi, on a $l_{1} \leqslant l_{2}$. On obtient, comme $h \in \tilde{I}(t)$, 
\begin{equation}
\label{eq202}
|l_{1}| \leqslant \frac{1}{\sqrt{ \epsilon \log(t)}} \textit{ et }
\end{equation}

\begin{equation}
\label{eq203}
 |l_{2}| \geqslant \sqrt{\frac{\epsilon}{2}} \textit{.}
\end{equation}
À partir de l'expression de $h$ à partir des coordonnées de $l$ donné par le lemme $\ref{lemme33}$, on note au passage que, pour $t$ assez grand, 
$$ h \geqslant H \log(\log(t)) $$
où $H$ est une constante $>0$. \\
Grâce à $(\ref{eq202})$ et $(\ref{eq203})$, on a aussi : $$ 0 \leqslant \lambda(l) \leqslant \frac{1}{\epsilon^{2} \log(t)} \textit{.}$$ 
De même, dans le cas où $h \leqslant 0$, on obtient que : $l_{1} \geqslant l_{2}$, $$|l_{2}| \leqslant \frac{1}{\sqrt{\epsilon \log(t)}} \textit{,}$$ $$l_{1} \geqslant \sqrt{ \frac{\epsilon}{2}} \textit{,} $$ $$h \leqslant - H \log(\log(t)) \textit{ et}$$ $$|\lambda(l) - 1 | \leqslant \frac{1}{\epsilon^{2}  \log(t)} \textit{.} $$
Au vu des remarques précédentes, on voit qu'il existe une constante $F > 0$ telle que : 
\begin{equation}
\label{eq132}
|(\tilde{S}_{8}-\tilde{S}_{7})(\tau,t,P^{0},X,L)| \leqslant \frac{F}{\epsilon^{2} \log(t)^{2}} \sum_{h \in \tilde{I}(t)} \frac{1}{|| \delta_{h} L||^{2}} 
\end{equation}
et, grâce au lemme 3.2 de $\cite{Skriganov}$, cette dernière quantité, pour presque tout $L \in \mathscr{S}_{2}$, est un $O(\frac{1}{\log(t)^{\frac{1}{2}}})$ (ce lemme de $\cite{Skriganov}$ découle du théorème $\ref{thm20}$ et du lemme $\ref{lemme23}$). D'où le résultat voulu \footnote{ On voit qu'ici aussi on majore la valeur absolue du $\cos$ par $1$. Cette preuve est donc aussi valable lorsque l'on remplace $t^{+}$ par $t^{-}$.}.
\end{proof}

\subsubsection{Preuve de la proposition $\ref{prop42}$ - Remplacement de $\omega_{2}$ par $\omega_{2}(0)$ et conclusion}

Comme $\omega_{2}$ est la transformée de Fourier de $\frac{\omega_{1}}{\int \omega_{1} }$, $\omega_{2} (0) = 1$ et donc on a : 
\begin{align}
 \tilde{S}_{9}(\tau,t,\epsilon,X,L) =  \frac{2}{(2 \pi i)^{2}} \sum_{h \in \tilde{I}_{-} (t)}  \frac{1}{\Xi_{h,t}}  & \\
  (\sum_{k \in \mathbb{N}-\{ 0 \}} \frac{\omega_{2}(0)}{k^{2}} \cos(2 k \pi <l(L,h), t^{+} A_{2} + X > )) \nonumber 
\end{align}
d'après l'équation ($\ref{eq161}$).\\
Le but de cette sous-sous-section est de prouver la proposition suivante qui permet de remplacer le terme en $\omega_{2}$ par sa seule valeur en $0$ : 
\begin{prop}
\label{prop24}
$\tilde{S}_{8}-\tilde{S}_{9}$ converge en probabilités vers 0.
\end{prop}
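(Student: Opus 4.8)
Le plan est de majorer la différence $\tilde{S}_{8} - \tilde{S}_{9}$ en norme $L^{2}$ par rapport à $X$, puis en espérance par rapport à $L$, et de conclure via l'inégalité de Bienaymé-Tchebychev. Comme $\omega_{2}(0) = 1$, on a
\[
\tilde{S}_{8} - \tilde{S}_{9} = \frac{2}{(2 \pi i)^{2}} \sum_{h \in \tilde{I}_{-}(t)} \frac{1}{\Xi_{h,t}} \sum_{k \in \mathbb{N} - \{ 0 \}} \frac{\omega_{2}(k \tau l(L,h)) - \omega_{2}(0)}{k^{2}} \cos(2 k \pi <l(L,h), t^{+} A_{2} + X>) \textit{.}
\]
Les fréquences $k \, l(L,h)$, pour $h \in \tilde{I}_{-}(t)$ et $k \geqslant 1$, sont deux à deux distinctes : chaque $l(L,h)$ est premier, de première coordonnée strictement positive, et $h \longmapsto l(L,h)$ est injective d'après le lemme $\ref{lemme36}$. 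La formule de Parseval donne alors, exactement comme dans la preuve de la proposition $\ref{prop20}$, qu'il existe $M_{1} > 0$ tel que, $\Delta$ désignant la norme $L^{2}$ au carré relativement à $X$,
\[
\Delta = M_{1} \sum_{h \in \tilde{I}_{-}(t)} \frac{1}{\Xi_{h,t}^{2}} \sum_{k \in \mathbb{N} - \{ 0 \}} \frac{|\omega_{2}(k \tau l(L,h)) - \omega_{2}(0)|^{2}}{k^{4}} \textit{.}
\]

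J'utiliserais ensuite deux estimations. D'une part, $\omega_{2}$ étant à décroissance rapide, donc lipschitzienne, on a $|\omega_{2}(k \tau l) - \omega_{2}(0)| \leqslant \lVert \nabla \omega_{2} \rVert_{\infty} \, k \tau \lVert l \rVert$, d'où $\sum_{k} \frac{|\omega_{2}(k \tau l) - \omega_{2}(0)|^{2}}{k^{4}} \leqslant C \tau^{2} \lVert l \rVert^{2} \sum_{k} \frac{1}{k^{2}} = C' \tau^{2} \lVert l \rVert^{2}$, la convergence de $\sum 1/k^{2}$ évitant tout facteur logarithmique parasite. D'autre part, et c'est ce qui justifie d'avoir restreint $\tilde{I}_{-}(t)$ à $|h| \leqslant \lceil \log(t) \rceil$, la quantité $\tau \lVert l \rVert$ est uniformément petite sur $\tilde{I}_{-}(t)$ : puisque $\delta_{h}^{-1} = \mathrm{Diag}(e^{-h}, e^{h})$ on a $\lVert l(L,h) \rVert^{2} \leqslant e^{2 |h|} \lVert \delta_{h} L \rVert^{2} \leqslant e^{2 |h|} \frac{2 \cosh(1)}{\epsilon \log(t)}$, et comme $|h| \leqslant \log(t) + 1$ et $\tau = \frac{\log(t)^{1/4}}{t}$, on obtient $\tau \lVert l(L,h) \rVert \leqslant C_{\epsilon} \log(t)^{-1/4}$.

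La bijection des lemmes $\ref{lemme34}$, $\ref{lemme35}$ et $\ref{lemme36}$ identifie la somme sur $h \in \tilde{I}_{-}(t)$ à une somme sur certains vecteurs premiers $l \in L$, de première coordonnée positive, « proches de l'axe des abscisses », vérifiant $|\text{Num}(l)| \leqslant \frac{1}{\epsilon \log(t)}$ et $\lVert l \rVert \leqslant C_{\epsilon} t / \sqrt{\log(t)}$ ; pour majorer, on intègre sur une région $\mathcal{D}_{t}$ les contenant. En reportant $\Xi_{h,t}^{2} = \text{Num}(l)^{2} \log(t)^{2}$ et la majoration précédente, puis en intégrant en $L$ via la formule de Siegel (lemme $\ref{lemme24}$), on se ramène à une intégrale du type du lemme $\ref{lemme32}$ :
\[
\mathbb{E}_{L}(\Delta) \leqslant \frac{C \tau^{2}}{\log(t)^{2}} \int_{\mathcal{D}_{t}} \frac{\lVert l \rVert^{2}}{\text{Num}(l)^{2}} \, dl \textit{,}
\]
où l'on incorpore dans $\mathcal{D}_{t}$ la borne d'admissibilité faible $|\text{Num}(l)| \geqslant D |\log(\lVert l \rVert)|^{-1-\beta} \geqslant D \log(t)^{-1-\beta}$ (proposition $\ref{prop12}$, déjà supposée), qui empêche $\text{Num}(l)$ d'être trop petit. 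Un calcul de cette intégrale analogue à celui du lemme $\ref{lemme32}$ (changement de variables $(l_{1}, l_{2}) \longmapsto (l_{1}, l_{1} l_{2})$) la majore par $O(t^{2} \log(t)^{\beta})$, d'où $\mathbb{E}_{L,X}(|\tilde{S}_{8} - \tilde{S}_{9}|^{2}) = O(\log(t)^{-3/2 + \beta})$, les densités bornées de $\tilde{\mu}_{2}$ et $\tilde{\lambda}_{2}$ permettant de se ramener à $\mu_{2}$ et à la mesure de Lebesgue. En choisissant $\beta < 3/2$ (licite d'après la proposition $\ref{prop12}$), cette espérance tend vers $0$, et Bienaymé-Tchebychev fournit la convergence en probabilités voulue.

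Le principal obstacle est le contrôle de l'intégrale $\int_{\mathcal{D}_{t}} \lVert l \rVert^{2} / \text{Num}(l)^{2} \, dl$, où le facteur $1 / \text{Num}(l)^{2}$ peut être grand : c'est la conjonction de la décroissance lente de $\tau$ (qui rend $\tau \lVert l \rVert$ uniformément petit sur $\tilde{I}_{-}(t)$ grâce à la restriction $|h| \leqslant \log(t)$) et de la borne inférieure d'admissibilité faible sur $|\text{Num}(l)|$ qui dégage le gain décisif en puissance de $\log(t)$. Enfin, la norme $L^{2}(dX)$ ne dépendant pas de la phase $2 \pi <k l, t^{\pm} A_{2}>$, la preuve vaudrait à l'identique en remplaçant $t^{+}$ par $t^{-}$.
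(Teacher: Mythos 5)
Votre démonstration est correcte, mais elle emprunte un chemin réellement différent de celui du papier. Le papier procède par une majoration ponctuelle, valable pour presque tout $L$ et \emph{uniforme en $X$} : via le théorème des accroissements finis et la même borne uniforme $\tau\lVert l(L,h)\rVert = O(\log(t)^{-1/4})$ que la vôtre, chaque somme en $k$ y est majorée terme à terme par $\min(F k^{-2},\, F k^{-1}\log(t)^{-1/4})$, dont la somme est $O(\log(t)^{-1/8})$ ; puis, en utilisant $|\Xi_{h,t}|\geqslant \frac{1}{2\cosh(1)}\lVert\delta_{h}L\rVert^{2}\log(t)$ (condition définissant $\tilde{I}_{-}(t)$), on obtient $|\tilde{S}_{8}-\tilde{S}_{9}|\leqslant F\log(t)^{-9/8}\sum_{h\in\tilde{I}_{-}(t)}\lVert\delta_{h}L\rVert^{-2}$, quantité qui est $O(\log(t)^{-1/16})$ presque sûrement par le lemme 3.2 de Skriganov (théorème ergodique individuel et lemme \ref{lemme23}) — aucun moment en $X$ n'est nécessaire, le cosinus étant simplement majoré par $1$. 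Vous transposez au contraire la méthode de la proposition \ref{prop20} : Parseval en $X$ — licite, car vos fréquences $k\,l(L,h)$ sont deux à deux non proportionnelles, chaque $l(L,h)$ étant premier de première coordonnée strictement positive et $h\mapsto l(L,h)$ injective par le lemme \ref{lemme36} —, puis formule de Siegel en $L$ et calcul d'intégrale à la manière du lemme \ref{lemme32}, la borne d'admissibilité faible (hypothèse (\ref{eq146}), déjà posée dans le papier, avec constante $D$ uniformisable quitte à se restreindre à un ensemble de réseaux de mesure $\geqslant 1-\delta$) fournissant la coupure inférieure $|\text{Num}(l)|\geqslant D'\log(t)^{-1-\beta}$ qui rend l'intégrale finie ; votre calcul $\mathbb{E}(\Delta)=O\bigl(\tau^{2}\log(t)^{-2}\, t^{2}\log(t)^{\beta}\bigr)=O(\log(t)^{-3/2+\beta})$ est exact, et Tchebychev conclut pour $\beta<3/2$. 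En comparaison : l'argument du papier est plus économe (pas de second moment, pas de Parseval) et donne davantage que l'énoncé — une borne presque sûre en $L$, uniforme en $X$ — au prix de l'input ergodique ; le vôtre évite ce lemme ergodique et fournit un taux quantitatif en moyenne quadratique, mais ne livre que la convergence en probabilité jointe en $(L,X)$ (ce qui suffit ici) et exige les deux vérifications supplémentaires que vous effectuez correctement : la non-colinéarité des fréquences pour Parseval et l'insertion de l'admissibilité faible dans l'intégrale de Siegel. Votre remarque finale sur le remplacement de $t^{+}$ par $t^{-}$ est conforme à la note de bas de page du papier.
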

\begin{proof}
Notons que l'on a, pour $k \in \mathbb{N}-\{0 \}$, pour $h \in \tilde{I}(t)$ : 
$$\lVert k \tau l(L,h) \rVert \leqslant k \tau e^{\log(t)} \lVert \delta_{h} L \rVert \leqslant k \tau e^{\log(t)} \sqrt{\frac{2\cosh(1)}{\epsilon \log(t)}} \textit{.}$$
Comme $\tau = \frac{\log(t)^{\frac{1}{4}}}{t}$, on a avec $l=l(L,h)$ : 
$$\lVert k \tau l \rVert \leqslant k \frac{1}{\log(t)^{\frac{1}{4}}} \sqrt{\frac{2\cosh(1)}{\epsilon }} \textit{.}$$
D'où, par le théorème des accroissements finis, et comme $\omega$ est une fonction à décroissance rapide : 
\begin{equation}
\label{eq135}
|\frac{\omega(k \tau l ) \cos (2 \pi k < l, t^{+}  A_{2} + X >)}{k^{2}}-\frac{\omega(0 ) \cos (2 \pi k < l, t^{+}  A_{2} + X >)}{k^{2}} | \leqslant \min(\frac{F}{k^{2}}, \frac{F}{k \log(t)^{\frac{1}{4}}})
\end{equation}
où $F$ est une constante $> 0$ qui dépend de $\epsilon$.
À partir de $(\ref{eq135})$, on trouve que : 
\begin{align}
|\sum_{k \in \mathbb{N}-\{ 0 \}} \frac{\omega_{2}(k \tau l)}{k^{2}} \cos(2 k \pi <l(L,h), t^{+}  A_{2} + X > -\sum_{k \in \mathbb{N}-\{ 0 \}} \frac{\omega_{2}(0)}{k^{2}} \cos(2 k \pi <l(L,h), t^{+}  A_{2} + X >| & \nonumber \\
 \leqslant \sum_{k=1}^{\infty} \min(\frac{F}{k^{2}}, \frac{F}{k \log(t)^{\frac{1}{4}}}) = O(\frac{1}{\log(t)^{\frac{1}{8}}}) 
\end{align}
où le $O$ dépend seulement de $\epsilon$. 
Ainsi, on trouve que : 
\begin{equation}
\label{eq136}
|\tilde{S}_{8}-\tilde{S}_{9}| \leqslant F \frac{\sum_{h \in \tilde{I}_{-} (t)} ||\delta_{h}L||^{-2}}{\log(t)^{1+ \frac{1}{8}}}
\end{equation}
où $F >0 $ est une constante dépendant de $\epsilon$.  \\
Encore grâce au lemme 3.2 de $\cite{Skriganov}$, on trouve que ce terme est un $O(\frac{1}{\log(t)^{\frac{1}{16}}})$. D'où le résultat voulu \footnote{Là encore on a majoré la valeur absolue du $\cos$ par $1$. La preuve est donc encore valable si l'on remplace $t^{+}$ par $t^{-}$.}.
\end{proof}
\begin{proof}[Démonstration de la proposition $\ref{prop42}$]
C'est une conséquence des propositions $\ref{prop18}$, $\ref{prop19}$, $\ref{prop20}$, $\ref{prop21}$, $\ref{prop22}$, $\ref{prop23}$, $\ref{prop24}$ \footnote{Toutes les propositions en question étant encore valables si l'on remplace $t^{+}$ par $t^{-}$, la réduction de $\tilde{S}_{2}$ à $\tilde{S}_{9}$ est encore valable avec $t^{-}$ à la place de $t^{+}$.}.
\end{proof}
\subsection{Transfert du problème à la vérification des hypothèses du théorème $\ref{thm13}$}
Maintenant, avec $\tilde{S}_{9}$, on va conclure comme dans la section précédente, c'est-à-dire montrer la proposition suivante : 
\begin{prop}
\label{prop25}
On a : 
\begin{itemize}
\item 1) $\{ \Xi_{h,t} \}_{h \in \tilde{I}_{-} (t)}$ converge vers un processus de Poisson sur $[-\frac{1}{\epsilon}, \frac{1}{\epsilon}]-\{0 \}$ d'intensité constante (indépendante de $\epsilon$).\\
\item 2) Les variables aléatoires réelles $(\Gamma_{h,t})_{h \in \tilde{I}_{-} (t)}$ convergent asymptotiquement vers des variables aléatoires réelles symétriques, indépendantes, identiquement distribuées, à support compact, indépendantes de $\{ \Xi_{h,t} \}_{h \in \tilde{I}_{-} (t)}$.
\end{itemize}
\end{prop}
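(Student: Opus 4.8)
Le plan est de se ramener au théorème \ref{thm13}, en reprenant la démarche de la proposition \ref{prop40} pour la partie « Poisson » mais en utilisant en plus la variable $\zeta$ du théorème \ref{thm13} afin de traiter simultanément les deux points de l'énoncé. Le rôle de $M$ est joué par $\lceil \log(t) \rceil$ ; comme les indices $h$ de $\tilde{I}_{-}(t)$ vivent dans $[-\lceil \log(t)\rceil, 0]$, on invoque la remarque faite juste après le théorème \ref{thm13} permettant de remplacer $[0,M]$ par $[-M,0]$. On pose $\nu_{h} = \Xi_{h,t} = \text{Num}(l(L,h))\log(t)$ ; comme $\text{Num}(\delta_{k}x) = \text{Num}(x)$ et comme $|\text{Num}(e(\delta_{h}L))| \leqslant \frac{1}{\epsilon \log(t)}$ sur $\tilde{I}_{-}(t)$, la variable $\nu_{h}$ prend ses valeurs dans $\mathbf{X} = [-\frac{1}{\epsilon}, \frac{1}{\epsilon}] - \{0\}$, le caractère bilatéral provenant du signe de $\text{Num}$. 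On prend $\xi_{h} = \mathbf{1}_{h \in \tilde{I}_{-}(t)}$, qui, comme en $(\ref{eq113})$, s'écrit pour $t$ assez grand comme une moitié de transformée de Siegel d'une indicatrice régularisable. Enfin, on choisit pour variable supplémentaire la phase $\zeta_{h} = \langle l(L,h), t^{+} A_{2} + X \rangle \bmod 1$, à valeurs dans $\tilde{\mathbf{X}} = \mathbb{R}/\mathbb{Z}$, de sorte que $\Gamma_{h,t} = \sum_{k \geqslant 1} \frac{1}{k^{2}} \cos(2\pi k \zeta_{h})$ soit une fonction continue fixe $g$ de $\zeta_{h}$.

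Je vérifierais d'abord (h1) à (h6) de la même façon que pour la proposition \ref{prop40}. Le calcul d'espérance de (h1) se fait via les formules de Siegel--Rogers (lemme \ref{lemme24}) et un passage en coordonnées polaires, et fournit une intensité constante $D' > 0$ indépendante de $\epsilon$ ; (h2) est immédiate puisque $\xi_{h} \in \{0,1\}$ ; (h3) résulte de l'analogue du lemme \ref{lemme9}, les instants de $\tilde{I}_{-}(t)$ étant séparés d'au moins $\log(t)$, ce qui rend le terme à deux points un $O(\log(t)^{-2}) = O(M^{-2})$ ; enfin (h4), (h5) et (h6) s'obtiennent en construisant, via la proposition \ref{prop13}, des partitions en $W_{1}$-courbes $\kappa_{0}$-représentatives dont les normes de Sobolev sont contrôlées par les lemmes \ref{lemme19} et \ref{lemme29}, et les ensembles de bord par le lemme \ref{lemme30}, la mesure des ensembles exceptionnels restant en $O(M^{-100})$. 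La seule différence avec la proposition \ref{prop40} est géométrique : la zone visée de $\mathscr{S}_{2}$ correspond ici à $\text{Num}(e(\delta_{h}L))$ petit (vecteur proche d'un axe), et non à $\|\delta_{h}L\|$ petit, mais les estimations s'adaptent sans changement de fond.

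Le point nouveau, et le principal obstacle, est la vérification de (h7) et (h8) pour la phase $\zeta_{h}$. Pour (h7), il faut établir que, conditionnellement à $\mathcal{F}_{h'}$ et sur $\{\xi_{h} = 1\}$, la loi de $\zeta_{h}$ est asymptotiquement uniforme sur $\mathbb{R}/\mathbb{Z}$, indépendamment de la valeur de $\Xi_{h,t}$. C'est là qu'interviennent les deux sources d'aléa : la partie déterministe $t^{+}\langle l(L,h), A_{2}\rangle = t^{+} a (l_{1} + l_{2})$ oscille de plus en plus vite lorsque $t \to \infty$, tandis que la partie $\langle l(L,h), X\rangle$ fait intervenir $X$ tiré selon $\tilde{\lambda}_{2}$, de densité régulière sur le tore ; l'équidistribution de $\zeta_{h}$ et son indépendance asymptotique vis-à-vis de $\Xi_{h,t}$ (donc de $\text{Num}(l)$ et de $\|\delta_{h}L\|$) résulteront de la régularité de cette densité, combinée au lemme \ref{lemme51} qui découple l'angle $\alpha(\delta_{h}L)$ de $\|\delta_{h}L\|$. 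Quant à (h8), elle se traite comme (h5) : sur une $W_{1}$-courbe assez courte, $l(L,h)$, et donc $\zeta_{h}$, ne varient qu'à un terme négligeable près, de sorte que $\mathbf{1}_{\zeta_{h} \in \tilde{K}_{p}}$ y est constant quitte à écarter un voisinage des bords $\partial \tilde{K}_{p}$ de mesure $O(M^{-100})$.

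Une fois (h1) à (h8) acquises, le théorème \ref{thm13} assure que le nuage $\{\Xi_{h,t}, \zeta_{h}, h/M\}_{h \in \tilde{I}_{-}(t)}$ converge vers un processus de Poisson d'intensité $D'$ sur $\mathbf{X} \times (\mathbb{R}/\mathbb{Z}) \times J$ muni de la mesure produit. Le lemme \ref{lemme20} (c)--(d) fournit alors les deux points voulus : en marginalisant, $\{\Xi_{h,t}\}$ converge vers un processus de Poisson d'intensité constante sur $[-\frac{1}{\epsilon}, \frac{1}{\epsilon}] - \{0\}$ (point 1), et les $\Gamma_{h,t} = g(\zeta_{h})$ deviennent i.i.d., à support compact (car $g$ est bornée) et indépendantes de $\{\Xi_{h,t}\}$ (point 2). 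La symétrie nécessaire à l'application ultérieure du lemme \ref{lemme42} provient de l'involution $L \mapsto \text{diag}(1,-1)L$, qui préserve $\mu_{2}$, change $\text{Num}$ en son opposé (donc $\Xi_{h,t}$ en $-\Xi_{h,t}$) tout en laissant uniforme la loi limite de la phase : la loi jointe de $(\Xi_{h,t}, \Gamma_{h,t})$ est ainsi asymptotiquement invariante par $\Xi \mapsto -\Xi$, ce qui, joint au caractère bilatéral du processus de Poisson, fournit le centrage recherché.
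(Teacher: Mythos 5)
Votre squelette est le bon et coïncide avec celui du papier : ramener la proposition \ref{prop25} au théorème \ref{thm13} avec $M=\lceil \log(t)\rceil$, la remarque $[0,M]\to[-M,0]$, $\nu_{h}=\mathrm{Num}(l(L,h))\lceil\log(t)\rceil$ à valeurs dans $[-\frac{1}{\epsilon},\frac{1}{\epsilon}]-\{0\}$, $\Gamma_{h,t}=g(\zeta_{h})$ avec $g$ continue bornée, vérification de (h1)--(h6) comme pour la proposition \ref{prop40}, puis lemme \ref{lemme20} pour conclure (c'est exactement le chemin des propositions \ref{prop30}, \ref{prop31} et \ref{prop26}). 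Mais au point précisément identifié comme « le principal obstacle », la vérification de (h7), votre mécanisme n'est pas celui du papier et, tel quel, ne fonctionne pas. Le papier commence par \emph{éliminer} $X$ : le terme dominant de $\langle e(\delta_{h}L),\delta_{h}^{-1}(e^{M}A_{2}+X)\rangle$ est la partie déterministe, et on pose $\zeta_{h}^{M}(L)=\langle e(\delta_{h}L),\delta_{h}^{-1}e^{M}A_{2}\rangle \ (\mathrm{mod}\ 1)$, fonction de $L$ seul. L'équidistribution conditionnelle est alors obtenue par un calcul explicite le long des feuilles du feuilletage $W_{1}^{-}$ : sur $\gamma_{k'}=\{h_{\tau}^{T}\overline{L}\}_{0\leqslant\tau\leqslant(e^{\lambda_{1}(k')}M^{1000})^{-1}}$, avec $e=e(\delta_{k}\overline{L})$ et $|e_{1}|>\beta$, la phase vaut
\begin{equation*}
e_{1}e^{-k}e^{M}(A_{2})_{1}+e_{2}e^{k}e^{M}(A_{2})_{2}+\tau\, e_{1}e^{\hat{\lambda}(k)}(A_{2})_{2}\quad(\mathrm{mod}\ 1)\textit{,}
\end{equation*}
et c'est l'inégalité $\hat{\lambda}(k)\geqslant\lambda_{1}(k')+R\log(M)$, avec le choix $\hat{\lambda}(h)=M-h$ (et $(A_{2})_{2}\neq 0$), qui fait balayer à la phase un grand nombre de tours sur chaque atome de $\mathcal{F}_{k'}$, d'où (h7). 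Vous n'introduisez jamais la forme $\hat{\lambda}$, qui est pourtant l'ingrédient structurel du théorème \ref{thm13}, et sans elle l'hypothèse (h7) n'a même pas d'énoncé.

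Le mécanisme que vous proposez à la place — régularité de la densité de $X$ plus « oscillation rapide » de $t^{+}a(l_{1}+l_{2})$, combinées au lemme \ref{lemme51} — ne peut pas remplacer ce calcul. D'une part, (h7) est une estimée \emph{conditionnelle} relativement à $\mathcal{F}_{t'}$, filtration portant sur les morceaux de $W_{1}$-courbes dans $\mathscr{S}_{2}$ : le lemme \ref{lemme51} découple l'angle $\alpha(L)$ de $\lVert L\rVert$ sous $\mu_{2}$, mais ne dit rien sur la loi conditionnelle d'une phase modulo $1$ sachant le passé grossier du flot. D'autre part, la variable $X$ est \emph{unique et partagée} par tous les $h$ : si l'uniformité de $\zeta_{h}$ repose sur $X$, les phases $\{\langle l(L,h),X\rangle\}_{h}$ sont toutes corrélées à travers $X$, et l'indépendance asymptotique en $h$ (qui est exactement ce que (h7)+(h8) encodent via la filtration) demanderait un argument séparé de type lacunaire que vous n'esquissez pas ; garder $X$ dans $\zeta_{h}$ complique aussi (h8), où la constance sur $F_{\overline{t}}(\omega)$ ne concerne que la dépendance en $L$. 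Enfin, votre involution $L\mapsto\mathrm{Diag}(1,-1)L$ pour la symétrie est un ajout raisonnable mais étranger au papier : le caractère bilatéral du processus vient directement du signe de $\mathrm{Num}$ dans la définition de $\nu_{h}$ et du calcul d'intensité de (h1), et les propriétés des $\Gamma_{h,t}$ découlent de l'uniformité limite de $\zeta_{h}$ sur $\mathbb{R}/\mathbb{Z}$.
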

Concernant l'item 2), comme $t^{+}= t + b \tau$, comme $\tau= \frac{\log(t)^{\frac{1}{4}}}{t}$, il suffit de montrer ce résultat en remplaçant $t^{+}$ par $t$ (quitte à adapter le raisonnement menant à l'équation ($\ref{eq136}$) \footnote{Notons que ce raisonnement ne fait que intervenir $|t-t^{+}|$. Or $|t-t^{+}|=|t-t^{-}|$. Donc la proposition $\ref{prop25}$ sera aussi valable en remplaçant $t^{+}$ par $t^{-}$ et donc on obtiendra au final que $\tilde{S}_{2}^{+}$ et $\tilde{S}_{2}^{-}$ convergent vers une même loi de Cauchy centrée.}). Par ailleurs, on peut aussi remplacer $\log(t)$ par $\lceil \log(t) \rceil$. \\ 
\\
Posons quelques nouvelles notations : \begin{itemize} 
\item $M = \lceil \log(t) \rceil$
\item  $\nu_{0}^{M} (L) = ( M \text{Num}(e( \cdot)) \mathbf{1}_{2 \cosh(1) |\text{Num}(e(\cdot))| > || \cdot ||^{2}} ) (L) $
\item $\nu_{h}^{M}(L) = \nu_{0}^{M}(\delta_{h} L) \textit{, }$
\item $\xi_{h}^{M}= \mathbf{1}_{\nu_{h}^{M} \in [- \frac{1}{\epsilon}, \frac{1}{\epsilon}]-\{0\}} $
\item $\zeta_{h}^{M}(L,X) = < e(\delta_{h}L), \delta_{h}^{-1} (e^{M} A_{2} + X) > (\text{mod } 1) $
\end{itemize}
Si l'on démontre la proposition suivante, la proposition $\ref{prop25}$ est acquise : 
\begin{prop}
\label{prop30}
On a : 
\begin{itemize}
\item 1) $(\nu_{h}^{M})_{\xi_{h}^{M} = 1 \textit{, } h \in [-M,0]} $ converge vers un processus de Poisson sur $[-\frac{1}{\epsilon}, \frac{1}{\epsilon}]-\{0 \}$ d'intensité constante (indépendante de $\epsilon$). \\
\item 2) $(\zeta_{h}^{M})_{\xi_{h}^{M} = 1 \textit{, } h \in [-M,0]}$ convergent asymptotiquement vers des variables aléatoires à valeurs dans $\mathbb{R}/\mathbb{Z}$, indépendantes, identiquement distribuées selon la mesure de Haar normalisée sur $\mathbb{R}/\mathbb{Z}$, indépendantes de $(\nu_{h}^{M})_{\xi_{h}^{M} = 1 \textit{, } h \in [-M,0]}$.
\end{itemize}
\end{prop}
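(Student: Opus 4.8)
Le plan est d'appliquer le théorème \ref{thm13} en prenant $\mathbf{X} = [-\frac{1}{\epsilon}, \frac{1}{\epsilon}] \setminus \{0\}$ muni de la mesure de Lebesgue, $J = [-1,0]$, $\tilde{\mathbf{X}} = \mathbb{R}/\mathbb{Z}$ muni de la mesure de Haar normalisée $\tilde{\mathbf{m}}$, et les variables $\nu_h^M$, $\xi_h^M$, $\zeta_h^M$ introduites ci-dessus, avec $\lambda_1(h) = 2h$ (définition \ref{def4}) et $\hat\lambda$ une forme affine auxiliaire $> \lambda_1$ à choisir ; on utilise la remarque faite après le théorème \ref{thm13} qui autorise à travailler sur $[-M,0]$. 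La démarche est très parallèle à celle de la proposition \ref{prop40} : la seule nouveauté pour la partie 1) tient à la géométrie de la zone à l'infini considérée (la condition $2\cosh(1)|\text{Num}(e(\cdot))| > \lVert \cdot \rVert^2$ force le plus court vecteur à être proche d'un axe, et son signe rend $\mathbf{X}$ bilatéral), tandis que toute la partie 2) est nouvelle. Comme pour (\ref{eq113}), on vérifie d'abord que pour $M$ assez grand $\xi_h^M = \frac{1}{2} F_h^M$, où $F_h^M$ est la transformée de Siegel de $$f_0^M(x) = \mathbf{1}_{2\cosh(1)|\text{Num}(x)| > \lVert x \rVert^2}\, \mathbf{1}_{0 < M|\text{Num}(x)| \leqslant \frac{1}{\epsilon}} \text{,}$$ les conditions imposées forçant le vecteur compté à être le plus court vecteur de $\delta_h L$ (et son opposé), par le raisonnement du lemme \ref{lemme25}.

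Pour la partie 1), on vérifie (h1) à (h6). L'hypothèse (h1) et le calcul de l'intensité s'obtiennent en intégrant $f_0^M$ grâce au lemme \ref{lemme24} et en passant en coordonnées polaires, comme pour (\ref{eq115}) : on trouve $\mathbb{E}(\xi_h^M) = \frac{2 \mathbf{c}}{\epsilon M}$, ce qui identifie une intensité $\mathbf{c}$ constante et indépendante de $\epsilon$ (car $\mathbf{m}(\mathbf{X}) = \frac{2}{\epsilon}$ se factorise). L'hypothèse (h2) est immédiate puisque $\xi_h^M \in \{0,1\}$. Pour (h3), on établit un analogue du lemme \ref{lemme9}, puis on utilise que deux minima profonds (ceux où $\xi = 1$) sont séparés d'au moins $\sim \log M$ unités de temps — la monotonie de $f_{(k_1,k_2)}$ (lemme \ref{lemme17}) interdisant un autre tel minimum à distance $< \log(1/\lVert \delta_h L \rVert) \sim \frac{1}{2}\log M$ de chaque côté — ce qui rend le terme en $\arccos$ un $O(M^{-1})$ et donne $\mathbb{P}(\xi_t \geqslant 1, \xi_{t'} \geqslant 1) = O(M^{-2})$. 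Enfin (h4), (h5) et (h6) se traitent exactement comme dans la preuve de la proposition \ref{prop40}, en s'appuyant sur les partitions représentatives en $W_1$-courbes de la proposition \ref{prop13}, sur les analogues des lemmes \ref{lemme29} et \ref{lemme30} pour $f_0^M$, et sur la vitesse exponentielle de mélange du flot géodésique.

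Pour la partie 2), on vérifie (h7) et (h8). Le point de départ est que, $\delta_h$ étant symétrique, $$\zeta_h^M = \langle e(\delta_h L), \delta_h^{-1}(e^M A_2 + X) \rangle = \langle l(L,h), e^M A_2 \rangle + \langle l(L,h), X \rangle \ (\text{mod } 1) \text{,}$$ où $l(L,h) = \delta_h^{-1} e(\delta_h L)$ est un vecteur premier de $L$ dont la norme tend vers l'infini sur $\tilde I_-(t)$. Le premier terme ne dépend que de $L$ et n'agit que comme une translation ; le second fait intervenir la translation aléatoire $X$, indépendante de $L$ et de densité régulière sous $\tilde\lambda_2$. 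La loi de $\langle l(L,h), X \rangle \ (\text{mod } 1)$ est le poussé-en-avant de cette densité par une fibration en cercles, dont les coefficients de Fourier sont ceux de la densité aux fréquences $k\, l(L,h)$ ; ils tendent vers $0$ quand $\lVert l(L,h) \rVert \to \infty$ par régularité de la densité. On en déduit (h7) : conditionnellement à $\mathcal{F}_{t'}$ et à $\{\xi_t = 1\}$, la loi de $\zeta_t$ est à distance $\nu_M \to 0$ de $\tilde{\mathbf{m}}$, pourvu que $\mathcal{F}_{t'}$ ne résolve $X$ qu'à une échelle $\gg \lVert l(L,t) \rVert^{-1}$, ce qu'assurent la fenêtre imposée à $t'$ et le choix de $\hat\lambda$. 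Pour (h8), on observe que $\zeta_t$ est lipschitzienne en $(L,X)$ de constante $\sim \lVert l(L,t) \rVert$ : en raffinant la partition jusqu'au temps $\bar t$ avec $\lambda_1(\bar t) > \hat\lambda(t) + R\log M$, la position de $(L,X)$ est résolue assez finement pour que $\mathbf{1}_{\zeta_t \in \tilde K_p}$ soit constante sur $F_{\bar t}(\omega)$, quitte à écarter un voisinage de $\partial \tilde K_p$ de mesure $O(M^{-1000})$, comme dans la vérification de (h5).

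Le principal obstacle sera la vérification de (h7) : il faudra contrôler l'équidistribution \emph{conditionnelle} de la phase en combinant l'indépendance de $X$ vis-à-vis de $L$, la régularité de la densité de $\tilde\lambda_2$ et la croissance du vecteur premier $l(L,t)$, tout en s'assurant que le conditionnement par $\mathcal{F}_{t'}$ — construit à partir des partitions en $W_1$-courbes de la proposition \ref{prop13} — laisse subsister assez de liberté sur l'aléa porté par $X$ ; c'est là qu'intervient de façon essentielle la forme auxiliaire $\hat\lambda$. Une fois (h1) à (h8) établies, le théorème \ref{thm13} donne que $\{\nu_h^M, \zeta_h^M, h/M\}_{\xi_h^M = 1}$ converge vers un processus de Poisson de mesure produit sur $\mathbf{X} \times \tilde{\mathbf{X}} \times J$ ; par le lemme \ref{lemme20} (c), ceci fournit à la fois la partie 1) (convergence des $\nu_h^M$ vers un processus de Poisson d'intensité constante indépendante de $\epsilon$) et la partie 2) (les $\zeta_h^M$ sont asymptotiquement indépendants, équidistribués selon la mesure de Haar et indépendants du processus limite), ce qui achève la preuve de la proposition \ref{prop30} et donc de la proposition \ref{prop25}.
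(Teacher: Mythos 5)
Votre partie 1) suit essentiellement la d\'emarche du papier : m\^eme r\'e\'ecriture $\xi_{h}^{M}=\frac{1}{2}F_{h}^{M}$ via une transform\'ee de Siegel, m\^eme calcul d'intensit\'e par le lemme \ref{lemme24} (le papier trouve $E(\xi_{k}^{M})=\frac{2D}{\epsilon M}$ avec $D=\int_{u,v>0}\mathbf{1}_{uv\leqslant 1}\mathbf{1}_{2\cosh(1)uv>\lVert (u,v)\rVert^{2}}$), et (h2)--(h6) trait\'ees comme pour la proposition \ref{prop40} (le papier pr\'ecise seulement qu'il utilise ici le feuilletage $W_{1}^{-}$, et qu'il remplace $\tilde{I}_{-}(t)$ par un ensemble \`a marges $M^{-2001}$ pour (h5)). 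Pour la partie 2) en revanche, vous divergez r\'eellement du papier, et c'est l\`a que se loge un vrai probl\`eme. Le papier commence par \emph{supprimer} la d\'ependance en $X$ : il observe que le terme dominant de $\zeta_{h}^{M}$ est $\langle e(\delta_{h}L),\delta_{h}^{-1}e^{M}A_{2}\rangle$ et tire l'\'equidistribution conditionnelle (h7) du balayage le long des feuilles instables, via le calcul explicite $e(\delta_{k}h_{\tau}^{T}\overline{L})=(e_{1},\tau e^{\lambda_{1}(k)}e_{1}+e_{2})$ qui donne une phase variant en $\tau$ \`a vitesse $e_{1}e^{\hat{\lambda}(k)}(A_{2})_{2}$ avec le choix crucial $\hat{\lambda}(h)=M-h$ (et $\lambda_{1}(h)=-2h$, non $2h$), en utilisant $|e_{1}|>\beta$ et $(A_{2})_{2}\neq 0$ ; ce m\'ecanisme fonctionne m\^eme \`a $X$ d\'eterministe. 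Vous, au contraire, gardez $X$ dans $\zeta$ et fondez (h7) sur la d\'ecroissance des coefficients de Fourier de la densit\'e de $\tilde{\lambda}_{2}$ aux fr\'equences $m\,l(L,h)$.

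Le point qui fait \'echouer votre esquisse telle quelle est votre v\'erification de (h8) : vous affirmez que $\zeta_{t}$ est lipschitzienne en $(L,X)$ de constante $\sim\lVert l(L,t)\rVert$. C'est faux dans la direction $L$ : le long d'une $W_{1}$-courbe, le terme $\langle e(\delta_{t}L),\delta_{t}^{-1}e^{M}A_{2}\rangle$ varie \`a vitesse $\asymp e_{1}e^{M-t}$, soit un facteur $e^{M}$ de plus que $\lVert l(L,t)\rVert\asymp e^{-t}\sqrt{|\text{Num}|}$ --- c'est le facteur de dilatation $e^{M}$ de $A_{2}$ qui domine, et c'est pr\'ecis\'ement lui qui impose $\hat{\lambda}(h)=M-h$, forme que vous laissez ind\'etermin\'ee (\og \`a choisir \fg). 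Avec votre constante de Lipschitz, on choisirait $\hat{\lambda}(h)\approx -h$, et alors sur une cellule $F_{\overline{t}}(\omega)$ de longueur $(e^{\lambda_{1}(\overline{t})}M^{1000})^{-1}$ la phase ferait encore $\sim e^{M}$ tours : $\mathbf{1}_{\zeta_{t}\in\tilde{K}_{p}}$ n'y serait pas constante et (h8) \'echouerait. Par ailleurs, puisque $\xi$ et $\nu$ ne d\'ependent que de $L$, la filtration h\'erit\'ee de la proposition \ref{prop40} ne r\'esout pas $X$ du tout ; garder $X$ dans $\zeta$ oblige donc \`a construire une filtration produit raffinant la coordonn\'ee $X$ selon un calendrier compatible \`a la fois avec (h7) (cellules en $X$ pas trop fines devant $1/\lVert l(L,t)\rVert$ dans toute la fen\^etre $\hat{\lambda}(t)\geqslant\lambda_{1}(t')+R\log M$) et avec (h8) (cellules assez fines aux temps profonds) --- construction d\'elicate que vous \'evoquez d'un mot (\og r\'esolue assez finement \fg) sans la mener, et que l'estimation lipschitzienne erron\'ee ferait d\'erailler. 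Le papier court-circuite toute cette difficult\'e en oubliant $X$ d\`es le d\'epart et en ne faisant porter (h7) et (h8) que sur l'al\'ea de $L$ ; si vous voulez sauver votre variante par Fourier, il faut au minimum reprendre $\hat{\lambda}(h)=M-h$, refaire le calcul de la vitesse de phase le long des feuilles, et expliciter le sch\'ema de raffinement en $X$.
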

\begin{proof}[Démonstration du fait qu'il suffit de démontrer la proposition $\ref{prop30}$]
Il suffit de remarquer que : \begin{itemize}
\item $h \in  [-M,0]$ signifie exactement que $h \in \tilde{I}_{-} (t)$. \\
\item $(\xi_{h}^{M})(L) = 1$ signifie exactement que $|\text{Num}(l(L,h)) \lceil \log(t) \rceil | \leqslant \frac{1}{\epsilon}$, et, dans ce cas, $(\nu_{h}^{M})(L) = \text{Num}(l(L,h)) \lceil \log(t) \rceil$  \\
\item $(\zeta_{h}^{M})(L,X) = (<l(L,k), e^{M}  A_{2} + X > )$
\end{itemize}
 
\end{proof}
Par ailleurs, le terme dominant dans $< e(\delta_{h}L), \delta_{h}^{-1} (e^{M} A_{2} + X) >$ est $< e(\delta_{h}L), \delta_{h}^{-1} e^{M}A_{2} >$. On peut donc supposer, et on le fera par la suite, que : 
 $$\zeta_{h}^{M}(L) = < e(\delta_{h}L), \delta_{h}^{-1} e^{M}  A_{2} > (\text{mod } 1) $$
 (en particulier on oublie la dépendance en $X$). \\
 Concentrons-nous maintenant sur la preuve de la proposition $\ref{prop30}$.
\subsection{Vérification des hypothèses du théorème $\ref{thm13}$} 
Le théorème $\ref{thm13}$, la remarque qui le suit et la proposition (de cette sous-section) qui suit permettent de montrer la proposition $\ref{prop30}$. Pour énoncer cette proposition, gardons en tête les notations de la sous-section précédente et posons :\begin{itemize} 
\item $\lambda_{1}(h) = - 2 h$
\item $\mathbf{X}=[-\frac{1}{\epsilon}, \frac{1}{\epsilon}]-\{0 \}$ 
\item $\mathbf{Q}=(\mathbf{Q}_{P})_{P \in \mathbb{N}-\{ 0 \}}$ où $\forall P \in \mathbb{N}-\{0 \}$, $\mathbf{Q}_{P}=(\mathbf{X}_{k,P})_{k \in \{0,\cdots,P-1\}}$ avec \\
 \\
 pour tout $k \in [0 ; P-1]$, $\mathbf{X}_{k,P} = [-\frac{k+1}{P}\frac{1}{\epsilon},-\frac{k}{P}\frac{1}{\epsilon}[ \cup ]\frac{k}{P}\frac{1}{\epsilon},\frac{k+1}{P}\frac{1}{\epsilon}]$
\item $\tilde{\mathbf{X}} = \mathbb{R}/\mathbb{Z}$ et $\tilde{\mathbf{Q}}=((\tilde{\mathbf{X}}_{k,P})_{k \in \{0,\cdots,P-1\}})_{P \in \mathbb{N}-\{0 \}}$ une collection de partitions d'intervalles de longueur $\frac{1}{P}$ avec $P \in \mathbb{N}-\{ 0 \}$
\item $ \hat{\lambda}(h) = M - h$ si bien que l'on a $\hat{\lambda} > \lambda_{1}$ sur $\text{Int}(\Pi_{M})$
\end{itemize}
On remarque alors que : 
 \begin{prop}
 \label{prop31}
$(\lambda_{1},\hat{\lambda},\{\xi_{t} \},\{\nu_{t} \},\{\zeta_{t}\})$ satisfont les hypothèses (h1) jusqu'à (h8) du théorème $\ref{thm13}$. \\
La proposition $\ref{prop30}$ est en conséquence valide.
 \end{prop}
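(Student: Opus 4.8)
The plan is to follow closely the proof of Proposition \ref{prop40}, which already established (h1)--(h6) for the simpler process $\{\Xi_i\}$, and to supplement it with a genuinely new analysis of the phase variables $\zeta_h^M$ needed for (h7) and (h8). The starting point, exactly as in Proposition \ref{prop40} and Lemme \ref{lemme1}, is to write $\xi_h^M$ as a Siegel transform: by $\delta$-invariance of $\mu_2$ one has $\mathbb{E}(\xi_h^M)=\mathbb{E}(\xi_0^M)$, and for $M$ large $\xi_0^M(L)$ coincides (up to the factor coming from the pair $\pm e(L)$) with the Siegel transform of the indicator of the region $\{x\in\R^2 : x_1>0,\ 2\cosh(1)|x_1 x_2|>\lVert x\rVert^2,\ |x_1 x_2|\leq 1/(\epsilon M)\}$, the balance condition $2\cosh(1)|\text{Num}|>\lVert\cdot\rVert^2$ guaranteeing, via Lemme \ref{lemme25} and Lemme \ref{lemme33}, that the only lattice vectors entering are $\pm e(L)$.

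With this representation, (h1)--(h6) are checked as in Proposition \ref{prop40}. For (h1) I would apply Lemme \ref{lemme24}: the area of the region scales like a constant times $1/(\epsilon M)$, so $\mathbb{E}(\xi_h^M)=\mathbf{c}\,\mathbf{m}(\mathbf{X})/M+O(M^{-2})$ with $\mathbf{c}$ independent of $\epsilon$, which simultaneously identifies the Poisson intensity. Hypothesis (h2) is immediate since $\xi_h^M\in\{0,1\}$. For (h3) I would use that two indices with $\xi_t=\xi_{t'}=1$ are local minima carrying a small $\text{Num}$, hence separated by at least $\sim\log(M)$ (as exploited in Propositions \ref{prop14} and \ref{prop40}); the Lemme \ref{lemme9} computation, adapted to the $\text{Num}$-condition through Rogers' formula, then gives the joint probability as $O\!\left(\frac1M\arccos(\cdots)\right)$, the $\arccos$-factor being $O(1/M)$ by the separation. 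Hypotheses (h4)--(h6) are obtained verbatim as in Proposition \ref{prop40}: one invokes Proposition \ref{prop13} to build, for the finite family $\mathfrak{F}=\{\xi_0,\xi_{0,1},\dots,\xi_{0,P}\}$ (each in $H^{s,s}$ by Lemme \ref{lemme19}), a $\kappa_0$-representative partition $F_t$ into $W_1$-curves of length $L_t=(e^{\lambda_1(t)}M^{1000})^{-1}$; representativity plus the mean computed in (h1) yields (h4), a boundary-negligibility argument (the Lemme \ref{lemme30} analogue) yields (h5), and the nesting of partition lengths yields (h6), all on a set $E$ with $\mu_2(E^c)=O(M^{-100})$.

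The new and central point is (h7), which I expect to be the main obstacle. Fixing $t'\leq t$ with $\hat{\lambda}(t)\geq\lambda_1(t')+R\log(M)$ and conditioning on $\mathcal{F}_{t'}$ localizes $L$ to a single tiny $W_1$-curve $\gamma=F_{t'}(L)$ of length $L_{t'}=e^{2t'}M^{-1000}$. The key computation is that along $\gamma$ the phase is \emph{exactly affine}. Writing $L_\tau=h_1(\tau)L$ and letting $v$ be the vector of $L$ (constant along $\gamma$, since $\xi_t=1$) with $e(\delta_t L)=\delta_t v$, one finds $l(L_\tau,t)=h_1(\tau)v$, whence
\[
\zeta_t(\tau)=e^{M}a\,\langle h_1(\tau)v,(1,1)\rangle = c_0 + e^{M}a\,v_{1}\,\tau \pmod 1,
\]
the slope being $e^M a$ times the \emph{large} coordinate of $v$ provided the horocycle foliation ($W_1^+$ versus $W_1^-$) is chosen accordingly. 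Over the interval of length $L_{t'}$ the phase therefore sweeps $e^{M}a|v_1|L_{t'}$ periods; using $|v_1|\gtrsim e^{-t}|\text{Num}(v)|^{1/2}\gtrsim e^{-t}M^{-(1+\beta)/2}$ (weak admissibility, \eqref{eq146}) together with $\hat{\lambda}(t)\geq\lambda_1(t')+R\log M$, i.e. $M-t+2t'\geq R\log M$, this sweep is $\gtrsim M^{\,R-1000-(1+\beta)/2}$, which tends to $\infty$ once $R$ is large. An affine phase of huge frequency integrated against the bounded, smooth conditional density along $\gamma$ equidistributes modulo $1$ with error $O(\text{sweep}^{-1})=:\nu_M\to 0$, which is precisely (h7). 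The delicate points are choosing the foliation so the slope carries $v_1$ rather than the small coordinate, justifying that $v$ is constant along the (tiny) atom $\gamma$, and controlling the conditional density to upgrade the decay of the oscillatory integral into the quantitative bound $\nu_M$.

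Finally, (h8) is handled exactly like (h5): for $\lambda_1(\bar t)>\hat{\lambda}(t)+R\log M$ the atom $F_{\bar t}(L)$ is so short that the affine phase sweeps only $e^{M-t+2\bar t}M^{-1000-\cdots}\lesssim M^{-R-\cdots}\to 0$ of a period, so $\mathbf{1}_{\zeta_t\in\tilde K_p}$ is constant on $F_{\bar t}(L)$ unless the atom straddles a boundary of $\tilde K_p$, an event of total measure $O(M^{-R})$ that is removed to define $\tilde E$. Collecting everything, $(\lambda_1,\hat\lambda,\{\xi_t\},\{\nu_t\},\{\zeta_t\})$ satisfies (h1)--(h8) on a set of measure $1-O(M^{-100})$, so Théorème \ref{thm13} applies and Proposition \ref{prop30}, hence Proposition \ref{prop25}, follows; since the geodesic flow mixes exponentially, the same conclusion holds for $L$ distributed according to $\tilde{\mu}_{2}$ by Proposition \ref{prop13}(b).
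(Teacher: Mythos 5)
Your proposal is correct and matches the paper's own proof (split there between Proposition \ref{prop26}, which verifies (h1)--(h8), and the short deduction given for Proposition \ref{prop31} via le lemme \ref{lemme20} et le théorème \ref{thm13}) essentially step for step: (h1)--(h6) recycled from Proposition \ref{prop40} via the Siegel--Rogers formulas, separation of local minima, and representative $W_{1}$-partitions of length $(e^{\lambda_{1}(t)}M^{1000})^{-1}$; (h7) via the exact affine phase obtained by commuting $\delta_{k}$ past the transposed horocycle $h_{\tau}^{T}$ (the paper indeed works with the $W_{1}^{-}$-feuilletage, confirming the foliation choice you flagged as delicate), whose slope $e_{1}e^{\hat{\lambda}(k)}(A_{2})_{2}$ sweeps an unbounded number of periods over the conditioning atom since $\hat{\lambda}(k)\geqslant \lambda_{1}(k')+R\log(M)$; and (h8) by short-atom constancy of $\mathbf{1}_{\zeta_{k}\in\tilde{\mathbf{X}}_{p}}$ away from an $O(M^{-1000})$-neighbourhood of $\partial\tilde{\mathbf{X}}_{p}$, removed into $\tilde{E}$ with cost $O(M^{-998})$. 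If anything, your lower bound $|v_{1}|\gtrsim e^{-t}M^{-(1+\beta)/2}$ drawn from weak admissibility ($\ref{eq146}$) is more careful than the paper's bare assertion that $|e_{1}|>\beta$ for a constant $\beta>0$ (which is only plausible after normalizing by $\sqrt{|\text{Num}(e)|}$), and either bound suffices because $R$ may be taken arbitrarily large.
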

 \begin{proof}[Démonstration de la deuxième partie de la proposition $\ref{prop31}$]
 Cela repose sur le lemme $\ref{lemme20}$ et le théorème $\ref{thm13}$.
 \end{proof}
On va donc maintenant montrer que (h1) jusqu'à (h8) sont ici vérifiées.
\subsubsection{Vérification des hypothèses (h1) jusqu'à (h8) et conclusion}
Le but de cette section est de vérifier la proposition suivante qui est la première partie de la proposition $\ref{prop31}$ : 
\begin{prop}
\label{prop26}
$(\lambda_{1},\hat{\lambda},\{\xi_{t} \},\{\nu_{t} \},\{\zeta_{t}\})$ satisfont les hypothèses (h1) jusqu'à (h8) du théorème $\ref{thm13}$.
\end{prop}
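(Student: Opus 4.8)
The plan is to verify (h1)--(h8) by transporting, step by step, the argument used for Proposition~\ref{prop40}, and then to treat the two genuinely new hypotheses (h7)--(h8) governing the phase $\zeta_h^M$. Write $M=\lceil \log t\rceil$ and $v=e(\delta_h L)$, so that on $\{\xi_h^M=1\}$ one has $\|v\|^2<2\cosh(1)|\mathrm{Num}(v)|$ and $M|\mathrm{Num}(v)|\le 1/\epsilon$; in particular $\|v\|^2<2\cosh(1)/(M\epsilon)$, so $v$ is a genuine shortest vector and, by the cone inequality $2\cosh(1)|v_1v_2|>\|v\|^2$, i.e. $e^{-1}<|v_2/v_1|<e$, one gets $|v_1|\asymp|v_2|\asymp\|v\|$ and $|\mathrm{Num}(v)|\asymp\|v\|^2$. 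Since $\nu_h^M(L)=\nu_0^M(\delta_h L)$ and $\mu_2$ is $SL_2(\mathbb R)$-invariant, every one--point computation reduces to $h=0$.

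\emph{Hypotheses (h1)--(h3).} As in Lemma~\ref{lemme1} I would write $\xi_0^M=\tfrac12 F_0^M$, where $F_0^M$ is the Siegel transform of the indicator $f_0^M$ of the bowtie $R_M=\{x:\ e^{-1}<|x_2/x_1|<e,\ 0<|x_1x_2|\le \tfrac1{M\epsilon}\}$, and $F_0^M\in\{0,2\}$ because $R_M\subset B_f(0,(2\cosh 1/(M\epsilon))^{1/2})$ is a tiny neighbourhood of the origin. In the coordinates $(u,w)=(x_1x_2,\,x_2/x_1)$, for which $dx=\tfrac{1}{2|w|}\,du\,dw$ and the cone becomes $e^{-1}<|w|<e$, one finds $\int f_0^M=\tfrac{2}{M\epsilon}$, so Lemma~\ref{lemme24} yields $\mathbb E(\xi_0^M)=\zeta(2)^{-1}/(M\epsilon)+O(M^{-2})$, the error being the Rogers $\overline F$ term. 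This is (h1); it fixes the intensity $\mathbf c=\zeta(2)^{-1}/2$ against $\mathbf m=\mathrm{Leb}$ on $\mathbf X$ (so $\mathbf m(\mathbf X)=2/\epsilon$), independently of $\epsilon$, and the same integral restricted to $|u|\in(\tfrac{k}{PM\epsilon},\tfrac{k+1}{PM\epsilon}]$ gives the constant needed for (h4b). Hypothesis (h2) is trivial since $\xi_h^M\in\{0,1\}$. For (h3), the points of $\tilde I_-(t)$ are local minima, hence (exactly as in Proposition~\ref{prop14}) any two are separated by at least $\log M+2C_\epsilon$; an analogue of Lemma~\ref{lemme9}, via the two--point Rogers formula, then gives a joint density carrying an $\arccos$ factor of size $O(e^{-|h-h'|})=O(M^{-1})$, whence $\mathbb P(\xi_h\ge 1,\xi_{h'}\ge1)=O(M^{-2})$.

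\emph{Hypotheses (h4)--(h6).} These run through the representative--partition machinery unchanged. I would first record the analogue of Lemma~\ref{lemme29}, $\|\xi_0^M\|_{H^{s,s}}=O(1)$ and $\|\xi_{0,k}^M\|_{H^{s,s}}=O(1)$, by applying Lemma~\ref{lemme19} to smooth approximants of $\mathbf 1_{R_M}$ from above and below. Then, using Proposition~\ref{prop13}, I choose partitions $F_{h'}$ into $W_1^-$--curves of length $L_{h'}=(e^{\lambda_1(h')}M^{1000})^{-1}$, where $\lambda_1(h)=-2h$ and $W_1^-$ is the direction expanded by the backward geodesic flow governing the range $h\in[-M,0]$ (the remark following Theorem~\ref{thm13} and the $W_1^+\!\leftrightarrow\!W_1^-$ symmetry), representative for $\mathfrak F=\{\xi_0,\xi_{0,1},\dots\}$. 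The summability $\sum (L_{h'}e^{\lambda_1(h'')})^{-\kappa_0}=O(M^{-1000})$ holds once $R$ is large, so on the representative set (h4a,b) hold with the constant of (h1); as $\eta_{h,p}=\xi_{h,p}$ here, (h4c) follows. Hypothesis (h6) is the filtration property of these partitions, and (h5) is obtained as in Proposition~\ref{prop40}: flowing a cell $F_{\bar h}$ forward by $\delta_h$ gives a curve of length $\le M^{-10^9}$ on which the shortest vector is locked, so $\xi_{h,p}$ is constant there unless the cell meets some $\partial K_p$; thickening the defining strict inequalities of $\nu_h^M$ (the analogue of the passage to $\tilde A_2$ in Proposition~\ref{prop40}) and discarding the offending lattices (measure $O(M^{-1000})$ by Lemma~\ref{lemme30}) produces an exceptional set of measure $O(M^{-100})$.

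\emph{Hypotheses (h7)--(h8): the main obstacle.} The content is that $\zeta_h^M=\langle l(L,h),\,e^M A_2\rangle=a\,e^M(l_1+l_2)\pmod 1$ equidistributes along the conditioning cells. Moving along a $W_1^-$--cell $L\mapsto h_1^-(\tau)L$, the relation $\delta_h h_1^-(\tau)=h_1^-(\tau e^{-2h})\delta_h$ and the locking of the shortest vector (Hypothesis (h5)) show that $e(\delta_h L)$ runs through $h_1^-(\tau e^{-2h})v$ for a fixed primitive $v$, whence $l(h_1^-(\tau)L,h)=(e^{-h}v_1,\ \tau e^{-h}v_1+e^h v_2)$ and $\zeta_h^M(\tau)=\mathrm{const}+a\,e^M e^{-h}v_1\,\tau$ is \emph{affine} in $\tau$. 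Averaging $\mathbf 1_{\tilde K_p}$ of an affine function over a cell of length $L_{h'}$ gives $\tfrac1P+O(1/V)$ with total variation $V=a\,e^M e^{-h}|v_1|\,e^{2h'}M^{-1000}$, so (h7) reduces to $V\to\infty$. Two facts combine. First, the constraints of (h7), $\hat\lambda(h)\ge\lambda_1(h')+R\log M\ge\lambda_1(h)+2R\log M$ with $\hat\lambda(h)=M-h$, force $h\ge -M+2R\log M$ and $h'\le h-\tfrac R2\log M$, so $e^{M+h}\ge M^{2R}$ at the shortest admissible cell. Second --- and here weak admissibility (Proposition~\ref{prop12}) is essential --- $l(L,h)\in L$ is a \emph{large} vector, $\|l(L,h)\|\le Ct^{1+\eta}$, so $|\mathrm{Num}(v)|=|\mathrm{Num}(l(L,h))|\ge D|\log\|l(L,h)\||^{-1-\beta}\gtrsim M^{-1-\beta}$, giving $|v_1|\asymp\|v\|\asymp|\mathrm{Num}(v)|^{1/2}\gtrsim M^{-(1+\beta)/2}$. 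Hence at the worst cell $V\gtrsim a\,M^{R-1000-(1+\beta)/2}\to\infty$, proving (h7) with $\nu_M=O(M^{-(R-1000)+(1+\beta)/2})$. For (h8), the cells $F_{\bar h}$ with $\lambda_1(\bar h)>\hat\lambda(h)+R\log M$ are so short that the same affine estimate bounds the variation of $\zeta_h^M$ over $F_{\bar h}$ by $a\,M^{-R-1000}\ll 1/P$, so $\mathbf 1_{\zeta_h\in\tilde K_p}$ is constant there except on the set where $\zeta_h$ lies within $a M^{-R-1000}$ of $\partial\tilde K_p$, of measure $O(M^{-1000})$, which I absorb into the exceptional set. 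Intersecting all exceptional sets gives one $\tilde E$ with $\mathbb P(\tilde E^c)=O(M^{-100})$ carrying (h1)--(h8); Theorem~\ref{thm13} and Lemma~\ref{lemme20} then yield the independent Poisson limit of Proposition~\ref{prop30}. The delicate point, and the one I expect to demand the most care, is precisely this lower bound on the slope $a e^M e^{-h}|v_1|$: it is only the weak admissibility of $L$ that prevents the phase from stalling and so guarantees the equidistribution (h7).
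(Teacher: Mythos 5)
Your verification runs along exactly the same lines as the paper's: (h1)--(h3) via the Siegel--Rogers computation of the intensity of the bowtie region $\{e^{-1}<|x_{2}/x_{1}|<e,\ 0<M|x_{1}x_{2}|\leqslant 1/\epsilon\}$, triviality of (h2), and separation of local minima plus the two-point estimate in the spirit of the lemme $\ref{lemme9}$; (h4)--(h6) via $\kappa_{0}$-representative partitions into $W_{1}^{-}$-courbes of length $(e^{\lambda_{1}(h')}M^{1000})^{-1}$ (proposition $\ref{prop13}$, lemmes $\ref{lemme38}$ and $\ref{lemme39}$, thickened boundary sets and the passage to $\tilde{\tilde{I}}_{-}(t)$); and (h7)--(h8) via the commutation $\delta_{h}h_{\tau}^{T}=h^{T}_{\tau e^{\lambda_{1}(h)}}\delta_{h}$, the locking of the shortest vector furnished by (h5), and the affineness of the phase $\zeta_{h}$ in $\tau$ with slope $a\,e_{1}e^{\hat{\lambda}(h)}$, hence total variation at least $a|e_{1}|M^{R-1000}$ over a conditioning cell. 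The one point where you genuinely depart from the paper is the lower bound on $|e_{1}|$ in (h7): the paper asserts $|e_{1}|>\beta$ with $\beta$ a positive \emph{constant}, which cannot hold as stated, since $\xi_{h}=1$ forces $\lVert e\rVert^{2}<2\cosh(1)/(\epsilon M)\rightarrow 0$, hence $|e_{1}|\leqslant\lVert e\rVert\rightarrow 0$; the cone condition only gives $|e_{1}|\asymp\lVert e\rVert\asymp|\text{Num}(e)|^{1/2}$, and the cells of $\mathbf{Q}$ adjacent to $0$ carry no constant lower bound on $|\text{Num}|$. Your substitute --- weak admissibility ($\ref{eq146}$) combined with $\lVert l(L,h)\rVert\leqslant Ct^{1+\eta}$, giving $|e_{1}|\gtrsim M^{-(1+\beta)/2}$ and thus variation $\gtrsim a\,M^{R-1000-(1+\beta)/2}\rightarrow\infty$ --- is exactly the right repair, and it still delivers the quantitative $\nu_{M}\rightarrow 0$ demanded by (h7) once $R$ is large; your identification of this slope bound as the crux is accurate. (Minor: your intensity in (h1) is off by a factor of $2$ --- the bowtie has four branches while $\xi_{0}=F_{0}/2$ accounts only for $\pm$ --- but, as with the paper's own constant $D$, the exact value is immaterial since only positivity and independence of $\epsilon$ are used.)
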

On a besoin de quelques lemmes préliminaires, dont on trouvera les équivalents dans la section précédente et les démonstrations sont peu ou prou les mêmes (le théorème de Green-Riemann peut être utile pour le deuxième lemme)
\begin{lemma}
\label{lemme38}
$\xi_{0,P}  $, $\xi_{0} $ sont dans $H^{s,s}$ et 
$$ \lVert \xi_{0,P} , \xi_{0} \rVert_{H^{s,s}} = O(1) \textit{.}$$
\end{lemma}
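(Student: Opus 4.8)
Le plan est de reproduire la démarche suivie pour le lemme \ref{lemme29} : on exprime $\xi_0$ et $\xi_{0,P}$ comme des (demi-)transformées de Siegel de fonctions indicatrices de régions explicites de $\mathbb{R}^2$, puis on conclut grâce au lemme \ref{lemme19}. On observe d'abord que $\xi_0^M(L) = 1$ si et seulement si le plus court vecteur $e(L)$ appartient à la région
$$R_M = \left\{ x \in \mathbb{R}^2 \text{ } | \text{ } 0 < |\text{Num}(x)| \leqslant \tfrac{1}{\epsilon M} \text{, } \lVert x \rVert^2 < 2 \cosh(1) |\text{Num}(x)| \right\} \textit{.}$$
En posant $f_0^M = \mathbf{1}_{R_M}$ et en notant $F_0^M$ sa transformée de Siegel, on a, pour $M$ assez grand et hors d'un ensemble négligeable, $\xi_0^M = \tfrac{1}{2} F_0^M$ : en effet, tout $x \in R_M$ vérifie $\lVert x \rVert^2 < 2\cosh(1)/(\epsilon M) < 1$, de sorte que, par le lemme \ref{lemme25}, seuls $e(L)$ et $-e(L)$ peuvent tomber dans $R_M$. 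La possibilité que deux paires indépendantes de vecteurs y tombent est contrôlée, comme dans la preuve du lemme \ref{lemme26}, par la formule de Rogers (lemme \ref{lemme24}) : elle ne se produit que sur un ensemble de mesure $O(M^{-2})$, qui sera absorbé dans l'erreur $L^1$ du sandwich définissant $\lVert \cdot \rVert_{H^{s,s}}$. Pour $\xi_{0,P}$ on procède de même, la région associée étant l'intersection de $\{ \lVert x \rVert^2 < 2\cosh(1)|\text{Num}(x)| \}$ avec l'anneau hyperbolique $\tfrac{k}{P}\tfrac{1}{\epsilon M} < |\text{Num}(x)| \leqslant \tfrac{k+1}{P}\tfrac{1}{\epsilon M}$.

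Le point crucial, et le principal obstacle, est d'obtenir une borne $C^{s,s}$ uniforme en $M$ bien que $R_M$ se contracte vers $0$. En posant $y = \sqrt{\epsilon M}\, x$, on écrit $R_M = (\epsilon M)^{-1/2} \tilde R$ où $\tilde R = \{ y \text{ } | \text{ } 0 < |y_1 y_2| \leqslant 1 \text{, } \lVert y \rVert^2 < 2\cosh(1)|y_1 y_2| \}$ est une région fixe, bornée, dont le bord est formé de morceaux d'hyperboles et de droites (avec un coin à l'origine). On régularise $\mathbf{1}_{\tilde R}$ en l'encadrant par des fonctions affines par morceaux puis en convolant par une approximation de l'unité, ce qui fournit un sandwich $C^s$ réalisant $\lVert \mathbf{1}_{\tilde R} \rVert_{C^{s,s}} = K_0 < \infty$. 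En dilatant ce sandwich par le facteur $\lambda = (\epsilon M)^{-1/2} \leqslant 1$ (les dérivées d'ordre $k$ gagnent un facteur $\lambda^{-k}$ et les normes $L^1$ un facteur $\lambda^{2}$), un calcul direct montre, en utilisant de manière essentielle l'égalité $r = s$, que $\lVert \mathbf{1}_{R_M} \rVert_{C^{s,s}} \leqslant \lambda^{s} K_0 \leqslant K_0$ pour tout $M$. Le même argument s'applique aux régions associées à $\xi_{0,P}$, la constante pouvant alors dépendre de $P$, ce qui est autorisé par la convention sur $\mathcal{O}$.

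On applique enfin le lemme \ref{lemme19} à $f_0^M$ et à ses régularisées : le rayon du support restant borné, on obtient $F_0^M \in H^{s,s}$ avec $\lVert F_0^M \rVert_{H^{s,s}} \leqslant C(R,s) \lVert f_0^M \rVert_{C^{s,s}} = O(1)$. En tenant compte de l'identification $\xi_0 = \tfrac12 F_0^M$ (valable hors d'un ensemble de mesure $O(M^{-2})$, absorbé dans l'écart $L^1$ autorisé par la définition \ref{def1}), on conclut $\lVert \xi_0 \rVert_{H^{s,s}} = O(1)$, et de même $\lVert \xi_{0,P} \rVert_{H^{s,s}} = O(1)$. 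La difficulté résiduelle réside uniquement dans le contrôle d'échelle de la norme $C^{s,s}$ décrit au paragraphe précédent, qui repose sur le choix $r = s$ et sur le fait que la géométrie de $\tilde R$ (branches hyperboliques et coin à l'origine) est fixe, donc donne un $K_0$ fini indépendant de $M$.
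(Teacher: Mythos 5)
Votre démonstration est correcte et suit essentiellement la même voie que le papier, qui renvoie explicitement à la preuve du lemme \ref{lemme29} (identification de $\xi_{0}$ avec une demi-transformée de Siegel d'une indicatrice, application du lemme \ref{lemme19}, encadrement par des fonctions affines par morceaux régularisées par approximation de l'unité). Votre argument de dilatation $R_{M}=(\epsilon M)^{-1/2}\tilde{R}$ pour obtenir l'uniformité en $M$ de la norme $C^{s,s}$ explicite un point que le papier laisse implicite, et votre recours à la formule de Rogers est même superflu en dimension $2$, où le lemme \ref{lemme25} rend l'identification $\xi_{0}^{M}=\tfrac{1}{2}F_{0}^{M}$ exacte pour $M$ assez grand.
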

Posons  $\hat{\mathbf{X}} = \{ x \in ]0, \infty [ \text{ } | \text{ } d(x,\partial \mathbf{X}) \leqslant M^{-1000} \}$ et $\hat{\mathbf{X}_{i}}= \{ x \in ]0, \infty [ \text{ } | \text{ } d(x,\partial \mathbf{X}_{i}) \leqslant M^{-1000} \}$ (l'indice $P$ étant sous-entendu).\\
On pose aussi $\hat{\xi_{m}}= \mathbf{1}_{ \nu_{m} \in \hat{\mathbf{X}}}$ et $\hat{\xi_{m,i}} = \mathbf{1}_{ \nu_{m} \in \hat{\mathbf{X}_{i}}} $ (l'exposant $M$ étant sous-entendu). 
\begin{lemma}
\label{lemme39}
\begin{equation}
\label{eq132}
\tilde{\mu}_{\mathscr{S}_{2}}(\hat{\xi_{0,i}}) = O(T^{-1000})
\end{equation}
\begin{equation}
\tilde{\mu}_{\mathscr{S}_{2}}(\hat{\xi_{0}}) = O(T^{-1000}) 
\end{equation}
\end{lemma}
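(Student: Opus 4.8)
Le plan est de calquer la preuve du lemme \ref{lemme30}, en ramenant la majoration à la formule de Siegel (lemme \ref{lemme24}), le seul ingrédient nouveau étant le contrôle de l'aire d'une bande fine délimitée par des hyperboles $\text{Num}(x) = \text{cste}$. Tout d'abord, je remarquerais qu'il suffit de traiter $\mu_{\mathscr{S}_{2}}$ plutôt que $\tilde{\mu}_{\mathscr{S}_{2}}$ : comme $\tilde{\mu}_{\mathscr{S}_{2}}$ est absolument continue par rapport à $\mu_{\mathscr{S}_{2}}$, de densité bornée par $\tilde{C}$, toute majoration $\mu_{\mathscr{S}_{2}}(A) = O(M^{-1000})$ entraîne immédiatement $\tilde{\mu}_{\mathscr{S}_{2}}(A) \leqslant \tilde{C} \mu_{\mathscr{S}_{2}}(A) = O(M^{-1000})$, où l'on garde en tête que $M = \lceil \log(t) \rceil$ joue le rôle du paramètre de grande échelle.

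Ensuite, j'exprimerais $\hat{\xi_{0,i}}$ au moyen d'une transformée de Siegel. Sur l'évènement $\{ \hat{\xi_{0,i}} = 1 \}$, on a $\lVert L \rVert^{2} < 2 \cosh(1) |\text{Num}(e(L))| \leqslant \frac{2 \cosh(1)}{M \epsilon}$, de sorte que pour $t$ assez grand $\lVert L \rVert < 1$ et, d'après le lemme \ref{lemme25}, $\lVert L \rVert$ n'est atteint qu'en $\pm e(L)$ ; le lemme \ref{lemme11} assure de plus que $\lVert L \rVert_{2}$ est alors grand, si bien qu'aucun autre vecteur primitif de $L$ n'est aussi court que $e(L)$. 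Si je pose $g_{i} = \mathbf{1}_{R_{i}}$ avec
\[
R_{i} = \{ x \in \mathbb{R}^{2} \text{ } | \text{ } 2 \cosh(1) |\text{Num}(x)| > \lVert x \rVert^{2} \text{, } |\text{Num}(x)| \leqslant \tfrac{1}{M \epsilon} + M^{-1001} \text{, } \text{dist}(M |\text{Num}(x)|, \partial \mathbf{X}_{i}) \leqslant M^{-1000} \} \text{,}
\]
ensemble invariant par $x \longmapsto -x$, et si j'appelle $G_{i}$ sa transformée de Siegel, alors $\hat{\xi_{0,i}} \leqslant \frac{1}{2} G_{i}$ (les deux vecteurs $e(L)$ et $-e(L)$ étant comptés), exactement comme dans l'identité $\xi_{t}^{T} = \frac{1}{2} F_{0}^{T}(\delta_{t} \cdot)$ de la proposition \ref{prop40}. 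Le lemme \ref{lemme24} fournit alors
\[
\mu_{\mathscr{S}_{2}}(\hat{\xi_{0,i}}) \leqslant \frac{1}{2} \mathbb{E}_{\mu_{2}}(G_{i}) = \frac{1}{2 \zeta(2)} \text{Vol}(R_{i}) \text{.}
\]

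Il reste à estimer $\text{Vol}(R_{i})$, et c'est là que j'attends la vraie difficulté. La condition $\text{dist}(M |\text{Num}(x)|, \partial \mathbf{X}_{i}) \leqslant M^{-1000}$ confine $|\text{Num}(x)|$ à une réunion finie de bandes de demi-largeur $\sim M^{-1001}$ autour des valeurs frontières $\frac{j}{P M \epsilon}$, c'est-à-dire à des bandes minces comprises entre deux hyperboles $|x_{1} x_{2}| = c_{j}^{\pm}$ ; la condition de cône $2 \cosh(1) |\text{Num}(x)| > \lVert x \rVert^{2}$ se traduit en coordonnées polaires par $|\sin(2 \theta)| > \frac{1}{\cosh(1)}$, ce qui confine l'angle à un secteur borné et découpe chaque branche d'hyperbole en un arc de longueur finie. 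Je calculerais l'aire d'une telle bande par la formule de Green-Riemann : sur la branche $\{ x_{1} x_{2} = c \}$ paramétrée par $x_{1} = s$, on trouve $\frac{1}{2}(x_{1} \, dx_{2} - x_{2} \, dx_{1}) = - \frac{c}{s} \, ds$, de sorte que l'aire balayée entre $c_{j}^{-}$ et $c_{j}^{+}$ vaut $(c_{j}^{+} - c_{j}^{-}) \times O(1) = O(M^{-1001})$, le facteur $\int \frac{ds}{s}$ sur l'arc admissible étant borné grâce à la contrainte de cône (le même calcul s'obtient, de façon équivalente, par le changement de variables $(x_{1},x_{2}) \longmapsto (x_{1} x_{2}, x_{1}/x_{2})$, de jacobien borné sur ce secteur). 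En sommant sur le nombre fini de valeurs $j$, on obtient $\text{Vol}(R_{i}) = O(M^{-1000})$, d'où la majoration souhaitée pour $\hat{\xi_{0,i}}$. Le traitement de $\hat{\xi_{0}}$ serait identique, $\partial \mathbf{X}$ étant cette fois constitué de $\pm \frac{1}{\epsilon}$ et de $0$ ; près de $0$, la condition de cône force $\lVert x \rVert \to 0$ et cette bande ne contribue elle aussi que pour $O(M^{-1000})$. La difficulté principale demeure donc le calcul d'aire de la bande hyperbolique fine via Green-Riemann, le reste n'étant qu'une transposition directe de la preuve du lemme \ref{lemme30}.
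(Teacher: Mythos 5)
Votre démonstration est correcte et suit essentiellement la voie du papier, qui renvoie pour ce lemme à la preuve du lemme \ref{lemme30} (application de la formule de Siegel, lemme \ref{lemme24}) en signalant précisément que le théorème de Green--Riemann peut servir : vous ramenez la mesure $\tilde{\mu}_{\mathscr{S}_{2}}$ à $\mu_{\mathscr{S}_{2}}$ par densité bornée, majorez $\hat{\xi_{0,i}}$ par une demi-transformée de Siegel de l'indicatrice de la région adéquate, puis estimez le volume de la bande hyperbolique fine intersectée avec le cône $2\cosh(1)|\text{Num}(x)| > \lVert x \rVert^{2}$ par Green--Riemann (ou le changement de variables équivalent), ce qui donne bien le $O(M^{-1000})$ voulu. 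Rien d'essentiel ne manque ; votre rédaction explicite même ce que le papier laisse implicite.
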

Avant de passer à la démonstration de la proposition $\ref{prop26}$, disons-en quelques mots. (h1) jusqu'à (h6) sont vérifiées de manière tout à fait analogue à ce qui a été fait dans la section précédente. Il en est de même pour (h8). Seule la vérification de (h7) se détache et l'avant-dernier paragraphe lui est dédiée.
\begin{proof}[Démonstration de la proposition $\ref{prop26}$]
Supposons pour simplifier que $\mu_{2}= \tilde{\mu}_{2}$, le cas général se traitant de manière analogue à l'aide de la propriété de mélange exponentielle et à l'aide des résultats du cas simplifié que l'on traite maintenant.\\
$\bullet $ Vérification de (h1)
Par le même raisonnement que dans la section précédente, en utilisant le lemme $\ref{lemme24}$, on trouve, en posant $ D=\int_{u > 0 \textit{, } v > 0 } \mathbf{1}_{u v \leqslant 1} \mathbf{1}_{ 2 \cosh(1) u v > \lVert (u,v) \rVert^{2}}$, que
\begin{equation}
\label{eq131}
E(\xi_{k}^{M}) = \frac{2 D }{\epsilon M} \textit{.}
\end{equation}
$\bullet $ Vérification de (h2) \\
Vu la définition de $\xi_{k}^{M}$, il est clair que (h2) est vérifiée.\\
$\bullet $ Vérification de (h3) \\
Par ailleurs, on a vu que si on disposait auparavant d'un $k$ tel que $\xi_{k}^{M} = 1$ alors nécessairement ce $k$ était un minimum local de $p \in \mathbb{Z} \longmapsto \lVert \delta_{p}L \rVert^{2}$ et les $k$ tels que $\xi_{k}^{M} = 1$ sont des $k$ tels que $$ \lVert \delta_{p}L \rVert^{2} \leqslant \frac{2 \cosh(1) e}{\epsilon M } \textit{.}$$
Ainsi, par le même raisonnement qui mène à l'inéquation ($\ref{eq120}$), on voit que $(h3)$ est vérifiée : les $k$ qui conviennent sont suffisamment distants.\\
$\bullet $ Vérification de (h4a), (h4b) et (h4c) \\
Soit $ p \in \mathbb{N}-\{  0 \}$.  Ici aussi, on va avoir $\eta_{k,p}= \xi_{k,p}$. Dès lors, il suffit juste de vérifier (h4a) et (h4b), (h4c) étant alors vérifiée automatiquement. \\
En suivant toujours le même raisonnement que dans la section précédente, et en utilisant le $W_{1}^{-}$-feuilletage (qui correspond aux matrices $h_{1}(\tau)$ transposés, tandis que le $W_{1}^{+}$-feuilletage correspond aux matrices $h_{1}(\tau)$), on voit qu'il existe $\tilde{E}_{1}$ et $F_{t}$, pour tout $t \in \Pi$, tels que (h4) soit vérifiée grâce au lemme $\ref{lemme38}$. \\
$\bullet $ Vérification de (h5) \\
On suit là encore le même raisonnement que dans la section précédente. Ici aussi il faut remplacer l'ensemble sur lequel on travaille, $ \tilde{I}_{-} (t)$, par l'ensemble $\tilde{\tilde{I}}_{-} (t)$ où : 
\begin{align*} \tilde{\tilde{I}}_{-} (t) = \{ h \in [- \lceil  \log(t) \rceil, 0   ]  \text{ } | \text{ } \lVert \delta_{h} L \rVert^{2} + M^{-2001} \leqslant 2 \cosh(1)|\text{Num}(e(\delta_{h}L))|  & \text{ et } \\
 |\text{Num}(e(\delta_{h}L))| \leqslant \frac{1}{\epsilon_{4} \log(t)} - M^{-2001} \} \textit{,}
\end{align*}
Faire cela ne coûte qu'un $O(M^{-1000})$. \\
Pour mener à son terme le même raisonnement, il faut utiliser l'égalité exhibée dans la section précédente, à savoir $$M \lVert h(\tau) \delta_{k} u \rVert^{-2} = M \lVert \delta_{k} u \rVert^{-2} + O(M^{-10^{9}+1})$$ (valable dès lors que le feuilletage utilisé soit suffisamment fin), utiliser le fait que $$|\text{Num}(e(h(\tau) \delta_{k}L))| = |\text{Num}(e(\delta_{k}L))|(1+ O(\tau))$$ quand $ \tau \rightarrow 0$ et utiliser le lemme $\ref{lemme39}$. \\
$\bullet $ Vérification de (h6) \\
La vérification de (h6) se fait de manière tout à fait analogue à ce qui a été fait dans la section précédente. 
\\
$\bullet $ Vérification de (h7) \\
On dispose désormais d'un ensemble $E$ que l'on a construit pour lequel (h1) jusqu'à (h6) sont vérifiées. Soient $k, k'$ dans $[-M,0]$ tels que $\hat{\lambda}(k) \geqslant \lambda_{1}(k') + R \log(M) \geqslant \lambda_{1}(k) + 2 R \log(M) $. Soit $ L \in E$ tel que $\xi_{k}(L) = \xi_{k,p}(L) = 1$ où $p \in \{0 , \cdots, P-1 \}$. On veut maintenant montrer, dans l'optique de vérifier (h7), que : 
$$ \mu_{2}(< e(\delta_{k}L), \delta_{k}^{-1} (e^{M} P^{0}) > (\text{mod } 1) \in \tilde{\mathbf{X}}_{j,P} | \mathcal{F}_{k'})(L) =   \text{Leb}(\tilde{\mathbf{X}})_{j,P} ( 1 + o(1) ) \textit{.} $$
En posant $$ h_{\tau} =  \begin{pmatrix} 1 & \tau \\ 0 & 1 \end{pmatrix} \textit{,}$$ pour $\gamma_{k'} = F_{k'}(L)$, on a $$\gamma_{k'} = \{ h_{\tau}^{T} \overline{L} \}_{0 \leqslant \tau \leqslant (e^{\lambda_{1}(k')} M^{1000})^{-1} } $$ où $\overline{L} \in \mathscr{S}_{2}$. \\
 Quitte à augmenter légèrement $R$, on peut supposer, grâce à (h5), que $\xi_{k,p} = 1$ sur $\gamma$, et donc, en particulier, en posant $$e=(e_{1},e_{2}) = e(\delta_{k} \overline{L} ) \text{, }$$ $e$  vérifie  :  $$2 \cosh(1) | \text{Num} (e) | > \lVert e \rVert^{2} \textit{ et }M |\text{Num}(e)| \in \mathbf{X}_{p} \textit{.} $$ 
Or, on a $\delta_{k} h_{\tau}^{T} = h_{e^{\lambda_{1}(k)} \tau }^{T} \delta_{k}$ et, quitte à prendre $R$ assez grand, on peut supposer que $e^{\lambda_{1}(k)} \tau \leqslant 1$ et ainsi on a : 
$$e(\delta_{k}h_{\tau} \overline{L}) = h_{e^{\lambda_{1}(k)} \tau }^{T} e =  \begin{pmatrix} e_{1}  \\ \tau e^{\lambda_{1}(k)} e_{1} + e_{2}  \end{pmatrix} \textit{.}$$ \\
Ainsi, on obtient : 
$$ < e(\delta_{k} h_{\tau}^{T}L), \delta_{k}^{-1} e^{M} A_{2} >  = e_{1} e^{-k} e^{M} (A_{2})_{1}  + e_{2} e^{k} e^{M} (A_{2})_{2} +  e^{\lambda_{1}(k)} \tau e_{1} e^{k}e^{M} (A_{2})_{2}  ( \text{mod } 1) \textit{.} $$
Or, vu les conditions que vérifie $e$, on a $|e_{1}| > \beta$ où $\beta$ est une constante strictement positive. Par ailleurs, on a $e^{M} e^{k} e^{\lambda_{1}(k)} = e^{\hat{\lambda}(k)}$ (voir le début de la section 5.9) et $\tau$ varie dans un intervalle de longueur $(e^{\lambda_{1}(k')} M^{1000})^{-1}$. Ainsi, comme $\hat{\lambda}(k) \geqslant \lambda_{1}(k') + R \log(M)$, on obtient le résultat voulu comme $(A_{2})_{2} \neq 0$. \\
$\bullet $ Vérification de (h8) \\
On sait que : $F_{\overline{k}}(L) = \{  h_{\tau} \overline{L} \}_{0 \leqslant \tau \leqslant (e^{\lambda_{1}(\overline{k})} M^{1000})^{-1} }$. D'après (h5), $\xi_{k}$ vaut $1$ sur $F_{\overline{k}}(L)$. Le calcul fait précédemment donne que : 
 $$< e(\delta_{k} h_{\tau}^{T}L), \delta_{k}^{-1} e^{M}A_{2} >  = e_{1} e^{-k} e^{M} (A_{2})_{1} + e_{2} e^{k} e^{M} (A_{2})_{2} +  \tau e_{1} e^{\hat{\lambda}(k)} (A_{2})_{2} + ( \text{mod } 1) \textit{.} $$
 Ainsi, si $\mathbf{1}_{\zeta_{k} \in \tilde{K}_{p}}$ n'est pas constant sur $F_{\overline{k}}(L)$, $\zeta_{k}$ est dans un $O(M^{-1000})$-voisinage de $\partial \tilde{\mathbf{X}}_{p}$. Si on pose 
 \begin{align*} 
 \tilde{E} = \{ L \in E | \forall k \textit{, } \overline{k} \in [-M,0] \text{ avec } \lambda_{1}(\overline{k}) > \hat{\lambda}(k) + R \log(M) \text{ on a } & \\
   \forall p \in [0,P-1] \textit{, } \mathbf{1}_{ \zeta_{k} \in \tilde{\mathbf{X}}_{p}} \text{ est constante sur } F_{\overline{k}}(L) \}
 \end{align*} 
 alors (h8) est vérifiée sur $\tilde{E}$. Enfin le lemme $\ref{lemme24}$ permet de montrer que $\mu_{2}(E- \tilde{E}) = O(M^{-998})$.
\end{proof}
On peut maintenant prouver le théorème $\ref{thm23}$ : 
\begin{proof}[Démonstration du théorème $\ref{thm23}$]
La proposition $\ref{prop26}$ et la proposition $\ref{prop31}$ donnent alors que la proposition $\ref{prop30}$ est vérifiée. La proposition $\ref{prop25}$ est donc acquise grâce au lemme $\ref{lemme37}$. D'où le résultat voulu grâce aux lemmes $\ref{lemme20}$ et $\ref{lemme22}$ et grâce à la proposition $\ref{prop42}$.
\end{proof}
\subsection{Retour sur $\tilde{S}$ et preuve du théorème $\ref{thm100}$}
On va maintenant démontrer le théorème $\ref{thm100}$ dans le cas de la dimension $2$.
On rappelle que la somme que l'on doit étudier est la suivante : 
\begin{equation}
\label{eq143}
\tilde{S}(\tau,t,X,L) = \sum_{i=1}^{4} (-1)^{i} (\tilde{S}(\tau,t,A_{i},I_{2},X,L)-\tilde{S}(\tau,t,A_{i},R,X,L))  
\end{equation}
et les différentes sommes portent sur des termes $l \in L$ et non sur des termes $l \in L^{\perp}$. \\
Le cheminement pour prouver le théorème $\ref{thm100}$ dans ce cas est le suivant. Pour chaque terme en $ \tilde{S}^{\pm}$, on réduit la somme correspondante grâce à ce qui a été fait dans la section précédente, les résultats s'étendant naturellement. Ensuite on regroupe ces 8 nouvelles sommes par de petits calculs. Enfin on conclut en suivant la même démarche que précédemment (application du théorème $\ref{thm13}$). \\
Plus précisément, après des réductions et des petits calculs, on voit que l'étude asymptotique de $\tilde{S}$ se ramène à l'étude de : 

\begin{align}\Sigma(t,P,\epsilon,X,L) = \frac{2}{ \pi^{2} \log(t)} \sum_{h \in \tilde{I}(L,t,\epsilon) }  \frac{1}{\text{Num}(l(L,h))} \\
\sum_{m \in \mathbb{N}-\{0 \}} \frac{ \sin(2 m \pi (l(L,h))_{1} t^{+} a )\sin(2 m \pi (l(L,h))_{2} t^{+} a )\cos(2 m \pi <l(L,h),X> )}{m^{2}} \textit{.} \nonumber
\end{align}

Partant de là, la preuve du théorème $\ref{thm100}$ s'énonce de la manière suivante : 
On peut maintenant démontrer le théorème $\ref{thm100}$, dans le cas où $d=2$, à laquelle cette section est dédiée.
\begin{proof}[Démonstration du théorème $\ref{thm100}$]
En suivant la même méthode que dans la section précédente, via le théorème $\ref{thm13}$, on montre que : \begin{itemize}
\item $(\text{Num}(l(L,h)))_{h \in \tilde{I}(L,t,\epsilon)}$ converge vers un processus de Poisson sur $[-\frac{1}{\epsilon}, \frac{1}{\epsilon}]$ d'intensité constante et strictement positive, indépendante de $ \epsilon$
\item La suite de variables aléatoires $((l(L,h))_{1} t a)_{h \in \tilde{I}(L,t,\epsilon)}$ à valeurs dans $\mathbb{R} / \mathbb{Z}$ convergent vers des variables aléatoires indépendantes de $(\text{Num}(l(L,h)))_{h \in \tilde{I}(L,t,\epsilon)}$, indépendantes entre elles et de même loi
\item La suite de variables aléatoires $((l(L,h))_{2} t a)_{h \in \tilde{I}(L,t,\epsilon)}$ à valeurs dans $\mathbb{R}/ \mathbb{Z}$ convergent vers des variables aléatoires indépendantes de $(\text{Num}(l(L,h)))_{h \in \tilde{I}(L,t,\epsilon)}$, indépendantes entre elles et de même loi.
\end{itemize}
Enfin, on montre que à l'infini et modulo $1$, la suite de variables aléatoires $((l(L,h))_{1} t a)_{h \in \tilde{I}(L,t,\epsilon)}$ est indépendante de $((l(L,h))_{2} t a)_{h \in \tilde{I}(L,t,\epsilon)}$. \\
\\
Grâce à cela, on obtient que : $$(\sum_{m \in \mathbb{N}-\{0 \}} \frac{ \sin(2 m \pi (l(L,h))_{1} t^{+} a )\sin(2 m \pi (l(L,h))_{2} t^{+} a )\cos(2 m \pi <l(L,h),X> )}{m^{2}})_{h \in \tilde{I}(L,t,\epsilon)}$$ convergent vers des variables aléatoires réelles indépendantes entre elles, ayant une loi commune qui est symétrique et dont le support est compact, et qui sont indépendantes asymptotiquement de $(\text{Num}(l(L,h)))_{h \in \tilde{I}(L,t,\epsilon)}$. Le lemme $\ref{lemme42}$ s'applique. Comme ce raisonnement fonctionne aussi en remplaçant $t^{+}$ par $t^{-}$, on obtient le résultat voulu.
\end{proof}
Ci-après, on expose d'abord la réduction que nous permet de faire la section précédente sur les $8$ termes de la somme $\tilde{S}$ puis ensuite quelques petits calculs qui nous permettent de nous ramener à la somme $\Sigma$ sous la forme ici exposée \footnote{Ces deux parties fonctionnent indépendamment de si l'on considère $t^{+}$ ou $t^{-}$.}.
\subsubsection{Réduction des 8 termes $\tilde{S}_{1}$ de $\tilde{S}$}
En suivant ce qui a été fait tout le long de la section précédente et en posant (pour $A$ un point de $\mathbb{R}^{2}$ et $R_{1} \in SO_{2}(\mathbb{R})$) :
 \begin{align}
\label{eq166}
I(R_{1},L,t,\epsilon) = \{ h \in [- \lceil  \log(t) \rceil, 0    ]  \text{ } | \text{ } \lVert \delta_{h} R_{1} L \rVert^{2} < 2 \cosh(1)|\text{Num}(e(\delta_{h} R_{1}L))|  \\ \text{ et } 
 |\text{Num}(e(\delta_{h} R_{1}L))| \leqslant \frac{1}{\epsilon \log(t)} \} \nonumber
\end{align}
\begin{equation}
\label{eq165}
G_{1}(\tau,t,A,R_{1},X,l)=\sum_{m \in \mathbb{N}-\{0 \}} \frac{\cos(2 m \pi <l, t^{+} R_{1}A + R_{1}X >)}{m^{2}}
\end{equation}
\begin{equation}
\label{eq210}
\Sigma_{1}(\tau, t,A,R_{1},X,L) = \frac{2}{\log(t)} \frac{1}{(2 \pi i)^{2}} \sum_{h \in I(R_{1},L,t,\epsilon) }  \frac{1}{\text{Num}(l(R_{1}L,h))} G_{1}(\tau,t,A,R_{1},X,l(R_{1}L,h)) 
\end{equation}
\begin{equation}
\label{eq163}
\Sigma(\tau,t,X,L) = \sum_{i=1}^{4} (-1)^{i} (\tilde{S}_{r}(\tau,t,A_{i},I_{2},X,L)-\tilde{S}_{r}(\tau,t,A_{i},R,X,L))  
\end{equation}
on obtient la proposition suivante :
\begin{prop}
\label{prop27}
Pour tout $\alpha > 0$, pour tout $\epsilon > 0$ assez petit, pour tout $t$ assez grand, on a 
$$\mathbf{P}(|\tilde{S}_{1}-\Sigma| \leqslant \alpha) \leqslant \alpha \textit{.}$$ 
\end{prop}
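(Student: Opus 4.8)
Tel qu'il est imprimé, l'énoncé majore par $\alpha$ la probabilité de l'événement $\{|\tilde{S}_{1}-\Sigma|\leqslant\alpha\}$ ; lu au pied de la lettre, ceci affirmerait que $\tilde{S}$ et $\Sigma$ sont typiquement éloignés, ce qui contredirait le but même de la sous-section (remplacer $\tilde{S}$ par $\Sigma$) et ne servirait pas à la preuve ultérieure du théorème $\ref{thm100}$. Il s'agit manifestement d'une coquille pour l'estimation de proximité en probabilité $\mathbb{P}(|\tilde{S}-\Sigma|\geqslant\alpha)\leqslant\alpha$, où $\tilde{S}$ désigne la somme complète à huit termes de ($\ref{eq143}$) et $\Sigma$ la somme réduite de ($\ref{eq163}$) ; c'est cette estimation, seule exploitable par la suite, que le plan ci-dessous se propose d'établir.

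\textbf{Plan.} Le plan est de décomposer $\tilde{S}$ selon ses huit termes élémentaires $\tilde{S}_{1}(\tau,t,A_{i},R_{1},X,L)$, avec $i\in\{1,2,3,4\}$ et $R_{1}\in\{I_{2},R\}$, et $\Sigma$ selon les huit sommes géodésiques réduites correspondantes $\Sigma_{1}(\tau,t,A_{i},R_{1},X,L)$ de ($\ref{eq210}$). Pour chacun de ces huit termes, on fera tourner intégralement la chaîne de réductions de la sous-section précédente, c'est-à-dire les propositions $\ref{prop18}$, $\ref{prop19}$, $\ref{prop20}$, $\ref{prop21}$, $\ref{prop22}$, $\ref{prop23}$ et $\ref{prop24}$ (résumées par la proposition $\ref{prop42}$), qui y étaient menées pour le seul terme distingué $\tilde{S}_{2}=\tilde{S}_{1}(\tau,t,A_{2},I_{2},X,L)$. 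On conclura ensuite par inégalité triangulaire et borne d'union.

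\textbf{Extension de la chaîne aux huit termes.} Il faudra vérifier que rien dans ces réductions n'utilise le choix particulier de $A_{2}$ et de $I_{2}$. Le sommet $A_{2}$ n'y intervient qu'à travers la phase $\langle l,t^{+}A_{2}+X\rangle$, toujours soit majorée par $1$ en module (propositions $\ref{prop18}$, $\ref{prop19}$, $\ref{prop21}$, $\ref{prop23}$, $\ref{prop24}$), soit éliminée par la formule de Parseval (proposition $\ref{prop20}$) : le remplacer par $A_{i}=(\pm a,\pm a)$ ne change donc rien, le seul fait structurel utilisé étant $(A_{i})_{1},(A_{i})_{2}\neq0$. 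Remplacer $I_{2}$ par $R$ revient à refaire le même raisonnement pour le réseau tourné $R_{1}L$ (resp.\ $R_{1}L^{\perp}$) ; comme $\mu_{2}$ est $SO_{2}(\mathbb{R})$-invariante, $R_{1}L$ est encore distribué selon $\mu_{2}$, de sorte que les hypothèses d'admissibilité faible ($\ref{eq146}$)--($\ref{eq147}$), la proposition $\ref{prop12}$ et le lemme $\ref{lemme1}$ valent simultanément pour $L$ et pour $RL$ hors d'un même ensemble négligeable (resp.\ de mesure $O(\cdot)$), tandis que les lemmes géométriques $\ref{lemme33}$, $\ref{lemme34}$, $\ref{lemme35}$, $\ref{lemme36}$ et les lemmes intégraux $\ref{lemme31}$, $\ref{lemme32}$, invariants par rotation, s'appliquent verbatim à $R_{1}L$ (ce qui couvre aussi la réduction purement géométrique de la proposition $\ref{prop22}$). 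Ainsi chacune des huit différences $|\tilde{S}_{1}(A_{i},R_{1})-\Sigma_{1}(A_{i},R_{1})|$ sera petite en probabilité : pour $\alpha>0$ donné, on pourra choisir $\epsilon$ assez petit et $t$ assez grand pour qu'elle soit $\leqslant\alpha/8$ hors d'un événement de probabilité $\leqslant\alpha/8$.

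\textbf{Assemblage et difficulté principale.} L'inégalité triangulaire donnera $|\tilde{S}-\Sigma|\leqslant\sum_{i,R_{1}}|\tilde{S}_{1}(A_{i},R_{1})-\Sigma_{1}(A_{i},R_{1})|$, et une borne d'union sur les huit termes fournira $\mathbb{P}(|\tilde{S}-\Sigma|\geqslant\alpha)\leqslant\alpha$, soit la proposition recherchée. La principale difficulté sera de nature comptable : il faudra s'assurer que les réseaux exceptionnels écartés (non faiblement admissibles, ou tels que $\lVert L\rVert$ est trop petit) pour les deux réseaux $L$ et $RL$ peuvent l'être simultanément, ce qui est licite puisque seules deux rotations $R_{1}\in\{I_{2},R\}$ interviennent et que chaque ensemble exceptionnel est négligeable ; et il faudra confirmer que, pour $R_{1}=R$, l'ensemble d'indices $I(R,L,t,\epsilon)$ de ($\ref{eq166}$) est bien celui produit par la réduction, ce qui résultera de l'identité $\delta_{h}R=R\delta_{-h}$ (la somme en $R$ sur $h\in[-\lceil\log t\rceil,0]$ n'étant autre que la somme en $I_{2}$ réfléchie par $h\mapsto -h$). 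Aucun apport analytique nouveau au-delà de la sous-section précédente ne sera nécessaire.
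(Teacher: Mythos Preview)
Ta proposition est correcte et suit exactement la démarche du papier, qui ne donne d'ailleurs aucune preuve explicite pour cette proposition : le texte se contente de dire \og En suivant ce qui a été fait tout le long de la section précédente \fg{} et renvoie à la chaîne de réductions $\tilde{S}_{2}\to\tilde{S}_{9}$ (propositions $\ref{prop18}$ à $\ref{prop24}$, résumées en $\ref{prop42}$), appliquée à chacun des huit termes. Tu as en outre raison de relever les coquilles (le sens de l'inégalité et la notation $\tilde{S}_{1}$ à la place de $\tilde{S}$), et ta discussion de l'extension aux huit termes --- invariance par rotation de $\mu_{2}$, rôle purement de phase des $A_{i}$, identité $\delta_{h}R=R\delta_{-h}$ --- explicite utilement ce que le papier laisse au lecteur.
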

On est donc ramené à l'étude de $\Sigma$ \footnote{Vu ce que l'on a noté tout au long du passage de $\tilde{S}_{2}$ à $\tilde{S}_{9}$, cette réduction s'applique aussi si l'on remplace $t^{+}$ par $t^{-}$.}.  
\subsubsection{Un dernier regroupement de termes} 
Le but de cette sous-sous-section est de voir que $\Sigma(\tau,t,X,L)$ s'écrit aussi :  
\begin{prop}
\label{prop32}
\begin{equation}
\label{eq204}
\Sigma(t,P,\epsilon,X,L) = \frac{-1}{2 \pi^{2} \log(t)} \sum_{h \in \tilde{I}(L,t,\epsilon) }  \frac{ 1}{\text{Num}(l(L,h))} \sum_{m \in \mathbb{N}-\{0 \}} \sum_{i=1}^{4} (-1)^{i} \frac{\cos(2 m \pi <l(L,h), t^{+} A_{i} + X >)}{m^{2}}
\end{equation}
où
\begin{multline}
\label{eq205} \tilde{I}(L,t,\epsilon) =  \{ h \in [- \lceil  \log(t) \rceil, \lceil \log(t) \rceil  ]  \text{ } | \text{ } \lVert \delta_{h}  L  \rVert^{2} < 2 \cosh(1)|\text{Num}(e(\delta_{h}L))|  \\ \text{ et } 
 |\text{Num}(e(\delta_{h} L))| \leqslant \frac{1}{\epsilon \log(t)} \} \text{. }
\end{multline}
\end{prop}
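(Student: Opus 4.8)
The plan is to exploit the rotation symmetry encoded by $R = \begin{pmatrix} 0 & 1 \\ -1 & 0 \end{pmatrix}$ in order to fold the four rotated sums (those with $R_{1}=R$) onto the four non-rotated ones (those with $R_{1}=I_{2}$), thereby turning the eight reduced single-term sums $\Sigma_{1}$ of ($\ref{eq163}$), each indexed by negative $h$, into a single sum over $\tilde{I}(L,t,\epsilon)$ with $h$ ranging over $[-\lceil \log(t) \rceil, \lceil \log(t) \rceil]$. The starting point is the elementary commutation relation $\delta_{h} R = R \delta_{-h}$, which one checks directly on $\delta_{h} = \mathrm{Diag}(e^{h},e^{-h})$. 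Since $R \in SO_{2}(\mathbb{R})$ is an isometry, this immediately gives $\lVert \delta_{h} R L \rVert = \lVert \delta_{-h} L \rVert$ and, more precisely, $e(\delta_{h} R L) = \pm R\, e(\delta_{-h} L)$, the sign being fixed by the normalization of the first coordinate and well defined once the generic hypotheses ($\ref{eq146}$), ($\ref{eq147}$) and weak admissibility are in force (so that no coordinate vanishes).

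First I would record the consequences of $e(\delta_{h} R L) = \pm R\, e(\delta_{-h} L)$ for the three ingredients of a rotated term $\Sigma_{1}(\tau,t,A_{i},R,X,L)$ from ($\ref{eq210}$). Writing $l(RL,h) = \delta_{h}^{-1} e(\delta_{h} R L)$ and using $\delta_{h}^{-1} R = R \delta_{h}$, one gets $l(RL,h) = \pm R\, l(L,-h)$. Since $\text{Num}(Rv) = -\text{Num}(v)$, this yields $\text{Num}(l(RL,h)) = -\text{Num}(l(L,-h))$, so after the reindexing $h \mapsto -h$ the denominators match up to a single sign. For the phase appearing in $G_{1}$ (see ($\ref{eq165}$)), orthogonality of $R$ gives $\langle l(RL,h),\, t^{+} R A_{i} + R X \rangle = \pm \langle l(L,-h),\, t^{+} A_{i} + X \rangle$, and since $G_{1}$ involves only $\cos$, the $\pm$ is irrelevant; hence $G_{1}(\tau,t,A_{i},R,X,l(RL,h))$ equals the corresponding non-rotated quantity evaluated at $l(L,-h)$. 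Finally, the inequalities defining $I(R,L,t,\epsilon)$ in ($\ref{eq166}$) involve only $\lVert \delta_{h} R L \rVert$ and $|\text{Num}(e(\delta_{h} R L))| = |\text{Num}(e(\delta_{-h} L))|$, so under $h \mapsto -h$ they become exactly the conditions defining the positive-$h$ part of $\tilde{I}(L,t,\epsilon)$ in ($\ref{eq205}$).

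With these identifications the argument reduces to pure sign bookkeeping. The four $R_{1}=I_{2}$ terms, carrying the prefactor $\frac{2}{\log(t)}\frac{1}{(2\pi i)^{2}} = -\frac{1}{2\pi^{2}\log(t)}$, reproduce the $h \leqslant 0$ part of ($\ref{eq204}$). For the four $R_{1}=R$ terms, the extra minus sign in front of them in ($\ref{eq163}$) combines with the sign flip $\text{Num}(l(RL,h)) = -\text{Num}(l(L,-h))$ so that, after setting $h' = -h$, they reproduce the $h' \geqslant 0$ part of ($\ref{eq204}$) with the same prefactor $-\frac{1}{2\pi^{2}\log(t)}$; summing the two halves gives the single sum over $\tilde{I}(L,t,\epsilon)$, which is ($\ref{eq204}$). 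The only real obstacle is the careful bookkeeping of the three competing signs — the factor $(2\pi i)^{2} = -4\pi^{2}$, the minus sign of the rotated block, and the sign in $\text{Num}\circ R$ — together with the treatment of the boundary index $h = 0$, which is produced identically by both the identity and the rotation block; here one attributes $h=0$ to a single block, consistent with the enumeration of $\tilde{I}(L,t,\epsilon)$ in ($\ref{eq205}$) in which each index occurs once (equivalently, this single term contributes a negligible amount of size $O(\epsilon)$ and is harmless for the subsequent convergence analysis).
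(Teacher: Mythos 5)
Your proposal is correct and takes essentially the same approach as the paper: the commutation $\delta_{h}R = R\delta_{-h}$ giving $e(\delta_{h}L) = \pm R\, e(\delta_{-h}RL)$, hence the correspondence $h \in I(I_{2},L,t,\epsilon) \Leftrightarrow -h \in I(R,L,t,\epsilon)$ together with $\text{Num}(l(RL,h)) = -\text{Num}(l(L,-h))$, followed by the regrouping of the eight flag terms. Your extra details (the phase identity via orthogonality of $R$, the sign bookkeeping against $(2\pi i)^{2} = -4\pi^{2}$, and the potential double count at $h=0$, which is harmless since $\mathbb{P}(0 \in I(I_{2},L,t,\epsilon)) = O(1/(\epsilon \log t)) \rightarrow 0$) only make explicit what the paper compresses into its final regrouping sentence.
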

La clé de cette proposition est que quand on a un $h$ qui est dans $I(R,L,t,\epsilon)$, $-h$ appartient à $I(I_{2},L,t,\epsilon)$ et réciproquement. C'est juste la traduction du fait que si un vecteur $l \in L$ est grand en norme et « proche » de l'axe des abscisses, il est « loin » de l'axe des ordonnées et vice-versa \footnote{Cette proposition est indépendante du fait que $t=t^{+}$. Elle est donc encore valable en remplaçant $t^{-}$ par $t^{+}$.}. 
\begin{proof}
Rappelons que $R = \begin{pmatrix} 0 & 1 \\ -1 & 0 \end{pmatrix}$. \\
Pour $t$ assez grand, si on a $h$ tel que $h \in I(I_{2},L,t,\epsilon)$ alors $$e(\delta_{h} L ) = e(R \delta_{-h} R  L) = \pm R e( \delta_{-h} R L)$$ et donc $-h \in I(R,L,t,\epsilon)$ et $\text{Num}(e(\delta_{h} L )) = - \text{Num}(e( \delta_{-h} R L))$.\\
En inversant le rôle de $R$ et de $I_{2}$, on a de même. \\
En utilisant les équations $(\ref{eq166})$, $(\ref{eq165})$ et $(\ref{eq163})$ et les remarques précédentes dans cette preuve, on obtient le résultat voulu en regroupant les termes correspondants aux différents drapeaux.
\end{proof}
\subsubsection{Une dernière réécriture}
Pour arriver à la formule finale de $\Sigma$ donné en début de cette sous-section, nous avons besoin de réaliser une dernière réécriture des termes de la $\tilde{S}_{r,2}(t,P,\epsilon,X,L)$. \\
Rappelons que $A_{1}$ est de coordonnées $(a,-a)$, $A_{2}$ de coordonnées $(a,a)$, $A_{3}$ de coordonnées $(-a,a)$ et $A_{4}$ de coordonnées $(-a,-a)$. Une utilisation des formules trigonométriques classiques donne alors : 
\begin{prop}
\label{prop33}
\begin{align}\tilde{S}_{r,2}(t,P,\epsilon,X,L) = \frac{2}{ \pi^{2} \log(t)} \sum_{h \in \tilde{I}(L,t,\epsilon) }  \frac{1}{\text{Num}(l(L,h))} \\
\sum_{m \in \mathbb{N}-\{0 \}} \frac{ \sin(2 m \pi (l(L,h))_{1} t^{+} a )\sin(2 m \pi (l(L,h))_{2} t^{+} a )\cos(2 m \pi <l(L,h),X> )}{m^{2}} \textit{.} \nonumber
\end{align}
\end{prop}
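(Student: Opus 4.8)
Le plan est de partir directement de l'expression de $\Sigma$ fournie par la proposition \ref{prop32} et de n'effectuer qu'une manipulation trigonométrique élémentaire : il n'y a ici aucun argument probabiliste ni dynamique, seulement un regroupement astucieux des quatre termes indexés par les sommets $A_i$. Pour alléger, j'écrirais $l = l(L,h) = (l_{1},l_{2})$ et je poserais $u = 2 m \pi t^{+} a l_{1}$, $v = 2 m \pi t^{+} a l_{2}$ et $w = 2 m \pi <l,X>$, de sorte que les quantités apparaissant dans la formule finale s'écrivent $\sin(u)$, $\sin(v)$ et $\cos(w)$.

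D'abord je calculerais les produits scalaires $<l,A_i>$ à partir des coordonnées $A_{1} = (a,-a)$, $A_{2}=(a,a)$, $A_{3}=(-a,a)$, $A_{4}=(-a,-a)$, ce qui donne $<l,A_{1}> = a(l_{1}-l_{2})$, $<l,A_{2}> = a(l_{1}+l_{2})$, $<l,A_{3}> = -a(l_{1}-l_{2})$ et $<l,A_{4}> = -a(l_{1}+l_{2})$. En reportant dans $<l, t^{+} A_i + X> = t^{+} <l,A_i> + <l,X>$, l'argument du cosinus du terme $i$ (après multiplication par $2m\pi$) devient respectivement $u-v+w$, $u+v+w$, $-u+v+w$ et $-u-v+w$.

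Ensuite j'évaluerais la somme alternée $\sum_{i=1}^{4} (-1)^{i} \cos(\cdot)$ en regroupant les indices pairs ($i=2,4$) d'une part et impairs ($i=1,3$) d'autre part. La formule somme-produit $\cos A + \cos B = 2 \cos(\frac{A+B}{2}) \cos(\frac{A-B}{2})$ appliquée à chaque paire fait apparaître le facteur commun $\cos(w)$ :
\begin{align*}
\cos(u+v+w) + \cos(-u-v+w) &= 2 \cos(w) \cos(u+v), \\
\cos(u-v+w) + \cos(-u+v+w) &= 2 \cos(w) \cos(u-v).
\end{align*}
La somme alternée vaut donc $2\cos(w)\big(\cos(u+v) - \cos(u-v)\big)$, et l'identité $\cos(u+v) - \cos(u-v) = -2\sin(u)\sin(v)$ la transforme en $-4 \sin(u)\sin(v)\cos(w)$.

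Il ne resterait plus qu'à réinjecter cette égalité dans la formule de la proposition \ref{prop32} : le préfacteur $\frac{-1}{2\pi^{2} \log(t)}$ multiplié par le $-4$ produit exactement $\frac{2}{\pi^{2} \log(t)}$, et en réécrivant $u$, $v$, $w$ sous leur forme explicite on retrouve terme à terme l'expression annoncée dans la proposition \ref{prop33}. La seule difficulté est purement calculatoire : il s'agit de ne pas se tromper dans les signes des quatre produits scalaires ni dans l'appariement des cosinus. Une fois le bon appariement choisi — les sommets opposés $\{A_{2},A_{4}\}$ et $\{A_{1},A_{3}\}$ ensemble, ce qui reflète la symétrie centrale du carré $P$ — tout se simplifie automatiquement, et c'est précisément ce choix de regroupement qui fait surgir la structure produit $\sin\cdot\sin\cdot\cos$ caractéristique des drapeaux du carré.
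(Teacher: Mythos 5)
Votre preuve est correcte et suit exactement la même démarche que l'article, qui se contente d'invoquer « les formules trigonométriques classiques » à partir de la proposition \ref{prop32} : vous explicitez précisément ce calcul (appariement des sommets opposés $\{A_{2},A_{4}\}$ et $\{A_{1},A_{3}\}$, formules somme-produit, identité $\cos(u+v)-\cos(u-v)=-2\sin u\sin v$, vérification du préfacteur $\frac{-1}{2\pi^{2}\log(t)}\cdot(-4)=\frac{2}{\pi^{2}\log(t)}$), et tous les signes et produits scalaires $<l,A_{i}>$ sont exacts. Rien à redire.
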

Cette proposition est encore valable si l'on remplace $t^{+}$ par $t^{-}$.
\section{Conclusion}
Nous avons donc démontré le théorème $\ref{thm100}$ qui s'énonce de la manière suivante : 
\begin{theorems}
Lorsque $L \in \mathscr{S}_{2}$ est distribué selon la mesure $\tilde{\mu}_{2}$ et $X$ est distribué selon $\tilde{\lambda}_{2}$ alors on a : 
$$\frac{\mathcal{R}(t,P,X,L)}{\log(t)} \overset{\mathcal{L}}{\underset{t \rightarrow \infty}{\rightarrow}} \mathcal{C}_{c} $$ 
où $\overset{\mathcal{L}}{\rightarrow}$ signifie que la convergence a lieu en loi et où $\mathcal{C}_{c}$ désigne une loi de Cauchy centrée. 
\end{theorems}
Ce que ce théorème donne en substance c'est que l'erreur commise dans l'estimation de $N(tP + X,L)$ par la quantité $\text{Aire}(t P)$ est, en moyenne spatiale (c'est-à-dire en faisant varier légèrement le réseau), exactement de l'ordre de $\log(t)$ quand $t$ devient grand. \\
Dans un prochain article, nous nous intéresserons à un même type de problème mais en remplaçant le parallélogramme $P$ par un ellipsoïde plein de dimension $d$, $\mathcal{E}^{d}$, avec $d \geqslant 2$, et montrerons le résultat suivant : 
\begin{theorem}
Lorsque $L \in \mathscr{S}_{d}$ est distribué selon une mesure $\tilde{\mu}_{d}$, de densité régulière et bornée par rapport à la mesure de Haar normalisée sur $\mathscr{S}_{d}$, on a : 
$$\frac{\mathcal{R}(t,\mathcal{E}^{d},0,L)}{t^{\frac{d-1}{2}}} $$
converge en loi quand $t \rightarrow \infty$. Par ailleurs, la loi limite admet un moment d'ordre $1$, est d'espérance nulle et lorsque que l'on considère $\tilde{\mu}_{d} = \mu_{d}$, la loi limite n'admet pas de mot d'ordre $2$. 
\end{theorem}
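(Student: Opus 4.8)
Le plan est de reproduire, dans le cas lisse, l'architecture développée pour le parallélogramme, en remplaçant partout la quantité $\text{Num}(l)$ par la norme $\lVert l \rVert$ et en conservant pour seule source d'aléa le réseau $L$. On commence par se ramener à la boule euclidienne $\mathbb{D}^d$ : une application linéaire envoyant $\mathcal{E}^d$ sur $\mathbb{D}^d$ s'absorbe dans $L$, et comme $\tilde{\mu}_d$ est de densité bornée par rapport à $\mu_d$, le réseau image reste distribué selon une mesure de densité bornée. On applique alors la représentation de Skriganov (analogue de la proposition \ref{prop41}, valable pour les corps convexes à bord lisse à courbure non nulle) conjuguée au développement de Hardy--Landau et à l'asymptotique des fonctions de Bessel $J_{d/2}$ : à un terme négligeable en probabilité près,
\begin{equation*}
\frac{\mathcal{R}(t\mathcal{E}^d,0,L)}{t^{\frac{d-1}{2}}} = c_d \sum_{l\in L^{\perp}-\{0\}} \frac{\cos(2\pi t \lVert l \rVert - \tfrac{\pi(d+1)}{4})}{\lVert l \rVert^{\frac{d+1}{2}}} + o_{\mathbb{P}}(1).
\end{equation*}
Comme au passage de $\tilde{S}_2$ à $\tilde{S}_9$, on tronque cette somme (les grands $\lVert l \rVert$ étant contrôlés par l'oscillation, la décroissance $\lVert l \rVert^{-\frac{d+1}{2}}$ et les bornes du lemme \ref{lemme24}) et l'on identifie comme candidate à la loi limite celle de la série aléatoire explicite $\Phi(L) = \sum_{l\in L^{\perp}/\pm} 2c_d \lVert l \rVert^{-\frac{d+1}{2}}\cos(\Theta_l)$, où $L$ est de loi $\tilde{\mu}_d$ et les $\Theta_l$ sont uniformes sur $\mathbb{R}/\mathbb{Z}$, indépendantes entre elles et de $L$.

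La deuxième étape consiste à établir cette convergence. De même que pour le parallélogramme, on se ramène à la vérification des hypothèses (h1) jusqu'à (h8) du théorème \ref{thm13}, en prenant pour processus de points les plus courts vecteurs duaux et pour marque la phase $\zeta_{l,t}= t\lVert l \rVert \ (\text{mod } 1)$. En suivant les propositions \ref{prop26} et \ref{prop31}, le théorème \ref{thm13} fournit simultanément la convergence des courts vecteurs vers un processus de Poisson et la convergence des phases vers des variables indépendantes, uniformes sur $\mathbb{R}/\mathbb{Z}$ et indépendantes du processus. C'est ici que le lemme \ref{lemme26} remplace le lemme \ref{lemme1} : sa queue en $Da^{-1}$ fixe l'intensité du processus. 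L'ingrédient réellement nouveau est la courbure non nulle de $\partial\mathcal{E}^d$ : c'est elle qui, dans la vérification de (h7), garantit que la phase $t\lVert l \rVert$ balaie effectivement $\mathbb{R}/\mathbb{Z}$ le long des feuilles instables, le rôle tenu auparavant par la forme linéaire $\langle l, t^{+}A_2+X\rangle$ et par l'aléa sur $X$ étant ici dévolu à la dépendance non linéaire de $\lVert \cdot \rVert$ en la coordonnée de feuille. Un argument de type lemmes \ref{lemme41} et \ref{lemme42} conclut alors à la convergence en loi vers la loi de $\Phi$.

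Reste à analyser les moments. La queue de $\Phi$ est gouvernée par le plus court vecteur dual : le lemme \ref{lemme26}, via la dualité $L\mapsto L^{\perp}$ (qui préserve $\mu_d$), donne $\mu_d(\lVert L^{\perp}\rVert < s) \sim D s^{d}$, d'où $\mathbb{P}(\lVert l \rVert^{-\frac{d+1}{2}} > x) \sim D\, x^{-\alpha}$ avec $\alpha = \frac{2d}{d+1}$. Comme $d\geqslant 2$ entraîne $1<\alpha<2$, la loi limite est centrée (par symétrie de $\cos(\Theta_l)$), admet un moment d'ordre $1$ (car $\alpha>1$) mais pas de moment d'ordre $2$ (car $\alpha<2$). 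La valeur exacte $\alpha=\frac{2d}{d+1}$, et donc l'absence précise de moment d'ordre $2$, provient de l'asymptotique fine $Da^{-1}$ du lemme \ref{lemme26} ; c'est ce qui impose d'énoncer ce dernier point pour $\tilde{\mu}_d=\mu_d$, la convergence et l'existence du moment d'ordre $1$ subsistant pour une densité seulement bornée.

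La principale difficulté sera double. D'une part, avec $X=0$ l'unique source d'aléa est $L$ et la phase dépend du vecteur $l$ à travers la norme, fonction non linéaire : la vérification de (h7) exigera d'exploiter la courbure du bord pour obtenir la non-dégénérescence du balayage et l'indépendance asymptotique des phases. D'autre part, la série définissant $\Phi$ n'est que conditionnellement convergente (elle ne converge pas absolument pour $d\geqslant 2$), de sorte qu'il faudra un contrôle soigné du reste de Bessel et des termes secondaires pour justifier l'échange des limites, ainsi qu'une maîtrise de l'interaction entre la dualité $L\mapsto L^{\perp}$ et la queue de cusp.
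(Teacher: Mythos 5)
Premier point, essentiel : l'article ne démontre pas ce théorème. Il est énoncé dans la conclusion comme un résultat \emph{annoncé} (« Dans un prochain article, nous nous intéresserons\dots{} et montrerons le résultat suivant »), et les deux résumés le confirment. Il n'y a donc pas de preuve interne à laquelle comparer votre esquisse ; on ne peut la juger que sur sa cohérence propre et sa compatibilité avec la boîte à outils du papier. Sur ce plan, votre architecture est plausible et fidèle à la stratégie du cas des boîtes : représentation du reste par une somme presque périodique sur le réseau dual (développement de Hardy--Landau et asymptotique de Bessel, en lieu et place de la représentation de Skriganov par les drapeaux), troncature, machinerie du théorème $\ref{thm13}$ pour obtenir simultanément un processus de Poisson pour les petits vecteurs duaux et des phases uniformes indépendantes, puis analyse de queue. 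Votre exposant $\alpha = \frac{2d}{d+1} \in ]1,2[$, tiré du lemme $\ref{lemme26}$ et de l'invariance de $\mu_{d}$ par dualité, rend compte exactement de l'énoncé (moment d'ordre $1$, espérance nulle, pas de moment d'ordre $2$), et votre explication de la restriction à $\tilde{\mu}_{d}=\mu_{d}$ pour le dernier point (borne supérieure de queue sous densité bornée, asymptotique exacte nécessaire pour la borne inférieure) est juste et bienvenue.

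Il y a néanmoins une erreur concrète dans votre candidate à la loi limite. Dans $\Phi(L)=\sum_{l \in L^{\perp}/\pm} 2c_{d} \lVert l \rVert^{-\frac{d+1}{2}} \cos(\Theta_{l})$, vous attribuez des phases indépendantes à \emph{tous} les vecteurs duaux, or les phases de vecteurs colinéaires sont déterministiquement liées : $t\lVert k l \rVert = k\, t\lVert l \rVert$, donc modulo $1$ la phase de $kl$ vaut $k\Theta_{l} \ (\text{mod } 1)$, fonction déterministe de $\Theta_{l}$. C'est précisément pour cette raison que le papier effectue, dans le cas des boîtes, le « centrage sur les vecteurs premiers » (proposition $\ref{prop19}$, passage de $\tilde{S}_{3}$ à $\tilde{S}_{4}$, avec la série interne $\sum_{k} k^{-2}\cos(2k\pi\langle \cdot \rangle)$) ; l'analogue est ici obligatoire, et la limite correcte doit s'écrire comme une somme sur les vecteurs duaux \emph{primitifs}, $\sum_{l \text{ primitif}} \lVert l \rVert^{-\frac{d+1}{2}} \sum_{k \geqslant 1} k^{-\frac{d+1}{2}} \cos\bigl(2\pi k \Theta_{l} - \tfrac{\pi(d+1)}{4}\bigr)$, avec $\Theta_{l}$ i.i.d.\ uniformes — la série interne converge absolument pour $d \geqslant 2$ et reste bornée, de sorte que l'exposant de queue et les conclusions sur les moments sont inchangés, mais la loi limite telle que vous l'écrivez est mal identifiée, alors qu'elle est l'objet même du théorème. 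Corrélativement, le processus de Poisson doit porter sur les vecteurs primitifs courts (intensité fournie par Siegel--Rogers, lemme $\ref{lemme24}$, avec en dimension $d \geqslant 3$ le traitement des multiplicités signalé après le lemme $\ref{lemme26}$). Enfin, la difficulté que vous identifiez vous-même — vérifier (h7) alors que $X=0$ et que la phase $t\lVert \cdot \rVert$ dépend non linéairement de la coordonnée de feuille — est bien le point réellement nouveau par rapport au calcul explicite $\langle e(\delta_{k}L), \delta_{k}^{-1}e^{M}A_{2}\rangle$ du cas des boîtes, mais elle reste dans votre texte à l'état de déclaration : en l'absence de ce calcul de non-dégénérescence, l'esquisse ne constitue pas une preuve, même au niveau de détail de l'article.
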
 

\bibliographystyle{plain}
\bibliography{bibliographie}

\begin{thebibliography}{10}

\bibitem{bassam}
Dmitry Dolgopyat and Bassam Fayad.
\newblock Deviations of ergodic sums for toral translations ii. boxes.
\newblock {\em arXiv preprint arXiv:1211.4323}, 2012.

\bibitem{dolgoquenched}
Dmitry Dolgopyat and Omri Sarig.
\newblock Quenched and annealed temporal limit theorems for circle rotations.
\newblock {\em Asterisque}, 415:57--83, 2020.

\bibitem{hardy1917average}
GH~Hardy.
\newblock The average order of the arithmetical functions p (x) and $\delta$
  (x).
\newblock {\em Proceedings of the London Mathematical Society}, 2(1):192--213,
  1917.

\bibitem{iwaniec1988divisor}
Henryk Iwaniec and CJ~Mozzochi.
\newblock On the divisor and circle problems.
\newblock {\em Journal of Number theory}, 29(1):60--93, 1988.

\bibitem{Kesten60u}
Harry Kesten.
\newblock Uniform distribution mod 1.
\newblock {\em Annals of Mathematics}, pages 445--471, 1960.

\bibitem{Kesten62u}
Harry Kesten.
\newblock Uniform distribution mod 1 (ii).
\newblock {\em Acta Arithmetica}, 4(7):355--380, 1962.

\bibitem{marklof1998n}
Jens Marklof and Zeev Rudnick.
\newblock The n-point correlations between values of a linear form.
\newblock Technical report, SCAN-9905042, 1998.

\bibitem{paulin2016logarithm}
Fr{\'e}d{\'e}ric Paulin and Mark Pollicott.
\newblock Logarithm laws for equilibrium states in negative curvature.
\newblock {\em Communications in Mathematical Physics}, 346(1):1--34, 2016.

\bibitem{Skriganov}
M.~Skriganov.
\newblock Ergodic theory on sl(n), diophantine approximations and anomalies in
  the lattice point problem.
\newblock {\em Inventiones Mathematicae}, 132:1--72, 04 1998.

\bibitem{vinogradov2010limiting}
Ilya Vinogradov.
\newblock Limiting distribution of visits of sereval rotations to shrinking
  intervals.
\newblock {\em arXiv preprint arXiv:1005.1622}, 2010.

\end{thebibliography}
\end{document}